\setlist{nosep}
\newtheorem{theorem}{Theorem}[section]
\newtheorem{proposition}[theorem]{Proposition}
\newtheorem{lemma}[theorem]{Lemma}
\newtheorem{corollary}[theorem]{Corollary}
\theoremstyle{definition}
\newtheorem{definition}[theorem]{Definition}
\theoremstyle{remark}
\newtheorem{remark}[theorem]{Remark}
\newcommand{\yy}{\scalebox{.6}{\RIGHTcircle}}
\newcommand{\pd}{\ensuremath{\vec d}}
\newcommand{\dB}{\ensuremath{\vec B}}
\newcommand{\rg}{\ensuremath{\mathrm{g}}}
  \renewcommand{\rm}{\ensuremath{\mathrm{m}}}
  \newcommand{\rm}{\ensuremath{\mathrm{m}}}
\newcommand{\rs}{\ensuremath{\mathrm{s}}}
\newcommand{\rc}{\ensuremath{\mathrm{c}}}
\newcommand{\inc}[1]{\ensuremath{\text{inc}(#1)}}
\renewcommand{\v}{\ensuremath{\mathrm{v}}}
\newcommand{\bols}{\ensuremath{\bar w}}
\newcommand{\rV}{\ensuremath{\mathrm{V}}}
\newcommand{\rE}{\ensuremath{\mathrm{E}}}
\newcommand{\rF}{\ensuremath{\mathrm{F}}}
\newcommand{\rFn}{\ensuremath{\mathrm{F}^{\bullet}}}
\newcommand{\rFb}{\ensuremath{\mathrm{F}^\circ}}
\newcommand{\rC}{\ensuremath{\mathrm{C}}}
\newcommand{\cA}{\mathcal{A}}
\newcommand{\cB}{\mathcal{B}}
\newcommand{\blar}{\mbox{\tiny \ensuremath{\circ,\!\geq}}}
\newcommand{\bstr}{\mbox{\tiny \ensuremath{\circ,\!>}}}
\newcommand{\nlar}{\mbox{\tiny \ensuremath{\bullet,\!\geq}}}
\newcommand{\nstr}{\mbox{\tiny \ensuremath{\bullet,\!>}}}
\newcommand{\dmaxb}{\ensuremath{\Delta^{\!\circ}}}
\newcommand{\dmaxn}{\ensuremath{\Delta^{\!\bullet}}}
\newcommand{\Wb}{\ensuremath{W^{\circ}}}
\newcommand{\Wn}{\ensuremath{W^{\bullet}}}
\newcommand{\Z}{\mathbb{Z}}
\newcommand{\Ring}{\mathcal{R}}
\newcommand{\C}{\mathbb{C}}
\title{The slice decomposition of planar hypermaps}
\author{
  Marie Albenque\thanks{\, Université Paris Cité, CNRS, IRIF, F-75013 Paris, France \protect\\
    Email: \texttt{malbenque@irif.fr} } \and
  Jérémie Bouttier\thanks{\, Sorbonne Université and Université Paris Cité, CNRS, IMJ-PRG, F-75005 Paris, France \protect\\
    On leave from: \protect\\
    Universit\'e Paris-Saclay, CNRS, CEA, Institut de physique th\'eorique, 91191, Gif-sur-Yvette, France \protect\\
    Email: \texttt{jeremie.bouttier@imj-prg.fr}}}
\date{\today}
\begin{document}
\maketitle

\begin{abstract}
  The slice decomposition is a bijective method for enumerating planar maps (graphs embedded in the sphere) with control over face degrees. In this paper, we extend the slice decomposition to the richer setting of \emph{hypermaps}, naturally interpreted as properly face-bicolored maps, where the degrees of faces of each color can be controlled separately. This setting is closely related with the two-matrix model and the Ising model on random maps, which have been intensively studied in theoretical physics, leading to several enumerative formulas for hypermaps that were still awaiting bijective proofs.

  Generally speaking, the slice decomposition consists in cutting along geode\-sics. A key feature of hypermaps is that the geodesics along which we cut are \emph{directed}, following the canonical orientation of edges imposed by the coloring. This orientation requires us to introduce an adapted notion of slices, which admit a recursive decomposition that we describe.

Using these slices as fundamental building blocks, we obtain new bijective decompositions of several families of hypermaps: disks (pointed or not) with a monochromatic boundary, cylinders with monochromatic boundaries (starting with trumpets or cornets having one geodesic boundary), and disks with a “Dobrushin” boundary condition. In each case, the decomposition ultimately expresses these objects as sequences of slices whose increments correspond to downward-skip free (\L{}ukasiewicz-type) walks subject to natural constraints.

Our approach yields bijective proofs of several explicit expressions for hypermap generating functions. In particular, we provide a combinatorial explanation of the algebraicity and of the existence of rational parametrizations for these generating functions when face degrees are bounded.
\end{abstract}

\newpage
\tableofcontents

\newpage  

\section{Introduction}

\subsection{Context, motivations, and contributions of this paper}

The enumeration of maps is a classical topic in combinatorics, which
was initiated by Tutte in the sixties, and which has seen many
developments up to now. See for instance the review by
Schaeffer~\cite{Schaeffer15} and the references cited therein. Let us
also mention the book by Lando and Zvonkin~\cite{Lando2004} which
provides an overview of some fascinating connections between map
enumeration and other branches of mathematics, involving avatars of
maps such as \emph{dessins d'enfants} and combinatorial models of the
moduli space of complex curves.

A paradigmatic problem in map enumeration is to count planar maps while controlling face degrees. This question was first considered
by Tutte~\cite{Tutte62} who obtained a remarkably simple expression
when all degrees are even. In the general case, he obtained a
functional equation for the corresponding generating
function~\cite{Tutte68}, whose solution was given by Bender and
Canfield~\cite{BeCa94}, see also~\cite{BoJe06}. Independently, the
same problem was solved in theoretical physics in connection with the
so-called one-matrix model, see e.g.~\cite{DGZ95,BouOHRMT11} and
references therein.

While these approaches lead to an explicit answer, in the form of
generating functions amenable to an asymptotic analysis, it is natural
to look for a \emph{bijective} approach. Such an approach was pioneered
by Schaeffer who rederived Tutte's result in the case of even degrees~\cite{Schaeffer97}, and later extended to arbitrary degrees by Bouttier, Di Francesco and Guitter~\cite{census}. Both papers rely on correspondences between
planar maps and certain decorated trees, often called \emph{blossom(ing)
  trees}. Another bijection, involving another family of trees known as \emph{mobiles}, was given in~\cite{mobiles}.

Trees are easier to enumerate thanks to their recursive structure:
for rooted trees, the operation of removing the root vertex translates
into equations---that are typically algebraic---determining their
generating functions. This naturally raises the question of whether such recursive decompositions can be performed directly at the level of maps. A construction of this type was proposed in~\cite[Appendix~A]{hankel}, and
is now known as the \emph{slice decomposition}, see
also~\cite[Chapter~2]{BouttierHDR} and~\cite{Bouttier2024}. It
provides an elementary yet elegant bijective solution to the problem
of counting maps with controlled face degrees, and moreover extends naturally to maps with girth constraints~\cite{irredmaps,Bouttier2024a} and to scaling
limits~\cite{BeMi17}.

In this paper, we extend the slice decomposition to the more general
context of planar \emph{hypermaps}. Recall that a hypergraph generalizes a graph by allowing \emph{hyperedges}, which may be incident to an arbitrary number of vertices rather than just two. A hypermap is the generalization of a
map in the same manner. In practice, we represent hyperedges as
faces of a second color, so that hypermaps are identified with properly
face-bicolored maps, dual to bipartite maps~\cite{Walsh75}.

The corresponding generalization of the above enumerative problem is
to count planar hypermaps with a control on the degrees of both faces
and hyperedges\footnote{We may of course ask to control the degrees of
  faces, hyperedges and vertices all at the same time. This is a
  significantly harder problem, even for maps (where edge degrees are fixed to be two); see
  \cite{DFIt93,KaStWy96,Kazakov2022,BenDali2024} for some results in
  this direction.}  or, in the representation chosen in this paper, the
degrees of faces of each color separately. This indeed generalizes the
previous problem, since maps correspond to hypermaps in which all
hyperedges have degree two. The question seems to have been first
addressed in the physics literature in connection with the
\emph{two-matrix
  model}~\cite{Itzykson1980,Mehta1981,Douglas91,Staudacher1993,Daul1993}. A
key motivation for physicists was the study of \emph{two-dimensional
  quantum gravity}~\cite{DGZ95}, as it was realized that the
two-matrix model exhibits a much richer critical behavior than the
one-matrix model. In other words, generating functions for planar
hypermaps with controlled degrees display certain types of
singularities that cannot be obtained with planar maps only. In
particular, a specialization of the two-matrix model is the celebrated
Ising model on (dynamical) random planar maps, first solved by
Kazakov~\cite{Kazakov86}, which remarkably displays a phase transition
and a critical point~\cite{BoKa87}. The deep algebraic and integrable structures underpinning the two-matrix model were further
investigated, in particular by Bertola, Eynard and their
collaborators, see e.g.~\cite{Bertola2003,Eynard2003,EO2007} and
references therein. We refer to the eighth chapter of the
book~\cite{EynardBook} for a combinatorially-oriented account of some
of these results\footnote{This reference deals with maps endowed with
  a bicoloring of the faces which is not necessarily proper. Such maps
  can be transformed into hypermaps by adding faces of degree two, and
  the enumeration problems are equivalent. See for instance the
  discussion in~\cite[Appendix~B]{BC2021}.}. Let us mention in passing the recent works~\cite{Duits_Hayford_Lee24,Hayford24} on a Riemann-Hilbert approach to the two-matrix model and its application to the Ising model coupled to 2D gravity.

On the bijective side, the first approach to the problem of counting
degree-controlled hypermaps was made by Bousquet-Mélou and
Schaeffer~\cite{BoSc02} via blossoming trees (and in the dual setting
of bipartite maps), thereby confirming rigorously the prediction of Boulatov and Kazakov for the genus 0 partition function of the Ising model on random tetravalent planar maps. Another bijective approach via mobiles was given
in~\cite[Section~3]{mobiles}\footnote{We shall mention that this
  reference discusses more the bijection itself than its enumerative
  consequences. On the latter aspect, it provides an infinite system
  of recursive equations characterizing the generating function of
  mobiles and ``half-mobiles''. It also claims that, in the ``limit of
  large labels'' (which amounts to lifting the positivity condition on
  labels), one recovers the equations from~\cite{BoSc02}: this is
  actually not quite true, as we will see in
  Remark~\ref{rem:mobiles_connection} below. Let us mention that the
  full infinite system of recursive equations has recently been
  related to the theory of integrable systems
  in~\cite{BergereEtAl}.}. These two approaches give systems of
recursive equations which are equivalent to those obtained for slices
in the present paper, see Remarks~\ref{rem:BoSc_connection} and
\ref{rem:mobiles_connection} below. Their limitation, however, was that the bijections imposed somewhat ``unnatural'' constraints on the trees or maps. Namely, the main result of
Bousquet-Mélou and Schaeffer~\cite[Theorem~3]{BoSc02} is about maps
rooted at a black vertex of degree $2$, and it was unclear whether
this restriction could be lifted. And the construction given
in~\cite[Section~3.3]{mobiles} produces mobiles whose labels are
constrained to be positive, which complicates their enumeration considerably.
Such issues were typical in the early days of the bijective approach
to map enumeration, and were later addressed by lifting the balance condition in blossoming trees (using $\alpha$-orientations), or by considering pointed rooted maps to relax positivity in mobiles, etc. Modern treatments of tree bijections for hypermaps
can be found in~\cite{Bernardi2020} and~\cite{AMT2025}: the first
paper discusses a very general bijection for hypermaps with girth
constraints, and recovers~\cite{BoSc02,mobiles} as special cases; the
second paper introduces a family of orientations characterizing
bipartite maps (dual to hypermaps) which allows to recover and extend
the blossoming bijection of~\cite{BoSc02}. Still, to the best of our knowledge,
most of the nice enumerative results from~\cite[Chapter~8]{EynardBook}
have not yet received a bijective proof, and this is the gap that we
intend to address with this paper.

We will show that the slice framework is particularly well-suited for this purpose: slices are certain (hyper)maps that admit recursive
decompositions equivalent to those of blossoming trees and mobiles, while also serving as building blocks to construct bijectively
several ``natural'' families of hypermaps. Let us list, by increasing
order of complexity, which families we treat in the paper: pointed
disks with one monochromatic boundary, cylinders with two
monochromatic boundaries (starting with the case of so-called trumpets
and cornets, i.e. cylinders with one geodesic boundary)\footnote{We believe that the
  decomposition of a cylinder into a trumpet/cornet pair is very
  similar to the bijection used in the proof
  of~\cite[Lemma~44]{Bernardi2020}. The idea of cutting a cylinder (or
  annular map) along an extremal minimal separating cycle appears in
  previous papers, and might be the first bijective argument in
  map enumeration not easily reformulated in terms of trees.}, disks with one monochromatic
boundary, disks with a ``Dobrushin'' boundary condition.  The case of
more general ``mixed'' boundary conditions, discussed
in~\cite[Section~8.4]{EynardBook}, is not treated in the present
paper, but will be adressed in future works~\cite{BEL,Lejeune}. 

Another contribution of the present paper is a combinatorial
explanation for the existence of a ``rational parametrization'' for
the generating functions of hypermaps with monochromatic boundaries,
in terms of the parameters controlling the boundary lengths. This
phenomenon was previously observed in the approach via loop equations,
see e.g.~\cite[Section~8.3]{EynardBook}, but its origin remained
mysterious to us. One of our insights is that hypermaps with
monochromatic boundaries can be ultimately decomposed as sequences of
slices whose increments form downward-skip free (\L{}ukasiewicz-type)
walks. Generating functions for such sequences/walks can all be
expressed in terms of a single ``master'' series, which is the
generating function of \emph{excursions}. See Appendix~\ref{sec:dsf}
below for a self-contained discussion. In our application to hypermaps
we actually have two master series, one per color, and the variable
controlling the length of a boundary of a given color turns out to be a
Laurent polynomial in the corresponding master series.

Let us mention that one of our main motivations is the aforementioned
Ising model on random maps, which is expected to display at the
critical point large-scale geometric properties that are totally
different from those of generic maps. A probabilistic approach to this model was recently developed in the
papers~\cite{Albenque2021,Albenque2022,Chen2020,Chen2023,Turunen2020},
which rely heavily on enumerative results such as those given
in~\cite[Section~12]{Bernardi2011}\footnote{This paper treats the
  Ising model as the $q=2$ specialization of the more general
  $q$-state Potts model. Let us mention that the rational
  parametrization involved in~\cite[Theorems~21 and~23]{Bernardi2011}
  is for the variable associated to the ``volume'' of the maps, while
  the rational parametrization discussed in the present paper is for
  the variables associated with the lengths of the boundaries.}. We
believe that more combinatorial results are needed to go further, and
we hope that the present paper might help to make progress in that
direction.

Finally, let us note that some of the ideas developed here have already been applied in~\cite{Bonzom2024} to the enumeration of certain planar constellations, corresponding to the spectral curve of Orlov–Scherbin $2$-Toda tau functions.

The remainder of this introduction delves into the matter, first by
giving some precise definitions, then by giving an overview of the
enumerative results which will follow from our bijective approach, and
finally by presenting the outline of the rest of the paper.

\subsection{Maps and hypermaps: basic notions and definitions}\label{sub:DefIntro}

Let us now introduce precisely the objects which we intend to
enumerate in this paper. We start with some basic notions on maps,
and refer to~\cite{Schaeffer15} for more details.

A \emph{planar map}, hereafter called \emph{map} for short, is a proper embedding of a connected finite multigraph into the sphere, considered up to orientation-preserving homeomorphism. By multigraph, we mean a graph where loops and multiple edges are allowed. A map consists of \emph{vertices}, \emph{edges}, \emph{faces}, and their incidence relations. The embedding fixes the cyclical order of edges around each vertex, which defines readily a \emph{corner} as a couple of consecutive edges around a vertex. The set of vertices, edges, faces and corners of a planar map $\rm$ are respectively denoted by $\rV(\rm)$, $\rE(\rm)$, $\rF(\rm)$ and $\rC(\rm)$. For any $\kappa\in \rC(\rm)$, the vertex incident to $\kappa$ is denoted by $\v(\kappa)$. A corner is also incident to a unique face.

The \emph{degree} of a vertex or a face is defined as the number of its incident corners. In other words, it counts incident edges, with multiplicity 2 for each loop (in the case of vertex degree) or for each bridge (in the case of face degree).  The \emph{contour} of a face is the cycle formed by its incident edges and vertices.

A \emph{boundary} is a distinguished face. Unless otherwise stated, each boundary is \emph{rooted}, i.e.\ it is endowed with a marked incident corner. Faces which are not boundaries are called \emph{inner faces}. Planar maps with one and two boundaries are colloquially called \emph{disks} and \emph{cylinders}, respectively. It is customary to draw a planar map in the plane, upon performing a stereographic projection of the sphere. This requires choosing a point at infinity, usually within a face which is therefore promoted to the role of \emph{outer} (unbounded) face. In a map with boundaries, the outer face is chosen among the boundaries. Note that the other boundaries are not called inner faces in our present terminology.

\begin{figure}[t]
\centering
\subcaptionbox{An hypermap with one monochromatic boundary,\label{subfig:hypermap}}{\;\includegraphics[page=1,width=0.4\linewidth]{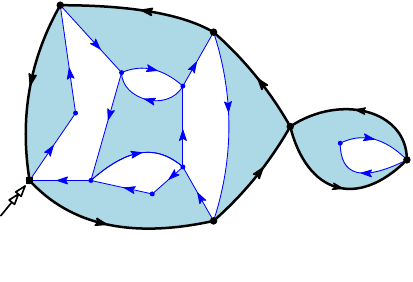}}\qquad\qquad
  \subcaptionbox{a hypermap with two monochromatic boundaries (one white and one black),\label{subfig:hypermapMonochromatic}}{\;\includegraphics[page=1,width=0.45\linewidth]{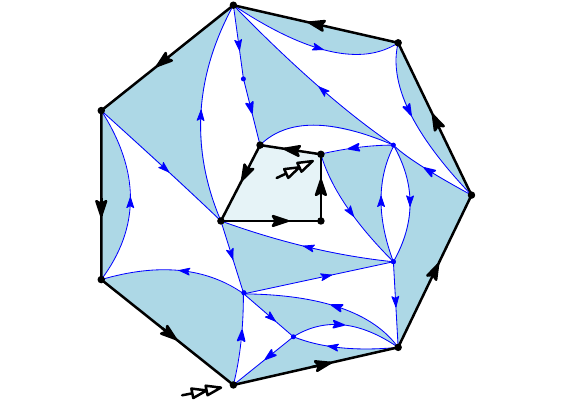}}\qquad\qquad
  \subcaptionbox{a hypermap with one non-monochromatic boundary.\label{subfig:hypermapBoundary}}{\;\includegraphics[page=4,width=0.4\linewidth]{images/hypermap.pdf}}\qquad\qquad
  \subcaptionbox{and, a hypermap with a $(5,2)$-Dobrushin boundary condition.\label{subfig:hypermapDobrushin}}{\;\includegraphics[page=3,width=0.4\linewidth]{images/hypermap.pdf}}
 \caption{Illustration of the different families of hypermaps introduced. Root corners are indicated by double black arrows and the contour of boundaries by thicker black edges (for readability, throughout this paper, black faces are colored in blue).}
  \label{fig:firstExamples}
\end{figure}

In this paper, we consider \emph{hypermaps}, that are planar maps endowed with a proper bicoloration (say in black and white) of their faces. Each edge of a hypermap can be canonically oriented, by requiring that the face on its right is white and that the face on its left is black. This gives a bijection between hypermaps and maps endowed with an orientation of their edges such that the contour of each face is an oriented cycle. See Figure~\ref{subfig:hypermap} for an illustration. 

There is a slight subtlety when considering hypermaps with boundaries. The simplest notion is that of \emph{hypermaps with monochromatic boundaries}, defined as hypermaps with some distinguished faces, colored in the same way as the inner faces, see Figures~\ref{subfig:hypermap} and~\ref{subfig:hypermapMonochromatic}. However, it is also interesting to consider non-monochromatic boundaries, which are more conveniently defined via orientations: a \emph{hypermap with non-monochromatic boundaries} is defined as a map with boundaries endowed with an orientation of its edges such that the contour of each \emph{inner} face is a directed cycle. In other words, we relax the condition that the contours of the boundaries are directed cycles, see Figure~\ref{subfig:hypermapBoundary}. Note that inner faces may still be naturally colored black and white, but the boundaries themselves do not necessarily have a defined color. Let us emphasize that from the coloring of the inner faces, only the orientation of the edges incident to at least one inner face can be retrieved, but not the orientation of edges only incident to boundaries (e.g. bridges).

In particular, a \emph{hypermap with a Dobrushin boundary of type
  $(p,q)$} is defined as a hypermap with one non-monochromatic
boundary such that the contour of the boundary face consists of two
directed paths: one of length $p$ formed by edges having the boundary
face on their right, the other of length $q$ formed by edges having
the boundary face on their left. By convention the root corner is
placed at the origin of these two paths, see Figure~\ref{subfig:hypermapDobrushin}. Note that, for $p=0$ or $q=0$, we recover a rooted monochromatic boundary.

\subsection{Generating functions of planar hypermaps: an overview}
\label{sec:genover}

We now introduce the main protagonists of this paper: the generating functions of certain classes of planar hypermaps. For
$\rm$ a hypermap with boundaries (monochromatic or not), we generally
define its weight $w(\rm)$ as
\begin{equation}\label{eq:Boltzmannweight}
w(\rm) :=t^{\left\lvert\rV(\rm)\right\rvert}\prod_{f \in \rFb(\rm)}t^\circ_{\deg(f)}\prod_{f \in \rFn(\rm)}t^\bullet_{\deg(f)},
\end{equation}
where 
$t, t^\circ_1,t^\circ_2,\ldots,t^\bullet_1,t_2^\bullet,\ldots$ are
formal variables, $\rV(\rm),\rFb(\rm),\rFn(\rm)$ denote respectively the set of vertices, white inner faces, black inner faces of $\rm$, and $\deg(f)$ denotes the degree of a face $f$. Throughout this paper, we denote the ring of formal power series in these variables with rational coefficients by $\Ring:=\mathbb{Q}[[t, t^\circ_1,t^\circ_2,\ldots,t^\bullet_1,t_2^\bullet,\ldots]]$.

A fundamental role is played by the \emph{disk generating functions},
namely the generating functions of (planar) hypermaps with one
monochromatic boundary. For $p\geq 1$, we set

\begin{equation}\label{eq:defFp}
F_p^\circ := \sum_{\substack{\rm \text{ hypermap with}\\ \text{a white rooted boundary}\\ \text{ of degree }p}} w(\rm)\qquad \text{and}\qquad F_p^\bullet := \sum_{\substack{\rm \text{ hypermap with}\\ \text{a black rooted boundary}\\ \text{ of degree }p}} w(\rm),
\end{equation}
which are both series belonging to $\Ring$. By convention, we set $F_0^\circ=F_0^\bullet=t$, which can be
interpreted as the weight of the map reduced to a single vertex (and
no edge). We gather the $F_p^\circ$ and $F_p^\bullet$ into the grand
generating functions
\begin{equation}\label{eq:defWDiskintro}
\Wb(x):= \sum_{p\geq 0}\frac{F_p^\circ}{x^{p+1}}\qquad \text{and} \qquad \Wn(y) := \sum_{p\geq 0}\frac{F_p^\bullet}{y^{p+1}}
\end{equation}
which lie in $\Ring[[x^{-1}]]$ and of $\Ring[[y^{-1}]]$, respectively; that is, they are formal power series in $x^{-1}$ (resp. $y^{-1}$) with coefficients in $\Ring$. Here, we
follow the usual convention of the physics literature to
work with inverse variables: one can intuitively think of \eqref{eq:defWDiskintro} as the analytic expansions at infinity of the ``resolvents'' $\Wb(x)$ and $\Wn(y)$. This choice, as well as the shift of the exponent by $+1$, will turn out to give nicer enumerative formulas. 

We will see that the disk generating functions can be expressed in
terms of two families of auxiliary series $(a_k)_{k \geq -1}$ and
$(b_k)_{k \geq 0}$ which are determined through a system of
``tree-like'' recursive equations. A compact way of writing this
system is to introduce the Laurent series
\begin{equation}\label{eq:defxyintro}
  x(z):=\sum_{k \geq -1} a_kz^{-k} \quad\text{ and } \quad y(z):=z^{-1} + \sum_{k\geq 0} b_kz^{k},
\end{equation}
then the system consists in imposing the conditions
\begin{equation}
  \label{eq:absys}
  \begin{split}
    [z^{-k}] \Big( x(z) - t z -\sum_{d \geq 1} t^\bullet_d
    y(z)^{d-1}\Big) = 0 &\qquad \text{for all $k \geq -1$}, \\
    [z^k] \Big( y(z) - \sum_{d \geq 1} t^\circ_d x(z)^{d-1}\Big) =
    0 &\qquad \text{for all $k \geq 0$}.
  \end{split}
\end{equation}
Here, $[z^k]A(z)$ denotes the coefficient of $z^k$ in $A(z)$. As we
will discuss in more detail in Section~\ref{sec:slices}, the
system~\eqref{eq:absys} uniquely determines the $a_k$ and $b_k$ as
elements of $\Ring$. In the case of \emph{bounded face
  degrees}, that is when we set $t^\bullet_k=0$ for all $k>\dmaxn$ and
$t^\circ_k=0$ for all $k>\dmaxb$, with $\dmaxn$ and $\dmaxb$ being arbitrary integers chosen a priori, then the system~\eqref{eq:absys}
implies that $a_k=0$ for $k \geq \dmaxn$ and $b_k=0$ for
$k \geq \dmaxb$. Hence, we obtain an \emph{algebraic} system of equations
for the series $a_{-1},\ldots,a_{\dmaxn-1},b_0,\ldots,b_{\dmaxb-1}$.
Essentially the same system appears
in~\cite{BoSc02} and \cite{mobiles}, where it was given a combinatorial
interpretation in terms of blossomming trees and mobiles, respectively. In
this paper, we give yet another combinatorial interpretation, this time in
terms of certain hypermaps called \emph{slices}, which will be defined
in Section~\ref{sec:slices}.

Slices have the advantage of permitting manipulations that are harder to visualize on blossoming trees or mobiles. This will allow us to make the connection with the disk
generating functions.

Let us first consider the first derivatives of these series with
respect to the variables
$t, t^\circ_1,t^\circ_2,\ldots,\allowbreak
t^\bullet_1,t_2^\bullet,\ldots$: as is often the case in the
enumeration of planar maps, these quantities admit simpler
expressions. The derivatives with respect to the vertex weight $t$
correspond to generating functions of \emph{pointed disks}, i.e.\
disks with an extra marked vertex. Corollary~\ref{cor:pointed} below
states that
\begin{equation}
  \label{eq:Fpt}
  \frac{\partial F_p^\circ}{\partial t} = [z^0] x(z)^p, \qquad
  \frac{\partial F_p^\bullet}{\partial t}  = [z^0] y(z)^p.
\end{equation}
At this stage, the combinatorially-inclined reader might have noticed
that the Laurent polynomials $x(z)^p$ and $y(z)^p$ have a natural
interpretation as the generating function of certain weighted walks on
$\Z$, which we call \emph{downward skip-free (DSF) walks} (see
Definition~\ref{def:DSF} and Remark~\ref{rem:DSFxy} below).  Then,
extracting the coefficient of $z^0$ means that we consider those walks
ending at their starting point, which are often called
\emph{bridges}. Proving~\eqref{eq:Fpt} boils down to exhibiting a
bijection between pointed disks and bridges whose steps are decorated
with slices, which we will do in Section~\ref{sub:pointed}. We will
furthermore show in Section~\ref{sec:apprelim} that the grand
generating functions of pointed disks read
\begin{equation}
  \label{eq:Wt}
   \frac{\partial W^\circ(x)}{\partial t} = \frac{d}{dx} \ln z^\circ(x), \qquad
  \frac{\partial W^\bullet(y)}{\partial t}  = - \frac{d}{dy} \ln z^\bullet(y),
\end{equation}
where $z^\circ(x)$ and $z^\bullet(y)$ are ``compositional inverses''
of respectively $x(z)$ and $y(z)$. Precisely, the series
$\tilde{z}^\circ(x)=z^\circ(x)^{-1}$ and $z^\bullet(y)$ are the unique
formal power series in respectively $x^{-1}$ and $y^{-1}$ such that
\begin{equation}
  \tilde{z}^\circ(x)  = \sum_{k\geq -1} \frac{a_k}{x} \tilde{z}^\circ(x)^{k+1}, \qquad
  z^\bullet(y) = \sum_{k\geq -1} \frac{b_k}{y} z^\bullet(y)^{k+1},
\end{equation}
which indeed amount to $x = x\left(z^\circ(x)\right)$ and
$y = y(z^\bullet(y))$. Combinatorially, $\tilde{z}^\circ(x)$ and
$z^\bullet(y)$ can be interpreted as generating functions of so-called DSF
\emph{excursions}.

As for the derivatives with respect to the face weights
$t^\circ_1,t^\circ_2,\ldots,\allowbreak
t^\bullet_1,t_2^\bullet,\ldots$, they correspond to generating
functions of cylinders with monochromatic boundaries. Following the
convention of having the boundaries rooted, we consider the generating
functions
\begin{equation}
  \label{eq:Fpq}
  F^{\circ\circ}_{p,q} = q \frac{\partial F^\circ_p}{\partial t^\circ_q} =
  p \frac{\partial F^\circ_q}{\partial t^\circ_p}, \qquad
  F^{\bullet\bullet}_{p,q} = q \frac{\partial F^\bullet_p}{\partial t^\bullet_q} =
  p \frac{\partial F^\bullet_q}{\partial t^\bullet_p}, \qquad
  F^{\circ\bullet}_{p,q} = q \frac{\partial F^\circ_p}{\partial t^\bullet_q} =
  p \frac{\partial F^\bullet_q}{\partial t^\circ_p}.
\end{equation}
They correspond to cylinders whose two boundaries are respectively
both white, both black, white and black, and have prescribed degrees
$p$ and $q$. Corollary~\ref{cor:cylindersenum} below gives the
following expressions:
\begin{equation}
  \label{eq:cylintro}
  \begin{split}
    F^{\circ\circ}_{p,q} &= \sum_{h \geq 1} h \left([z^h] x(z)^p \right) \left( [z^{-h}] x(z)^{q} \right),\\
    F^{\bullet\bullet}_{p,q} &= \sum_{h \geq 1} h \left([z^h] y(z)^p \right) \left( [z^{-h}] y(z)^{q} \right),\\
    F^{\circ\bullet}_{p,q} &= \sum_{h \geq 1} h \left([z^h] x(z)^p \right) \left( [z^{-h}] y(z)^{q} \right).\\
  \end{split}
\end{equation}
The associated grand generating functions admit then the expressions
\begin{equation}
  \label{eq:Wcyl}
  \begin{split}
    W^{\circ\circ}(x_1,x_2) &= \sum_{p,q \geq 1} \frac{F^{\circ\circ}_{p,q}}{x_1^{p+1} x_2^{q+1}}
    = \frac{\partial^2}{\partial x_1 \partial x_2} \ln \left(\frac{z^\circ(x_1)-z^\circ(x_2)}{x_1-x_2}\right),\\
    W^{\bullet\bullet}(y_1,y_2) &= \sum_{p,q \geq 1} \frac{F^{\bullet\bullet}_{p,q}}{y_1^{p+1} y_2^{q+1}} = \frac{\partial^2}{\partial y_1 \partial y_2} \ln \left(\frac{z^\bullet(y_1)-z^\bullet(y_2)}{y_1-y_2}\right),\\
    W^{\circ\bullet}(x,y) &= \sum_{p,q \geq 1} \frac{F^{\circ\bullet}_{p,q}}{x^{p+1} y^{q+1}} = -
    \frac{\partial^2}{\partial x \partial y} \ln \left( 1 - \frac{z^\bullet(y)}{z^\circ(x)}\right)
\end{split}
\end{equation}
still in terms of the series $z^\circ(x)$ and $z^\bullet(y)$
introduced above, see Proposition~\ref{prop:ggfpointcyl} below. We
note that essentially the same expressions appear in
\cite[Section~9]{Daul1993}. An important idea in our bijective
derivation of~\eqref{eq:cylintro} is that the quantities
$[z^{\pm h}] x(z)^p$, $[z^{\pm h}] y(z)^p$ can be interpreted as the
generating functions of certain types of cylinders which we call
\emph{trumpets} and \emph{cornets}: these are cylinders in which one
of the boundaries is assumed to be ``geodesic'' or ``tight'', see
Section~\ref{sub:trumpets}. Also, the attentive reader might have
noticed that a fourth generating function seems to be missing
in~\eqref{eq:cylintro}. We leave this as a teaser for
Section~\ref{sub:cyl}.

Let us now consider the undifferentiated disk generating functions.
Several expressions will be obtained combinatorially in
Section~\ref{sub:mono}, let us just quote here the most compact ones:
\begin{equation}
  \label{eq:Fpcompactintro}
  F^\circ_p = \frac{1}{p+1}\sum_{h \in \Z} h b_{-h} [z^h]x(z)^{p+1}, \qquad
  F^\bullet_p = \frac{1}{p+1}\sum_{h \in \Z} h a_{-h} [z^{-h}]y(z)^{p+1}.
\end{equation}
Proposition~\ref{prop:Wsubsbis} states that the associated grand
generating functions read
\begin{equation}
  \label{eq:Wsubs}
  W^\circ(x) = y(z^\circ(x)) - \sum_{d \geq 1} t^\circ_d x^{d-1}, \qquad
  W^\bullet(y) = x(z^\bullet(y)) - \sum_{d \geq 1} t^\bullet_d y^{d-1}.
\end{equation}
As discussed in Section~\ref{sec:apmono}, this result is essentially
equivalent to the rational parametrization of the spectral curve
given in~\cite[Theorem~8.3.1]{EynardBook}. 
Finally, we consider the generating function $F^{\yy}_{p,q}$ of
hypermaps with a Dobrushin boundary of type $(p,q)$. By convention we
have $F^{\yy}_{p,0}=F^\circ_p$, $F^{\yy}_{0,q}=F^\bullet_q$, and
$F^{\yy}_{0,0}=t$. Proposition~\ref{prop:Wdobr} below states that
\begin{equation}
  \label{eq:Wdobrintro}
                   \sum_{p,q \geq 0}  \frac{F^{\yy}_{p,q}}{x^{p+1} y^{q+1}}
  =\exp \left(\sum_{h \in \Z} h \left([z^h] \ln \left(1-\frac{x(z)}x\right) \right) \left( [z^{-h}] \ln \left(1-\frac{y(z)}y\right) \right)\right)-1.
\end{equation}
The connection with the expression given
at~\cite[Equation~(8.4.6)]{EynardBook} is discussed in
Section~\ref{sec:apdobru}.

\subsection{Outline, acknowledgments}
\label{sec:outline}

\paragraph{Outline.} The remainder of this paper is organized as
follows. Section~\ref{sec:slices} introduces slices, the fundamental
objects of our combinatorial approach: we start with the basic
definitions in Section~\ref{sub:defSlices}, before turning to the
recursive decomposition of slices, and its translation in terms of
generating functions, in
Section~\ref{sec:slicesgf}.

Section~\ref{sec:wrapping} discusses the operation of ``wrapping''
slices, by gluing their left and right boundaries together, to produce
other families of maps. We start in
Section~\ref{sub:pointedRootedMaps} with the easiest case of pointed
rooted hypermaps, then proceed in Section~\ref{sub:pointed} to the
case of pointed disks obtained by wrapping slices of zero increment. Finally, in Section~\ref{sub:trumpets}, we treat the wrapping of slices of nonzero increment, producing the so-called trumpets and cornets.

Section~\ref{sec:cyldisk} explains how cylinders with monochromatic
boundaries may be obtained by gluing together a trumpet and a cornet.
We treat the case of general cylinders in Section~\ref{sub:cyl}, and
observe in particular that cylinders with boundaries of opposite
colors come in two kinds, which we dub ``two-way'' and ``one-way''
cylinders. Using this distinction, we show in Section~\ref{sub:mono}
that disks with a monochromatic boundary are in bijection with one-way
cylinders of a special kind, which allows us to enumerate them. Then,
in Section~\ref{sub:Dobru}, we show that disks with a Dobrushin
boundary condition are closely related to one-way cylinders, and we
introduce the so-called ``blob decomposition'' to derive an exact counting formula. 

Section~\ref{sec:allperim} is devoted to the all-perimeter ``grand''
generating functions. Using enumerative results on downward skip-free
walks, we obtain in Section~\ref{sec:apprelim} the grand generating
functions of pointed disks and cylinders with monochromatic
boundaries. Section~\ref{sec:apmono} deals with disks with
monochromatic boundaries, and shows that, under the assumption of
bounded face degrees, their generating function admits a rational
parametrization. Section~\ref{sec:apdobru} explains how the generating
function of disks with a Dobrushin boundary condition is related to the resultant of the spectral curve.

Some concluding remarks are gathered in Section~\ref{sec:conc}, and
supplementary material may be found in the appendices:
Appendix~\ref{sec:dsf} gives a self-contained discussion of the
enumeration of downward skip-free walks, and
Appendix~\ref{sec:altmono} gives an alternative approach to the
enumeration of monochromatic disks, via pointed disks.

  \paragraph{Acknowledgments.} We thank Bertrand Eynard, Emmanuel
Guitter and Thomas Lejeune for useful discussions. JB acknowledges the
hospitality of Laboratoire de physique, ENS de Lyon, where part of
this project was completed. This work was supported by the CNRS PEPS
Carma and the Agence Nationale de la Recherche via the grants
ANR-18-CE40-0033 ``Dimers'', ANR-19-CE48-0011 ``Combiné'',
ANR-21-CE48-0007 ``IsOMa'' and ANR-23-CE48-0018 ``CartesEtPlus''.

\section{Slices}\label{sec:slices}

In this section, we introduce \emph{slices}, the fundamental objects of our combinatorial approach. In a nutshell, slices are hypermaps with certain directed geodesic boundaries. We define them in detail in Section~\ref{sub:defSlices}: preliminaries on directed distances and geodesics are given in Section~\ref{sub:odgprelim}, followed by the introduction of general slices in Section~\ref{sub:generalSlices} and elementary slices in Section~\ref{sub:elem}.

We then turn, in Section~\ref{sec:slicesgf}, to the enumeration of slices via recursive decomposition. In Section~\ref{sub:decompositionElementary}, we explain how a general slice can be decomposed into a sequence of elementary slices; Section~\ref{sub:enumElemSlices} shows how an elementary slice, in turn, yields a general slice by removing the base edge. Altogether this yields a system of equations characterizing the generating functions of elementary slices, equivalent to that obtained in~\cite{BoSc02} for blossomming trees. Finally, Section~\ref{sub:decAlternative} discusses an alternative decomposition for certain elementary slices, which is useful to make the connection with the mobiles of~\cite{mobiles}.

\subsection{Basic definitions and properties}\label{sub:defSlices}

\subsubsection{Directed distance and geodesics}\label{sub:odgprelim}

Fix $\rm$ a hypermap endowed with its canonical orientation. Then, for
vertices $u,v\in \rV(\rm)$, the \emph{directed distance} from $u$ to
$v$, which we denote by $\pd(u,v)$ or by
$\pd_{\rm}(u,v)$ if we need to make the dependency on $\rm$ explicit,
is defined as the minimal length (number of edges) of a directed 
path from $u$ to $v$. If no such path exists, we set $\pd(u,v)=\infty$ by convention.
We have $\pd(u,v)=0$ if and only if $u=v$, and $\pd$ satisfies the
oriented version of the triangle inequality:
\begin{equation}\label{eq:triangleInequality}
\pd(u,w)\leq \pd(u,v)+\pd(v,w), \quad \text{for any }u,v,w \in \rV(\rm).
\end{equation}
Note however that $\pd$ is not symmetric, i.e. $\pd(u,v)$ and $\pd(v,u)$ are not equal in general.

A directed path from $u$ to $v$ of length $\pd(u,v)$ is called a \emph{directed geodesic}, or \emph{geodesic} for short, from $u$ to
$v$. If, instead of a vertex $u$, we consider a corner $\kappa$, then by a 
geodesic from $\kappa$ to $v$ we mean a geodesic from
$\v(\kappa)$, the vertex incident to $\kappa$, to $v$. Similarly, $v$ may be replaced by a corner, if needed.
 
There can \emph{a priori} exist multiple geodesics between two given vertices. To single out one canonical geodesic, previous works on slice decompositions~\cite{hankel,irredmaps} rely on the so-called leftmost geodesic starting from a corner. Here, we use the same notion with the necessary adjustments to handle directed distance instead of the usual graph distance. Precisely, given a corner $\kappa$ and a vertex $v$, the \emph{leftmost geodesic} $\gamma$ from $\kappa$ to $v$ is defined recursively as follows. If $v$ is equal to $\v(\kappa)$, then $\gamma$ is the path of length zero starting and ending at $v$. Otherwise, among all the edges incident to $\v(\kappa)$ that belong to a geodesic from $\kappa$ to $v$, choose $e$ to be the first one encountered when turning clockwise around $\v(\kappa)$ starting at $\kappa$. Let $w$ be the endpoint of $e$, and let $\kappa'$ be the corner immediately following $e$ when turning clockwise around $w$. Then, we define $\gamma$ to be the path obtained by prefixing $e$ to the leftmost geodesic from $\kappa'$ to $v$.

\begin{remark} \label{rem:access} Let us discuss in which conditions
  the directed distance can be infinite. When a hypermap $\rm$ has
  only monochromatic boundaries, its canonical orientation is
  \emph{Eulerian}, meaning that the numbers of incoming and outgoing
  edges at each vertex are equal. Hence, there exists an Eulerian circuit,
  which implies that $\rm$ is strongly connected, i.e.\ 
  $\pd(u,v)<\infty$ for any pair of vertices $u$ and $v$.
  However, a hypermap with non monochromatic boundaries is not
  necessarily strongly connected, see e.g.\ 
  Figure~\ref{subfig:hypermapBoundary} (the bridge separates two
  strongly connected components). Still, it is straightforward to
  check that, given any vertex $v$, there exists a vertex $w$ incident
  to one of the boundaries such that $\pd(v,w)<\infty$.
\end{remark}

\subsubsection{General slices}\label{sub:generalSlices}

Armed with the above definitions, we may now introduce the main concept of this article:
\begin{definition}\label{def:slice}
  A \emph{slice} is a planar hypermap with one (not necessarily
  monochromatic) boundary, chosen as the outer face, having three
  incident distinguished corners -- hereafter denoted $o$, $l$ and $r$
  -- which are not necessarily distinct. The corners appear in counterclockwise
  order around the map and divide the contour of the outer face into three parts subject to the following conditions:
  \begin{itemize}
  \item The part between $o$ and $l$, called the \emph{left boundary}, is a directed geodesic from
    $l$ to $o$,
  \item The part between $r$ and $o$, called the \emph{right boundary}, is
    the unique directed geodesic from $r$ to $o$. It intersects the left boundary only at the vertex incident to $o$, referred to as the \emph{apex}.
  \item The part between $l$ and $r$, called the \emph{base}, is a
    directed path either from $l$ to $r$ or from $r$ to $l$. The slice is said to be of type $\cA$ in the first case and of type $\cB$
    in the second case.
  \end{itemize}
\end{definition}

\begin{figure}[t]
  \centering
  \includegraphics[scale=0.8]{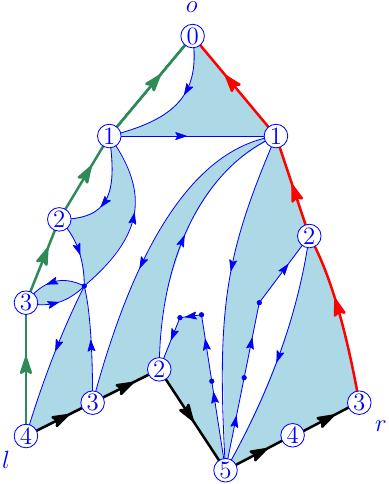}
  \caption{\label{fig:FirstExamplesSlices} A slice of type $\cA$ with
    its distinguished corners $o,l,r$. The left boundary, the right
    boundary, and the base are shown in respectively green, red, and
    black. The blue numbers on boundary vertices indicate their
    canonical labeling, i.e.\ their directed distance towards the apex
    (vertex incident to $o$). Inner vertices also receive labels but
    we do not display them for readability. The increment of the slice
    is the difference between the label of $r$ and that of $l$, here
    it is equal to $3-4=-1$.  }
\end{figure}

See Figure~\ref{fig:FirstExamplesSlices} for an illustration. Note
that we do not require the boundary of a slice to be simple: while the
left and right boundaries must be simple paths, as they are
geodesics, the base may visit the same vertex several times, or
share vertices or edges with the left or right boundaries.
In the papers~\cite{hankel,irredmaps}, the base of a slice was required
to be reduced to one edge. This more restrictive notion of slices
corresponds to what we call \emph{elementary slices} below.

\begin{remark}\label{rem:leftmost}
Note that the left boundary is clearly the leftmost geodesic from $l$ to $o$. Moreover, since the right boundary is the only geodesic from $r$ to $o$, it is in particular also the leftmost geodesic from $r$ to $o$. 
\end{remark}

The \emph{canonical labeling} of a slice $\rm$ is the function
$\ell:V(\rm)\rightarrow \mathbb{Z}_{\geq 0}\,;\,u \mapsto \pd(u,\v(o))$. Note that $\ell$ takes only finite values, since $\v(o)$ is clearly accessible from any vertex incident to the boundary, and hence from any vertex by Remark~\ref{rem:access}. 
We extend the definition of $\ell$ to $\rC(\rm)$ by setting  $\ell(\kappa)=\ell(\mathrm{v}(\kappa))$, for any $\kappa \in \rC(\rm)$. Then, the \emph{increment} of $\rm$ -- denoted by $\inc{\rm}$ -- is defined as:
\begin{itemize}
\item $\ell(r)-\ell(l)$ for a slice of type $\cA$ (i.e.\ with a base directed from $l$ to $r$),
\item $\ell(l)-\ell(r)$ for a slice of type $\cB$ (i.e.\ with a base directed from $r$ to $l$).
\end{itemize}

\begin{remark} \label{rem:incBound} The oriented triangle
  inequality~\eqref{eq:triangleInequality} entails that the increment
  of the slice $\rm$ is at least equal to the opposite of the length
  of the base. Indeed, for type $\cA$ we have
  $\inc{\rm}=\ell(r)-\ell(l)=\pd(\v(r),\v(o))-\pd(\v(l),\v(o)) \geq -
  \pd(\v(l),\v(r))$, and the base is directed from $l$ to $r$ so its
  length is at least $\pd(\v(l),\v(r))$. For type $\cB$ the argument
  is the same up to exchanging $l$ and $r$.
\end{remark}

\medskip

In the following, we will adopt the convention that edges from the left boundary are represented in green (and called the \emph{green edges}), whereas edges from the right boundary are represented in red (and called the \emph{red edges}). This choice of color stems from the fact that it is allowed to have a geodesic from $l$ to $o$ different from the left boundary, whereas it is forbidden to have a geodesic from $r$ to $o$ different from the right boundary.

\subsubsection{Elementary slices}\label{sub:elem}

\begin{figure}
  \centering
  \includegraphics{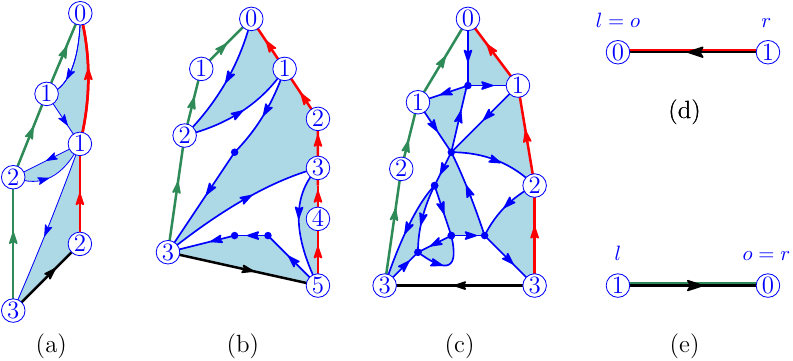}
  \caption{Some examples of elementary slices: (a) type $\cA_{-1}$,
    (b) type $\cA_{2}$, (c) type $\cB_{0}$, (d) the trivial slice,
    which is the unique slice of type $\cB_{-1}$, (e) the empty slice,
    which has type $\cA_{-1}$ but which is not the unique slice of this type.}
  \label{fig:hyperSlices}
\end{figure}

\begin{definition}\label{def:typeAk}
  An \emph{elementary slice} is a slice in which the base is reduced to a single edge, see Figure~\ref{fig:hyperSlices}.
  An elementary slice with increment $k$ is called:
\begin{itemize}
 \item an elementary slice of type $\cA_k$ (or $\cA_k$-slice) if it has type $\cA$,
\item an elementary slice of type $\cB_k$ (or $\cB_k$-slice) if it has type $\cB$.
 \end{itemize} 
\end{definition}

By Remark~\ref{rem:incBound}, the increment of an elementary slice is
at least $-1$. There exist two special elementary slices which
consist of a single directed edge: the \emph{empty} slice and the
\emph{trivial} slice, which are displayed on
Figure~\ref{fig:FirstExamplesSlices}, and are of types $\cA_{-1}$ and
$\cB_{-1}$ respectively.  We then make the following observation:

\begin{lemma}\label{lem:increment}
  In an elementary slice different from the trivial slice and the empty
  slice, the base edge is incident to an inner face, which is black
  for type $\cA$ and white for type $\cB$, and the increment is
  strictly smaller than the degree of this face. The trivial slice is
  the unique elementary slice of type $\cB_{-1}$.
\end{lemma}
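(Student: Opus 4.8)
The plan is to analyze the base edge $e$ of an elementary slice $\rm$ directly, using the canonical labeling $\ell$ and the orientation constraints. First I would recall that, by Remark~\ref{rem:incBound}, the increment of any elementary slice is at least $-1$, since the base has length one. Now fix an elementary slice $\rm$ that is neither trivial nor empty. I would first argue that $e$ is incident to an inner face rather than to the outer face on both sides. The outer face of a slice is its unique boundary; if $e$ bordered the outer face on both sides, then $e$ would be a bridge separating the slice into pieces, and one checks that this forces $\rm$ to be reduced to a single directed edge — hence the empty or the trivial slice — contradicting our assumption. So $e$ borders a unique inner face $f$ (on the side not facing the outer face).

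Next I would determine the color of $f$ via the orientation of $e$. For a type-$\cA$ slice, the base is directed from $l$ to $r$; since the outer (boundary) face lies to one side of this directed base, and the canonical orientation of a hypermap puts the white face on the right and the black face on the left of each edge, I would check which side the boundary lies on. Because the corners $o,l,r$ appear in counterclockwise order around the map and the base is the portion between $l$ and $r$, the outer face sits on the left of the edge oriented $l\to r$; hence $f$, lying on the right, is white — wait, I need to be careful here: re-examining the cyclic order convention of Definition~\ref{def:slice}, the base traversed from $l$ to $r$ has the outer face on its \emph{right}, so the inner face $f$ is on the \emph{left}, making it black. This is the assertion for type $\cA$; the type-$\cB$ case is symmetric (base directed $r\to l$, inner face white) upon exchanging the roles of $l$ and $r$. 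I would phrase this carefully with an accompanying reference to Figure~\ref{fig:hyperSlices}, as the sign of the convention is the only genuinely delicate point.

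Then I would bound the increment by the degree of $f$. For a type-$\cA$ slice, $\inc{\rm} = \ell(r) - \ell(l) = \pd(\v(r),\v(o)) - \pd(\v(l),\v(o))$. The key observation is that the contour of $f$ is a directed cycle (since $f$ is an inner face of a hypermap), and it contains the base edge $e$ from $\v(l)$ to $\v(r)$; removing $e$ from this cycle leaves a directed path $P$ from $\v(r)$ to $\v(l)$ of length $\deg(f)-1$. Hence $\pd(\v(r),\v(l)) \le \deg(f)-1$, and by the oriented triangle inequality $\pd(\v(r),\v(o)) \le \pd(\v(r),\v(l)) + \pd(\v(l),\v(o))$, which rearranges to $\inc{\rm} = \pd(\v(r),\v(o)) - \pd(\v(l),\v(o)) \le \pd(\v(r),\v(l)) \le \deg(f)-1 < \deg(f)$. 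The type-$\cB$ computation is identical with $l$ and $r$ swapped.

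Finally, for the last sentence, I would show the trivial slice is the \emph{unique} elementary slice of type $\cB_{-1}$. Suppose $\rm$ is an elementary slice of type $\cB$ with increment $-1$, i.e. $\ell(l) - \ell(r) = -1$, so $\ell(r) = \ell(l) + 1$. If $\rm$ were not the trivial slice, then by the first part its base edge is incident to an inner white face $f$, and the increment would be strictly less than $\deg(f)$; that alone does not yet force $-1$ to be impossible, so I would instead argue directly. Since the base is directed from $r$ to $l$ and $\ell(r) = \ell(l)+1$, the single base edge $e$ is itself a directed path from $\v(r)$ to $\v(l)$ realizing $\pd(\v(r),\v(o)) = \pd(\v(l),\v(o)) + 1$; this means $e$ followed by a geodesic from $l$ to $o$ (the left boundary) forms a geodesic from $r$ to $o$. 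But by Definition~\ref{def:slice} the right boundary is the \emph{unique} geodesic from $r$ to $o$, so this concatenated path must equal the right boundary; in particular $e$ is the first edge of the right boundary and the left boundary is a suffix of the right boundary. Since the left and right boundaries intersect only at the apex, the left boundary has length zero, so $l = o$ (as corners at the apex), and then the right boundary is just $e$, forcing $\rm$ to consist of the single edge $e$ — i.e. the trivial slice. The main obstacle, as noted, is getting the left/right and black/white conventions consistent with Definition~\ref{def:slice}; everything else is a direct application of the oriented triangle inequality and the uniqueness clause for the right boundary.
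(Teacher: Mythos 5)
Your proof is correct and follows essentially the same route as the paper: the bridge case isolates the trivial and empty slices, the face color is read off from the canonical orientation of the base edge, the degree bound comes from the directed path obtained by deleting the base edge from the contour of $f$ (you phrase it via the oriented triangle inequality, the paper by concatenating that path with the left boundary), and uniqueness in type $\cB_{-1}$ uses the uniqueness of the right boundary, reducing back to the bridge case. The only differences are cosmetic.
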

\begin{proof}
  Consider an elementary slice $\rm$ endowed with its three
  distinguished corners $o,l,r$ and its canonical labeling $\ell$.

  If the base edge of $\rm$ is a bridge, then it must also belong to the left or the right boundary. Since these boundaries are geodesics that intersect only at the apex, it follows that $\rm$ must be reduced to a
  single edge, and be equal either to the trivial or to the empty
  slice.

  By contraposition, we deduce that, for any other elementary slice, the
  base edge is not a bridge, and hence is incident to an inner face
  which we denote by $f$. The color of $f$ is determined by the
  orientation of the base edge: black for type $\cA$, white for type
  $\cB$.  Let us denote by $d$ the degree of $f$. If $\rm$ is of type
  $\cA$, then the contour of $f$, with the base edge removed, forms an
  directed path of length $d-1$ from $\v(r)$ to $\v(l)$. Concatenating
  this path with the left boundary yields a directed path of length
  $d-1+\ell(l)$ from $\v(r)$ to $\v(o)$. This path must have length at
  least $\ell(r)$ hence $\inc{\rm}=\ell(r)-\ell(l) \leq d-1$ as
  wanted. If $\rm$ is of type $\cB$, the argument is the same up to
  exchanging $l$ and $r$.

  If $\rm$ is of type $\cB_{-1}$, then the concatenation of the base
  and of the left boundary forms a geodesic from $r$ to $o$. It must
  necessarily coincide with the right boundary, by its uniqueness
  property, so $\rm$ cannot have inner faces. In particular, the base
  edge is a bridge, and by the previous discussion, $\rm$ must therefore be the trivial slice.
\end{proof}

\subsection{Enumeration of slices via recursive decomposition}
\label{sec:slicesgf}

We now describe the recursive decomposition of slices. As it will
translate into equations determining their generating functions, let
us first discuss how these are defined.
By analogy with~\eqref{eq:Boltzmannweight}, we define the weight
$\bols(\rm)$ of a slice $\rm$ as
\begin{equation}\label{eq:defWeightslice}
\bols(\rm):=t^{|\rV'(\rm)|}\prod_{f\in \rFb(\rm)}t^\circ_{\deg(f)}\prod_{f\in \rFn(\rm)}t^\bullet_{\deg(f)},
\end{equation}
where $\rV'(\rm)$ denotes the set of vertices of $\rm$ \emph{not
  incident to its right boundary} (a detail that will prove convenient later). For any integer $k \geq -1$, let
$a_k$ and $b_k$ be the corresponding generating functions of
elementary slices of type $\cA_k$ and $\cB_k$, namely
\begin{equation}\label{eq:defAB}
a_k := \sum_{\rm\text{ of type }\cA_k}\bols(\rm)
\qquad
\text{and}
\qquad
b_k := \sum_{\rm\text{ of type }\cB_k}\bols(\rm).
\end{equation}
These formal power series belong to the ring
$\Ring:=\mathbb{Q}[[t,
t^\circ_1,t^\circ_2,\ldots,t^\bullet_1,t_2^\bullet,\ldots]]$. By the
last statement in Lemma \ref{lem:increment}, we have $b_{-1}=1$. It
will prove convenient to gather the $a_k$ and $b_k$ into the Laurent
series
\begin{equation}\label{eq:defxy}
x(z):=\sum_{k\geq -1} a_kz^{-k}\quad\text{ and }\quad y(z):=\sum_{k \geq -1}b_kz^{k}=z^{-1}+\sum_{k\geq 0} b_kz^{k}
\end{equation}
which belong respectively to the rings $\Ring((z^{-1}))$ and
$\Ring((z))$.
These series are actually Laurent polynomials if a bound is imposed on the 
face degrees:

\begin{lemma}[Bounded face degrees]
  \label{lem:boundxypol}
  Fix two positive integers $\dmaxn,\dmaxb$. If we set to zero the weights for black and white faces of degree larger than $\dmaxn$
  and $\dmaxb$, respectively (i.e.\ $t^\bullet_k=0$ for
  all $k>\dmaxn$ and $t^\circ_k=0$ for all $k>\dmaxb$), then $a_k$ and
  $b_k$ vanish for $k \geq \dmaxn$ and $k \geq \dmaxb$, respectively. Consequently, $x(z)$ and $y(z)$ are Laurent polynomials. 
\end{lemma}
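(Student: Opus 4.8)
The plan is to exploit Lemma~\ref{lem:increment}, which already pins down the structure of any non-trivial elementary slice: its base edge is incident to an inner face $f$ (black for type $\cA$, white for type $\cB$), and the increment is strictly smaller than $\deg(f)$. So I would argue by contraposition on the increment. Suppose $\rm$ is an elementary slice of type $\cA_k$ with $k \geq \dmaxn$. If $\rm$ is the empty or trivial slice, its increment is $-1 < \dmaxn$ (using $\dmaxn \geq 1$), so these cases are excluded. Otherwise Lemma~\ref{lem:increment} gives a black inner face $f$ with $\deg(f) > k \geq \dmaxn$. But the weight $\bols(\rm)$ contains the factor $t^\bullet_{\deg(f)}$, which has been set to zero by hypothesis. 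Hence $\bols(\rm) = 0$ for every such slice, and therefore $a_k = \sum_{\rm \text{ of type } \cA_k} \bols(\rm) = 0$. The argument for $b_k$ with $k \geq \dmaxb$ is identical, now producing a white inner face of degree $> \dmaxb$ and a vanishing factor $t^\circ_{\deg(f)}$; the trivial and empty slices again have increment $-1 < \dmaxb$ and cause no trouble.

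Once $a_k = 0$ for $k \geq \dmaxn$ and $b_k = 0$ for $k \geq \dmaxb$, the claim that $x(z)$ and $y(z)$ are Laurent polynomials is immediate from their defining expressions~\eqref{eq:defxy}: the sum $x(z) = \sum_{k \geq -1} a_k z^{-k}$ truncates to $\sum_{k=-1}^{\dmaxn - 1} a_k z^{-k}$, and likewise $y(z) = z^{-1} + \sum_{k=0}^{\dmaxb - 1} b_k z^k$. Both are finite Laurent expressions, hence Laurent polynomials with coefficients in $\Ring$.

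I do not anticipate a genuine obstacle here: the entire content is already contained in Lemma~\ref{lem:increment}, and the only point requiring any care is making sure the two exceptional slices (empty and trivial) are handled separately, which is painless since their increment is $-1$ and we assumed $\dmaxn, \dmaxb \geq 1$. One could also phrase the whole thing slightly more slickly by noting that a slice contributing to $a_k$ with $k \geq \dmaxn$ must, by Lemma~\ref{lem:increment}, contain a face of forbidden degree, and that the set of slices with no face of forbidden degree is exactly the one enumerated after the specialization — but the direct computation above is the most transparent presentation.
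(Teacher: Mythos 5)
Your proof is correct and follows essentially the same route as the paper: both reduce the statement to Lemma~\ref{lem:increment}, concluding that any elementary slice of type $\cA_k$ (resp.\ $\cB_k$) with $k\geq\dmaxn$ (resp.\ $k\geq\dmaxb$) has its base edge incident to a black (resp.\ white) inner face of degree larger than $k$, so its weight vanishes under the specialization. Your explicit handling of the empty and trivial slices is a harmless extra detail that the paper sidesteps by noting those have increment $-1$.
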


\begin{proof}
  By Lemma~\ref{lem:increment}, for any $k \geq 0$, an elementary
  slice of type $\cA_k$ (resp.\ $\cB_k$) has its base edge incident to
  a black (resp.\ white) inner face of degree larger than $k$. Thus,
  its weight vanishes for $k \geq \dmaxn$ (resp.\ $k \geq \dmaxb$).
\end{proof}

\subsubsection{Decomposing a general slice into a sequence of elementary slices}\label{sub:decompositionElementary}
We present in this section the canonical decomposition of general slices into sequences of elementary slices.

\begin{proposition}\label{prop:decComposite}
For any $k\in \mathbb{Z}$ and any $p\in \mathbb{Z}_{>0}$, there exists a weight-preserving bijection between: 
\begin{itemize}
\item Slices of type $\cA$ with increment $k$ and a base of length $p$,
\item and $p$-tuples of elementary slices of type $\cA$, such that the sum of their increments is equal to $k$ (where the weight of a $p$-tuple is defined as the product of the weights of its components).  
\end{itemize}
A similar statement holds for slices of type $\cB$.
\end{proposition}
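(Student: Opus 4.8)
The plan is to prove Proposition~\ref{prop:decComposite} by describing explicitly the map from a general slice to a tuple of elementary slices and its inverse, and checking both that the construction is well-defined and that it is weight-preserving. I will focus on type $\cA$; the type $\cB$ case is obtained by swapping the roles of $l$ and $r$ (and the colors black/white) throughout, as in the proofs of Lemma~\ref{lem:increment} and Remark~\ref{rem:incBound}.

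\medskip

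\textbf{The decomposition.} Let $\rm$ be a slice of type $\cA$ with increment $k$ and base of length $p$, with distinguished corners $o,l,r$ and canonical labeling $\ell$. The base is a directed path $l = v_0 \to v_1 \to \cdots \to v_p = r$; write $\kappa_0,\dots,\kappa_p$ for the corners of the base along this path (so $\v(\kappa_i)=v_i$, with $\kappa_0 = l$ and $\kappa_p = r$ up to the obvious convention). For each $i \in \{0,\dots,p\}$, let $\gamma_i$ be the leftmost directed geodesic from $\kappa_i$ to $o$; in particular $\gamma_0$ is the left boundary and $\gamma_p$ is the right boundary (by Remark~\ref{rem:leftmost}). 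The key geometric claim is that these geodesics are \emph{nested}: $\gamma_{i+1}$ lies weakly to the right of $\gamma_i$, and consecutive geodesics $\gamma_i,\gamma_{i+1}$ together with the base edge $v_i\to v_{i+1}$ bound a region $\rm_i$ which, once we regard $\gamma_i$ as its left boundary, $\gamma_{i+1}$ as its right boundary, and the single base edge $v_i \to v_{i+1}$ as its base, is an elementary slice of type $\cA$ with increment $\ell(v_{i+1})-\ell(v_i)$. Summing these increments telescopes to $\ell(r)-\ell(l) = k$, so we obtain a $p$-tuple of $\cA$-slices whose increments sum to $k$, as required. The map $\rm \mapsto (\rm_0,\dots,\rm_{p-1})$ is our bijection.

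\medskip

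\textbf{Why this is well-defined, and the inverse.} The two things to verify carefully are: (i) the leftmost geodesics $\gamma_i$ are pairwise non-crossing and in the correct cyclic/left-to-right order, so that the regions $\rm_i$ are genuinely disjoint slices tiling $\rm$; and (ii) each $\rm_i$ satisfies the defining conditions of Definition~\ref{def:slice}, the only nontrivial one being that $\gamma_{i+1}$ restricted to $\rm_i$ is the \emph{unique} directed geodesic from $v_{i+1}$ to the apex of $\rm_i$ — this uses that inside $\rm_i$ the directed distance to $o$ agrees with $\ell$ (no shortcut leaves the region, since $\gamma_i$ is a geodesic to $o$ and lies on the left, $\gamma_{i+1}$ on the right), and the uniqueness of the rightmost geodesic then follows from standard leftmost/rightmost geodesic arguments as in~\cite{hankel,irredmaps}. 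For the inverse map, given a $p$-tuple $(\rm_0,\dots,\rm_{p-1})$ of $\cA$-slices with increments summing to $k$, glue them left-boundary-to-right-boundary: identify the right boundary of $\rm_i$ with the left boundary of $\rm_{i+1}$. This gluing is unambiguous because, by the canonical labeling, both are the (unique, resp.\ leftmost) geodesic of the same length to a common apex, and the increments match up so that the labels are consistent; concatenating the $p$ base edges gives a base of length $p$ directed from the leftmost $l$ to the rightmost $r$, producing a slice of type $\cA$ with increment $k$. These two constructions are manifestly inverse to each other.

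\medskip

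\textbf{Weights, and the main obstacle.} Weight-preservation is where the convention in~\eqref{eq:defWeightslice} that $\rV'(\rm)$ excludes vertices on the \emph{right} boundary pays off: in the gluing, each internal interface (the right boundary of $\rm_i$ = left boundary of $\rm_{i+1}$) is counted once in $\rm$ but, among the two slices sharing it, only $\rm_{i+1}$ (for which it is the left boundary) contributes its vertices to $\rV'$, while $\rm_i$ excludes them. Thus every vertex of $\rm$ except those on the global right boundary $\gamma_p$ is counted exactly once across the tuple, matching $|\rV'(\rm)| = \sum_i |\rV'(\rm_i)|$; faces are partitioned among the $\rm_i$ with degrees preserved, so $\bols(\rm) = \prod_i \bols(\rm_i)$. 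I expect the main obstacle to be step (i)–(ii) above: rigorously establishing the nesting of the leftmost geodesics $\gamma_0,\dots,\gamma_p$ and the resulting planar tiling of $\rm$ into the regions $\rm_i$, together with the claim that inside each region the ambient directed distance to the apex is unchanged. This is the standard ``onion-peeling'' argument for slice decompositions, but it must be adapted with care to the directed setting, in particular handling the facts that the base may be non-simple and may share vertices or edges with the left/right boundaries, and that $\pd$ is not symmetric.
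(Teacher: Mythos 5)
Your strategy is the same as the paper's in spirit: cut along the leftmost directed geodesics issued from the corners of the base towards the apex, and glue back. The paper, however, carries this out \emph{inductively} (one cut at a time, via the refined Proposition~\ref{prop:decCompositePaths} with prescribed base condition), precisely so that the geometric verifications reduce to analysing a single cut: the geodesic $\gamma^{(1)}$ coalesces with the left or right boundary at its first contact point, the two components are identified explicitly as an elementary slice $\rc_1$ and a smaller slice $\tilde\rm$, and the weight splits because the cut vertices land on the right boundary of $\rc_1$. In your all-at-once version, the statements you call step (i)--(ii) (nesting and non-crossing of the $\gamma_i$, the fact that each region is a slice with the ambient directed distance to the apex preserved, uniqueness of its right boundary) are exactly the content of the proposition; you flag them yourself as ``the main obstacle'' and defer them to ``standard arguments'' from the undirected setting without carrying them out. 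As written, this is not a proof but a reduction to an unproven claim, and the directed setting is precisely where care is needed (the base may be non-simple, $\pd$ is not symmetric, and two leftmost geodesics meeting at a vertex must be shown not to separate again).

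There is also a concrete error in your inverse construction. You glue the right boundary of $\rm_i$ to the left boundary of $\rm_{i+1}$ and claim the gluing is unambiguous because ``both are the \ldots geodesic of the same length to a common apex.'' This is false in general: the right boundary of $\rm_i$ has length equal to the canonical label of its right corner inside $\rm_i$, the left boundary of $\rm_{i+1}$ has length equal to the label of its left corner inside $\rm_{i+1}$, and these need not agree (e.g.\ glue the empty slice, whose right boundary has length $0$, to an elementary slice with a left boundary of positive length); likewise the apexes of consecutive pieces need not coincide. The correct gluing, as in the paper, matches the two boundaries edge by edge only up to the shorter of the two lengths, the unmatched edges of the longer one becoming part of the left or right boundary of the glued slice, whose apex is then located accordingly. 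Since the equal-length claim is what made your gluing ``manifestly inverse'' to the cutting, the bijectivity argument (and in particular the subtle direction, that gluing followed by cutting returns the original tuple, which the paper checks by identifying the merged interface with the leftmost geodesic used for the cut) is not established. Your weight bookkeeping via the convention in~\eqref{eq:defWeightslice} is the right idea, but it too needs the corrected description of the interfaces to go through.
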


Before proving this proposition, let us first state its enumerative consequence:

\begin{corollary}\label{cor:genSerComp}
  For any $p\in \mathbb{Z}_{>0}$, we have
\begin{equation}\label{eq:enumIncA}
x(z)^p = \sum_{\substack{\rm \text{ slice of type }\cA\\ \text{with base of length }p}} \bols(\rm)z^{-\inc \rm},
\end{equation}
and 
\begin{equation}\label{eq:enumIncB}
y(z)^p = \sum_{\substack{\rm \text{ slice of type }\cB\\ \text{with base of length }p}} \bols(\rm)z^{\inc \rm}.
\end{equation}
Equivalently,
\begin{equation}\label{eq:pathIncBis}
  P^\circ_{p,h}:=[z^{-h}]x(z)^p \qquad \text{and} \qquad P^\bullet_{p,h}:=[z^{h}]y(z)^p
\end{equation}
are the generating functions of slices with a base of length $p$, increment $h$, and of type $\cA$ or type $\cB$, respectively.
\end{corollary}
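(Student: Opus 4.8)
The plan is to prove Proposition~\ref{prop:decComposite} and then observe that Corollary~\ref{cor:genSerComp} follows by bookkeeping. I will describe the bijection for type $\cA$; type $\cB$ is symmetric under exchanging the roles of $l$ and $r$ and reversing orientations.

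\textbf{The decomposition map (slice $\to$ tuple).} Let $\rm$ be a slice of type $\cA$ with base of length $p$, increment $k$, endowed with its distinguished corners $o,l,r$ and canonical labeling $\ell$. Write the base as a directed path $l=v_0\to v_1\to\cdots\to v_p=r$. For each $i\in\{0,\dots,p\}$, let $\gamma_i$ be the leftmost directed geodesic from the corner just to the right of the base at $v_i$ towards the apex $\v(o)$; note $\gamma_0$ is the left boundary and $\gamma_p$ is the right boundary. The key geometric fact is that these geodesics are \emph{nested} and do not cross: $\gamma_i$ lies weakly to the left of $\gamma_{i+1}$ (this uses planarity together with the leftmost convention, exactly as in~\cite{hankel,irredmaps}, adapted to directed distances via the recursive definition of the leftmost geodesic given in Section~\ref{sub:odgprelim}). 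Consequently, for each $i$, the region bounded by $\gamma_{i-1}$, the single base edge $v_{i-1}\to v_i$, and $\gamma_i$ is itself a slice $\rm_i$ of type $\cA$ whose base is that single edge, hence an elementary $\cA$-slice. Its left boundary is $\gamma_{i-1}$, its right boundary is $\gamma_i$ (which is indeed the \emph{unique} geodesic from its $r$-corner to its apex, because it is a sub-geodesic of the ambient right boundary read from $v_i$, or more carefully: any other geodesic would, by concatenation with a piece of $\rm$, contradict minimality; this is where the uniqueness clause in Definition~\ref{def:slice} must be checked with care). Its increment is $\ell(v_i)-\ell(v_{i-1})$, and since $\ell(v_p)-\ell(v_0)=\ell(r)-\ell(l)=k$, the increments of $\rm_1,\dots,\rm_p$ telescope to $k$. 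The map $\rm\mapsto(\rm_1,\dots,\rm_p)$ is the claimed bijection.

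\textbf{The inverse (tuple $\to$ slice).} Given $\cA$-slices $\rm_1,\dots,\rm_p$ with increments summing to $k$, glue them left-to-right: identify the right boundary of $\rm_i$ with the left boundary of $\rm_{i+1}$. Here one must check these two geodesics have compatible lengths so the gluing is well-defined on the nose: the length of the right boundary of $\rm_i$ equals $\ell(r_i)=\ell(l_i)+\inc{\rm_i}$ and the length of the left boundary of $\rm_{i+1}$ equals $\ell(l_{i+1})$; one normalizes labels so that along the glued seam these agree, which is possible precisely because slices are defined up to the additive shift of labels — equivalently, one glues \emph{geodesics of the same length}, which is forced once we align apexes. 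The result is a planar hypermap with a boundary made of: the left boundary of $\rm_1$ (new left boundary), the concatenation of the $p$ base edges (new base, of length $p$, directed from $l$ to $r$, hence type $\cA$), and the right boundary of $\rm_p$ (new right boundary). One then verifies that this right boundary is the \emph{unique} geodesic from the new $r$ to $o$: a shorter or alternative path would have to cross one of the glued seams $\gamma_i$, contradicting that each $\gamma_i$ was already the unique right-boundary geodesic of $\rm_i$ and the triangle inequality across the seam. Finally, this construction is manifestly inverse to the decomposition, and it is weight-preserving because the convention in~\eqref{eq:defWeightslice} counts only vertices \emph{not} incident to the right boundary: under gluing, each seam vertex is counted exactly once (it belongs to the right boundary of $\rm_i$, hence is excluded from $\bols(\rm_i)$, but is a genuine non-right-boundary vertex of the glued slice unless it also lies on the final right boundary), and the inner faces of the $\rm_i$ are in obvious bijection with those of $\rm$. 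This last point — that the $\rV'$ convention makes $\bols$ multiplicative under seam-gluing — is the reason it was introduced, and checking it carefully is the cleanest way to get the weight factor right.

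\textbf{Deducing Corollary~\ref{cor:genSerComp}.} By Proposition~\ref{prop:decComposite}, summing $\bols(\rm)z^{-\inc{\rm}}$ over type-$\cA$ slices with base length $p$ factorizes as a product over the $p$ elementary components; since $\inc{\rm}=\sum_i\inc{\rm_i}$ and $\bols(\rm)=\prod_i\bols(\rm_i)$, this product is
\[
\prod_{i=1}^{p}\Big(\sum_{\rm_i\text{ of type }\cA}\bols(\rm_i)\,z^{-\inc{\rm_i}}\Big)
=\prod_{i=1}^{p}\Big(\sum_{k\geq-1}a_k z^{-k}\Big)=x(z)^p,
\]
using the definition~\eqref{eq:defAB} of $a_k$ and that an elementary $\cA_k$-slice contributes $a_k$ with a factor $z^{-k}$. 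This is~\eqref{eq:enumIncA}; formula~\eqref{eq:enumIncB} is identical with $z$ replaced by $z^{-1}$ throughout, reflecting the sign convention in the increment of type-$\cB$ slices and in~\eqref{eq:defxy}. Extracting the coefficient of $z^{-h}$ (resp. $z^{h}$) then gives that $P^\circ_{p,h}$ (resp. $P^\bullet_{p,h}$) counts slices of base length $p$ and increment $h$, proving~\eqref{eq:pathIncBis}.

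\textbf{Main obstacle.} The crux is the non-crossing/nesting property of the leftmost geodesics $\gamma_0,\dots,\gamma_p$ and the attendant verification that each intermediate region is a \emph{bona fide} slice in the sense of Definition~\ref{def:slice} — in particular that its right boundary is the \emph{unique} geodesic to its apex. In the undirected setting this is standard (cf.~\cite{hankel,irredmaps}), but here one must redo the argument with the directed distance $\pd$, relying on the oriented triangle inequality~\eqref{eq:triangleInequality} and the recursive ``first edge clockwise'' definition of the leftmost geodesic; the asymmetry of $\pd$ means one cannot simply quote the earlier results verbatim. The second, more bookkeeping-flavored obstacle is pinning down the label normalization in the gluing so that seams match exactly, and confirming that the $\rV'$ convention in~\eqref{eq:defWeightslice} indeed renders $\bols$ multiplicative — this is routine but must be stated precisely to avoid an off-by-one in the vertex count at the seams and at the apex.
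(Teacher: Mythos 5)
Your cutting direction and the final bookkeeping are essentially the paper's: the paper also proves Corollary~\ref{cor:genSerComp} via Proposition~\ref{prop:decComposite} (in the refined form of Proposition~\ref{prop:decCompositePaths}), by cutting along leftmost directed geodesics issued from the base corners and using the $\rV'$ convention in~\eqref{eq:defWeightslice} to make $\bols$ multiplicative. The organizational difference is that you cut along all $p-1$ geodesics simultaneously and lean on a non-crossing/nesting lemma for directed leftmost geodesics, which you only flag; the paper instead cuts along one geodesic at a time and recurses on the remaining piece, which sidesteps any global nesting statement (after the first cut, subsequent geodesics are taken in the cut-open slice, not in the ambient one). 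Also, your parenthetical reason for uniqueness of each piece's right boundary (``any other geodesic would \dots contradict minimality'') is not the right mechanism: a competing geodesic has the same length, so the contradiction must come from the leftmost property of the cut, not from minimality.

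The genuine gap is in your inverse (gluing) construction. You assert that the right boundary of the $i$-th piece and the left boundary of the $(i+1)$-th piece can be arranged to have equal length, ``once we align apexes'', because ``slices are defined up to the additive shift of labels''. This is false: the canonical labeling is the directed distance to the apex, an absolute quantity, and the two seam geodesics generically have different lengths (glue, say, an elementary slice of type $\cA_{2}$ onto one of type $\cA_{-1}$ --- their apexes cannot be identified). The correct gluing, as in the paper's proof, identifies only the right corner of one piece with the left corner of the next and matches edges starting from that end, leaving the excess of the longer geodesic unmatched; these leftover edges then become part of the left or right boundary of the glued slice, and the apex of the result is the apex of whichever piece ``sticks out'' (see Figure~\ref{fig:decComposite} and the case distinction $k>\tilde k$, $k<\tilde k$, $k=\tilde k$ in the paper). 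Consequently your description of the glued object --- left boundary equal to that of the first piece, right boundary equal to that of the last, seams of equal length --- is wrong in general, and the check that gluing inverts cutting (the subtler half, as the paper notes) has to be redone with this partial matching. The deduction of \eqref{eq:enumIncA}--\eqref{eq:pathIncBis} from the proposition is correct as you state it.
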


This corollary follows from the observation that, for any
$p\in \mathbb{Z}_{\geq 0}$, the Laurent series $x(z)^p$ and $y(z)^p$
are the generating functions of $p$-tuples of elementary slices, with $z$
recording the sum of their increments. Note that, for slices of type
$\cA$, the exponent of $z$ is in fact equal to minus the increment:
this convention will be useful later. Moreover, since $x(z)$ (resp.\ $y(z)$) contains no powers of $z$ larger than $1$
(resp.\ smaller than $-1$), it follows that $P_{p,h}^\circ$ (resp.\
$P_{p,h}^\bullet$) vanishes for $h<-p$. This is consistent with
Remark~\ref{rem:incBound}.

To prove Proposition~\ref{prop:decComposite}, we actually establish a refined bijective
result. To state it, some additional terminology is needed.

\begin{definition}\label{def:DSF}
  A \emph{downward skip-free walk}, or \emph{DSF walk} for short, is a
  finite sequence of integers ${\bm \pi}=(\pi_0,\pi_1,\ldots,\pi_p)$ of
  arbitrary length such that, at each \emph{step} $i=1,\ldots,p$, the
  \emph{increment} $\pi_i-\pi_{i-1}$ is greater than or equal to
  $-1$. The integers $\pi_0,\pi_1,\ldots,\pi_p$ are called the
  \emph{positions} of the walk, with $\pi_0$ the starting position and
  $\pi_p$ the final position.
\end{definition}

\begin{remark}\label{rem:DSFxy}
  Observe that, for any $p\in \mathbb{Z}_{\geq 0}$ and any
  $h\in \mathbb{Z}$, the quantity $P^\circ_{p,h}=[z^{-h}]x(z)^p$
  (resp.\ $P^\bullet_{p,h}=[z^{h}]y(z)^p$) is equal to the generating
  function of DSF walks with $p$ steps, starting position $0$ and
  final position $h$, where each step of increment $k$ (with $k\geq -1$) is assigned a weight $a_k$ (resp.\ $b_k$).
\end{remark}

Since along any directed edge $(u_1,u_2)$ of a slice, we have
$\ell(u_2)\geq \ell(u_1)-1$, it follows that the sequence of labels
read along any directed path of a slice forms a DSF walk. This holds in particular for the base of the slice, which motivates the following
definition.

Fix a DSF walk ${\bm \pi}$ with $p$ steps and starting position $\pi_0=0$. Consider a slice $\rm$ of type $\cA$ with base length $p$ and canonical labeling $\ell$. Let $(\kappa_0=l,\kappa_1,\ldots,\kappa_p=r)$ denote the sequence of corners incident both the outer face and to the base, listed from $l$ to $r$. We say that $\rm$ has \emph{base condition given by $\bm \pi$} if $\pi_i=\ell(\kappa_i)-\ell(l)$ for all $i=0,\ldots,p$.
We define similarly slices of type $\cB$ with base condition given by $\bm \pi$, except that the corners along the base are listed from $r$ to $l$ (rather than from $l$ to $r$), and the labels are shifted by $-\ell(r)$ (instead of $-\ell(l)$).
Proposition~\ref{prop:decComposite} is then a direct consequence of the following result:
\begin{proposition}\label{prop:decCompositePaths}
For any DSF walk ${\bm \pi}=(\pi_0,\pi_1,\ldots,\pi_p)$, there exists a weight-preserving bijection between: 
\begin{itemize}
\item Slices of type $\cA$, with base condition given by ${\bm \pi}$,
\item and $p$-tuples of elementary slices of type $\cA$, such that the increment of the $i$-th slice is equal to $\pi_i-\pi_{i-1}$, for any $i=1,\ldots,p$.
\end{itemize}
A analogous statement holds for slices of type $\cB$.

\end{proposition}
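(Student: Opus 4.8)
The plan is to prove Proposition~\ref{prop:decCompositePaths} by describing explicitly the two maps of the bijection and checking they are mutually inverse. I will focus on type $\cA$; type $\cB$ follows by the symmetric argument (exchanging the roles of $l$ and $r$ and reversing the orientation of the base).

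\medskip

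\textbf{From a slice to a tuple of elementary slices.} Start with a slice $\rm$ of type $\cA$ with base condition given by $\bm\pi$, base length $p$, and canonical labeling $\ell$; write $(\kappa_0 = l, \kappa_1, \ldots, \kappa_p = r)$ for the corners along the base. For each $i = 1, \ldots, p$, let $v_i = \v(\kappa_i)$ and let $e_i$ be the $i$-th base edge, joining $v_{i-1}$ and $v_i$. The key tool is the leftmost geodesic: for each interior base corner $\kappa_i$ (with $1 \le i \le p-1$), let $\gamma_i$ be the leftmost geodesic from $\kappa_i$ to $o$ (equivalently to the apex), and set $\gamma_0$ to be the left boundary and $\gamma_p$ the right boundary. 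I claim these geodesics are \emph{laminar}: two leftmost geodesics aimed at the same target vertex cannot cross, and once they meet they coincide. This is the standard ``confluence'' property of leftmost geodesics that underlies all slice decompositions (cf.~\cite{hankel,irredmaps}), and it holds here after the adjustments made in Section~\ref{sub:odgprelim} to the directed setting: if $\gamma_i$ and $\gamma_j$ share a vertex $w$, then by the recursive definition of the leftmost geodesic both continue from $w$ via the same edge (the first geodesic edge clockwise from the corner of $\gamma_i$ resp.\ $\gamma_j$ at $w$ — but uniqueness of the leftmost choice from a \emph{corner} combined with planarity forces these to agree). Consequently the $\gamma_i$ divide $\rm$ into $p$ regions $R_1, \ldots, R_p$, where $R_i$ is the part of $\rm$ lying ``between'' $\gamma_{i-1}$ and $\gamma_i$ and containing the base edge $e_i$. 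Each $R_i$, equipped with the corners $o^{(i)}$ (at the apex), $l^{(i)} = \kappa_{i-1}$, $r^{(i)} = \kappa_i$, is a slice whose base is exactly $e_i$: its left boundary is the portion of $\gamma_{i-1}$ up to the first confluence with $\gamma_i$, its right boundary the portion of $\gamma_i$, and these meet only at the apex by the laminar property; the right boundary is the unique geodesic from $r^{(i)}$ in $R_i$ because any competing geodesic would have to leave $R_i$, crossing $\gamma_{i-1}$ or $\gamma_i$, contradicting laminarity. So $R_i$ is an elementary slice, and its increment is $\ell(v_i) - \ell(v_{i-1}) = (\pi_i + \ell(l)) - (\pi_{i-1} + \ell(l)) = \pi_i - \pi_{i-1}$, as required. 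Weight preservation: the weight convention~\eqref{eq:defWeightslice} counts vertices \emph{not on the right boundary}, so when we cut along $\gamma_i$ the vertices of $\gamma_i$ are assigned to $R_{i+1}$ (where they are the left boundary) and not to $R_i$ (where they are the right boundary); hence each vertex of $\rm$, except those on the right boundary of $\rm$ itself, is counted exactly once across the $R_i$, and every inner face of $\rm$ lies in exactly one $R_i$. Thus $\bols(\rm) = \prod_i \bols(R_i)$.

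\medskip

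\textbf{From a tuple back to a slice.} Given elementary slices $\rm_1, \ldots, \rm_p$ with $\inc{\rm_i} = \pi_i - \pi_{i-1}$, glue them sequentially: identify the right boundary of $\rm_i$ with the left boundary of $\rm_{i+1}$, matching corner $r^{(i)}$ with $l^{(i+1)}$ and the apex of $\rm_i$ with that of $\rm_{i+1}$. This gluing is well defined because the right boundary of $\rm_i$ is a geodesic from its $r$-corner to its apex of length $\ell(r^{(i)}) - 0$ computed in $\rm_i$, and by the increment condition these lengths chain up consistently: the label of $l^{(i+1)}$ in the assembled map is $\pi_i$ (relative to $l$), matching. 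The result $\rm$ is a planar hypermap with a single boundary; assign it corners $o$ (common apex), $l = l^{(1)}$, $r = r^{(p)}$, and base $(e_1, \ldots, e_p)$ the concatenation of base edges. One must check: (i) the directed distances in $\rm$ agree with the glued-in labels, i.e.\ $\pd_{\rm}(v, \v(o)) = \ell^{(i)}(v)$ for $v \in \rm_i$ — the inequality $\le$ is clear by concatenating paths, and $\ge$ follows because any directed path from $v$ to the apex in $\rm$, chopped at the cuts, has length at least the sum of the relevant directed distances in the pieces; (ii) the left boundary of $\rm$ (portion of $\partial\rm_1$) is a geodesic from $l$ to $o$ — immediate from (i) and the fact it was one in $\rm_1$; (iii) the right boundary of $\rm$ (portion of $\partial\rm_p$) is the \emph{unique} geodesic from $r$ to $o$ and meets the left boundary only at the apex — uniqueness is inherited from $\rm_p$ together with the observation that any geodesic from $r$ must stay within $\rm_p$ (leaving it would mean crossing a right-boundary-of-$\rm_i$ which is itself a shortest path, not a shortcut), and disjointness from the left boundary follows because a common vertex would give two distinct geodesics from that vertex's $r$-side, violating uniqueness in the relevant elementary piece. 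Hence $\rm$ is a slice of type $\cA$ with base condition $\bm\pi$, and $\bols(\rm) = \prod_i \bols(\rm_i)$ by the same vertex-accounting as above.

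\medskip

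That these two constructions are inverse is then a matter of routine verification: cutting the reassembled slice along its leftmost geodesics recovers the original gluing walls (because the right boundary of $\rm_i$ \emph{is} the leftmost geodesic from $r^{(i)}$, by Remark~\ref{rem:leftmost} applied to the elementary slice, and leftmost geodesics are determined locally so they are unaffected by the gluing), and conversely reassembling the pieces obtained by cutting $\rm$ along the $\gamma_i$ glues back exactly along the same walls. \textbf{The main obstacle} is establishing the laminar/confluence property of the leftmost geodesics $\gamma_0, \ldots, \gamma_p$ in the directed setting rigorously — i.e.\ that they are pairwise non-crossing and coalescing — since this is what guarantees the regions $R_i$ are well defined and are genuine elementary slices (in particular that each $R_i$'s candidate right boundary really is the \emph{unique} geodesic within $R_i$). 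Everything else — the increment bookkeeping, the weight accounting via $\rV'$, and the inverse check — is bookkeeping once confluence is in hand.
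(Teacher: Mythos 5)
Your overall strategy (cut along the leftmost directed geodesics emanating from the base corners, glue back along them) is the same as the paper's, but you execute it as a \emph{simultaneous} cut along all of $\gamma_1,\ldots,\gamma_{p-1}$, and this forces you to invoke a global laminarity/confluence property of leftmost geodesics which you then explicitly leave unproven (``the main obstacle''). That is a genuine gap, and moreover your one-line justification of it is not correct as stated: the leftmost geodesic is defined from a \emph{corner}, and its continuation at a vertex $w$ depends on the corner of arrival, so two leftmost geodesics meeting at $w$ via different corners do not automatically pick the same outgoing edge --- one needs a planarity-plus-geodesy argument, not ``uniqueness of the leftmost choice from a corner''. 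The paper sidesteps the global statement by induction: it cuts only along the leftmost geodesic $\gamma^{(1)}$ from $\kappa_1$, so the only interaction it needs is of $\gamma^{(1)}$ with the slice's own left and right boundaries; coalescence with the right boundary is automatic from its uniqueness property (two geodesics from a vertex of the right boundary to the apex would yield two geodesics from $r$), and coalescence with the left boundary uses Remark~\ref{rem:leftmost}. The resulting piece $\rc_1$ is elementary and the remaining piece is a strictly smaller general slice, so one iterates. Note also that your argument that the right boundary of $R_i$ is the \emph{unique} geodesic inside $R_i$ (``a competing geodesic would have to leave $R_i$'') does not hold: a competitor could stay inside $R_i$; the correct reason, visible in the paper's setup, is that the new right boundary is (a copy of) the \emph{leftmost} geodesic along which one has just cut, so a distinct geodesic strictly to its left would contradict leftmostness.

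There is a second concrete error in your gluing direction: you ``identify the right boundary of $\rm_i$ with the left boundary of $\rm_{i+1}$, matching \dots the apex of $\rm_i$ with that of $\rm_{i+1}$''. In general these two boundaries have \emph{different} lengths (the increment condition only constrains the difference of base labels, not the heights of the apexes), so a full identification is impossible. The paper's gluing matches edges one by one starting from the identified base corners, up to the minimum of the two lengths; the unmatched surplus of the longer boundary becomes part of the left or right boundary of the assembled slice, and the apex of the assembled slice is the apex of whichever piece sticks out (or the merged apex when the lengths coincide). Getting this right is also what makes the inverse check work: after gluing, the merged wall is precisely the leftmost geodesic from the corresponding base corner, which is the subtle point the paper flags when verifying that gluing followed by cutting returns the original tuple. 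So, beyond the unproven confluence lemma, the gluing step as you wrote it would have to be repaired along these lines before the bijection is established.
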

\begin{figure}
\centering
\includegraphics[width=0.9\linewidth]{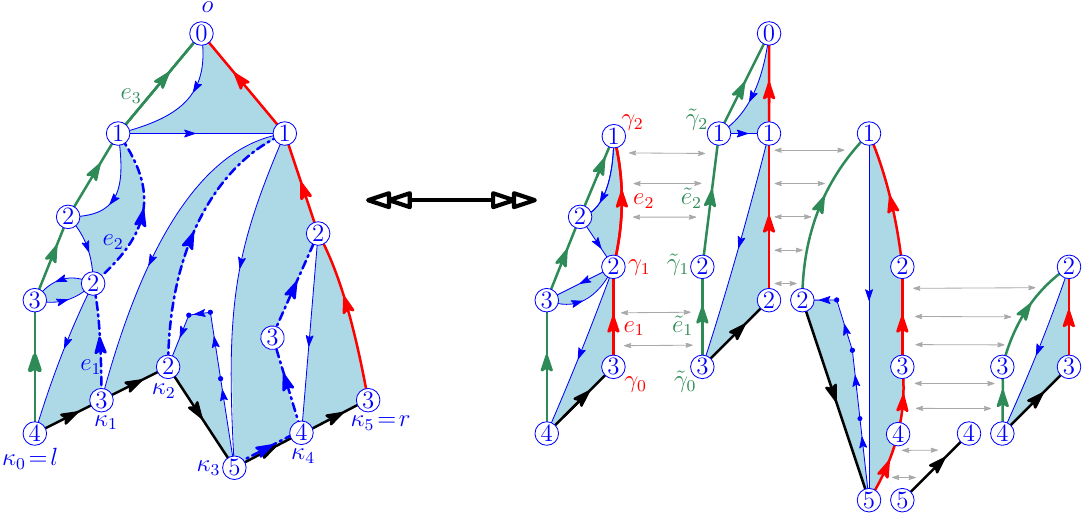}
\caption{\label{fig:decComposite} Decomposition of a slice of type
  $\cA$ (left) with base condition $\pi =(0,-1,-2,1,0,-1)$, and
  $\ell(l)=4$. The leftmost geodesics emanating from the corners
  $\kappa_1,\ldots,\kappa_4$ incident to the base are represented in
  dashed edges.  Cutting along them produces a $5$-tuple of
  elementary slices (right). The canonical labeling of the original
  slice yields a labeling of each elementary slice, which coincides with its own canonical labeling up to a shift.}
\end{figure}

\begin{proof}
  Let us first describe the bijection informally: fix $\rm$ a slice of type $\cA$ with
  base condition given by $\bm \pi$. If $p=1$, $\rm$ is already an
  elementary slice and there is nothing to prove. For $p \geq 2$,
  write $l$, $r$ and $o$ for the left, right and apex corners of
  $\rm$. List the corners incident to the base from $l$ to $r$ as $(\kappa_0=l,\kappa_1,\ldots,\kappa_p=r)$. For each
  $i\in\{1,\ldots,p-1\}$, consider the \emph{leftmost geodesic}
  $\gamma^{(i)}$ from $\kappa_i$ to $o$. As illustrated in
  Figure~\ref{fig:decComposite} for the case $p=5$, cutting along these geodesics yields the desired $p$-tuple of elementary slices.

  Let us provide a detailed description of the decomposition. We proceed inductively on $p$ by cutting along $\gamma^{(1)}$, then
  $\gamma^{(2)}$, etc. Write respectively $(\gamma_0,\ldots,\gamma_k)$
  and $(e_1,\ldots,e_k)$ for the vertices and edges of $\gamma^{(1)}$
  from $\kappa_1$ to $o$.  Let $\gamma_j$ be the first vertex of
  $\gamma^{(1)}$ that belongs to either the left or the right boundary of
  $\rm$ (such a vertex exists since $\gamma_k=\v(o)$ is the apex).  As
  these boundaries are themselves leftmost geodesics, $\gamma^{(1)}$
  coalesces with one of them after $\gamma_j$, and we only need to cut
  along the portion of $\gamma^{(1)}$ from $\gamma_0$ to
  $\gamma_j$. The cutting splits each vertex $\gamma_i$,
  $i=0,\cdots,j$ (resp.\ each edge $e_i$, $i=1,\cdots,j$) into two
  copies: the one on the left of $\gamma^{(1)}$ is still denoted
  $\gamma_i$ (resp.\ $e_i$) while the one on the right is denoted
  $\tilde{\gamma}_i$ (resp.\ $\tilde{e}_i$). See again
  Figure~\ref{fig:decComposite}, where we have $j=2$ and $k=3$.

  The cutting operation separates $\rm$ into two connected components:
  we denote by $\rc_1$ the left one, containing $l=\kappa_0$, and by
  $\tilde \rm$ the right one, containing $r=\kappa_p$. Using
  Remark~\ref{rem:leftmost} and the fact that we have cut along a leftmost
  geodesic, we can see that both $\rc_1$ and $\tilde \rm$ are slices:
  \begin{itemize}
  \item $\rc_1$ is an elementary slice of type $\cA$ with increment
    equal to $\pi_1-\pi_0$, its left corner being $l=\kappa_0$, its
    right corner being the new corner incident to $\gamma_0$ , and its
    apex being either the new corner incident to $\tilde \gamma_j$ if
    $\gamma_j$ is on the left boundary of $\rm$, or $o$ otherwise,
  \item $\tilde \rm$ is a slice with a base of length $p-1$ and base
    condition given by $(0,\pi_2-\pi_1,\ldots,\pi_p-\pi_1)$, its right
    corner being $r=\kappa_p$, its left corner being the new corner
    incident to $\tilde{\gamma}_0$, and its apex being either $o$ if
    $\gamma_j$ is on the right boundary of $\rm$, or the new corner
    incident to $\gamma_j$ otherwise.
  \end{itemize}
For any $k\geq 1$, let us write respectively $f^\circ_k(\cdot)$ and $f^\bullet_k(\cdot)$, for the function giving the number of white and black inner faces. Then we clearly have:
\[
f^\circ_k(\rm)=f^\circ_k(\rc_1)+f^\circ_k(\tilde \rm)\quad \text{ and } \quad f^\bullet_k(\rm)=f^\bullet_k(\rc_1)+f^\bullet_k(\tilde \rm).
\]
Moreover, recall from~\eqref{eq:defWeightslice} that vertices incident to the right boundary of a slice do not contribute to its weight. This ensures that the vertices along $\gamma^{(1)}$ in $\rm$ contribute only to the weight of $\tilde \rm$, and not to that of $\rc_1$. Hence we have: 
\[
  \bar w(\rm)=\bar w(\rc_1) \times \bar w(\tilde \rm).
\]
We then iterate the above decomposition on $\tilde\rm$. Assuming by
induction that $\tilde\rm$ is decomposed into a $(p-1)$-tuple of
slices $(\rc_2,\ldots,\rc_p)$ with
$\bar w(\tilde \rm)=\bar w(\rc_2) \times \cdots \times \bar w(\rc_p)$,
the decomposition of $\rm$ yields the $p$-tuple
$(\rc_1,\rc_2,\ldots,\rc_p)$ satisfying
\[
  \bar w(\rm)=\bar w(\rc_1) \times \bar w(\rc_2) \times \cdots \times \bar w(\rc_p)
\]
as wanted.

Let us now describe the reverse gluing operation: fix a collection
$(\rc_i)_{1\leq i \leq p}$ of elementary slices of type $\cA$ such
that $\inc{\rc_i}=\pi_i-\pi_{i-1}$ for all $i=1,\ldots,p$. As illustrated in Figure~\ref{fig:decComposite}, the operation consists of identifying the right corner of $\rc_i$ with the left corner of
$\rc_{i+1}$, for all $i=1,\ldots,p-1$, and then gluing the right
boundaries to the left boundaries together. For a more detailed
description, let us again proceed by induction on $p$ and assume that
we have already glued together the elementary slices
$\rc_2,\ldots,\rc_p$ into a slice $\tilde\rm$ with a base of length
$p-1$, base condition $(0,\pi_2-\pi_1,\ldots,\pi_p-\pi_1)$ and weight
$\bar w(\tilde \rm)=\bar w(\rc_2) \times \cdots \times \bar w(\rc_p)$:
we now explain how to construct a slice $\rm$ by gluing $\rc_1$ and $\tilde \rm$ together.

Write $o_1,l_1,r_1$ (resp.\ $\tilde o,\tilde l,\tilde r$) for the
apex, left corner, and right corner of $\rc_1$ (resp.\
$\tilde \rm$). We then identify $r_1$ to $\tilde l$, and glue the right boundary of $\rc_1$ to the left boundary of
$\tilde \rm$. More precisely, let us denote by $e_1,\ldots,e_k$ the
edges of the right boundary of $\rc_1$ read from $r_1$ to $o_1$, and
by $\tilde e_1,\ldots,\tilde e_{\tilde k}$ the edges of the left
boundary of $\tilde \rm$ read from $\tilde l$ to $\tilde o$. The
gluing consists in matching the right side of $e_i$ to the left side of
$\tilde e_i$ for all $i=1,\ldots,\min(k,\tilde k)$, see again
Figure~\ref{fig:decComposite}. Note that, if $k > \tilde k$ (resp.\ if
$k < \tilde k$), the edges $e_{\tilde k+1},\ldots,e_k$ (resp.\
$\tilde e_{k+1},\ldots,\tilde e_{\tilde k}$) remain unmatched. We
claim that the resulting map $\rm$ is a slice:
\begin{itemize}
\item its left corner is $l_1$,
\item its right corner is $\tilde r$,
\item its apex is $o_1$ if $k> \tilde k$, $\tilde o$ if
  $k < \tilde k$, or the corner resulting from the merging of $o_1$
  and $\tilde o$ if $k=\tilde k$,
\item its left boundary is the left boundary of $\rc_1$,
  possibly extended by $\tilde e_{k+1},\ldots,\tilde e_{\tilde k}$ if $k < \tilde k$,
\item its right boundary is the right boundary of
  $\tilde \rm$, possibly extended by $e_{\tilde k+1},\ldots,e_k$ if $k>\tilde k$.
\end{itemize}
It may be checked that the left and right boundaries are indeed leftmost
geodesics, as required for a slice. Furthermore, by construction, the
base condition of $\rm$ is ${\bm \pi}=(\pi_0,\pi_1,\ldots,\pi_p)$, and
its weight is
$\bar w(\rm)=\bar w(\rc_1) \times \bar w(\tilde \rm)= \bar w(\rc_1)
\times \bar w(\rc_2) \times \cdots \times \bar w(\rc_p)$ as wanted.

We conclude the proof by noting that the cutting and gluing operations
are indeed inverses of one another. It is clear that cutting followed
by gluing recovers the original slice $\rm$. The converse fact, that
gluing followed by cutting recovers the original $p$-tuple of
elementary slices $(\rc_1,\ldots,\rc_p)$, is slightly more subtle. This may
be checked by induction: in the above description of the gluing process, the path obtained by merging the right boundary of $\rc_1$
with the left boundary of $\tilde \rm$ is precisely the leftmost
geodesic along which we performed the cut.
\end{proof}

\subsubsection{Removing the base edge of an elementary slice}\label{sub:enumElemSlices}

We now describe how to further decompose an elementary slice. In view
of Lemma~\ref{lem:increment}, it is natural to remove its base edge.
The following proposition asserts that the resulting map, which inherits three distinguished outer corners from the original slice, is itself a slice:

\begin{proposition} \label{prop:baseremoval}
For any integers $d \ge 1$ and $k \ge 0$ (resp.\ $k \ge -1$), removing the base edge defines a bijection between:

\begin{enumerate}
    \item elementary slices of type $\cB_k$ (resp.\ $\cA_k$) whose base edge is incident to an inner face of degree $d$, and
    \item general slices of type $\cA$ (resp.\ $\cB$) whose base consists of $d-1$ edges and whose increment is $-k$.
\end{enumerate}
\end{proposition}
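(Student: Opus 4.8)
The plan is to analyze the effect of deleting the base edge $e$ of an elementary slice, say of type $\cB_k$ whose base is incident to an inner face $f$ of degree $d$ (the type $\cA_k$ case being symmetric under exchanging the roles of the two colors and of $l,r$). First I would recall from Lemma~\ref{lem:increment} that, since $k\ge 0$, the slice is neither trivial nor empty, so $e$ is genuinely incident to the inner face $f$, which is white for type $\cB$, and moreover $k<d$. After removing $e$, the face $f$ merges with the outer face. I would describe precisely what becomes of the three distinguished corners: the base of the $\cB_k$-slice runs from $r$ to $l$, and the contour of $f$ with $e$ removed is a directed path $P$ of length $d-1$ from $l$ to $r$ (using that the contour of the inner face $f$ is a directed cycle in the canonical orientation, of which $e$ is the step from $r$ to $l$). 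The new outer face therefore has its contour made of: the left boundary (a geodesic from $l$ to $o$), then the right boundary (the unique geodesic from $r$ to $o$), traversed backwards from $o$ to $r$, then the path $P$ from $r$... — here I must be careful with orientations and cyclic order, so that after relabeling $o'=o$, $l'=r$, $r'=l$ I obtain a hypermap with one boundary whose contour splits into a left part that is a geodesic from $l'$ to $o'$, a right part that is the unique geodesic from $r'$ to $o'$, and a base of length $d-1$ directed from $l'$ to $r'$, i.e. a general slice of type $\cA$. The increment computation is then immediate: $\ell$ is unchanged by deleting an edge that is not a bridge and does not shorten any directed path to $\v(o)$ (this needs a short argument — see below), so $\inc'=\ell(r')-\ell(l')=\ell(l)-\ell(r)=\inc{\rm}=k$ as an $\cA$-increment... wait, with the sign conventions of the definitions, the increment of the resulting type-$\cA$ slice is $\ell(r')-\ell(l')=\ell(l)-\ell(r)$, whereas $\inc{\rm}=\ell(l)-\ell(r)=k$ for the $\cB$-slice, giving increment $-k$ only if I track the sign correctly; I would double-check this matches the statement, which asks for increment $-k$, so the correct reading is that deleting the base edge changes the sign, and I will verify this is consistent with Remark~\ref{rem:incBound} (an $\cA$-slice with base of length $d-1$ has increment $\ge -(d-1)$, and indeed $-k\ge -(d-1)$ since $k\le d-1$).

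The key technical point, and the step I expect to be the main obstacle, is verifying that after deletion the left and right boundaries are still \emph{geodesics for the new directed distance}, and in particular that the right boundary of the new slice (which is $P$ followed by the old left boundary, or however the merging works out — I need to identify which of the three boundary arcs plays the role of the ``unique geodesic'' $r'$-to-$o'$) really is the \emph{unique} directed geodesic. Deleting an edge can only increase directed distances, so any directed path present after deletion was present before; hence the old geodesics remain geodesics \emph{provided} their lengths did not need to change, i.e. provided $\pd'(v,\v(o))=\pd(v,\v(o))$ for the relevant vertices. For vertices on the left/right boundary this holds because those boundaries are themselves directed paths to $\v(o)$ surviving the deletion, so $\pd'(\cdot,\v(o))\le$ (old value) $=\pd(\cdot,\v(o))\le\pd'(\cdot,\v(o))$. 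For the uniqueness of the new right boundary: any competing geodesic in $\rm\setminus e$ would also be a geodesic in $\rm$ of the same endpoints, and I would argue it contradicts the uniqueness clause of the original slice (for the old right boundary) or forces a shortcut that would have made the base edge $e$ superfluous, contradicting $k<d$. This is the delicate part and will require carefully using the leftmost-geodesic characterization (Remark~\ref{rem:leftmost}) together with planarity.

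Finally, for the reverse map: given a general slice $\sigma$ of type $\cA$ (resp.\ $\cB$) with base of length $d-1$ and increment $-k$, I would add back an edge $e$ joining $r'$ to $l'$ (resp.\ $l'$ to $r'$) so as to enclose a new inner face of degree $d$ between $e$ and the old base, properly colored (white for the new type $\cB_k$, black for type $\cA_k$), and check that: the base edge condition $k\ge 0$ (resp.\ $k\ge -1$) follows from $-k\le d-1$ together with Lemma~\ref{lem:increment}; the three corners $o,l,r$ of the resulting elementary slice are obtained by the inverse relabeling; the canonical labeling and hence the increment is recovered; and the left/right boundaries of $\sigma$ become, respectively, the left/right boundaries of the new elementary slice, still satisfying the geodesic and uniqueness conditions (here I use that adding an edge can only decrease distances, and that the new edge $e$ lies on the new base, so it cannot create a shortcut bypassing the base). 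I would conclude by noting that the two constructions are manifestly inverse to one another — deleting the edge one has just added, and conversely — and that both preserve the weight $\bols$, since $e$ is not incident to any vertex of the right boundary that would change status, and the merged/created face $f$ contributes the factor $t^\bullet_d$ (resp.\ $t^\circ_d$) on one side and nothing on the other, consistently with the statement.
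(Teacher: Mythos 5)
There is a genuine gap, and it lies in your bookkeeping of the distinguished corners. You propose to relabel $o'=o$, $l'=r$, $r'=l$ after deleting the base edge, and this is incorrect on three counts. First, the corners $o,l,r$ must appear in counterclockwise order around the map, and swapping $l$ and $r$ destroys this. Second, with your relabeling the right boundary of the new slice would be the \emph{old left} boundary, on which you would now have to impose the uniqueness-of-geodesic condition; but the old left boundary is only required to be \emph{a} geodesic from $l$ to $o$, not the unique one, so this condition can genuinely fail. The asymmetry between the left and right boundaries is precisely why the construction works only if the three corners are inherited \emph{unchanged}: what reverses is the direction of the base (for a $\cB_k$-slice the base edge runs from $r$ to $l$, and the contour of the incident white face minus that edge is a directed path of length $d-1$ from $l$ to $r$), so the \emph{type} flips from $\cB$ to $\cA$ while $l$ and $r$ keep their roles, and the increment computed with the type-$\cA$ convention is $\ell(r)-\ell(l)=-k$. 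Third, your relabeling produces increment $+k$; you notice the mismatch with the statement but dispose of it by declaring that ``deleting the base edge changes the sign'', which is not an argument — the sign change is a consequence of the type flip with fixed corners, not of any relabeling.

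Once the corners are kept fixed, the point you single out as ``the delicate part'' (uniqueness of the new right boundary, for which you anticipate needing planarity and leftmost geodesics) becomes immediate: since $k\ge 0$, the base edge goes from label $\ell(r)$ to label $\ell(l)=\ell(r)+k$, hence lies on no directed geodesic towards $\v(o)$, so all distances to the apex are unchanged by its removal; any competing geodesic from $\v(r)$ in the new map is then a geodesic in the old map and coincides with the old right boundary. This is exactly the mechanism the paper uses, and it is also the correct justification in the converse direction: when you re-add the edge, the reason no shortcut is created is the sign condition on $k$ (for the $\cA\to\cB_k$ direction the new edge does not decrease the label; for the $\cB\to\cA_k$ direction with $k=-1$ one needs a one-line triangle-inequality check, and the new edge may then lie on a geodesic from $\v(l)$, which is harmless precisely because the left boundary carries no uniqueness requirement), not your stated reason that ``the new edge lies on the new base''. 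Finally, a small slip in your closing remark: the face created or merged is \emph{white} (weight $t^\circ_d$) in the $\cB_k$ case and \emph{black} (weight $t^\bullet_d$) in the $\cA_k$ case, the opposite of what you wrote.
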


\begin{proof}
  \begin{figure}
    \centering
    \includegraphics{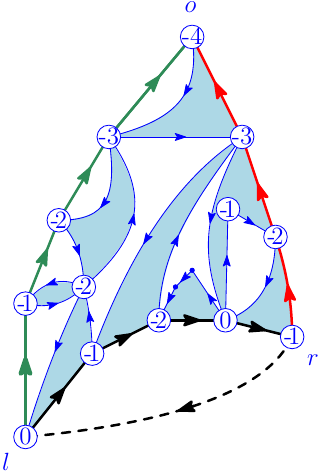}
    \caption{Upon removing the base edge (dashed) of an elementary
      slice of type $\cB_1$, we obtain a slice of type $\cA$ with
      increment $-1$.}
    \label{fig:decSliceBredux}
  \end{figure}
  As can be seen in Figure~\ref{fig:decSliceBredux}, when removing the
  base edge of an elementary slice different from the trivial and the
  empty slice, we obtain a general slice of the opposite type and
  increment, and the length of the new base is equal to the degree of
  the ``merged'' inner face minus one. (Removing the base edge does
  not affect the fact that the left and right boundaries are
  geodesics.)

  It is clear that the operation is injective, since there is a unique
  way to reconnect the endpoints of the base leaving the three
  distinguished corners incident to the outer face.

  Checking that the mapping is surjective is a subtler point. Given
  integers $d \geq 1$ and $k \geq 0$, let $\bar \rm$ be a general
  slice with type $\cA$, base of length $d-1$, and increment
  $-k$. Denote as usual $o,l,r$ its distinguished corners. Write $\rm$
  for the map obtained by adding a directed edge $e$ from $r$ to $l$
  clockwise around the map. Since
  $k=\pd_{\bar \rm}(\v(l),\v(o))-\pd_{\bar \rm}(\v(r),\v(o))$ is
  nonnegative, $e$ does not belong to any directed geodesic towards
  $\v(o)$ in $\rm$, and we have
  $\pd_{\bar \rm}(u,\v(o))=\pd_{\rm}(u,\v(o))$ for any
  $u \in \rV(\bar \rm)=\rV(\rm)$. We deduce that $\rm$ is a slice,
  with the same left and right boundaries as $\bar \rm$, and its base
  consists of the single edge $e$: $\rm$ is an elementary slice of
  type $\cB$. By construction its increment is $k$, and $e$ has an
  inner white face of degree $d$ on its right. So, $\rm$ is the wanted
  preimage of $\bar \rm$.

  The argument is similar, but slightly different, for $\bar \rm$ a
  general slice with type $\cB$, base of length $d-1$, and increment
  $-k \leq 1$. Write $\rm$ for the map obtained by adding a directed
  edge $e$ from $l$ to $r$ counterclockwise around the map. Now, we
  have
  $k=\pd_{\bar \rm}(\v(l),\v(o))-\pd_{\bar \rm}(\v(r),\v(o)) \geq -1$,
  so the key property still holds: the directed distances to $\v(o)$ are the same in $\bar \rm$ and $\rm$. However, $e$ \emph{may} now belong to a geodesic from $\v(l)$ to $\v(o)$ in $\rm$, which is not an issue since there is no uniqueness requirement for the left boundary. All in all, $\rm$ is again the wanted preimage of
  $\bar \rm$.
\end{proof}

Combining the above decomposition with that of
Section~\ref{sub:decompositionElementary} yields a recursive
decomposition of elementary slices, providing a characterization of their generating functions:
\begin{proposition}\label{prop:decslices}
Recall the definitions of $(a_k)$, $(b_k)$, $x(z)$, $y(z)$, $P^\circ_{p,k}$ and $P^\bullet_{p,k}$ given in \eqref{eq:defAB}, ~\eqref{eq:defxy} and~\eqref{eq:pathIncBis}. Then, we have $b_{-1}=1$ and for $k\in \mathbb{Z}_{\geq 0}$:
\begin{equation}\label{eq:decBslices}
b_k=\sum_{d\geq 1}t^\circ_d P^\circ_{d-1,-k} \qquad \text{or equivalently} \qquad [z^k] \Big( y(z) - \sum_{d\geq 1}t^\circ_d x(z)^{d-1}\Big) = 0.
\end{equation}
Similarly, for $k\in \mathbb{Z}_{\geq -1}$, we have
\begin{equation}\label{eq:decAslices}
a_k=t\delta_{k,-1}+\sum_{d\geq1}t^\bullet_d P^\bullet_{d-1,-k} \quad \text{or equivalently} \quad [z^{-k}] \Big( x(z) - t z -\sum_{d\geq1}t^\bullet_d y(z)^{d-1}\Big) = 0.
\end{equation}
These equations form an infinite system of 
equations which determines uniquely the $(a_k)$ and $(b_k)$ as elements of $\Ring:=\mathbb{Q}[[t, t^\circ_1,t^\circ_2,\ldots,t^\bullet_1,t_2^\bullet,\ldots]]$.
\end{proposition}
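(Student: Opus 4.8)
The statement bundles three claims: the equality $b_{-1}=1$, the two recursions \eqref{eq:decBslices}--\eqref{eq:decAslices}, and uniqueness of the solution in $\Ring$. The first is nothing but the last sentence of Lemma~\ref{lem:increment} together with the fact that the trivial slice has weight $\bols=1$ (it has no inner face, and its two vertices both lie on its right boundary). For the recursions, the plan is to compose the base-edge removal of Proposition~\ref{prop:baseremoval} with the identification, furnished by Corollary~\ref{cor:genSerComp} and \eqref{eq:pathIncBis}, of $P^\circ_{p,h}$ and $P^\bullet_{p,h}$ as the generating functions of general slices of type $\cA$, resp.\ $\cB$, with base of length $p$ and increment $h$.

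Concretely, fix $k\geq 0$. By Lemma~\ref{lem:increment} every $\cB_k$-slice is distinct from the trivial and empty slices, so its base edge bounds a white inner face; split the sum \eqref{eq:defAB} defining $b_k$ according to the degree $d\geq 1$ of that face. For fixed $d$, Proposition~\ref{prop:baseremoval} gives a bijection onto general $\cA$-slices with base length $d-1$ and increment $-k$. The crucial bookkeeping is that this bijection rescales the weight by exactly $t^\circ_d$: removing the base edge merges the degree-$d$ white face into the outer face (deleting a single $t^\circ_d$ factor) while, as observed in the proof of Proposition~\ref{prop:baseremoval}, changing neither the vertex set nor the right boundary -- hence neither $\rV'$ nor the factor $t^{|\rV'|}$ of \eqref{eq:defWeightslice} -- and leaving every other face untouched. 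Summing over $d$ gives $b_k=\sum_{d\geq1}t^\circ_d P^\circ_{d-1,-k}$; since $b_k=[z^k]y(z)$ and $P^\circ_{d-1,-k}=[z^k]x(z)^{d-1}$ by \eqref{eq:defxy} and \eqref{eq:pathIncBis}, this is the reformulated identity. Equation \eqref{eq:decAslices} is obtained in the same way with the two colors exchanged, the only new point being that the family of $\cA_{-1}$-slices contains, besides those whose base edge bounds a black face, the empty slice, whose base edge is a bridge; it contributes its weight $t$, accounting for the term $t\delta_{k,-1}$ (equivalently the $-tz$ inside the bracket, as $[z^{-k}](tz)=t\delta_{k,-1}$).

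For uniqueness, the plan is an $\mathfrak{m}$-adic successive-approximation argument, where $\mathfrak{m}\subset\Ring$ is the ideal of series with no constant term, graded so that each weight variable has degree $1$. For any solution, every term on the right-hand side of \eqref{eq:decBslices} and \eqref{eq:decAslices} carries a weight variable, so $b_k\in\mathfrak{m}$ for $k\geq 0$ and $a_k\in\mathfrak{m}$ for $k\geq -1$; consequently $x(z)$ has all its coefficients in $\mathfrak{m}$ and $y(z)$ all of its coefficients in $\mathfrak{m}$ except for the coefficient $1$ of $z^{-1}$, whence $x(z)^{d-1}$ and $y(z)^{d-1}$ have coefficients in $\mathfrak{m}^{d-1}$ and $\mathfrak{m}^0$ respectively, and the sums over $d$ converge $\mathfrak{m}$-adically (for a fixed monomial only finitely many values of $d$ can contribute, each term being divisible by $t^\circ_d$ or $t^\bullet_d$). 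One then shows by induction on $n$ that all the $a_k$ and $b_k$ are forced modulo $\mathfrak{m}^{n+1}$: modulo $\mathfrak{m}$ everything vanishes except $b_{-1}=1$, and the right-hand side of each equation, read modulo $\mathfrak{m}^{n+1}$, depends only on the series $x(z)$ and $y(z)$ modulo $\mathfrak{m}^{n}$, i.e.\ on data already determined at the previous step. Since every monomial has finite degree, this determines all coefficients of all $a_k$ and $b_k$; existence being supplied by the combinatorial definition \eqref{eq:defAB}, the proof is complete. The only genuinely delicate point in the whole argument is the weight bookkeeping under base-edge removal, which is exactly what the slightly unusual convention in \eqref{eq:defWeightslice} -- counting only the vertices not incident to the right boundary -- is designed to make transparent; the infinitude of the index set causes no difficulty, since a given monomial only involves finitely many of the equations.
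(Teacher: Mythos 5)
Your proposal is correct and follows essentially the same route as the paper: splitting $b_k$ (resp.\ $a_k$) by the degree of the inner face incident to the base edge, applying the base-edge removal bijection of Proposition~\ref{prop:baseremoval} together with Corollary~\ref{cor:genSerComp}, with the empty slice accounting for the $t\delta_{k,-1}$ term, and with the weight factor $t^\circ_d$ (resp.\ $t^\bullet_d$) tracked exactly as in the paper. Your $\mathfrak{m}$-adic induction for uniqueness is simply a fleshed-out version of what the paper dismisses as ``standard arguments on formal power series, noting that the formal variables appear in factor in the right-hand sides,'' so there is no substantive difference there either.
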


\begin{proof}
  We have already seen that $b_{-1}=1$ since the unique elementary
  slice of type $\cB_{-1}$ is the trivial slice. Now, fix $k \geq 0$. By Lemma~\ref{lem:increment}, all elementary slices of type $\cB_k$
  have their base edge incident to a white inner face. Hence, we may
  write $b_k=\sum_{d \geq 1} b_k^{(d)}$ where $b_k^{(d)}$ is the
  generating function of those slices for which this white inner face
  has degree $d$. Using Proposition~\ref{prop:baseremoval} and
  Corollary~\ref{cor:genSerComp}, and noting that the removal of the
  base edge modifies the slice weight by a factor $t^\circ_d$, we get
  $b_k^{(d)} = t^\circ_d P^\circ_{d-1,-k}$, which
  yields~\eqref{eq:decBslices} by summing over $d$. The proof
  of~\eqref{eq:decAslices} is entirely similar, except that for $k=-1$
  we must add the contribution of the empty slice, which has weight
  $t$.

  That the equations \eqref{eq:decBslices} and \eqref{eq:decAslices}
  determine all $(a_k)$ and $(b_k)$ follows from the recursive nature
  of our decomposition. Alternatively, this can be seen directly from the
  first forms of the equations by standard arguments on formal power
  series, noting that the formal variables $t$, $t_k^\circ$ and
  $t_k^\bullet$ appear in factor in the right-hand sides.
\end{proof}

\begin{remark}\label{rem:degBorne}
  As we have seen in Lemma~\ref{lem:boundxypol}, under the assumption
  of bounded face degrees only finitely many $a_k$ and $b_k$ are
  nonzero. They are determined by the equations~\eqref{eq:decBslices}
  and~\eqref{eq:decAslices}, which form an algebraic system of
  equations. Indeed, in these equations the sums over $d$ have finite
  support ($t_d^\circ$ and $t_d^\bullet$ vanish for $d$ large enough),
  and $P^\circ_{d-1,-k}$ and $P^\bullet_{d-1,-k}$ are polynomials of
  degree $d-1$ in the $(a_i)$ and $(b_i)$ respectively.
\end{remark}

\begin{remark}[Connection with Bousquet-Mélou--Schaeffer blossomming trees]
  \label{rem:BoSc_connection}
  The recursive equations~\eqref{eq:decBslices}
  and~\eqref{eq:decAslices} satisfied by the generating functions of
  elementary slices are equivalent (upon setting the vertex weight $t$
  to $1$) to \cite[Equations~(2) and (3)]{BoSc02} which characterize
  the generating functions of bicolored blossoming trees. More precisely,
  we have the following correspondence between the notations of the
  present paper and those of \cite{BoSc02}:
  \begin{equation}
    \begin{array}{rclcrcl}
      t_k^\circ & \leftrightarrow & x_k & \qquad & t_k^\bullet & \leftrightarrow & y_k \\
      a_k & \leftrightarrow & \delta_{k,1} + B_{-k}  & \qquad & b_k & \leftrightarrow & \delta_{k,-1} + W_k \\
      x(z) & \leftrightarrow & z + B(z) & \qquad & y(z) & \leftrightarrow & \frac1z + W(z). \\
    \end{array}
  \end{equation}
  This equivalence means that there exists a weight-preserving
  bijection between elementary slices and bicolored blossoming trees. Such a bijection can be defined recursively, or alternatively
  through a variant of the Bousquet-Mélou--Schaeffer opening/closure
  procedures. We will not go into details here, but refer
  instead to~\cite{Bouttier2024a} which discusses, in a different
  context, a bijection between slices and decorated trees that we believe could be adapted to the context of hypermaps.
\end{remark}

\begin{remark}[Connection with Eynard's notations]
  \label{rem:Eynard_connection}
  The generating functions $(a_k)$ and $(b_k)$ of elementary slices
  are closely related with the coefficients $\gamma,\alpha_k,\beta_k$
  appearing in~\cite[Section~8.3.1]{EynardBook}. To make the
  connection precise, we have to perform the change of variables
  $z \to \gamma z$ in Eynard's $x(z)$ and $y(z)$, so that we have the
  identifications
  \begin{equation}
    \label{eq:Eynard_connection}
    a_{-1} = \gamma^2, \qquad a_k = \frac{\alpha_k}{\gamma^k}, \qquad
    b_k = \beta_k \gamma^k, \qquad k \geq 0.
  \end{equation}
  Then, the system of equations of \cite[Theorem~8.3.1]{EynardBook}
  is, upon taking $c=-1$, $a=b=0$ (so that $c_{+-}=1$),
  $V'_1(x) = - \sum_{d \geq 1} t_d^\circ x^{d-1}$, and
  $V'_2(y) = - \sum_{d \geq 1} t_d^\bullet y^{d-1}$, equivalent to our
  Propositions~\ref{prop:decslices} and~\ref{prop:alternativeDec}
  below. In all rigor, Eynard only considers the case where all faces
  have degree at least three, but this restriction is inessential.
\end{remark}

\begin{remark}\label{rem:nonbicolored}
The case of non-bicolored maps can be recovered by setting $t_2^{\bullet}=1$ and $t_k^{\bullet}=0$ for $k\neq 2$ (indeed, by collapsing all the bivalent black faces, we obtain a map with only white faces). By Lemma~\ref{lem:increment}, the increment of an elementary slice of type $\cA$ belongs to $\{-1,0,1\}$ and by~\eqref{eq:decAslices}, we have:
\[
  a_{-1} = t + b_1,\quad a_0=b_0\quad \text{and} \quad a_{1}=b_{-1}=1.
\]
Then, performing the change of variables $a_{-1}=R$ and $a_0=S$, so that $x(z)=z^{-1} + S + Rz$, Equations~\eqref{eq:decBslices} for $k=0,1$ match \cite[Equation~(2.5)]{census,hankel} with $t_k^\circ=g_k$.
\end{remark}

\subsubsection{An alternative decomposition for elementary slices of type $\cA_{-1}$}\label{sub:decAlternative}

The decomposition of elementary slices presented in the previous section allows to decompose elementary slices of type $\cA$ into a sequence of elementary slices of type $\cB$ and vice-versa. It turns out that for elementary slices of type $\cA_{-1}$, there exists an alternative decomposition which involves only elementary slices of type $\cA$.
It implies the following enumerative result:

\begin{proposition}\label{prop:alternativeDec}
We have the relation:
\begin{equation}\label{eq:decAlternative}
a_{-1}=t+a_{-1}\sum_{d\geq 1}t^\circ_d P^\circ_{d-1,1}  \quad \text{or equivalently} \quad [z^{-1}] \Big( y(z) - \frac{t}{a_{-1}z} - \sum_{d\geq1}t^\circ_d x(z)^{d-1}\Big) = 0.
\end{equation}
\end{proposition}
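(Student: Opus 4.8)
The plan is to prove~\eqref{eq:decAlternative} by a weight-preserving bijection, in the same spirit as Propositions~\ref{prop:decComposite} and~\ref{prop:baseremoval}, but cutting along the first edge of the \emph{left} boundary instead of removing the base edge. Precisely, I claim that non-empty elementary $\cA_{-1}$-slices are in weight-preserving bijection with triples $(d,\rs,\rc)$ where $d\geq 1$, where $\rs$ is a general slice of type $\cA$ with base of length $d-1$ and increment $+1$, and where $\rc$ is an elementary slice of type $\cA_{-1}$, the weight of a triple being $t^\circ_d\,\bols(\rs)\,\bols(\rc)$. Granting this, I sum over all non-empty $\cA_{-1}$-slices, whose weights total $a_{-1}-t$ (the empty slice contributing $t$), and use Corollary~\ref{cor:genSerComp} in the form $\sum_\rs\bols(\rs)=P^\circ_{d-1,1}$ together with $\sum_\rc\bols(\rc)=a_{-1}$; this gives $a_{-1}-t=a_{-1}\sum_{d\geq1}t^\circ_d P^\circ_{d-1,1}$, which is the first form of~\eqref{eq:decAlternative}, and extracting the coefficient of $z^{-1}$ (using $[z^{-1}]y(z)=1$ and $[z^{-1}]x(z)^{d-1}=P^\circ_{d-1,1}$) yields the second form.

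For the forward map, take a non-empty elementary $\cA_{-1}$-slice $\rm$ with distinguished corners $o,l,r$ and canonical labeling $\ell$, so $\ell(r)=\ell(l)-1$ and $\ell(l)\geq 1$. Let $e_1$ be the first edge of the left boundary, from $\v(l)$ to a vertex $\gamma_1$ with $\ell(\gamma_1)=\ell(l)-1$. Since $\rm$ is not the empty slice, $\v(l)$ has degree at least $2$ (a leaf at $\v(l)$ would force the base edge to coincide with $e_1$, and then the uniqueness of the right boundary together with its meeting the left boundary only at the apex would collapse $\rm$ to the empty slice); hence the base edge differs from $e_1$, so $e_1$ is not a bridge, because the base edge followed by the right boundary joins $\v(l)$ to $\v(o)$ without using $e_1$. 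Consequently $e_1$ borders a unique inner face $g$, which the canonical orientation forces to be white (the outer face being on the other side of $e_1$), of some degree $d$, and the contour of $g$ is $e_1$ followed by a directed path $P$ of length $d-1$ from $\gamma_1$ to $\v(l)$. I then delete $e_1$, merging $g$ into the outer face; the resulting map $\rm'$, distinguished at $o$, at $r$, and at the corner of $\gamma_1$ inherited from $l$, is a general slice of type $\cA$ with base $P$ followed by the old base edge $e$ (hence of length $d$) and increment $0$. Decomposing $\rm'$ via Proposition~\ref{prop:decComposite}, its base condition is $(0,\dots,\pi_{d-1}=1,\pi_d=0)$, so the last elementary slice in the decomposition, call it $\rc$, has increment $-1$, while the first $d-1$ reassemble into a slice $\rs$ of type $\cA$ with base of length $d-1$ and increment $+1$; moreover $\bols(\rm)=t^\circ_d\,\bols(\rm')=t^\circ_d\,\bols(\rs)\,\bols(\rc)$ since deleting $e_1$ leaves the vertex set and the right boundary untouched but suppresses the white face $g$. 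The inverse glues $\rs$ and $\rc$ into $\rm'$ and adds an edge from $\v(l)$ back to $\gamma_1$ in the outer face, on the side of the base-path that cuts off a white face of degree $d$; checking it is well-defined and inverse to the cutting is routine.

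The delicate point is verifying that deleting $e_1$ really produces a slice with the \emph{same} canonical labeling, i.e.\ that $\pd_{\rm}(\cdot,\v(o))$ is unchanged. This is exactly where the increment being $-1$ is used: any geodesic to $\v(o)$ through $e_1$ would have to start at $\v(l)$, and for $\v(l)$ the base edge $e$ (with $\ell(\v(r))=\ell(l)-1$) followed by the right boundary already provides a geodesic to $\v(o)$ of length $\ell(l)$ avoiding $e_1$; hence no distance to $\v(o)$ grows. Once the labeling is known to be preserved, the remaining slice axioms for $\rm'$ (left boundary still a directed geodesic, right boundary still the \emph{unique} directed geodesic from $\v(r)$ to $\v(o)$, and the two meeting only at $\v(o)$) follow at once because $\rm'$ is a subgraph of $\rm$, and the degenerate cases (left boundary of length one, so $l$ and $o$ become corners at the same vertex of $\rm'$; or $\rc$ the empty slice) cause no trouble. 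Iterating the decomposition then expresses any $\cA_{-1}$-slice using only elementary slices of type $\cA$, as announced at the start of the subsection.
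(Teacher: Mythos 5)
Your proof is correct and follows essentially the same route as the paper: delete the first edge $e_1$ of the left boundary, check via the base-plus-right-boundary geodesic that the canonical labeling is unchanged, recognize the result as a type-$\cA$ slice whose base condition is a length-$(d-1)$ walk reaching $+1$ followed by a downstep, and conclude with the sequence decomposition of Proposition~\ref{prop:decCompositePaths}/Corollary~\ref{cor:genSerComp}. Your explicit splitting of that slice into the pair $(\rs,\rc)$ is just an unpacking of what the paper delegates to Corollary~\ref{cor:genSerComp}, so there is no substantive difference.
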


\begin{proof}
  \begin{figure}[t]
    \centering
    \includegraphics[page=1]{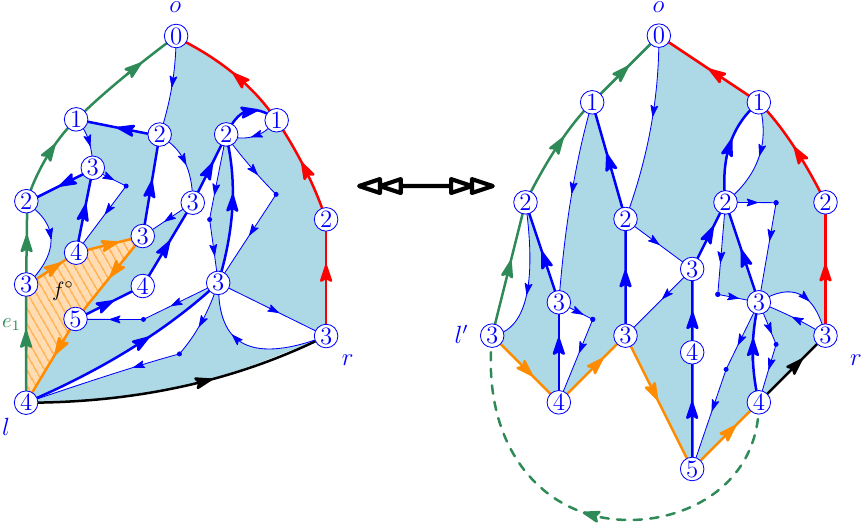}
    \caption{\label{fig:decAlternative} The alternative decomposition
      of a non-empty elementary slice $\rm$ of type $\cA_{-1}$ (left),
      obtained by deleting the first edge of its left boundary. As
      explained in the text, the resulting slice $\bar \rm$ (right)
      has the same canonical labeling.}
  \end{figure}
  Let $\rm $ be an elementary slice of type $\cA_{-1}$, with
  distinguished corners $o,l,r$ and canonical labeling $\ell$. By
  definition of the increment, we have $\ell(l)=\ell(r)+1 > 0$ hence
  the left boundary has positive length.

  If $\rm$ is the empty slice, its weight is $t$, which
  accounts for the $t$ in~\eqref{eq:decAlternative}. Otherwise, the first
  edge $e_1$ of the left boundary has a white inner face on its
  right\footnote{The edge $e_1$ cannot have the outer face on its
    right since the left and right boundaries only meet at the apex,
    and since $e_1$ cannot be the base edge either by
    Lemma~\ref{lem:increment}.}: we denote it by $f^\circ$, and its
  degree by $d$.

  Let $\bar \rm$ be the map obtained from $\rm$ by deleting $e_1$. Let
  $l'$ be the corner of $\bar \rm$ corresponding to the endpoint of
  $e_1$, see Figure~\ref{fig:decAlternative}. Then, $\bar \rm$ endowed
  with the distinguished corners $o,l',r$ is a slice of type $\cA$
  such that:
\begin{itemize}
\item it has the same right boundary as $\rm$, 
\item it has the same left boundary, save from $e_1$,
\item its base is the concatenation of the contour of $f^\circ$ from the head of $e_1$ to its tail and of the base of $\rm$. 
\end{itemize}
Moreover, $\bar \rm$ and $\rm$ admit the same canonical labeling of their vertices. Indeed, since $\rm$ is of type $\cA_{-1}$, the path obtained by concatenating its base with its right boundary is a geodesic from the root vertex to the apex, which implies that deleting $e_1$ does not change the label of $\v(l)$. Therefore, the boundary path condition of $\bar \rm$ is the concatenation of a DSF walk of length $d-1$ and increment $1$ (corresponding to the contour of $f^\circ \setminus \{e_1\}$), followed by a downstep (corresponding to the base of $\rm$). 

The weights of the two slices differ only by a factor $t^\circ_d$, corresponding to the weight of $f^\circ$. Since the construction is easily seen to be invertible, by adding an edge and rerooting, the result follows from Corollary~\ref{cor:genSerComp}.
 \end{proof}

\begin{remark}[Connection with mobiles]\label{rem:mobiles_connection}
  Equation~\eqref{eq:decAlternative} may be rewritten as
  \begin{equation}
    a_{-1} = \frac{t}{1-\sum_{d=1}^{\dmaxb}t^\circ_d P^\circ_{d-1,1}} =
    \frac{t}{1- [z^{-1}] \Big( \sum_{d=1}^{\dmaxb}t^\circ_d x(z)^{d-1}\Big)},
  \end{equation}
  which corresponds combinatorially to an iteration of the
  decomposition used in the proof of
  Proposition~\ref{prop:alternativeDec}. Together with the recursive
  decomposition of slices of types $\cA_k$ and $\cB_k$ for $k \geq 0$
  considered in Section~\ref{sub:enumElemSlices}, we obtain an
  alternate recursive decomposition of slices which is equivalent to
  that of the generalized mobiles considered
  in~\cite[Section~3.3]{mobiles}.
\end{remark}

\begin{remark}
Even in the case of non-bicolored slices, this alternative decomposition is new. Following the notations of Remark~\ref{rem:nonbicolored},~\eqref{eq:decAlternative} becomes: 
\[
  R= t+R\sum g_{k}[z^{-1}](z^{-1}+S+Rz)^{k-1},
\]
or equivalently: 
\begin{equation}
 R = \frac{t}{1-\sum g_{k}[z^{-1}](z^{-1}+S+Rz)^{k-1}}.
\end{equation}
An equation of this form was previously derived via mobiles, see~\cite[Section~4.2]{mobiles}. In contrast, from Proposition~\ref{prop:decslices}, we obtain the equation: 
\begin{equation}
  R= t+\sum g_{k}[z](z^{-1}+S+Rz)^{k-1},
\end{equation}
which is equivalent to the former one, but was originally derived via blossoming trees \cite{census}.
\end{remark}

\section{Wrapping slices}
\label{sec:wrapping}
In the previous section, we described how a general slice can be decomposed as a sequence of elementary slices. In this section, we describe a second fundamental construction for general slices, which corresponds to gluing their left and right boundaries together. This allows us to derive bijectively enumerative formulas for pointed rooted hypermaps, pointed disks and so-called ``trumpets'' and ``cornets'' with monochromatic boundary conditions.

\subsection{Pointed rooted hypermaps}\label{sub:pointedRootedMaps}

A \emph{pointed rooted} hypermap is a hypermap which has no boundary,
but which has instead both a marked vertex (the
\emph{pointed vertex}) and a marked edge (the \emph{root
  edge}). We will show that such a map is in bijection with a pair of
slices with the same increment but with opposite types. This
generalizes to the case of hypermaps a construction described
in~\cite[Appendix A]{hankel}.

Given a pointed rooted hypermap $\rm$, let $v^*$ denote its
pointed vertex. The \emph{canonical labeling} of $\rm$ is the
function
$\ell:V(\rm)\rightarrow \mathbb{Z}_{\geq 0}\,;\,u \mapsto
\pd(u,v^*)$. For an edge $e$, we define its \emph{increment} $\mathrm{inc}(e)$ as the difference between the label of its endpoint and that of its origin, with respect to the canonical orientation of $e$. The \emph{increment} of $\rm$ is then the increment of its root edge. The weight of $\rm$ is defined as in~\eqref{eq:Boltzmannweight} except that the pointed vertex receives
no weight $t$.

\begin{proposition}\label{prop:akbk}
  For any $k \geq -1$, there is a weight-preserving bijection between the set of pointed rooted hypermaps with increment $k$ and the set of pairs $(\mathrm{a},\mathrm{b})$ such that $\mathrm{a}$ is a slice of type $\cA_k$, $\mathrm{b}$ is a slice of type $\cB_k$, and at least one of them has an inner face. As a consequence, the generating series (with no weight for the pointed vertex) of pointed rooted hypermaps with increment $k$ is equal to $a_kb_k-t\delta_{k,-1}$. 
\end{proposition}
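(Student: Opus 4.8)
The plan is to construct the bijection by a cutting-and-regluing argument along leftmost geodesics, mirroring the classical construction for maps in \cite[Appendix~A]{hankel} but adapted to the directed setting. Start from a pointed rooted hypermap $\rm$ with pointed vertex $v^*$, root edge $e_0$, and increment $k=\inc{e_0}$; say $e_0$ is directed from $u$ to $w$, so $\ell(w)=\ell(u)+k$ with $k\geq -1$ by the inequality $\ell(w)\geq \ell(u)-1$ valid along any directed edge. The idea is to consider the leftmost geodesics from $u$ to $v^*$ and from $w$ to $v^*$ (using the corners of $\rm$ immediately following $e_0$ on each side), and to cut the map open along these two geodesics together with the edge $e_0$. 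Because leftmost geodesics emanating from distinct corners can only merge (once they meet they coalesce, as leftmost geodesics are determined by the clockwise rule), the two geodesics meet at some vertex and then coincide up to $v^*$. Cutting along $e_0$ and then along the two geodesics down to their meeting point opens $\rm$ into two pieces: the piece lying ``between'' $e_0$ and the left geodesic carries the left boundary, a leftmost geodesic from the image of $u$ (or $w$) to the image of $v^*$, and $e_0$ as its base; the other piece inherits the other geodesic as a boundary which, by construction (it is the right-side geodesic and is unique from its starting corner up to the meeting vertex — this uses that leftmost geodesics from a fixed corner are unique), serves as the right boundary. One checks that the piece containing $e_0$ on the $\cA$-side becomes a slice of type $\cA$ with base a single edge, hence an elementary slice, and similarly a $\cB$-slice on the other side; both have increment $k$ because the increment of a slice is $\ell(r)-\ell(l)$ (resp.\ $\ell(l)-\ell(r)$), which matches $\inc{e_0}=k$ after identifying labels (directed distances to $v^*$ are preserved under the cut, as the cut is along geodesics). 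Since every vertex of $\rm$ except $v^*$ lands in exactly one of the two slices — and vertices on the cut geodesics are counted once, consistent with the convention that right-boundary vertices carry no weight in $\bols$ — the weight factorizes: $w(\rm)=\bols(\mathrm{a})\,\bols(\mathrm{b})$, where the missing vertex $v^*$ plays the role of the (unweighted) apex of both slices.

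The reverse map glues a pair $(\mathrm{a},\mathrm{b})$ with $\mathrm{a}$ of type $\cA_k$, $\mathrm{b}$ of type $\cB_k$: identify the apex of $\mathrm{a}$ with the apex of $\mathrm{b}$ (this becomes $v^*$), glue the right boundary of $\mathrm{a}$ to the left boundary of $\mathrm{b}$ starting from the apex, matching edge by edge (the surplus of whichever boundary is longer remaining as the right boundary of the glued object, exactly as in the gluing of Proposition~\ref{prop:decCompositePaths}), and symmetrically glue the left boundary of $\mathrm{a}$ to the right boundary of $\mathrm{b}$, again from the apex; the two base edges of $\mathrm{a}$ and $\mathrm{b}$ share their endpoints (the left corner of $\mathrm{a}$ with the right corner of $\mathrm{b}$, and the right corner of $\mathrm{a}$ with the left corner of $\mathrm{b}$, matching because the increments agree) and become a single distinguished edge, which we root. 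One verifies that the result is a hypermap with no boundary: the gluing makes all faces properly bicolored directed cycles, and the canonical labeling of the glued map agrees with the directed distance to $v^*$ — here one uses that along the glued geodesics the distances were already correct and that $k\geq -1$ ensures no shortcut is created by the identified base edge (the same verification as in Proposition~\ref{prop:baseremoval}). That the two constructions are mutually inverse follows as in the proof of Proposition~\ref{prop:decCompositePaths}: the glued geodesics are precisely the leftmost geodesics along which the cut is performed.

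For the enumerative consequence, the bijection identifies pointed rooted hypermaps with increment $k$ with pairs $(\mathrm{a},\mathrm{b})\in\cA_k\times\cB_k$ \emph{except} that we must exclude the pairs in which \emph{both} $\mathrm{a}$ and $\mathrm{b}$ have no inner face. Indeed, if $\mathrm{a}$ has no inner face it is a single directed edge, so of type $\cA_{-1}$ (the empty slice), and likewise $\mathrm{b}$ with no inner face is the trivial slice of type $\cB_{-1}$; gluing these two produces the hypermap reduced to a single vertex and a single edge (a loop at $v^*$), which is not a valid pointed rooted hypermap in the required sense — or, more to the point, the case $k=-1$ is where the subtraction occurs. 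Since the generating function of $\cA_k$-slices is $a_k$ and that of $\cB_k$-slices is $b_k$, the generating function of unordered-but-typed pairs is $a_kb_k$, and we must subtract the single forbidden pair, which exists only when $k=-1$ and has weight $t\cdot t^{0}$ — but with the convention that $v^*$ carries no $t$ and the empty/trivial slices each contribute the factor corresponding to their (shared, now-removed) apex, the weight of the forbidden configuration is exactly $t\,\delta_{k,-1}$ (this matches the term $t\delta_{k,-1}$ already isolated in \eqref{eq:decAslices}, the weight of the empty slice). Hence the generating function of pointed rooted hypermaps with increment $k$ is $a_kb_k-t\delta_{k,-1}$.

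The main obstacle I expect is verifying cleanly that the two leftmost geodesics from the endpoints of $e_0$ behave well — that they meet and then coalesce, that the cut along them preserves all directed distances to $v^*$, and that the right-side piece genuinely inherits a \emph{unique} geodesic as its right boundary (not merely a leftmost one). This is the directed analogue of the delicate point in the map case, and it is exactly the place where one must invoke the uniqueness of leftmost geodesics from a fixed corner and the geodesic-preservation argument already used in Propositions~\ref{prop:decCompositePaths} and~\ref{prop:baseremoval}; the bookkeeping of which vertex is counted in which slice (so that $\bols$ multiplies correctly) is routine once this is in place.
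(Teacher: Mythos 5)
Your overall strategy is the paper's (cut along the two leftmost geodesics emanating from the corners adjacent to the root edge; conversely glue the two slices along their boundaries and base edges), but the way you handle the mismatch between the boundary lengths of the two slices is wrong, and this is exactly the delicate point of the proposition. Equality of the increments only forces the \emph{differences} of the boundary lengths to agree, not the lengths themselves: if the base edge of the $\cA_k$-slice carries labels $(i,i+k)$ and that of the $\cB_k$-slice carries labels $(j,j+k)$ with $i\neq j$, then gluing ``from the apex'' as you propose does not identify the base endpoints — after matching $\min(i,j)$ edges, the left corner of the $\cA_k$-slice lands on the vertex of label $i$ of the other slice's right boundary, not on its right corner (label $j$). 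So your claim that the two base edges ``share their endpoints \dots because the increments agree'' fails whenever $i\neq j$, and leaving the surplus edges ``as the right boundary of the glued object'' contradicts the fact that the result must be a hypermap with \emph{no} boundary. The correct gluing (the paper's) identifies the two base edges first and glues outward from the base; the $|i-j|$ leftover green and red edges then all belong to the \emph{same} slice and are glued to one another, and it is the apex of that slice which becomes the pointed vertex, while the apex of the other slice is sent to an ordinary (weighted) vertex of label $|i-j|$.

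The cutting direction has the mirror-image defect: you cut ``down to their meeting point'' and assert that $v^\star$ is the apex of both slices. In general the two leftmost geodesics coalesce strictly before $v^\star$; one must keep cutting (slitting) along the common portion all the way to $v^\star$, so that one slice acquires apex $v^\star$, with its left and right boundaries running along the two sides of the slit, while the other slice has its apex at the coalescence point and strictly shorter boundaries. If you stop at the meeting point you always produce two slices with matching boundary lengths, so your cutting map cannot be surjective onto all pairs in $\cA_k\times\cB_k$ and cannot invert the (corrected) gluing; moreover the weights go wrong, since $v^\star$ would then be an interior vertex of one slice and would wrongly pick up a factor $t$ in $\bols$. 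Your identification of the excluded pair (empty slice, trivial slice) and the conclusion $a_kb_k-t\delta_{k,-1}$ are fine once the bijection itself is repaired along these lines.
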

\begin{figure}
\centering
\includegraphics[width=0.8\linewidth,page=4]{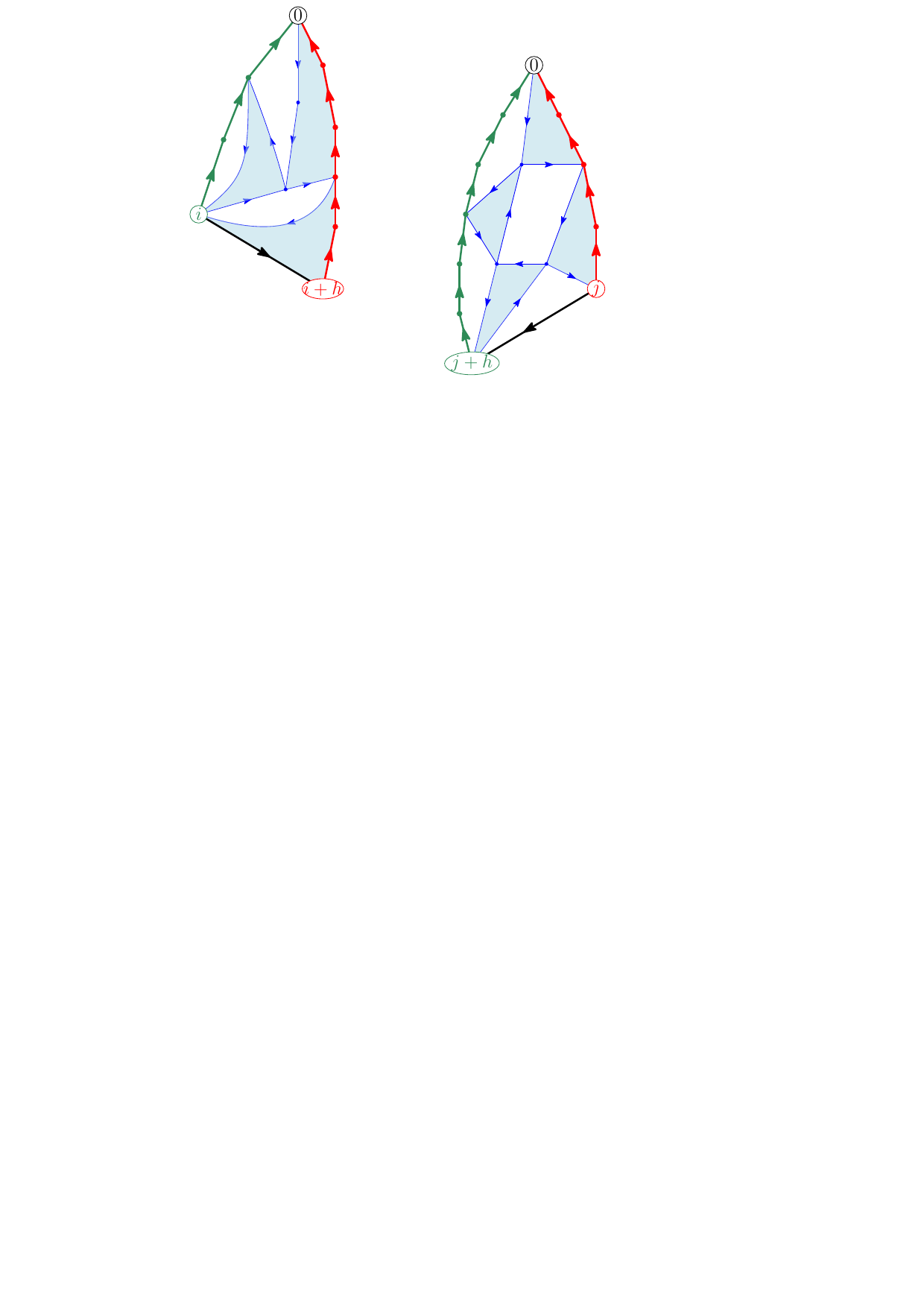}
\caption{Gluing of a slice of a type $\cA_k$ and of a slice of type $\cB_k$ (left), to obtain a pointed rooted hypermap with increment $k$ (right). Here, we have $k=2$. In this example, using the notation of the proof of Proposition~\ref{prop:akbk}, we have $i=3$ and $j=4$. Since $j\geq i$, some boundary edges of the slice of type $\cB_k$ are matched one to another.\label{fig:akbk}}
\end{figure}
\begin{proof}
Fix $k\geq -1$, and let $\mathrm{a}$ and $\mathrm{b}$ be two slices of respective types $\cA_k$ and $\cB_k$, at least one of them having an inner face.
Let $i,j\geq 0$ be such that the base edges of $\mathrm{a}$ and $\mathrm{b}$ are respectively labeled $(i,i+k)$ and $(j,j+k)$.

Then, as illustrated in Figure~\ref{fig:akbk}, we glue the base edge of $\mathrm{a}$ to that of $\mathrm{b}$, and we sequentially glue the green (resp. red) edges of $\mathrm{a}$ with the red (resp. green) edges of $\mathrm{b}$. If $j \geq i$ (resp.\ $i \geq j$), then exactly $j-i$ (resp.\ $i-j$) green edges and red edges of $\mathrm{b}$ (resp.\ $\mathrm{a}$) remain unmatched, and we glue them together to finish the construction. As there is at least one face, the resulting object is a planar map, which we denote by $\rm$. It is a pointed rooted map, the pointed vertex being the apex of $\mathrm{b}$ (resp.\ $\mathrm{a}$), and the root edge being the edge formed by the gluing of the base edges of $\mathrm{a}$ and $\mathrm{b}$.

Since all the gluings performed to construct $\rm$ respect the canonical orientation of the edges, $\rm$ is a hypermap. Its white and black faces consist of the reunion of the white and black inner faces of $\mathrm{a}$ and $\mathrm{b}$. Recall that $\rV'(.)$ stands for the set of vertices not incident to the right boundary of a slice. Then, we observe that $\rV'(\mathrm{a}) \cup \rV'(\mathrm{b})$ is in bijection with $\rV(\rm)$ minus the pointed vertex. Hence, the definition of the weight of a slice ensures that
\begin{equation}
\bols(\mathrm{a})\bols(\mathrm{b})=
t^{|V(\rm)|-1}\prod_{f\in \rFb(\rm)}t^\circ_{\deg(f)}\prod_{f\in \rFn(\rm)}t^\bullet_{\deg(f)}
\end{equation}
which is the wanted weight for $\rm$.

Reciprocally, to recover a pair of slices from a pointed rooted hypermap $\rm$, we proceed as follows. Write $(u,v)$ for the root edge of $\rm$, let $\kappa_u$ and $\kappa_v$ be the corners incident to $u$ and $v$ respectively, and situated just before (respectively after) the root edge in clockwise  order. To decompose $\rm$, we cut along the leftmost geodesics from $\kappa_u$ to $v^\star$ and from $\kappa_v$ to $v^\star$. The fact that it produces two slices follows by the same argument as the proof of Proposition~\ref{prop:decslices}. Since a pointed rooted hypermap has at least one face, at least one of the resulting slices must contain an inner face.

These two operations are clearly inverse to each another, and hence define a bijections between pointed rooted hypermaps and pairs of slices with the same increment and opposite types. The restriction that at least one of the two slices has an inner face excludes the pair consisting of the empty slice and the trivial slice, which accounts for the subtraction of $-t\delta_{k,-1}$ in the proposition.
\end{proof}

As a direct consequence of this proposition, we obtain the following enumerative results:
\begin{corollary}\label{cor:ahbh}
The generating function of pointed rooted hypermaps is equal to 
\begin{equation}
\sum_{k\geq -1}a_kb_k-t
\end{equation}
where we recall that the pointed vertex receives no weight $t$. Moreover, we have
\begin{equation}\label{eq:kakbk}
\sum_{k\geq -1}ka_kb_k=t
\end{equation}
\end{corollary}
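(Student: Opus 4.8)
The first identity should follow with almost no work. Any pointed rooted hypermap has a root edge of some increment $k$, with $k\ge-1$ by the oriented triangle inequality~\eqref{eq:triangleInequality} applied to the canonical labeling $\ell(u)=\pd(u,v^\star)$ ($v^\star$ being the pointed vertex); hence it is counted exactly once in Proposition~\ref{prop:akbk}. I would just sum the identity of that proposition over all $k\ge-1$: the terms $t\delta_{k,-1}$ collapse to a single $t$, giving the generating function $\sum_{k\ge-1}a_kb_k-t$.

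For~\eqref{eq:kakbk} my plan is to evaluate in two ways the generating function $S$ of pointed rooted hypermaps \emph{weighted by the increment of the root edge}. On the one hand, grouping the objects according to the value $k$ of this increment and applying Proposition~\ref{prop:akbk}, $S=\sum_{k\ge-1}k\,(a_kb_k-t\delta_{k,-1})$; this differs from $\sum_{k\ge-1}ka_kb_k$ only by the contribution of the single pair of slices excluded in Proposition~\ref{prop:akbk} — the empty slice together with the trivial slice, of common increment $-1$ and total weight $t$ — which is exactly the constant term occurring in~\eqref{eq:kakbk}. On the other hand, since a pointed rooted hypermap is nothing but a pointed hypermap $\rm$ equipped with a distinguished edge, and since the root edge carries no weight, one has $S=\sum_{\rm}w(\rm)\sum_{e\in\rE(\rm)}\inc{e}$, the outer sum running over pointed hypermaps. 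So it suffices to prove the telescoping identity $\sum_{e\in\rE(\rm)}\inc{e}=0$ for every pointed hypermap $\rm$; this forces $S=0$, and combining the two evaluations yields~\eqref{eq:kakbk}.

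The telescoping identity is the step I expect to need the most care, so I would spell it out. Being boundaryless, $\rm$ is a genuine hypermap, so the contour of each of its faces is a directed cycle in the canonical orientation, along which the increments $\inc{e}=\ell(\text{endpoint of }e)-\ell(\text{origin of }e)$ telescope to $0$. Moreover a hypermap has no bridge: an edge cannot bound the same face on both sides, as that face would then lie simultaneously to its right and to its left, hence be both white and black. Consequently every edge lies on exactly two face contours — those of the incident white and of the incident black face — and is traversed in its canonical orientation in each of them; so summing the (vanishing) increment sums over all faces counts $\inc{e}$ exactly twice for every edge: $2\sum_{e\in\rE(\rm)}\inc{e}=\sum_f\sum_{e\in\partial f}\inc{e}=0$, whence $\sum_{e\in\rE(\rm)}\inc{e}=0$. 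The only remaining loose ends — turning ``pointed rooted $=$ pointed $+$ marked edge'' into an honest identity in $\Ring$, and recalling that increments are always $\ge-1$ — are routine and use no new idea.
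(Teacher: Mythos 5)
Your argument is essentially the paper's own proof: the first identity is obtained by summing Proposition~\ref{prop:akbk} over $k$, and the second by showing that $\sum_{e\in\rE(\bar{\rm})}\inc{e}=0$ for every pointed hypermap $\bar{\rm}$ and then averaging over the choice of the root edge. The only (harmless) variation is in how you prove this telescoping identity: the paper groups the edges according to their incident \emph{black} face, so that each edge is counted exactly once, whereas you sum over the contours of faces of both colors and divide by two, which requires your extra (correct) remark that a boundaryless hypermap has no bridge.

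The one point to fix is the sign bookkeeping in the last step. Since the excluded pair (empty slice, trivial slice) has increment $-1$ and weight $t$, your two evaluations give $0=S=\sum_{k\ge-1}k\,(a_kb_k-t\delta_{k,-1})=\sum_{k\ge-1}ka_kb_k+t$, hence $\sum_{k\ge-1}ka_kb_k=-t$; the phrase ``exactly the constant term occurring in~\eqref{eq:kakbk}'' silently flips this sign so as to land on the displayed $+t$. In fact the paper's own proof produces the same $-t$, and this sign is confirmed independently by setting $p=0$ in~\eqref{eq:Fpcompact}, or by the specialization of Remark~\ref{rem:nonbicolored}, where $\sum_k ka_kb_k=b_1-a_{-1}=-t$. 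So the sign in the printed equation~\eqref{eq:kakbk} appears to be a typo in the statement rather than a defect of your method; still, as written, your final line asserts an equality that your own computation does not yield, and the honest conclusion of your argument is $\sum_{k\ge-1}ka_kb_k+t=0$.
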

\begin{proof}
The first formula follows directly from Proposition~\ref{prop:akbk} by summing over $k$. 
Still by the same proposition, the second formula amounts to the identity
\begin{equation}
  \sum_{\substack{\rm \text{ pointed}\\ \text{rooted hypermap}}} \inc{\rm} w(\rm) = 0
\end{equation}
where $\inc{\rm}$ and $w(\rm)$ denote respectively the increment and the weight of $\rm$, as defined above. We claim that we actually have the stronger identity
$\sum_{\rm \in \bar{\rm}} \inc{\rm} w(\rm) = 0$
which holds for any pointed hypermap $\bar{\rm}$, where the
notation $\rm \in \bar{\rm}$ means that $\rm$ is a pointed rooted
hypermap obtained by selecting a root edge among the edges of
$\bar\rm$. Note that $w(\rm)$ does not depend on the choice of the
root edge, and recall that $\inc\rm$ is, by definition, the difference
between the labels of the endpoint and the origin of the root edge. Thus, the
stronger identity boils down to
\begin{equation} \label{eq:incsum}
  \sum_{e \in \rE(\bar \rm)} \inc{e} = 0, 
\end{equation}
where we recall that $\inc{e}$ is the difference of labels between the two endpoints of $e$. 
In the sum above, let us group
together all edges incident to a same black face. W
claim that their increments sum to zero, since these edges form a
closed counterclockwise directed cycle, along which the labels form a
DSF walk starting and ending at the same position. We
get~\eqref{eq:incsum} by summing over all black faces of $\bar \rm$, which completes the proof.
\end{proof}

\begin{remark}
  The equality~\eqref{eq:kakbk} was already proved in Proposition 2.5 of~\cite{BergereEtAl} by a similar argument, involving mobiles instead of slices.
\end{remark}

\subsection{Pointed disks}
\label{sub:pointed}
We now turn to the case of pointed disks, which may be obtained by
wrapping slices of zero increment:

\begin{proposition}\label{prop:bijPointedRooted}
There exists an explicit weight-preserving bijection between: 
\begin{itemize}
\item Pointed hypermaps with a monochromatic white (respectively black) boundary of degree $p$, in which the pointed vertex carries no weight; and
\item Slices of type $\cA$ (respectively $\cB$) with increment $0$ and base length $p$.
\end{itemize}
\end{proposition}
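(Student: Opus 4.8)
The plan is to mimic the cutting/gluing strategy of Proposition~\ref{prop:akbk}, specializing to the case where the ``pointed rooted hypermap'' is replaced by a pointed disk with a monochromatic boundary. Concretely, fix a pointed hypermap $\rm$ with a white monochromatic boundary of degree $p$ (the black case being symmetric), with pointed vertex $v^\star$ carrying no weight, and let $\ell(u)=\pd(u,v^\star)$ be its canonical labeling (finite for all $u$, since $\rm$ has only monochromatic boundaries and is thus strongly connected by Remark~\ref{rem:access}). Let $\kappa_0$ be the root corner of the boundary, and let $(\kappa_0,\kappa_1,\ldots,\kappa_{p-1},\kappa_p=\kappa_0)$ be the corners incident to the boundary face, listed in the order in which they are met along the contour (which, since the boundary is monochromatic, is a directed cycle oriented consistently). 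First I would cut along the leftmost geodesic $\gamma$ from $\kappa_0$ to $v^\star$; this ``opens'' the pointed disk into a slice-like object whose left and right boundaries are both copies of $\gamma$. Since $\gamma$ is a leftmost geodesic and the boundary contour is a directed cycle returning to its starting label, the resulting object is a slice of type $\cA$ whose base is the (now opened) boundary contour of length $p$, with left and right corners both of label $\ell(v(\kappa_0))$, hence increment $0$.

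The key points to verify, in order, are: (i) cutting along $\gamma$ indeed yields a valid slice — the left boundary is a geodesic (it is $\gamma$), the right boundary is the \emph{unique} geodesic from its right corner to $o$ (this needs the leftmost-geodesic argument already used in the proof of Proposition~\ref{prop:decslices}, namely that once one has cut along the leftmost geodesic, no other geodesic survives on the right side), and the left and right boundaries meet only at the apex; (ii) the base has length exactly $p$ and, being the image of a directed cycle of the original map, is a directed path whose endpoints carry equal labels, so the increment is $0$ and the slice is of type $\cA$; (iii) the weight is preserved — here the convention in~\eqref{eq:defWeightslice} that vertices on the right boundary of a slice receive no weight is exactly what compensates for the pointed vertex $v^\star$ receiving no weight in $\rm$ (the vertices of $\gamma$, including $v^\star=v(o)$, are doubled, and only the left copies count), so $\bols(\rm_{\mathrm{slice}})=w(\rm)$. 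For the inverse map, given a slice $\rm'$ of type $\cA$ with increment $0$ and base length $p$, one glues its left boundary to its right boundary (they have the same length since the increment is $0$ and both are geodesics from a vertex of the same label to the apex — this equality of lengths is where increment $0$ is used), identifies $l$ with $r$, and the apex becomes the pointed vertex; the base becomes the contour of a monochromatic white boundary of degree $p$, rooted at the corner coming from $l=r$.

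The main obstacle I expect is part (i) together with showing the two operations are mutually inverse, in particular that \emph{gluing followed by cutting} recovers the original slice. This is the same subtlety flagged at the end of the proof of Proposition~\ref{prop:decCompositePaths}: one must check that after gluing the left and right boundaries of $\rm'$, the path along which they were identified is precisely the leftmost geodesic from $\kappa_0$ to $v^\star$ in the glued map. This follows because the right boundary of a slice is by definition the \emph{unique} geodesic from $r$ to $o$, hence a fortiori the leftmost one (Remark~\ref{rem:leftmost}), and gluing a geodesic to a geodesic of equal length along matching labels does not create shortcuts (the directed distances to the apex are unchanged, by the same argument as in the proof of Proposition~\ref{prop:baseremoval}: no glued edge lies on a new geodesic towards $v(o)$). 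Conversely, \emph{cutting followed by gluing} trivially recovers $\rm$ since there is a unique way to reattach the two copies of $\gamma$. A secondary point to watch is the degenerate case $p=1$ and small maps (e.g. the map reduced to a single vertex, for which $F_0^\circ=t$ by convention); I would treat, or explicitly exclude, these boundary cases in passing, noting that the bijection as stated concerns $p\geq 1$ so the empty/trivial slice subtleties of Proposition~\ref{prop:akbk} do not intervene here.
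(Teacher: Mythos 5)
Your proposal follows essentially the same route as the paper: the paper proves the refined statement (Proposition~\ref{prop:decPointedRooted}) by cutting along the leftmost geodesic from the root corner to the pointed vertex, exactly as you do, and inverts it by identifying the left and right boundaries (of equal length since the increment is $0$), with the identified path being the leftmost geodesic in the glued map. One small bookkeeping point: the cut does \emph{not} duplicate the apex (the endpoint $v^\star$ of the geodesic remains a single vertex, incident to the right boundary and hence unweighted), which is exactly why the number of weighted vertices of the slice is $|\rV(\rm)|-1$; with your literal accounting (``$v^\star$ is doubled and only left copies count'') the weight would come out off by a factor $t$, so this detail should be stated correctly even though your conclusion $\bols(\rm_{\mathrm{slice}})=w(\rm)$ is the right one.
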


Combining this proposition with Corollary~\ref{cor:genSerComp}, we obtain:
\begin{corollary}\label{cor:pointed}
  For any $p\in \Z_{\geq 0}$, the generating functions
  $\frac{\partial F_p^{\circ}}{\partial t}$ and
  $\frac{\partial F_p^{\bullet}}{\partial t}$ of pointed hypermaps with
  a monochromatic boundary of degree $p$, respectively white or black,
  are given by:
\begin{equation}\label{eq:pointedEnumF}
\frac{\partial F_p^{\circ}}{\partial t}= P^\circ_{p,0}=[z^0]x(z)^p \qquad \text{and}\qquad \frac{\partial F_p^{\bullet}}{\partial t} = P^\bullet_{p,0}=[z^0]y(z)^p.
\end{equation}
\end{corollary}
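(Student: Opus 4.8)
The plan is to obtain the corollary as an immediate consequence of the weight-preserving bijection of Proposition~\ref{prop:bijPointedRooted} together with the enumerative identity of Corollary~\ref{cor:genSerComp}: no further combinatorial construction is needed, only a short bookkeeping argument relating the $t$-derivative of $F_p^\circ$ (resp.\ $F_p^\bullet$) to pointed hypermaps.

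First I would spell out that differentiating with respect to the vertex weight $t$ is exactly the operation of pointing a vertex. Starting from the definition $F_p^\circ = \sum_{\rm} w(\rm)$ with $w$ as in~\eqref{eq:Boltzmannweight}, the product rule gives
\begin{equation*}
  \frac{\partial F_p^\circ}{\partial t} = \sum_{\rm} |\rV(\rm)|\, t^{|\rV(\rm)|-1} \prod_{f\in\rFb(\rm)} t^\circ_{\deg(f)} \prod_{f\in\rFn(\rm)} t^\bullet_{\deg(f)},
\end{equation*}
where the sum runs over hypermaps with a white rooted boundary of degree $p$. Distributing the factor $|\rV(\rm)|$ over the choice of a distinguished vertex $v^*\in\rV(\rm)$, we recognize precisely the generating function of pointed hypermaps with a monochromatic white boundary of degree $p$ in which the pointed vertex carries no weight $t$ (the weight of such a pointed hypermap being $t^{|\rV(\rm)|-1}$ times the face weights). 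The same computation applies verbatim to $F_p^\bullet$.

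Next I would apply Proposition~\ref{prop:bijPointedRooted}, which puts this family of pointed hypermaps in weight-preserving bijection with slices of type $\cA$ (resp.\ type $\cB$) having increment $0$ and base length $p$; hence $\partial F_p^\circ/\partial t$ (resp.\ $\partial F_p^\bullet/\partial t$) equals the generating function $\sum \bols(\rm)$ of those slices. Finally, Corollary~\ref{cor:genSerComp} identifies, for any $h\in\Z$, the generating function of slices of type $\cA$ (resp.\ type $\cB$) with base length $p$ and increment $h$ as $P^\circ_{p,h} = [z^{-h}]x(z)^p$ (resp.\ $P^\bullet_{p,h} = [z^{h}]y(z)^p$); setting $h=0$, in which case the sign of the exponent is irrelevant, yields the claimed formulas $\partial F_p^\circ/\partial t = [z^0]x(z)^p$ and $\partial F_p^\bullet/\partial t = [z^0]y(z)^p$.

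There is essentially no obstacle, since the real work is done in the two cited results; the only care needed is in the edge case $p=0$ and in matching conventions. For $p=0$ one uses $F_0^\circ = F_0^\bullet = t$, so both derivatives equal $1$, which agrees with $[z^0]x(z)^0 = [z^0]y(z)^0 = 1$ (equivalently $P^\circ_{0,0} = P^\bullet_{0,0} = 1$ is the generating function of the empty tuple of elementary slices, consistent with Corollary~\ref{cor:genSerComp}). One should also check that the ``pointed vertex carries no weight'' convention used in Proposition~\ref{prop:bijPointedRooted} is precisely the factor $t^{|\rV(\rm)|-1}$ produced by the differentiation, which it is by construction.
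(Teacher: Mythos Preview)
Your proposal is correct and follows essentially the same approach as the paper, which simply states that the corollary follows by combining Proposition~\ref{prop:bijPointedRooted} with Corollary~\ref{cor:genSerComp}. You have added a bit more detail than the paper (spelling out why $\partial/\partial t$ corresponds to pointing, and checking the $p=0$ edge case), but the argument is the same.
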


To prove Proposition~\ref{prop:bijPointedRooted}, we proceed as for Proposition~\ref{prop:decComposite} by establishing a refined bijective correspondence. 
\begin{definition}\label{def:boundaryLabelCondition}
Fix a DSF walk ${\bm \pi}=(\pi_0,\pi_1,\ldots,\pi_p)$ of length $p$ starting and ending at $0$. 
We say that a pointed hypermap $(\rm,v^\star)$ with a monochromatic boundary has \emph{label boundary condition given by $\bm \pi$} if
\[
  \bm \pi = \Big(\pd(\rho_i,v^\star)-\pd(\rho,v^\star)\Big)_{0\leq i \leq p},
\]
where $(\rho_i)$ is the sequence of corners incident to the boundary, started at the marked corner $\rho$ and listed in the order given by the orientation of the edges, see Figure~\ref{fig:pointedRooted}. 
\end{definition}

Since the label boundary condition of any hypermap is a DSF walk, Proposition~\ref{prop:bijPointedRooted} is a direct corollary of the following result:
\begin{proposition}\label{prop:decPointedRooted}
For $p\in \mathbb{Z}_{\geq 0}$, fix a DSF walk ${\bm \pi}$ of length $p$ starting and ending at $0$. Then, there exists a weight-preserving bijection between: 
\begin{itemize}
\item Pointed hypermaps with a monochromatic white (resp. black) boundary, label boundary condition given by $\bm \pi$, and no weight assigned to the pointed vertex; and,
\item Slices of type $\cA$ (resp. $\cB$), and base condition given by $\bm \pi$. 
\end{itemize}
\end{proposition}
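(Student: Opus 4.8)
The plan is to mimic the cutting/gluing strategy used in the proof of Proposition~\ref{prop:decComposite}, but now with the wrapping operation of Proposition~\ref{prop:akbk} replaced by a single cut. First I would handle the case $p=0$ separately: a pointed hypermap with a monochromatic boundary of degree $0$ is a single marked vertex, matching the empty slice (type $\cA$) or the trivial slice (type $\cB$) of base length $0$; the DSF walk is the trivial walk, and both sides are singletons. For $p\geq 1$, fix a pointed hypermap $(\rm,v^\star)$ with a monochromatic white boundary with label boundary condition $\bm\pi$, marked boundary corner $\rho$, and boundary corners $(\rho_0=\rho,\rho_1,\ldots,\rho_{p})$ listed along the orientation of the boundary edges (so $\rho_p$ is again incident to the same vertex as $\rho_0$ after going once around; here I should be careful: the degree of the boundary face equals $p$, so there are $p$ edges on its contour and $p$ corners, with $\rho_p$ identified to $\rho_0$ only as a \emph{corner} index when the boundary contour is a simple cycle — in general I take the $p$ distinct corners $\rho_0,\ldots,\rho_{p-1}$ and set $\rho_p$ to be the corner reached after the $p$-th boundary edge, which is incident to $\v(\rho_0)$). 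The cut: consider the leftmost directed geodesic $\gamma$ from $\rho_0$ to $v^\star$ and cut the map open along $\gamma$. This turns the outer boundary face into the base of a slice: the boundary contour (read from $\rho_0$ around) becomes the base, the two copies of $\gamma$ become the left and right boundaries, and $v^\star$ splits into the apex. One verifies, exactly as in Proposition~\ref{prop:decComposite} and Proposition~\ref{prop:akbk}, that the right-hand copy of $\gamma$ is the \emph{unique} geodesic from $\v(r)$ to $\v(o)$ (this is where leftmost-ness is used), that the left copy is the leftmost geodesic from $\v(l)$ to $\v(o)$, and that the two meet only at the apex; hence the result is a genuine slice of type $\cA$. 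The base condition is $\bm\pi$ because cutting does not change directed distances to $v^\star$ (the removed edges of the boundary contour are not on any directed geodesic to $v^\star$ — or rather, the distances are computed in $\rm$, and each boundary corner $\rho_i$ keeps its label), so $\ell(\kappa_i)-\ell(l)=\pd(\rho_i,v^\star)-\pd(\rho_0,v^\star)=\pi_i$. The increment is $\pi_p-\pi_0=0$, consistent with base length $p$ and increment $0$. Since vertices on the right boundary of the slice are exactly the (second copies of) vertices of $\gamma$, which in $\rm$ form the removed geodesic together with $v^\star$, the weight bookkeeping $\bar w(\text{slice})=w(\rm)$ works precisely because the pointed vertex of $\rm$ carries no weight and the right-boundary vertices of a slice carry no weight.

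Conversely, given a slice $\rm$ of type $\cA$ with base condition $\bm\pi$ (so increment $0$, base length $p$), the gluing operation identifies the left boundary with the right boundary, matching the $i$-th green edge with the $i$-th red edge for $i=1,\ldots,\min(k,\tilde k)$ where $k,\tilde k$ are the lengths of the two boundaries; since $\ell(l)=\ell(r)$ (increment $0$) one checks that in fact $k=\tilde k$ automatically — wait, that is not quite forced, so more carefully: as in Proposition~\ref{prop:akbk}, if the lengths differ we glue the leftover green or red edges to each other, and because the increment is $0$ the two apex corners merge into a single marked vertex $v^\star$, leaving the base as a monochromatic white boundary face of degree $p$, rooted at the corner coming from $l$. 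Directed distances to $v^\star$ in the glued map coincide with directed distances to the apex in the slice (an argument analogous to the surjectivity argument in Proposition~\ref{prop:baseremoval}: the identifications along geodesic boundaries do not create shortcuts, since left and right boundaries were already geodesics to the apex), so the label boundary condition is recovered as $\bm\pi$. That these two maps are mutually inverse follows as in Proposition~\ref{prop:decComposite}: cutting the glued map along the leftmost geodesic from the root boundary corner recovers precisely the seam along which we glued, because that seam \emph{was} a leftmost geodesic from $\v(l)$ to the apex in the slice. The type $\cB$ / black-boundary case is identical after exchanging the roles of $l$ and $r$ and reversing orientations, as usual.

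The main obstacle I expect is the verification that after gluing, the recovered seam is exactly the leftmost geodesic from the marked boundary corner to $v^\star$, so that cutting undoes gluing — equivalently, that in the original pointed hypermap the leftmost geodesic from $\rho_0$ to $v^\star$ does not run along boundary edges. One needs to argue that a leftmost geodesic from a boundary corner to $v^\star$ immediately leaves the boundary face and never returns to it (otherwise the cut would not open the map into a disk-shaped slice); this uses that the boundary face is monochromatic and the canonical orientation makes its contour a directed cycle, so any geodesic touching it could be rerouted, and leftmost-ness pins down the choice. The secondary subtlety is the bookkeeping when the boundary contour is not a simple cycle (the base of the slice may revisit vertices or share vertices/edges with the left/right boundaries), but Definition~\ref{def:slice} explicitly allows this, and the label boundary condition being a DSF walk (each boundary edge has increment $\geq -1$ since it is a directed edge) guarantees compatibility; the rest is the same routine induction on $p$ as in the proof of Proposition~\ref{prop:decCompositePaths}, which I would not reproduce in detail.
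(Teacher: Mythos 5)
Your construction is the same as the paper's: cut the pointed disk along the leftmost directed geodesic from the root corner to $v^\star$, glue the left and right boundaries back in the converse direction, and check the two operations are inverse because the seam of the gluing is exactly that leftmost geodesic (as in Proposition~\ref{prop:decCompositePaths}). The core of the argument is correct.

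Three points need fixing, though none of them invalidates the approach. First, in the degenerate case $p=0$ the matching slice is the slice reduced to a single vertex, not the empty or trivial slice: those consist of a single \emph{base} edge (base length $1$) and have increment $-1$, so they do not occur here at all. Second, in the converse direction the hesitation about unequal boundary lengths is unfounded: the left boundary is a geodesic from $l$ to the apex, hence has length $\ell(l)$, the right boundary is the unique geodesic from $r$ to the apex, hence has length $\ell(r)$, and increment $0$ forces $\ell(l)=\ell(r)$; no leftover edges can occur, and invoking the wrapping-style gluing of leftover edges from Proposition~\ref{prop:akbk} would in any case be wrong here, since it would destroy the boundary face of degree $p$. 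Third, your ``main obstacle'' is both unnecessary and, as a statement, false: the leftmost geodesic from the root corner to $v^\star$ \emph{can} run along edges of the boundary contour --- for instance when the only outgoing edge at the root vertex is the contour edge leaving it --- and no rerouting argument can prevent this. But nothing needs to be argued: cutting along such a geodesic still produces a valid slice, precisely because Definition~\ref{def:slice} allows the base to share vertices and edges with the left and right boundaries (the shared contour edge becomes an edge traversed twice by the outer face, once as a base edge and once as a boundary edge), and the cut/glue inverse argument is unaffected. This is exactly the observation you make yourself in your ``secondary subtlety,'' which supersedes the purported obstacle.
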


\begin{figure}[t!]
\centering
\includegraphics[width=0.9\linewidth]{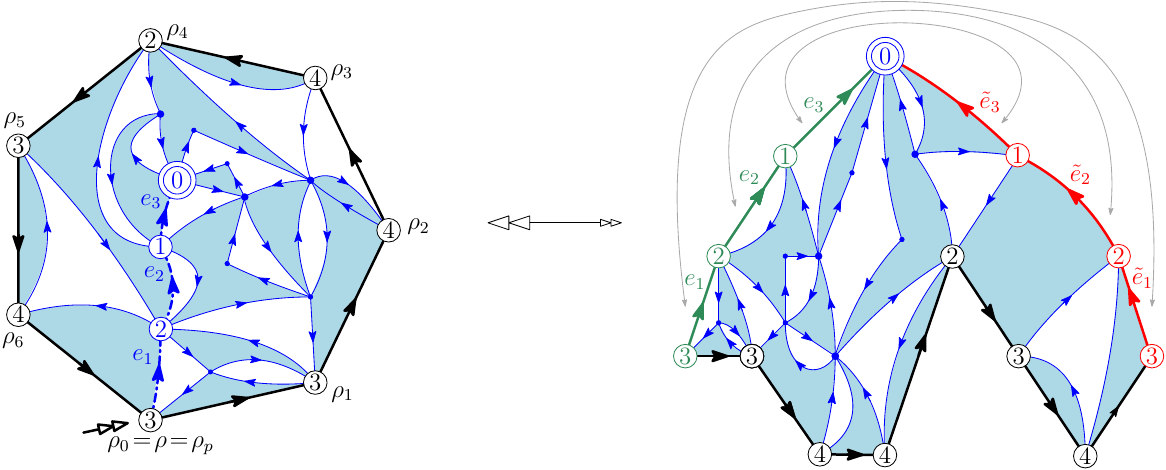}
\caption{\label{fig:pointedRooted}Illustration of the opening of a pointed and rooted hypermap into a slice of increment 0. The label boundary condition of this map is given by $\bm\pi = (0,0,1,1,-1,0,1,0)$.\\
The pointed vertex is encircled and the leftmost geodesic from the root corner to the pointed vertex is represented in bold dashed edges.}
\end{figure}
\begin{proof}
Start from a pointed and rooted hypermap $(\rm,v^\star)$ equipped with its canonical labeling. Write $(e_1,\ldots,e_k)$ for the sequence of edges in the the leftmost geodesic from the root corner $\rho$ to the pointed vertex $v^\star$ and let $(\gamma_0=\v(\rho),\ldots,\gamma_k=v^\star)$ be the corresponding sequence of vertices, see Figure~\ref{fig:pointedRooted}.

To obtain a slice, we proceed as in the proof of Proposition~\ref{prop:decCompositePaths} and cut along this geodesic.  
 The map obtained is clearly a slice, with base condition given by $\bm \pi$. Moreover, it has the same weight as $\rm$ (recall from~\eqref{eq:defWeightslice} that the vertices incident to the right boundary of a slice do not contribute to the weight).
\medskip

Reciprocally, given a slice with $0$ increment, we identify its left and right boundaries (which have the same length since the increment is $0$). The corners $l$ and $r$ of the slice are identified and we root the resulting hypermap at this corner. The sequence of identified edges forms the leftmost geodesic from the root corner to the pointed vertex. Hence, this is the inverse operation of the opening procedure described above, which concludes the proof.
\end{proof}

\begin{remark}
  In a hypermap, the number of edges is equal to the number of corners
  incident to white faces. Therefore, pointed rooted hypermaps (as
  defined in Section~\ref{sub:pointedRootedMaps}) are in bijection
  with pointed hypermaps with a marked corner incident to a white
  face, i.e.\ pointed hypermaps with a white boundary of arbitrary
  degree.  Of course the previous assertions remain true if we replace
  white by black. Thus, the generating function of pointed rooted hypermaps
  is given by the three expressions
  \begin{equation}
    \sum_{k\geq -1}a_kb_k-t = \sum_{p \geq 1} t_p^\circ \frac{\partial F_p^{\circ}}{\partial t} = \sum_{p \geq 1} t_p^\bullet \frac{\partial F_p^{\bullet}}{\partial t}.
  \end{equation}
  where the first is Corollary~\ref{cor:ahbh}, and in the second and
  third we need to incorporate the weight $t_p^\circ$ and
  $t_p^\bullet$ respectively of the boundary face.
  Corollary~\ref{cor:pointed} thus provides another way to enumerate
  pointed rooted hypermaps.
\end{remark}

\subsection{Trumpets and cornets}\label{sub:trumpets}
We investigate in this section the wrapping of general slices, with positive or negative increment. By doing so, we will obtain hypermaps with two boundaries satisfying some additional constraints, which we now define.

\begin{definition}\label{def:tight}
Given a planar hypermap with two boundaries, a \emph{separating cycle} is a directed cycle of edges that separates the hypermap into two regions, each containing one of the boundaries.
A monochromatic boundary is said to be \emph{tight} if its contour is a separating cycle of minimal length among all separating cycles oriented in the same direction. Moreover, it is said to be \emph{strictly tight} if its contour is the unique such cycle.
\end{definition}

\begin{definition}\label{def:trumpetCornet}
A \emph{trumpet} is a hypermap with two monochromatic boundaries, the first one being rooted, and the second one being tight, unrooted and \emph{black}. A \emph{cornet} is a hypermap with two monochromatic boundaries, the first one being rooted, and the second one being strictly tight, unrooted and \emph{white}.

The color of the rooted boundary is a priori unspecified: if it is black (respectively white) the trumpet or the cornet is said to be \emph{black} (respectively \emph{white}).
We define the \emph{perimeter} of a cornet or a trumpet as the degree of its rooted boundary, and its \emph{girth} as the degree of its unrooted (tight or strictly tight) boundary. 
\end{definition}

\begin{figure}[h!]
\centering
\includegraphics[width=0.8\linewidth]{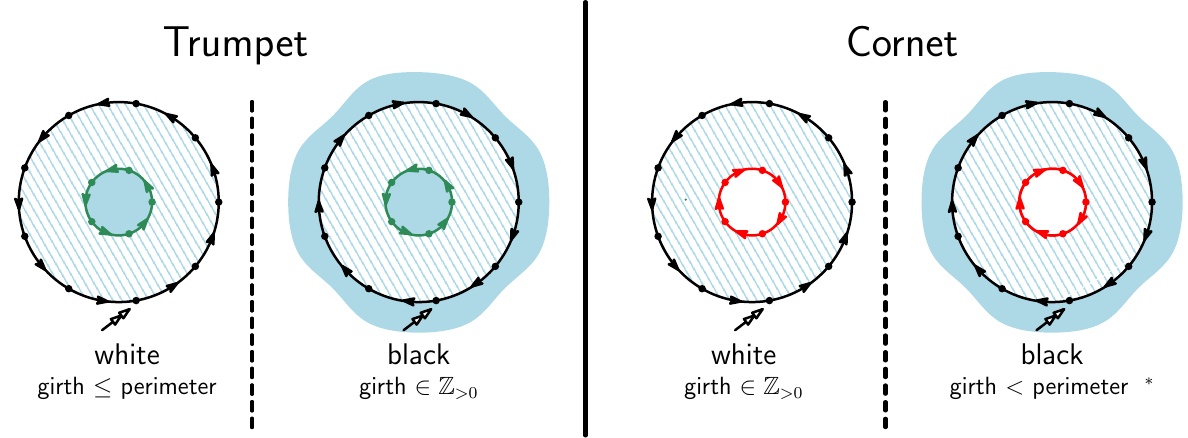}
\caption{Schematic representation of trumpets and cornets. Tight boundaries are represented in green and strictly tight boundaries in red.\\
\emph{(${}^*$ In a black cornet, the perimeter is equal to the girth if and only if the cornet is reduced to a cycle of edges.
\label{fig:TrumpetCornet}})}
\end{figure}

Note that, for a white trumpet, the girth is not larger than the
perimeter, since the contours of the two boundaries are oriented in
the same direction. The same property holds for a black cornet, which
in the case of equality is necessarily reduced to a single directed cycle. For
black trumpets and white corners, the girth may be larger than the
perimeter, since the contours of the two boundaries are oriented in opposite directions.

The main result of this section is the following bijective correspondence, illustrated on Figure~\ref{fig:trumpets}: 
\begin{proposition}[Bijection between trumpets/cornets and
  slices]\label{prop:trumpets}
For any $p,k\in \mathbb{Z}_{> 0}$, there exists a weight-preserving bijection between: 
\begin{itemize}
\item White (respectively black) trumpets with perimeter $p$ and girth $k$, and
\item Slices of type $\cA$ (respectively $\cB$), increment $-k$ (respectively $k$), and with a base of length $p$.
\end{itemize}
There is also a weight-preserving bijection between:
\begin{itemize}
\item White (respectively black) cornets, with perimeter $p$ and girth $k$, and where no weight is given to the vertices incident to the strictly tight boundary; and,
\item Slices of type $\cA$ (respectively $\cB)$, increment $k$ (respectively $-k$),  and with a base of length $p$.
\end{itemize}
\end{proposition}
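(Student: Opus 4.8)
The plan is to prove Proposition~\ref{prop:trumpets} by an explicit cut/glue construction, essentially the same ``wrapping'' idea used for pointed disks (Proposition~\ref{prop:bijPointedRooted}) but now for slices of nonzero increment, where the left and right boundaries have different lengths. I would treat the white trumpet case first and obtain the others by symmetry of colors and by the analogous argument for cornets. Recall from Corollary~\ref{cor:genSerComp} that a slice of type $\cA$ with base of length $p$ carries the data of its base condition, a DSF walk $\bm\pi=(\pi_0,\dots,\pi_p)$ with $\pi_0=0$; its increment is $\inc\rm=\pi_p$, and here we want $\inc\rm=-k<0$. As in Section~\ref{sub:pointed}, I would set up a refined statement in terms of the boundary label condition: given such a slice with left corner $l$, right corner $r$, apex $o$, and canonical labeling $\ell$, we have $\ell(l)=k+\ell(r)\ge k$, so the left boundary (of length $\ell(l)$) is strictly longer than the right boundary (of length $\ell(r)$). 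The wrapping operation identifies the right boundary with the \emph{bottom part} of the left boundary, edge by edge, matching the $\ell(r)$ red edges to the last $\ell(r)$ green edges; the top $k$ green edges remain unmatched and form a directed cycle of length $k$. This cycle becomes the contour of the unrooted, black, tight boundary of girth $k$; the gluing of the base edges' endpoints $l$ and $r$ gives the rooted boundary, whose degree is the base length $p$ (hence perimeter $p$), and whose color (white) is the color of the face incident to the base of an $\cA$-slice — i.e.\ black on one side, so actually I must double-check here that it is the \emph{white} boundary we obtain; the statement asserts white trumpets $\leftrightarrow$ $\cA$-slices, so the orientation of the base edge of an $\cA$-slice ($l\to r$) must be traced carefully to see that the rooted boundary inherits a consistent orientation making it white.

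The key steps, in order, are as follows. \textbf{(1) Forward map (slice $\to$ trumpet).} Start from an $\cA$-slice $\rm$ with increment $-k<0$ and base length $p$; glue the first $\ell(r)$ edges of the left boundary (counted from $o$) to the $\ell(r)$ edges of the right boundary, respecting the canonical orientation, and check that the labels agree along the identification (this is where $\inc\rm=-k$ is used: the two glued vertices at height $h$ have directed distance $h$ to the apex in both boundaries). The remaining top $k$ green edges become a directed cycle; declare it the unrooted boundary, root the other boundary at the corner obtained by merging $l$ and $r$. Verify planarity and that the result is a hypermap (all gluings respect orientation, so face contours stay directed cycles). \textbf{(2) Tightness.} Show the new unrooted boundary is tight: any separating cycle oriented the same way must have length $\ge k$ because such a cycle gives, in the slice before wrapping, a directed path from a boundary vertex to $\v(o)$, and $k$ is precisely the defect $\ell(l)-\ell(r)$. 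For trumpets we only need ``tight'' not ``strictly tight'', which matches the fact that for a general $\cA$-slice the left boundary need not be the unique leftmost geodesic — this is exactly the green/red distinction emphasized after Definition~\ref{def:slice}. \textbf{(3) Reverse map (trumpet $\to$ slice).} Given a white trumpet, cut along the leftmost directed geodesic from the root corner toward the unrooted boundary — more precisely, toward a suitable vertex/corner on the tight cycle determined canonically (e.g.\ the first vertex of the tight cycle reached). Unfolding along this geodesic and ``opening up'' the tight cycle produces a planar hypermap with three distinguished corners; argue as in the proof of Proposition~\ref{prop:decCompositePaths} that this is a slice of type $\cA$ with the right base length and increment $-k$. \textbf{(4) Weight-preservation and inverseness.} By~\eqref{eq:defWeightslice}, vertices on the right boundary of a slice carry no weight, which matches either the fact that they get identified with left-boundary vertices (carrying the weight) under wrapping, or — for cornets — that the strictly tight boundary's vertices carry no weight by the statement's convention; so $\bar w(\rm)=w(\text{trumpet})$. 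Then check that cutting after gluing recovers $\rm$ (the merged green/red path is precisely the leftmost geodesic cut along), and gluing after cutting recovers the trumpet. \textbf{(5) Cornets.} Repeat with $\inc\rm=+k>0$: now the right boundary is \emph{longer} than the left by $k$, the bottom $\ell(l)$ red edges get glued to the left boundary, and the \emph{top} $k$ red edges form the unrooted cycle. Since the right boundary is by definition the \emph{unique} geodesic from $r$ to $o$, this top cycle is a \emph{strictly} tight boundary — this is exactly why cornets require strict tightness. The no-weight convention on its vertices matches the no-weight-on-right-boundary convention of slices, so weight-preservation is immediate. Colors: an $\cA$-slice gives a \emph{white} cornet, a $\cB$-slice a black one — trace the base-edge orientation once more.

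The main obstacle I expect is \textbf{Step (3)}, the reverse direction: one must pin down a \emph{canonical} way to cut a trumpet/cornet so that cutting is a genuine two-sided inverse of wrapping. The subtlety is that the unrooted boundary is only tight (not strictly tight) for trumpets, so its contour need not be the literal leftmost geodesic; one has to argue that after wrapping, the glued green/red path \emph{is} the leftmost geodesic from the rooted corner toward $\v(o)$ in the trumpet, and conversely that cutting along that leftmost geodesic in an arbitrary trumpet yields a slice whose right boundary is the unique geodesic (on the cut-open side) — using the minimality in the definition of ``tight'' to control what happens when the leftmost geodesic hits the tight cycle. A secondary, bookkeeping-level obstacle is getting all the color/orientation conventions consistent across the four cases (white/black $\times$ trumpet/cornet) so that ``type $\cA$, increment $-k$'' really lands on ``white trumpet'' and not its black counterpart; I would fix this once and for all by carefully following the canonical orientation of the base edge through the gluing, as in Figure~\ref{fig:akbk}, and then invoke black$\leftrightarrow$white symmetry for the remaining cases. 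Everything else — planarity, the DSF/base-condition refinement, weight-preservation via $\rV'(\cdot)$ — is routine along the lines already established in Sections~\ref{sub:decompositionElementary} and~\ref{sub:pointed}.
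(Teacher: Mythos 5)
Your forward direction (slice $\to$ trumpet/cornet) is essentially the paper's wrapping, but as written in your step (1) it is wrong: you glue ``counted from $o$'' so that identified vertices have equal labels (``directed distance $h$ to the apex in both boundaries''). For a slice of increment $-k\neq 0$ a label-preserving gluing cannot identify $l$ with $r$, so the base never closes up into a monochromatic rooted boundary of degree $p$; instead you get a single non-monochromatic boundary of length $p+k$ and no second boundary at all. The correct gluing (which your opening paragraph describes, and which the paper uses) starts from $l$ and $r$, identifying the left-boundary vertex of label $j+k$ with the right-boundary vertex of label $j$; the $k$ unmatched edges near the apex then close into the directed cycle that becomes the tight boundary. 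This is only a slip, but it is worth fixing because the label shift by $k$ is exactly what makes the tightness argument in your step (2) work.

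The genuine gap is step (3), and your proposed fix does not close it. ``Cut along the leftmost directed geodesic from the root corner toward the unrooted boundary'' is not well defined in the finite trumpet/cornet: a leftmost geodesic needs a target, and ``the first vertex of the tight cycle reached'' is circular since it depends on the geodesic chosen. More seriously, geodesics in an annular hypermap can wind around the central boundary, so directed distances to any fixed vertex of the tight cycle do not coincide with the canonical labeling that the cut-open map must carry, and one must also prove that after cutting the right boundary is the \emph{unique} geodesic to the apex. The paper's proof is built precisely to handle this: it passes to the universal cover, proves the directed wrapping lemma $\pd(v,T^{\pm d}(v))\ge kd$ (Lemma~\ref{lemma:wrappingLemma}), deduces that the lift $\gamma$ of the tight boundary is a bi-infinite directed geodesic, strict in the cornet case (Lemma~\ref{lem:geodesicContour}), introduces the directed Busemann function $\dB_\gamma$ so that leftmost infinite geodesics towards $\gamma$ are defined step by step without choosing a target, proves that they reach and coalesce with $\gamma$ (Lemma~\ref{lemma:coalescence}, whose proof differs substantially between trumpets and cornets), and then cuts the universal cover along the two leftmost geodesics issued from a lift $\rho$ of the root corner and from $T(\rho)$, the fundamental domain between them being the slice; the verification that cutting inverts wrapping is also carried out in the universal cover. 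Your appeal to ``argue as in Proposition~\ref{prop:decCompositePaths}'' does not transfer: that argument cuts a simply connected slice along geodesics to a marked apex, where coalescence with the existing boundaries is automatic, whereas here the difficulties (winding, geodesicity of the tight contour, coalescence, uniqueness of the new right boundary) come from the annular topology and are exactly what the universal-cover/Busemann machinery supplies. As it stands, your step (3) restates the desired conclusion rather than proving it.
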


\begin{remark}
By considering a pointed vertex as a (degenerate) strictly tight boundary of degree 0, this proposition also makes sense for $k=0$, and we retrieve the statement of Proposition~\ref{prop:bijPointedRooted} for pointed disks.
\end{remark}

Proposition~\ref{prop:trumpets} has the following enumerative corollary:

\begin{corollary}
  \label{cor:trumpenum}
  Given $p,k\in \mathbb{Z}_{> 0}$,
  the generating function of white (respectively black) trumpets with perimeter $p$ and girth $k$ is equal to $P^\circ_{p,-k}=[z^k]x(z)^p$
  (respectively $P^\bullet_{p,k}=[z^k]y(z)^p$). 

  Similarly, the generating
  function of white (respectively black) cornets, with perimeter $p$ and girth $k$, where no weight is given to the vertices incident to the strictly tight boundary, is equal to
  $P^\circ_{p,k}=[z^{-k}]x(z)^p$ (respectively
  $P^\bullet_{p,-k}=[z^{-k}]y(z)^p$).
\end{corollary}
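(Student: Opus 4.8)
The plan is to obtain Corollary~\ref{cor:trumpenum} as a direct combination of two results already in hand: the bijections of Proposition~\ref{prop:trumpets} between trumpets/cornets and general slices, and the identification, in Corollary~\ref{cor:genSerComp}, of the quantities $P^\circ_{p,h}=[z^{-h}]x(z)^p$ and $P^\bullet_{p,h}=[z^{h}]y(z)^p$ as the generating functions of slices of type $\cA$, respectively $\cB$, with a base of length $p$ and increment $h$. For each of the four families, I would first rewrite its generating function, via Proposition~\ref{prop:trumpets}, as a sum over slices of the prescribed type, base length and increment, and then read off that sum from Corollary~\ref{cor:genSerComp}.

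Carrying this out requires only keeping track of signs. Recall the convention (stated just after Corollary~\ref{cor:genSerComp}) that in $x(z)^p$ the exponent of $z$ records \emph{minus} the increment of a type-$\cA$ slice, whereas in $y(z)^p$ it records \emph{plus} the increment of a type-$\cB$ slice. Proposition~\ref{prop:trumpets} then gives: white trumpets of perimeter $p$ and girth $k$ are in weight-preserving bijection with type-$\cA$ slices of base length $p$ and increment $-k$, hence are counted by $P^\circ_{p,-k}=[z^{k}]x(z)^p$; black trumpets of perimeter $p$ and girth $k$ correspond to type-$\cB$ slices of base length $p$ and increment $k$, hence are counted by $P^\bullet_{p,k}=[z^{k}]y(z)^p$; white cornets of perimeter $p$ and girth $k$ (no weight on the strictly tight boundary vertices) correspond to type-$\cA$ slices of base length $p$ and increment $+k$, hence are counted by $P^\circ_{p,k}=[z^{-k}]x(z)^p$; and black cornets of perimeter $p$ and girth $k$ correspond to type-$\cB$ slices of base length $p$ and increment $-k$, hence are counted by $P^\bullet_{p,-k}=[z^{-k}]y(z)^p$. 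These are exactly the four asserted formulas.

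I do not expect any real obstacle here, since all the combinatorial content is already contained in Proposition~\ref{prop:trumpets}. The two points deserving a word of care are, first, the matching of weight conventions: the slice weight $\bar w$ ignores the vertices on the right boundary, which is precisely what the clause ``no weight given to the vertices incident to the strictly tight boundary'' records on the cornet side, and which is already built into the weight-preserving statement of Proposition~\ref{prop:trumpets}; and second, the sign convention distinguishing trumpets (where the girth is, informally, ``subtracted'') from cornets (where it is ``added''), which is exactly what sends them to opposite powers of $z$. Once these conventions are pinned down, the corollary is immediate, and one may note in passing that the degenerate case $k=0$ recovers the pointed-disk statement of Proposition~\ref{prop:bijPointedRooted} and Corollary~\ref{cor:pointed}.
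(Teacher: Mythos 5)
Your proposal is correct and is exactly the paper's (implicit) argument: the corollary is stated as an immediate consequence of the bijection of Proposition~\ref{prop:trumpets} combined with the slice-counting interpretation $P^\circ_{p,h}=[z^{-h}]x(z)^p$, $P^\bullet_{p,h}=[z^{h}]y(z)^p$ from Corollary~\ref{cor:genSerComp}. Your bookkeeping of the sign conventions (increment $-k$ for trumpets, $+k$ for cornets, and the opposite for the black color) and of the weight convention on the strictly tight boundary matches the paper exactly.
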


\begin{remark}
  In the particular case $p=1$, where $P^\circ_{1,k}=a_k$ and $P^\bullet_{1,k}=b_k$ for $k \geq -1$, Corollaries~\ref{cor:trumpenum} and \ref{cor:pointed} give a combinatorial interpretation of $a_k$ and $b_k$ in terms of hypermaps with monochromatic boundaries. Namely:
  \begin{itemize}
  \item for $k \geq 1$, $a_k$ corresponds to white cornets with perimeter $1$ and girth $k$,
  \item for $k \geq 1$, $b_k$ corresponds to black trumpets with perimeter $1$ and girth $k$,
  \item $a_{-1}$ corresponds to white trumpets with perimeter $1$ and girth $1$,
  \item $b_{-1}=1$ corresponds to the unique black cornet with perimeter $1$ and girth $1$, which is reduced to a directed loop.
  \item and $a_0$ and $b_0$ correspond to pointed disks whose boundaries
    have length $1$ and are white and black, respectively.
  \end{itemize}
\end{remark}

The remainder of this section is devoted to the proof of
Proposition~\ref{prop:trumpets}, which is inspired
from~\cite[Sections~7 and 9.3]{irredmaps}
and~\cite[Section~4.4]{Bouttier2024}.

\subsubsection{From slices to trumpets/cornets}

We start by proving the easier direction of Proposition~\ref{prop:trumpets}, which is going from slices to trumpets or cornets. Consider a slice $\rs$ of type $\cA$ with a base of length $p$. Write $k$ for its increment, and suppose first that $k>0$. This implies that its right boundary has $k$ more edges than its left boundary, or equivalently that $\rs$ has $k$ more red edges than green edges. The \emph{wrapping} of $\rs$ is defined as the hypermap obtained by gluing sequentially green edges to red edges starting from $l$ and $r$ respectively. Note that, if we denote by $\ell$ the canonical labeling of $\rs$, a vertex $u$ of the left boundary is identified with the vertex $v$ from the right boundary such that $\ell(v)=\ell(u)+k$. 
The $k$ additional red edges are left unglued, see Figure~\ref{fig:trumpets}. The hypermap obtained has two white monochromatic boundaries: the first one, of degree $p$, inherits the rooting of the original slice whereas the second one, of degree $k$, is unrooted.

To prove that this hypermap is a white cornet as wanted, we need to
check that the unrooted boundary is strictly tight. Any separating cycle oriented in the same direction as its contour, corresponds in $\rs$ to a directed path $\gamma$ from a vertex -- say $v$ -- of the right boundary of $\rs$ to a vertex -- say $u$ -- of its left boundary, such that $u$ and $v$ are identified in the wrapping operation, and so we have $\ell(v)=\ell(u)+k$. The length of $\gamma$ is at least $k$: indeed the concatenation of $\gamma$ with the portion of the left boundary from $u$ to the apex $o$ is a directed path from $v$ to $o$, hence its length $|\gamma|+\ell(u)-\ell(o)$ is at least $\ell(v)-\ell(o)$, which is the length of the right boundary from $v$ to $o$. Moreover, since the right boundary is the unique geodesic from $v$ to $o$ and since the left and right boundaries only meet at $o$, we see that if $|\gamma|=k$, then necessarily $u$ is the apex and $\gamma$ is the unmatched portion of the right boundary, which becomes the contour of the unrooted boundary after wrapping. Hence, the unrooted boundary is strictly tight as claimed.

The case of a slice of type $\cB$ with a negative increment is completely similar, except that we now get a black cornet since the rooted boundary of degree $p$ is black. In the remaining two cases—namely, a slice of type $\cA$ with negative increment or a slice of type $\cB$ with positive increment—the left boundary has $k$ more edges than the right boundary. Consequently, after the wrapping operation, $k$ green edges remain unmatched, forming the contour of an unrooted black monochromatic boundary. The argument that this boundary is tight proceeds along the same lines as above; however, multiple geodesics may exist between vertices on the left boundary. As a result, the unrooted boundary is tight but not necessarily strictly tight, yielding a trumpet rather than a cornet.

\begin{figure}
\centering
\includegraphics[width=0.9\linewidth,page=1]{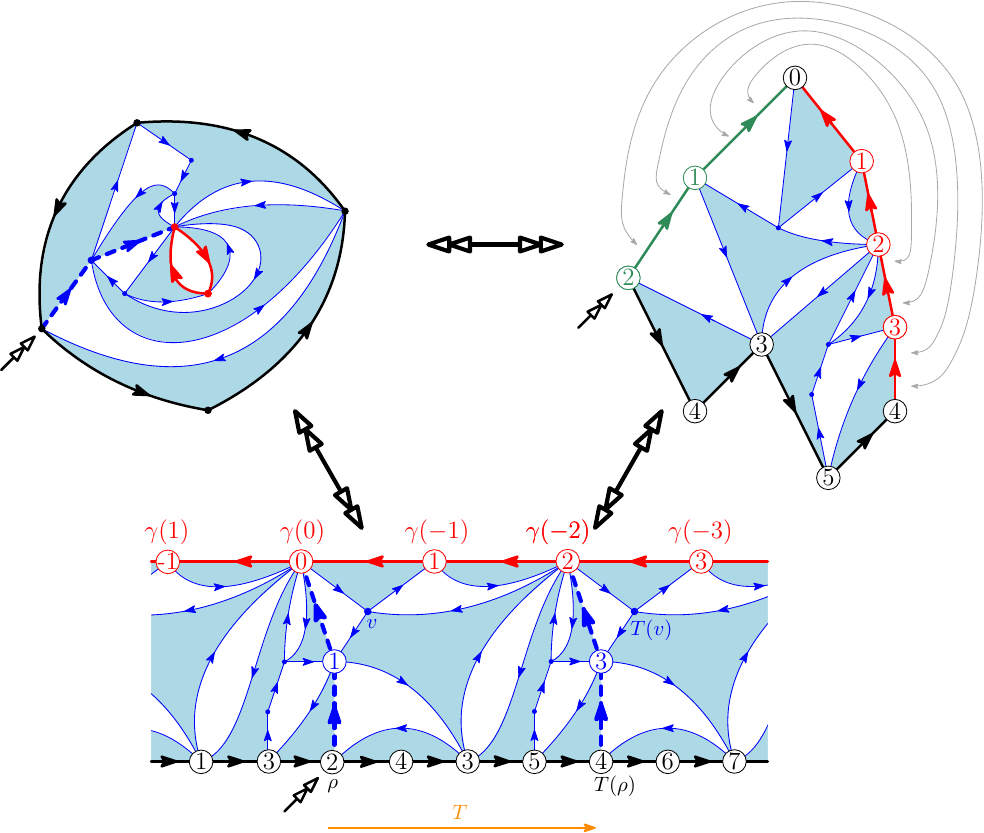}
\caption{\label{fig:trumpets}Illustration of the bijective correspondence between a white cornet with perimeter $p=4$ and girth $k=2$ (top left) and a slice of type $\cA$ with increment $k=2$ and base of length $p=4$ (top right). We pass from the slice to the cornet by gluing green and red edges as displayed, giving rise to the path shown in blue dashed edges in the cornet.\\
  The reverse construction passes through the universal cover of the cornet (bottom). The labels on vertices correspond to the value of the Busemann function $\dB_\gamma$ associated with the bi-infinite directed geodesic $\gamma$, shown in red, which is the preimage of the tight boundary of the cornet. The translation $T$ is an automorphism of the universal cover, which increases the value of $\dB_\gamma$ by $k$. The paths shown in blue dashed edges correspond to the initial segments of the leftmost infinite geodesics towards $\gamma$ that start from an arbitrary preimage of the root corner, and from its translate by $T$. By cutting along them, we get a fundamental domain which is the slice we are looking for.}
\end{figure}

\subsubsection{Infinite directed geodesics and directed Busemann functions}
\label{sec:dirBuse}

To prove the other direction of Proposition~\ref{prop:trumpets} from trumpets to slices, we
first introduce in this section some additional material and concepts
related to infinite directed graphs. Indeed, the slice decomposition
of annular maps, originally introduced in~\cite{irredmaps}, requires
considering their universal cover which is an infinite map. It was
realized in~\cite{Bouttier2024} that the decomposition is most
accurately described using the concept of Busemann function. In the context of the present work, we consider hypermaps whose edges are directed, which necessitates extending the concept of the Busemann function to the directed setting in order to accurately describe the corresponding slice decomposition. 

Throughout this section, we consider an infinite directed graph $\rg$,
planarity playing no role in our discussion. We denote by $\rV(\rg)$ the vertex set of $\rg$, and by $\pd$ the directed graph distance in $\rg$. That is, given $u, v \in \rV(\rg)$, $\pd(u,v)$ is the minimal number of edges in a directed path from $u$ to $v$, with the convention that $\pd(u,v) = +\infty$ if no such path exists. All facts
mentioned in the first paragraph of Section~\ref{sub:odgprelim}, and
in particular the oriented triangle inequality
\eqref{eq:triangleInequality}, apply here mutatis mutandis.
To simplify the presentation, we assume that $\rg$ is simple, so that a directed path can be coded as a sequence of vertices. The extension to multigraphs is straightforward.

\begin{definition}\label{def:infiniteGeo}
  An \emph{infinite directed geodesic} is an infinite directed path
  $\gamma=(\gamma(s))_{s \in \mathbb{Z}_{\geq 0}}$, such that for any
  $s\leq t$, we have $\pd(\gamma(s),\gamma(t))=t-s$. Moreover,
  $\gamma$ is called an \emph{infinite directed strict geodesic} if,
  for any $s\leq t$, $\gamma|_{[s,t]}$ is the unique directed geodesic
  from $\gamma(s)$ to $\gamma(t)$.

\emph{Bi-infinite directed (strict) geodesics} are defined similarly upon replacing $\mathbb{Z}_{\geq 0}$ by $\mathbb{Z}$ in the above definitions. 
\end{definition}

\begin{figure}
  \centering
  \includegraphics[width=.7\textwidth]{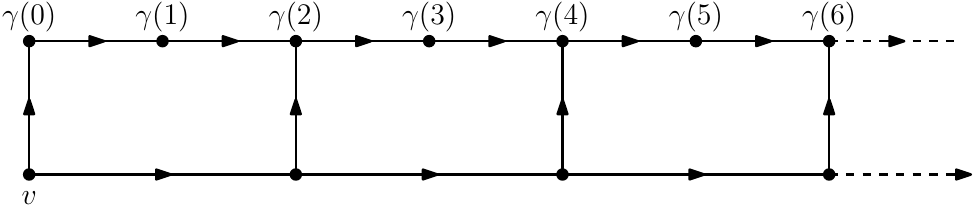}
  \caption{An infinite directed graph with an infinite directed
    geodesic $\gamma$ such that the directed Busemann function
    $\dB_\gamma$ takes negative infinite values: this is the case for
    instance for the vertex $v$ on the bottom left, which is such that
    $\pd(v,\gamma(2s))=s+1$ for all $s$, and hence
    $\pd(v,\gamma(2s))-2s \to -\infty=\dB_\gamma(v)$.}
  \label{fig:InfBusemann}
\end{figure}

Let $\gamma$ be an infinite or bi-infinite directed geodesic, and let
$v\in \rV(\rg)$. The oriented triangle inequality asserts
that
  \begin{equation}\label{eq:BusemannDecreasing}
\pd(v,\gamma(t))\leq \pd(v,\gamma(s))+\pd(\gamma(s),\gamma(t))= \pd(v,\gamma(s))+t-s
\end{equation}
for any $s \leq t$. In other words, the function
$s\mapsto\pd(v,\gamma(s))-s$ is nonincreasing, and thus admits a limit
for $s \to +\infty$. Note that the function takes values in
$\Z \cup \{+\infty\}$, with the natural order, using the convention
that $+\infty-s=+\infty$ for all $s \in \Z$. The limit itself belongs
to $\Z \cup \{+\infty,-\infty\}$: it is equal to $+\infty$ if and only
if $\pd(v,\gamma(s))=+\infty$ for all $s$, that is no vertex on
$\gamma$ is accessible from $v$ by a directed path;
Figure~\ref{fig:InfBusemann} displays an example where the limit is
$-\infty$. This discussion leads to the following:
\begin{definition}
  \label{def:Busemann}
  Given an infinite or bi-infinite directed geodesic $\gamma$, the
  \emph{directed Busemann function} associated with $\gamma$ is the
  function $\dB_\gamma: V(\rg) \to \Z \cup \{\pm \infty\}$ defined by
  \begin{equation}\label{eq:directedBusemann}
    \dB_\gamma(v):= \lim_{s\rightarrow+\infty}\pd(v,\gamma(s))-s, \quad\text{for all }v\in V(\rg).
\end{equation}
\end{definition}

Extra assumptions ensure that the directed Busemann function takes
only finite values:

\begin{lemma}
  \label{lem:busefin}
  Given an infinite or bi-infinite directed geodesic $\gamma$ and
  a vertex $v$ of $\rg$:
  \begin{enumerate}
  \item if there exists $s$ such that $\pd(v,\gamma(s))<+\infty$, then $\dB_\gamma(v)<+\infty$,
  \item if there exists $s$ such that $\pd(\gamma(s),v)<+\infty$, then $\dB_\gamma(v)>-\infty$.
  \end{enumerate}
  (Note that the above assumptions are always satisfied if $\rg$ is strongly connected.)
\end{lemma}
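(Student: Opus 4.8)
The plan is to prove the two statements by bounding the sequence $\pd(v,\gamma(s))-s$ from above (for part 1) and from below (for part 2), using only the oriented triangle inequality \eqref{eq:triangleInequality} together with the defining property of a geodesic, namely that $\pd(\gamma(s),\gamma(t))=t-s$ for $s\le t$. Since the sequence $s\mapsto \pd(v,\gamma(s))-s$ is already known to be nonincreasing (from the discussion preceding Definition~\ref{def:Busemann}), in each case it suffices to exhibit a single finite bound in the appropriate direction: a finite upper bound forces the limit to be $<+\infty$, and a finite lower bound forces it to be $>-\infty$.

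For part 1, suppose $\pd(v,\gamma(s_0))=:D<+\infty$ for some $s_0$. For every $t\ge s_0$ the triangle inequality gives $\pd(v,\gamma(t))\le \pd(v,\gamma(s_0))+\pd(\gamma(s_0),\gamma(t))=D+(t-s_0)$, hence $\pd(v,\gamma(t))-t\le D-s_0$ for all $t\ge s_0$. Taking $t\to+\infty$ yields $\dB_\gamma(v)\le D-s_0<+\infty$. For part 2, suppose $\pd(\gamma(s_0),v)=:D'<+\infty$ for some $s_0$. Here I would first note that, because $\gamma$ is an (infinite or bi-infinite) directed geodesic, $\pd(\gamma(s_0),\gamma(t))=t-s_0$ is finite for every $t\ge s_0$; combined with $\pd(\gamma(s_0),v)<\infty$, the triangle inequality $\pd(v,\gamma(t))\le \pd(v,\gamma(t))$ is the wrong direction, so instead I apply it the other way: $\pd(\gamma(s_0),\gamma(t))\le \pd(\gamma(s_0),v)+\pd(v,\gamma(t))$, i.e.\ $t-s_0\le D'+\pd(v,\gamma(t))$, so $\pd(v,\gamma(t))\ge t-s_0-D'$ and therefore $\pd(v,\gamma(t))-t\ge -s_0-D'$ for all $t\ge s_0$. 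Passing to the limit gives $\dB_\gamma(v)\ge -s_0-D'>-\infty$. This also shows in particular that in the case $\pd(v,\gamma(s_0))=+\infty$ one automatically has $\dB_\gamma(v)=+\infty$, consistent with the convention.

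Finally, the parenthetical remark is immediate: if $\rg$ is strongly connected then $\pd(v,\gamma(s))<+\infty$ and $\pd(\gamma(s),v)<+\infty$ for every $v$ and every $s$, so both hypotheses hold and $\dB_\gamma$ takes values in $\Z$. The only point requiring a little care, and the one I would flag as the main (minor) obstacle, is bookkeeping with the $+\infty$ convention: one must check that the nonincreasing sequence argument from before Definition~\ref{def:Busemann} is compatible with some terms possibly equalling $+\infty$, so that ``nonincreasing and bounded above by a finite constant from some index on'' genuinely forces a finite-or-$-\infty$ limit rather than an ill-defined one. Since the sequence is eventually finite under the hypothesis of part 1 (once $\pd(v,\gamma(s_0))<\infty$, monotonicity keeps all later terms finite), this is not a real difficulty, but it is the place where the argument must be stated precisely.
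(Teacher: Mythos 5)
Your proof is correct and follows essentially the same route as the paper: part 1 via $\pd(v,\gamma(t))\leq \pd(v,\gamma(s_0))+(t-s_0)$ together with the monotonicity of $s\mapsto \pd(v,\gamma(s))-s$, and part 2 via the reversed triangle inequality $t-s_0=\pd(\gamma(s_0),\gamma(t))\leq \pd(\gamma(s_0),v)+\pd(v,\gamma(t))$, exactly as in the paper. One small caveat: your closing aside that $\pd(v,\gamma(s_0))=+\infty$ for a single $s_0$ forces $\dB_\gamma(v)=+\infty$ is not correct (later points of $\gamma$ may still be reachable from $v$; the limit is $+\infty$ only if $\pd(v,\gamma(s))=+\infty$ for \emph{all} $s$), but this remark is not needed for the lemma.
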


\begin{proof}
  The first point has already been discussed above, we only restate it to
  emphasize the symmetry with the second point, which we now establish. Let $s$ be such that $\pd(\gamma(s),v)<+\infty$, then for any
  $t \geq s$, we have
  \begin{equation}
    t-s = \pd(\gamma(s),\gamma(t)) \leq \pd(\gamma(s),v) + \pd(v,\gamma(t)).
  \end{equation}
  Hence $\pd(v,\gamma(t))-t$ is bounded below by
  $-\pd(\gamma(s),v)-s$. Taking $t \to \infty$ we conclude that
  $\dB_\gamma(v)>-\infty$, as wanted.
\end{proof}

We conclude this section by recording some properties that will be useful later:
\begin{lemma}\label{lem:directedBusemann2}
  Let $\gamma$ be an infinite or bi-infinite directed geodesic.
  \begin{enumerate}
  \item For any $u,v \in \rV(\rg)$ such that $(u,v)$ is a directed edge, we have $\dB_\gamma(u)\leq \dB_\gamma(v)+1$, with
    the convention $\pm \infty+1=\pm \infty$.
  \item For any $u \in \rV(\rg)$ such that $\dB_\gamma(u)$ is finite,
    there exists $v  \in \rV(\rg)$ such that $(u,v)$ is a directed edge with
    $\dB_\gamma(u) = \dB_\gamma(v)+1$.
  \item Any directed path $\mathrm p:=(p(0),p(1),\ldots)$ along which
    $\dB_\gamma$ is finite and decreases exactly by $1$ along each
    edge is an infinite geodesic.
  \end{enumerate}
\end{lemma}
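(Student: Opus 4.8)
The three statements are progressively stronger, so I would prove them in order, each time using the directed triangle inequality \eqref{eq:triangleInequality} and passing to the limit $s\to+\infty$ in \eqref{eq:directedBusemann}. For (1), I would fix a directed edge $(u,v)$. For every $s$ in the range of $\gamma$, the triangle inequality gives $\pd(u,\gamma(s))\le \pd(u,v)+\pd(v,\gamma(s))=1+\pd(v,\gamma(s))$. Subtracting $s$ from both sides and letting $s\to+\infty$ yields $\dB_\gamma(u)\le \dB_\gamma(v)+1$, with the stated conventions on $\pm\infty$ handled case by case (if $\dB_\gamma(v)=+\infty$ the inequality is vacuous; if $\dB_\gamma(v)=-\infty$ then $\pd(v,\gamma(s))-s\to-\infty$ forces $\pd(u,\gamma(s))-s\to-\infty$ as well, so $\dB_\gamma(u)=-\infty$).

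For (2), assume $\dB_\gamma(u)\in\Z$. Pick $s$ large enough that $\pd(u,\gamma(s))-s=\dB_\gamma(u)$ (possible since the monotone integer-valued sequence $s\mapsto\pd(u,\gamma(s))-s$ is eventually constant once finite) and moreover $\pd(u,\gamma(s))\ge 1$, so that $u\ne\gamma(s)$ and some geodesic from $u$ to $\gamma(s)$ has a first edge $(u,v)$. Then $\pd(v,\gamma(s))=\pd(u,\gamma(s))-1$. By part~(1), $\dB_\gamma(v)\ge\dB_\gamma(u)-1$; on the other hand $\dB_\gamma(v)\le\pd(v,\gamma(s))-s=\pd(u,\gamma(s))-1-s=\dB_\gamma(u)-1$. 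Hence $\dB_\gamma(v)=\dB_\gamma(u)-1$, i.e. $\dB_\gamma(u)=\dB_\gamma(v)+1$. The one subtlety is ensuring $\pd(u,\gamma(s))\ge 1$ for some such $s$: this holds because, for $\gamma$ an infinite (not bi-infinite) geodesic indexed by $\Z_{\ge0}$, if $u$ lay on $\gamma$ we could still move to a larger index; more carefully, $\pd(u,\gamma(s))-s=\dB_\gamma(u)$ for all large $s$, and since $\pd(u,\gamma(s))\to+\infty$ (if it stayed bounded, $\pd(u,\gamma(s))-s\to-\infty$, contradicting finiteness), we have $\pd(u,\gamma(s))\ge 1$ for $s$ large.

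For (3), let $\mathrm p=(p(0),p(1),\dots)$ be a directed path with $\dB_\gamma$ finite along it and $\dB_\gamma(p(i+1))=\dB_\gamma(p(i))-1$ for all $i$. I must show $\pd(p(i),p(j))=j-i$ whenever $i\le j$. The inequality $\pd(p(i),p(j))\le j-i$ is immediate since $\mathrm p$ restricted to $[i,j]$ is a directed path of length $j-i$. For the reverse, iterate part~(1) along any directed path realizing $\pd(p(i),p(j))$: if $q_0=p(i),q_1,\dots,q_m=p(j)$ with $m=\pd(p(i),p(j))$, then $\dB_\gamma(q_0)\le\dB_\gamma(q_1)+1\le\cdots\le\dB_\gamma(q_m)+m$, so $\dB_\gamma(p(i))\le\dB_\gamma(p(j))+m$, i.e. $m\ge\dB_\gamma(p(i))-\dB_\gamma(p(j))=j-i$ by the telescoping hypothesis on $\mathrm p$. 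Combining the two bounds gives $\pd(p(i),p(j))=j-i$, which is exactly the geodesic condition of Definition~\ref{def:infiniteGeo}.

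\textbf{Main obstacle.} The calculations themselves are short; the only real care is in part~(2), where one must justify selecting an index $s$ that simultaneously realizes the Busemann limit and is far enough along $\gamma$ that $u$ is strictly before $\gamma(s)$ (so that a genuine outgoing edge of $u$ exists on a geodesic to $\gamma(s)$). This rests on the fact that a finite, eventually-constant, nonincreasing integer sequence $\pd(u,\gamma(s))-s$ forces $\pd(u,\gamma(s))\to+\infty$, which I would spell out explicitly. The bi-infinite case requires no change, since only the forward tail $\gamma(s)$, $s\to+\infty$, enters the definition of $\dB_\gamma$.
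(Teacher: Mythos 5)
Your proposal is correct and follows essentially the same route as the paper: part (1) by the oriented triangle inequality and passing to the limit, part (2) by taking the first edge of a geodesic to a $\gamma(s)$ realizing the (eventually constant) Busemann limit, and part (3) by forcing the lower bound $\pd(p(i),p(j))\geq j-i$ from the unit decrease of $\dB_\gamma$ — the paper does this last step by choosing a common large index $\sigma$ and applying the triangle inequality, while you iterate part (1) along a minimizing path, which is an equivalent bookkeeping. Your extra care in part (2) to ensure $\pd(u,\gamma(s))\geq 1$ (so that an outgoing edge actually exists) is a sound refinement of a point the paper leaves implicit.
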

\begin{proof}
  \begin{enumerate}
  \item By the oriented triangle inequality we have
    $\pd(u,\gamma(s))\leq 1+\pd(v,\gamma(s))$ for all $s$, and the
    result follows from Definition~\ref{def:Busemann}.
  \item Since the function $s \mapsto \pd(u,\gamma(s))-s$ is
    nonincreasing, takes integer values and tends to the finite limit
    $\dB_\gamma(u)$, it is eventually constant. Hence, there exists $s_0$ such that
    $\dB_\gamma(u)=\pd(u,\gamma(s_0))-s_0$. Consider
    a directed geodesic from $u$ to $\gamma(s_0)$. The first
    edge of this geodesic leads to a vertex $v$ such that
    $\pd(u,\gamma(s_0))=1+\pd(v,\gamma(s_0))$, and hence
    $\dB_\gamma(u)=\pd(u,\gamma(s_0))-s_0=1+\pd(v,\gamma(s_0))-s_0
    \geq 1+\dB_\gamma(v)$. The latter inequality is in fact an
    equality by the previous point.
  \item Let $\mathrm p$ be a path satisfying the assumptions, and fix
    $0\leq s < t$. Let $\sigma$ be large
    enough for both
    \begin{equation}
      \dB_{\gamma}(p(s))=\pd(p(s),\gamma(\sigma))-\sigma \quad \text{and}
      \quad \dB_{\gamma}(p(t))=\pd(p(t),\gamma(\sigma))-\sigma
    \end{equation}
    to hold. Since $\dB_\gamma$ decreases exactly by $1$ along each
    edge of $\mathrm p$, we have
    $t-s=\dB_{\gamma}(p(s))-\dB_{\gamma}(p(t))=\pd(p(s),\gamma(\sigma))-\pd(p(t),\gamma(\sigma))$. From
    the oriented triangle inequality
    $\pd(p(s),\gamma(\sigma)) \leq \pd(p(s),p(t)) +
    \pd(p(t),\gamma(\sigma))$ we conclude that
    $\pd(p(s),p(t)) \geq t-s$. This must be in fact an equality since
    $\mathrm p$ is a directed path. Thus, $\mathrm p$ is a directed
    geodesic as wanted. \qedhere
  \end{enumerate}
\end{proof}

\subsubsection{From trumpets/cornets to slices}

We have now the necessary tools to describe the reverse construction in Proposition~\ref{prop:trumpets}: we consider a trumpet or cornet $\rm$, and explain how to cut it into a slice. Throughout this section, $p$ and $k$ denote the perimeter and girth of $\rm$ respectively.

Let us define the \emph{universal cover} $\tilde \rm$ of $\rm$ as its preimage through the exponential mapping $z\mapsto \exp(2i\pi z)$, assuming that $\rm$ is embedded in the complex plane in such a way that the origin of the plane lies within its unrooted boundary and that its rooted face is the outer face, see Figure~\ref{fig:trumpets}. 

The universal cover $\tilde \rm$ is an infinite planar map\footnote{We
  define an infinite planar map by replacing the word ``finite'' by
  ``infinite'' in the definition of a planar map given in
  Section~\ref{sub:DefIntro}.}, in which every vertex has finite
degree. The translation of the plane $z\mapsto z+1$ induces a natural
automorphism $T$ of $\tilde \rm$, and the exponential mapping
$z\mapsto \exp(2i\pi z)$ induces a \emph{projection} from $\tilde \rm$
to $\rm$. Two elements (vertices, edges, faces or corners) $x_1,x_2$
of $\tilde \rm$ project to the same element of $\rm$ if and only if
there exists $\ell \in \Z$ such that $T^\ell(x_1)=x_2$. Each edge of
$\tilde \rm$ naturally inherits the orientation of its projection in
$\rm$. Each inner face $f$ of $\rm$ is lifted in $\tilde \rm$ to an
infinite number of copies with the same degree and color as $f$.  The
preimage of the unrooted boundary (respectively of the rooted
boundary) is a single face of infinite degree called the \emph{upper
  boundary} (respectively \emph{lower boundary}) of $\tilde \rm$. The
contour of the upper boundary is an infinite directed path, which we
denote by $\gamma=(\gamma(s))_{s \in \Z}$, choosing an arbitrary
parametrization\footnote{As in Section~\ref{sec:dirBuse} we assume
  that $\tilde \rm$ is simple so that a path can be coded as a
  sequence of vertices; one can straightforwardly extend our
  discussion to allow multiple edges, at the price of slightly heavier
  notation.}. Note that $\gamma$ is directed from left to right (resp.\
from right to left) if $\rm$ is a trumpet (resp.\ a cornet), so we
have
\begin{equation}
  \label{eq:gammaT}
  T(\gamma(s)) =
  \begin{cases}
    \gamma(s+k) & \text{if $\rm$ is a trumpet,}\\
    \gamma(s-k) & \text{if $\rm$ is a cornet.}
  \end{cases}
\end{equation}
for all $s \in \Z$. The contour of the lower boundary is also an infinite directed path, going from left to right if $\rm$ is white and from right to left is $\rm$ is black. We root $\tilde\rm$ at an arbitrary preimage $\rho$ of the root
corner of $\rm$: note that $\rho$ and $T(\rho)$ are both incident to the lower boundary, with $p$ edges between them.
For the rest of this section, $\pd$ denotes the directed distance on
$\tilde \rm$. 
\medskip

Let us now exploit the fact that $\rm$ has a tight boundary. We start with the following claim, which is a directed version of the wrapping lemma from~\cite[p.957]{irredmaps}. Since its proof follows exactly the same lines as in the undirected case, we omit it.

\begin{lemma}\label{lemma:wrappingLemma}
If $\rm$ is a trumpet, then for any $v\in \rV(\tilde \rm)$ and $d \in \Z_{>0}$, we have 
\begin{equation}\label{eq:Translation}
  \pd(v,T^d(v)) \geq kd.
\end{equation}
If $\tilde \rm$ is a cornet, then for any $v\in \rV(\tilde \rm)$ and $d \in \Z_{>0}$, we have: 
\begin{equation}\label{eq:strictTranslation}
 \pd(v,T^{-d}(v)) \geq kd,
\end{equation}
where equality holds if and only if $v$ belongs to $\gamma$.
\end{lemma}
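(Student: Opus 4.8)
The plan is to prove Lemma~\ref{lemma:wrappingLemma} by a covering-space argument, exploiting the fact that the unrooted boundary of $\rm$ is tight (resp.\ strictly tight) \emph{in $\rm$}, and then lifting this extremality to $\tilde\rm$ via the projection. First I would set up the translation action: recall from \eqref{eq:gammaT} that $T(\gamma(s))=\gamma(s\pm k)$, so the contour $\gamma$ of the upper boundary of $\tilde\rm$ projects onto the tight boundary of $\rm$, and $\gamma$ is an infinite directed geodesic precisely because its projection has minimal length among separating cycles in its direction. The key point is that a directed path in $\tilde\rm$ from $v$ to $T^d(v)$ projects to a directed \emph{closed} walk in $\rm$ which is homotopic to $d$ times the unrooted boundary, hence is (possibly after correcting for winding) a separating cycle or a sum of such; its length is therefore at least $d$ times the girth $k$ by tightness.

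The heart of the argument is the following: consider a directed path $\mathrm p$ in $\tilde\rm$ from $v$ to $T^d(v)$ of length $L$. Projecting to $\rm$ gives a directed closed walk $\bar{\mathrm p}$ which winds $d$ times around the annulus in the direction of the unrooted boundary. I would use a standard planarity/winding-number argument (the same one underlying the undirected wrapping lemma of \cite[p.~957]{irredmaps}) to extract from $\bar{\mathrm p}$ a directed separating cycle $C$ oriented like the unrooted boundary, such that one can iterate and bound: more precisely, by a ``peeling'' or homological argument the walk $\bar{\mathrm p}$ decomposes into $d$ loops each of which, being a directed closed walk winding once and separating, has length $\geq k$ by the minimality in Definition~\ref{def:tight}. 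Summing gives $L \geq kd$, which is \eqref{eq:Translation}, and likewise \eqref{eq:strictTranslation} for the cornet case upon replacing $T$ by $T^{-1}$ and the girth bound by the bound for the cycle oriented oppositely.

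For the equality case in the cornet statement, I would invoke strict tightness: if $\pd(v,T^{-d}(v))=kd$, then every loop in the decomposition above has length \emph{exactly} $k$ and projects to a directed separating cycle of minimal length oriented like the unrooted boundary. Since the cornet's unrooted boundary is \emph{strictly} tight, its contour is the unique such cycle; hence each of these projected loops is the boundary contour itself, which forces the geodesic from $v$ to $T^{-d}(v)$ to run along $\gamma$, and in particular $v$ must lie on $\gamma$. Conversely, if $v=\gamma(s)$ lies on $\gamma$, then $\gamma|_{[s-kd,\,s]}$ (reparametrised appropriately using \eqref{eq:gammaT}) is a directed path from $T^{-d}(v)=\gamma(s-kd)$ — wait, rather from $v$ to $T^{-d}(v)$ of length exactly $kd$, so $\pd(v,T^{-d}(v))\leq kd$, and combined with the inequality this gives equality.

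The main obstacle I anticipate is making the winding-number/planarity extraction of the $d$ separating loops fully rigorous in the directed setting: one must be careful that the loops one peels off are genuinely \emph{directed} cycles (so that Definition~\ref{def:tight} applies) and that the total length is exactly additive under the decomposition. The authors sidestep this by citing \cite[p.~957]{irredmaps} and asserting the proof ``follows exactly the same lines as in the undirected case'', so in practice I would do the same: the orientation of all edges is inherited consistently from $\rm$, the separating cycles that arise are automatically directed because they are sub-walks of the directed path $\mathrm p$ read along $\gamma$-parallel arcs, and the rest of the bookkeeping — correspondence between translation number $d$ and winding number, additivity of length — is identical to the undirected argument and requires no new idea.
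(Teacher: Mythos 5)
Your proposal is correct and follows essentially the route the paper itself intends: the paper omits the proof, deferring to the undirected wrapping lemma of~\cite[p.~957]{irredmaps}, and your argument is exactly the directed adaptation of that projection/winding-number argument, including the use of strict tightness to pin down the equality case for cornets. The only refinement worth noting is that the projection of a path from $v$ to $T^{\pm d}(v)$ should be decomposed into \emph{simple} directed cycles whose winding numbers lie in $\{-1,0,+1\}$ and sum to $d$ (so at least $d$ of them are separating cycles co-oriented with the tight boundary, each of length at least $k$, and the remaining cycles only add length), rather than into exactly $d$ once-winding loops; the same counting forces, in the equality case, exactly $d$ cycles of length $k$ and none of the others, after which strict tightness and the lifting argument give $v\in\gamma$ as you describe.
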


This lemma implies almost directly the following one (details of the proof are postponed to the end of this section):
\begin{lemma}\label{lem:geodesicContour}
  If $\rm$ is a trumpet (resp.\ cornet), then $\gamma$ a bi-infinite directed geodesic (resp.\ strict geodesic). 
\end{lemma}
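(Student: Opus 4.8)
The statement splits into two claims: that $\gamma$ is a bi-infinite directed geodesic (both for a trumpet and for a cornet), and that, when $\rm$ is a cornet, it is moreover a \emph{strict} bi-infinite geodesic. For the first claim, fix $s\le t$ in $\Z$. Since $\gamma|_{[s,t]}$ is a directed path, $\pd(\gamma(s),\gamma(t))\le t-s$, so only the reverse inequality is at stake. Choose an integer $d\ge 1$ with $s+kd\ge t$. If $\rm$ is a trumpet then $T^{d}(\gamma(s))=\gamma(s+kd)$ by~\eqref{eq:gammaT}, so the oriented triangle inequality~\eqref{eq:triangleInequality} and~\eqref{eq:Translation} give
\[
kd\le \pd\big(\gamma(s),T^{d}(\gamma(s))\big)\le \pd(\gamma(s),\gamma(t))+\pd(\gamma(t),\gamma(s+kd))\le \pd(\gamma(s),\gamma(t))+(s+kd-t),
\]
whence $\pd(\gamma(s),\gamma(t))\ge t-s$. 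The cornet case is identical, using $T^{-d}$ and~\eqref{eq:strictTranslation} instead; there one even gets $\pd(\gamma(s),T^{-d}(\gamma(s)))=kd$, since $\gamma(s)$ lies on $\gamma$.

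For the strict claim, I must show that for $s\le t$ the path $\gamma|_{[s,t]}$ is the \emph{only} directed geodesic from $\gamma(s)$ to $\gamma(t)$, and I would proceed in three steps. \textbf{(a) Reduction.} If $\eta$ is a directed geodesic from $\gamma(s)$ to $\gamma(t)$ and $d$ is chosen with $s+kd\ge t$, then $\eta$ followed by $\gamma|_{[t,s+kd]}$ is a directed path from $\gamma(s)$ to $\gamma(s+kd)=T^{-d}(\gamma(s))$ of length $kd=\pd(\gamma(s),T^{-d}(\gamma(s)))$, hence a geodesic; so it suffices to prove the statement $(*)_d$: ``the only directed geodesic from $\gamma(s)$ to $T^{-d}(\gamma(s))$ is $\gamma|_{[s,s+kd]}$'', for every $d\ge 1$ and every $s$. \textbf{(b) Base case $(*)_1$.} A directed geodesic $\eta$ of length $k$ from $\gamma(s)$ to $T^{-1}(\gamma(s))$ projects, in $\rm$, to a closed directed walk of length $k$ winding once around $\rm$ in the direction of the unrooted contour. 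Using~\eqref{eq:strictTranslation} to rule out that any two of its vertices are $T^{m}$-translates of one another for some $m\ne 0$ — which, via the bounds $\pd(v,T^{-e}(v))\ge ke$, would produce too short a directed path between a vertex and one of its translates — this walk is a \emph{simple} directed cycle; being non-contractible in the annulus $\rm$, it is separating, of minimal length $k$, and oriented like the unrooted contour, so by strict tightness it \emph{is} the unrooted contour. Therefore $\eta$, being the lift of that contour starting at $\gamma(s)$, equals $\gamma|_{[s,s+k]}$.

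\textbf{(c) Induction on $d$.} Let $d\ge 2$ and let $\eta$ be a directed geodesic of length $kd$ from $\gamma(s)$ to $T^{-d}(\gamma(s))$. For each $1\le j\le d-1$, the concatenation of $\eta|_{[jk,kd]}$ with $T^{-d}(\eta|_{[0,jk]})$ is a directed path from $\eta(jk)$ to $T^{-d}(\eta(jk))$ of length $kd$; by~\eqref{eq:strictTranslation} this forces $\pd(\eta(jk),T^{-d}(\eta(jk)))=kd$, and the equality case of Lemma~\ref{lemma:wrappingLemma} then gives $\eta(jk)\in\gamma$. Combined with $\pd(\gamma(s),\eta(jk))=jk$, $\pd(\eta(jk),\gamma(s+kd))=k(d-j)$ and the fact (just proved) that $\gamma$ is a bi-infinite geodesic, this pins down $\eta(jk)=\gamma(s+jk)$. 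Hence each $\eta|_{[jk,(j+1)k]}$ is a directed geodesic of length $k$ between $\gamma(s+jk)$ and $T^{-1}(\gamma(s+jk))$, so it equals $\gamma|_{[s+jk,s+(j+1)k]}$ by $(*)_1$; concatenating over $j$ yields $\eta=\gamma|_{[s,s+kd]}$, i.e.\ $(*)_d$.

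The main obstacle is the base case $(*)_1$: it is the one place where the argument leaves purely elementary estimates and genuinely invokes strict tightness, and its delicate point is verifying that the projection of a length-$k$ geodesic between $T$-translates is a \emph{simple} cycle, so that the ``unique minimal separating cycle'' property can be applied. By contrast, the reduction in step (a), the translation trick in step (c), and the passage from $\eta(jk)\in\gamma$ to $\eta(jk)=\gamma(s+jk)$ are all routine once one has the wrapping Lemma~\ref{lemma:wrappingLemma} and the conclusion of the first claim in hand.
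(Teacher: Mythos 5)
Your proof is correct, but it follows a genuinely different route from the paper's. The paper establishes geodesicity and strictness simultaneously: it first shows, by projecting to $\rm$ and invoking (strict) tightness, that for $0\le r<k$ the only geodesic from $\gamma(s)$ to $\gamma(s+r)$ is the corresponding subpath of $\gamma$, and then treats a general pair $\gamma(s),\gamma(t)$ via Euclidean division of $t-s$ by $k$, comparing an arbitrary directed path $\mathrm p$ with its translate $T^{-q}(\mathrm p)$ and splicing at their first intersection point, together with Lemma~\ref{lemma:wrappingLemma}. You instead obtain plain geodesicity (for trumpets and cornets alike) from a short triangle-inequality computation based on \eqref{eq:gammaT} and \eqref{eq:Translation}--\eqref{eq:strictTranslation}, and you reduce strictness to geodesics between a point of $\gamma$ and its $T^{-d}$-translate: the one-period case is settled by projecting such a geodesic to a simple separating cycle of length $k$ oriented like the unrooted contour and invoking strict tightness, and longer geodesics are cut into $k$-blocks whose endpoints are forced onto $\gamma$ by the equality case of Lemma~\ref{lemma:wrappingLemma} (your ``induction'' is really a direct reduction to the base case, which is fine). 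A pleasant by-product of your route is that it only uses Lemma~\ref{lemma:wrappingLemma} as stated (equality iff $v\in\gamma$), whereas the paper's argument parenthetically relies on the stronger refinement ``equality iff the path follows the upper boundary''. One step should be made explicit, though: in the simplicity check of step (b), if $\eta(i)$ and $\eta(j)=T^{m}(\eta(i))$ with $m>0$ and $i<j$, the bound $\pd(v,T^{-e}(v))\ge ke$ does \emph{not} apply to the subpath $\eta|_{[i,j]}$ (it only controls translates in the other direction, which handles $m<0$ and forces the only coincidence to be the two endpoints); for $m>0$ you must splice $\eta|_{[0,i]}$ with $T^{-m}\bigl(\eta|_{[j,k]}\bigr)$ to get a directed path of length $i+k-j<k$ from $\gamma(s)$ to $T^{-(m+1)}(\gamma(s))$, contradicting $\pd\ge k(m+1)$ --- this is indeed the ``too short path between a vertex and one of its translates'' you allude to, but as written only the $m<0$ case is covered by a direct application of \eqref{eq:strictTranslation}.
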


We may thus consider the associated directed Busemann function
$\dB_\gamma$. Note that it depends on the parametrization chosen for
$\gamma=(\gamma(s))_{s \in \Z}$, but the only possible changes of
parametrization are translations $s \mapsto s+s_0$, which just modify
$\dB_\gamma$ by a global additive constant that does not affect the
following construction. By~\eqref{eq:directedBusemann} and
\eqref{eq:gammaT}, it is straightforward to check that, for any vertex
$v$ of $\tilde \rm$, we have
\begin{equation}
  \label{eq:dBT}
  \dB_\gamma(T(v)) =
  \begin{cases}
    \dB_\gamma(v) - k & \text{if $\rm$ is a trumpet,}\\
    \dB_\gamma(v) + k & \text{if $\rm$ is a cornet.}
  \end{cases}
\end{equation}

We claim that $\dB_\gamma$ takes only finite
values. Indeed, let $v$ be a vertex of $\tilde \rm$ and let $v'$ be
its projection in $\rm$. By the strong connectivity of $\rm$ (see
again Remark~\ref{rem:access}) there exists a directed path connecting
$v'$ to the tight boundary, and another in the opposite direction. Such paths lift to
directed paths from $v$ to $\gamma$ and back, and it follows from Lemma~\ref{lem:busefin} that $-\infty<\dB_\gamma(v)<+\infty$. Note that $\tilde \rm$ is not necessarily
strongly connected: for instance if $\rm$ is reduced to a cycle then
$\tilde \rm$ just consists of the infinite directed path $\gamma$. (In
fact, one can show that the universal cover of a hypermap with two monochromatic
boundaries is strongly connected if and only if the two boundaries of
are not adjacent to each other. This fact is closely related with
Lemma~\ref{lem:DobruCyl} below.)

For any corner $c$ of $\tilde \rm$, we define the \emph{leftmost infinite directed geodesic $g_c$ from $c$ towards $\gamma$} as follows. Let $v=:g_c(0)$ be the vertex incident to $c$, and consider the set of vertices $w$ such that $(v,w)$ is a directed edge and $\dB_\gamma(w)=\dB_\gamma(v)-1$. Lemma~\ref{lem:directedBusemann2} ensures that this set is non-empty and set $g_c(1)$ to be its element first encountered when turning clockwise around $v$ starting at $c$. Then, define $(g_c(i))_{i>1}$ recursively as the leftmost infinite directed geodesic towards $\gamma$ from the corner following the directed edge $(g_c(0),g_c(1))$ clockwise around $g_c(1)$.
The fact that $g_c$ is a directed geodesic is an immediate consequence of Lemma~\ref{lem:directedBusemann2}. Moreover, we have: 
\begin{lemma}\label{lemma:coalescence}
For any corner $c$ incident to the lower face of $\tilde \rm$, define 
$\sigma_c:=\min \{s\,,\, g_c(s) \in \gamma\}$. Then, $\sigma_c<\infty$ and, for any $t\geq \sigma_c$, $g_c(t)\in \gamma$.

In other words, for any corner $c$ incident to the lower face of $\tilde \rm$, $g_c$ reaches $\gamma$ and coalesces with it after the first time it touches it.
\end{lemma}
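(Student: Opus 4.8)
We outline how the proof would go. Write $U$ for the upper face of $\tilde\rm$, so that $\gamma=\partial U$. The argument splits into two assertions: first $\sigma_c<\infty$, i.e.\ $g_c$ eventually meets $\gamma$; then $g_c$ never leaves $\gamma$ once it has touched it. Throughout I would freely use the elementary facts about $g_c$ coming from its definition and from Lemma~\ref{lem:directedBusemann2}: $g_c$ is an infinite directed geodesic, along it $\dB_\gamma$ decreases by exactly $1$ at each step, so $\dB_\gamma(g_c(s))\to-\infty$, and $g_c$ is simple (a directed geodesic cannot revisit a vertex). I also use that $\dB_\gamma$ takes only finite values and that, by~\eqref{eq:dBT}, $T$ shifts $\dB_\gamma$ by $-k$ for a trumpet and by $+k$ for a cornet.

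\emph{Step 1 ($\sigma_c<\infty$).} The starting point is the finiteness of $\rm$: the projection of $g_c$ to $\rm$ is an infinite directed walk, so it visits some vertex infinitely often. Lifting back, one gets times $s_1<s_2<\cdots$ and integers $0=d_1<d_2<\cdots$ (strict monotonicity of the $d_i$ being forced by~\eqref{eq:dBT} and the fact that $\dB_\gamma$ strictly decreases along $g_c$) with $g_c(s_i)=T^{d_i}(g_c(s_1))$ for a trumpet, $g_c(s_i)=T^{-d_i}(g_c(s_1))$ for a cornet, and $s_i-s_1=k\,d_i$. Hence $g_c|_{[s_1,s_i]}$ is a directed path of length exactly $k\,d_i$ between $g_c(s_1)$ and one of its translates, so it realizes the bound of the wrapping Lemma~\ref{lemma:wrappingLemma} with equality. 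For a cornet, the equality clause of that lemma gives $g_c(s_1)\in\gamma$, hence $\sigma_c\le s_1<\infty$; and since this argument never used that $c$ is incident to the lower face, it in fact shows that the leftmost geodesic towards $\gamma$ from \emph{any} corner reaches $\gamma$, a fact reused in Step 2. For a trumpet the wrapping lemma has no equality clause, so one cannot conclude this way; instead one argues topologically, using both that $c$ is incident to the lower face and that $g_c$ is the \emph{leftmost} geodesic towards $\gamma$, to rule out that $g_c$ and its $T$-translates (which bound a $T$-invariant region of $\tilde\rm$) could wind around indefinitely without reaching $\gamma$. I would follow the corresponding extremality argument in the undirected setting, namely~\cite[Section~9.3]{irredmaps} and~\cite[Section~4.4]{Bouttier2024}.

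\emph{Step 2 (coalescence).} Suppose $g_c(\sigma_c)=\gamma(j_\ast)$. The key local observation is that around a vertex $\gamma(j)$ the corner of the face $U$ lies between the two consecutive $\gamma$-edges $e_j=(\gamma(j-1),\gamma(j))$ and $e_{j+1}=(\gamma(j),\gamma(j+1))$, on the side of $\gamma$ occupied by $U$; reading off the rotation at $\gamma(j)$ near these two edges then controls which edge the leftmost prescription selects. For a cornet, $U$ lies on the right of $\gamma$, and one shows that from \emph{any} vertex $\gamma(j)$ the only outgoing edge decreasing $\dB_\gamma$ is $e_{j+1}$: an outgoing edge $\gamma(j)\to w$ with $w\notin\gamma$ and $\dB_\gamma(w)=\dB_\gamma(\gamma(j))-1$, followed by the leftmost geodesic from $w$ (which reaches $\gamma$ by Step 1 applied to an arbitrary corner), would produce a second directed geodesic between two vertices of $\gamma$, contradicting Lemma~\ref{lem:geodesicContour} ($\gamma$ is a strict geodesic). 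Thus once $g_c$ is at $\gamma(j_\ast)$ it must take $e_{j_\ast+1}$, and an induction gives $g_c(\sigma_c+i)=\gamma(j_\ast+i)$ for all $i\ge0$. For a trumpet, $\gamma$ is only a geodesic, so such a vertex $w$ cannot be excluded this way; one instead uses that $U$ lies on the left of $\gamma$, so that $e_{j_\ast+1}$ is the first half-edge met when turning clockwise from $e_{j_\ast}$, together with the fact — again a planarity/leftmost argument parallel to the undirected case — that $g_c$ reaches $\gamma(j_\ast)$ through an edge from which the leftmost prescription forces the next step along $\gamma$. An induction then concludes.

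\emph{Main obstacle.} The technical heart is the trumpet case, in both steps. Because the directed wrapping lemma for trumpets lacks an ``equality holds iff the vertex lies on $\gamma$'' clause — in contrast with the cornet case, which is settled cleanly and algebraically — one cannot conclude from the periodicity of the projection alone that $g_c$ meets $\gamma$, and is forced into a genuinely planar argument exploiting the hypothesis that $c$ is incident to the lower face together with the leftmost prescription defining $g_c$. Making this topological step precise (and, along the way, inserting the postponed proof of Lemma~\ref{lem:geodesicContour}, which is deduced from Lemma~\ref{lemma:wrappingLemma} exactly as in the undirected case) is where the real work lies.
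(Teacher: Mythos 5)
Your treatment of the cornet case is correct and essentially the paper's: the pigeonhole/translation argument combined with the equality clause of Lemma~\ref{lemma:wrappingLemma} shows that the leftmost geodesic from \emph{any} corner reaches $\gamma$, and coalescence then follows from the strictness of $\gamma$ (Lemma~\ref{lem:geodesicContour}). The paper phrases the coalescence step as a minimal-counterexample argument rather than your local claim that the only $\dB_\gamma$-decreasing edge out of $\gamma(j)$ goes to $\gamma(j+1)$, but the contradiction invoked — a directed geodesic between two points of $\gamma$ passing through a vertex off $\gamma$ — is the same, and your version goes through.

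The genuine gap is the trumpet case, which you yourself flag as ``where the real work lies'' and then leave to an unspecified extremality argument borrowed from the undirected setting. This is exactly the part that cannot be outsourced: the wrapping lemma has no equality clause for trumpets and $\gamma$ is not strict, so neither of your two mechanisms applies, and your sketch about $g_c$ and its $T$-translates ``winding indefinitely'' does not identify the actual obstruction or supply a proof. The paper's argument is concrete and different from what you gesture at: set $s^\star := \inf\{s\in\Z \,:\, \dB_\gamma(c) = \pd(c,\gamma(s)) - s\}$ (finite, since the nonincreasing integer-valued function $s\mapsto \pd(c,\gamma(s))-s$ is eventually equal to its limit but cannot equal it for arbitrarily negative $s$), and pick a directed geodesic $\mathrm{p}$ from $c$ to $\gamma(s^\star)$. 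Since $\dB_\gamma(\gamma(s))=-s$, the Busemann function decreases by exactly one along each edge of $\mathrm{p}$ (Lemma~\ref{lem:directedBusemann2}), and by minimality of $s^\star$ the path $\mathrm{p}$ meets $\gamma$ only at its endpoint. Because $c$ is incident to the lower boundary (this is where that hypothesis enters), cutting $\tilde\rm$ along $\mathrm{p}$ splits it into a left component $\mathcal L$ and a right component $\mathcal R$, and the triangle inequality gives $\dB_\gamma > -s^\star$ on $\mathcal{L}\setminus\{\gamma(s^\star)\}$. Since $\dB_\gamma\to-\infty$ along $g_c$, the path $g_c$ must cross $\mathrm{p}$, and leftmostness forces it to coalesce with $\mathrm{p}$ from its last crossing onward, hence to reach $\gamma(s^\star)$ and then follow the contour of the upper boundary inside $\mathcal R$ — which settles reaching and coalescence simultaneously. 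Nothing playing the role of $s^\star$ or of the auxiliary geodesic $\mathrm{p}$ appears in your proposal, so as written the trumpet half of the lemma remains unproven.
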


\begin{proof}
This is the only part of the proof that differs substantially between the trumpet and the cornet cases. We start with the case of a cornet.
  
Since $g_c$ is an infinite self-avoiding path in $\tilde \rm$, it visits at least two different pre-images of the same vertex of $\rm$. In other words, there exist $v$ in $\tilde \rm$, $0\leq s<t$ and $d\in \mathbb{Z} \setminus \{0\}$ such that $g_c(s)=v$ and $g_c(t)=T^{-d}(v)$. Using in turn the property that $g_c$ is a directed geodesic, that $\dB_\gamma$ decreases by $1$ along each of its edges, and~\eqref{eq:dBT}, we get the equalities
  \begin{equation}
    \pd(v,T^{-d}(v)) = t-s = \dB_\gamma(v)-\dB_\gamma(T^{-d}(v)) = kd.
  \end{equation}
  We deduce that $d>0$ and can therefore apply Lemma~\ref{lemma:wrappingLemma}, which entails that $v$ is on $\gamma$, and $\sigma_c\leq s<\infty$.

To prove that $g_c$ coalesces with $\gamma$ after $\sigma_c$, we proceed by contradiction. Suppose that there exists $t> \sigma_c$ such that $g_c(t)\notin \gamma$, and take the minimum value of $t$ for which this holds. By considering the leftmost geodesic started at $(g_c(t-1),g_c(t))$, the same reasoning as above gives the existence of $u>t$ such that $g_c(u)\in \gamma$. The subpath $(g_c(t-1),\ldots,g_c(u))$ of $g$ is a geodesic path between two points of $\gamma$ which contains at least one vertex (namely $g_c(t)$) not in $\gamma$. Since $\gamma$ is a bi-infinite strict geodesic, this is a contradiction, which concludes the proof of this first case. 
\medskip

We now consider the case where $\tilde \rm$ is the universal cover of a trumpet. We are going to prove that $\sigma_c= s^\star := \inf\{s\in \mathbb{Z}\,,\,\dB_\gamma(c)=\pd(c,\gamma(s))-s\}$, see Figure~\ref{fig:coalescence}. Consider a directed geodesic $\mathrm p$ from $c$ to $\gamma(s^\star)$. For any $s$, $\dB_\gamma(\gamma(s))=-s$, so that $\dB_\gamma(c)-\dB_\gamma(\gamma(s^\star))=\pd(c,\gamma(s^\star))$. Hence, the value of $\dB_\gamma$ decreases exactly by one along each edge of $\mathrm p$ by Lemma~\ref{lem:directedBusemann2}. 

Next, the definition of $s^\star$ ensures that $\mathrm p$ does not intersect $\gamma$ before $\gamma(s^{\star})$, so that cutting $\tilde \rm$ along the path $\mathrm p$ decomposes $\tilde \rm$ in only two infinite connected components: the left one $\mathcal L$ and the right one $\mathcal R$. By the triangle inequality, for every vertex $v\in \mathcal L\backslash\{ \gamma(s^\star)\}$, we have $\dB_\gamma(v)>\dB_\gamma(\gamma(s^\star))=-s^\star$.

Next, consider the leftmost geodesic $g_c$ started at $c$. Either $g_c$ coincides with $\mathrm p$ up to $\gamma(s^\star)$, and this proves the first part of the claim, or $g_c$ contains at least one vertex from $\mathcal L$. In the latter case, $g_c$ eventually intersects $\mathrm p$ to reach $\mathcal R$. Let $(u,v)$ be the last edge of $g_c$ where $u\in \mathcal{L}$ and $v\in \mathrm p$. Since $g_c$ is a leftmost geodesic, it coalesces with $\mathrm p$ after $v$ and reaches $\gamma(s^\star)$. After $\gamma(s^\star)$, $g_c$ lies in $\mathcal{R}$ and hence follows the contour of the upper boundary face of $\tilde \rm$.
\end{proof}
\begin{figure}[h]
\centering
\includegraphics[scale=1.1]{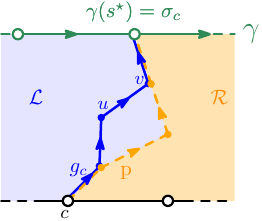}
\caption{Illustration of the proof of Lemma~\ref{lemma:coalescence}. The paths $\mathrm{p}$ and $g_c$ are respectively represented by orange dashed edges and by blue plain edges.\label{fig:coalescence}} 
\end{figure}

We can finally associate to $\rm$ a slice via $\tilde \rm$. We start again with the case where $\rm$ is a cornet. Consider the infinite leftmost geodesics $g_\rho$ and $g_{T(\rho)}$ in $\tilde \rm$. By translational invariance, we have $\sigma_\rho=\sigma_{T(\rho)}+k$, and in particular, $\sigma_\rho>\sigma_{T(\rho)}$. Combined with the preceding claim, this yields that $g_\rho$ and $g_{T(\rho)}$ intersect for the first time at $\sigma_\rho$ and then coalesce. 

By cutting $\tilde \rm$ along both these paths, we decompose it into three connected components: two infinite connected components, and one finite connected component, which we denote by $\mathrm s$. The boundary of $\rs$ consists into three parts: 
\begin{itemize}
\item a \emph{left boundary} which is the initial segment of $g_\rho$ going from $\rho$ to $\gamma(\sigma_\rho)$,
\item a \emph{right boundary} which is the portion of $g_{T(\rho)}$ going from $T(\rho)$ to $\gamma(\sigma_\rho)$: precisely it is the concatenation of the initial segment of $g_{T(\rho)}$ going from $T(\rho)$ to $\gamma(\sigma_{T(\rho)})=T(\gamma(\sigma_\rho))$, and of the portion of the upper boundary $\gamma$ of $\tilde \rm$ from $T(\gamma(\sigma_\rho))$ to $\gamma(\sigma_\rho)$, which has length $k$,
\item and a \emph{base} which is the the portion of the lower boundary
  of $\tilde \rm$ between $\rho$ and $T(\rho)$: its length is equal to the perimeter of $\rm$; and it is directed from $\rho$ to $T(\rho)$ if $\rm$ is white, and from $T(\rho)$ to $\rho$ if $\rm$ is black.
\end{itemize}
We check that $\rs$ is a slice by noting that the left and right boundaries are leftmost geodesics (being portions of infinite leftmost geodesics), which only meet at their common endpoint $\gamma(\sigma_\rho)$. Observe that, if $\rm$ is white (resp.\ black), then $\rs$ is of type $\cA$ (resp.\ $\cB$), and its increment is equal to $k$ (resp.\ $-k$) as the right boundary has $k$ more edges than the left boundary.

The construction of $\rs$ in the case of the universal cover of a trumpet is completely similar and is left to the reader. 
\medskip

It follows from our construction that by performing the wrapping operation on $\rs$, we retrieve $\rm$. Hence, to conclude the proof of Proposition~\ref{prop:trumpets}, it only remains to prove that a slice with non-zero increment can be recovered as the slice decomposition of its gluing. Let $\rs$ be a slice, without loss of generality, assume that $\rs$ has a base directed from $l$ to $r$ and has increment $k>0$, write $L$ for the length of its left boundary, so that its right boundary has length $L+k$.

Consider infinitely many copies $(\rs_i)_{i\in \mathbb{Z}}$ of $\rs$, and write respectively $l_i$ and $r_i$ for the left and right corners of $\rs_i$, for any $i\in \mathbb{Z}$. By gluing together the $L$ first edges of the right boundary of $\rs_i$ to the $L$ edges of the left boundary of $\rs_{i+1}$ for any $i\in \mathbb{Z}$, the infinite map we obtain is clearly equal to the universal cover $\tilde \rm$ of $\rm$ associated to $\rs$ by the wrapping operation. 

We can then check that the leftmost infinite geodesic from $l_0$ in $\tilde \rm$ follows the left boundary of $\rs_0$ and then coalesces with the boundary of the upper face of $\tilde \rm$. This implies that the slicing of $\tilde \rm$ along the leftmost geodesic started at $l_0$ and at $r_0=l_1$ gives back precisely $\rs$, which concludes the proof. 

The three remaining cases can be treated exactly along the same lines. 

\begin{proof}[Proof of Lemma~\ref{lem:geodesicContour}]
Let us first treat the case where $\rm$ is a cornet, and denote by $k$ its girth. Recall that $\gamma=(\gamma(s))_{s\in \mathbb{Z}}$ stands for the bi-infinite path forming the contour of the upper face of $\tilde \rm$.

Let $s\in \mathbb{Z}$ and let $r \in \{0,\ldots,k-1\}$. Let $\mathrm p$ be a directed path from $\gamma(s)$ to $\gamma(s+r)$ in $\tilde \rm$. The projection of $\mathrm p$ to $\rm$ is a directed path connecting two vertices incident to the tight face of $\rm$, hence its length is either equal to $r$, and in this case it follows the contour of its tight boundary, or, strictly greater than $r$. Since the length of $\mathrm p$ is equal to the length of its projection, it implies that the unique geodesic from $\gamma(s)$ to $\gamma(s+r)$ in $\tilde \rm$ is the corresponding subpath of $\gamma$.

Now, let $t>s$, and denote respectively by $q_{s,t}$ and $r_{s,t}$ the quotient and the remainder in the Euclidean division of $t-s$ by $k$. If $q_{s,t}=0$ or $r_{s,t}=0$, the result follows either by the previous paragraph or by the directed wrapping lemma stated in Lemma~\ref{lemma:wrappingLemma}. Otherwise, let $\mathrm p$ be a directed path from $\gamma(s)$ to $\gamma(t)$ in $\tilde \rm$, and let $\mathrm p':=T^{-q_{s,t}}(
\mathrm p)$. Note that $\mathrm p'$ is a directed path from $\gamma(s+q_{s,t}k)$ to $\gamma(t+q_{s,t}k)$, see Figure~\ref{fig:noShortcut}. 
\begin{figure}
\centering
\includegraphics[scale=1.1]{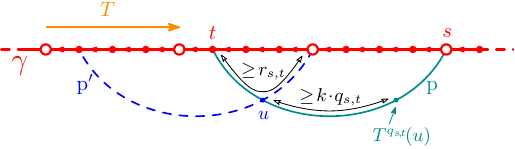}
\caption{Illustration of the proof of Lemma~\ref{lem:geodesicContour}. The paths $\mathrm{p}$ and $\mathrm{p}'$ are respectively represented by blue plain edges and by teal dashed edges.\label{fig:noShortcut}} 
\end{figure}

The two paths $\mathrm{p}$ and $\mathrm{p}'$ necessarily intersect, and we denote by $u$ their first point of intersection, starting from $\gamma(s)$. By translational invariance, $T^{q_{s,t}}(u)$ also belongs to $\mathrm p$, and by Lemma~\ref{lemma:wrappingLemma} the subpath of $\mathrm p$ from $T^{q_{s,t}}(u)$ to $u$ has length at least $q_{s,t}k$ (with equality if and only if the path follows the boundary of the upper face). Moreover, the rest of $\mathrm p$ is made of the subpath from $u$ to $\gamma(t)$ and of the subpath from $\gamma (s)$ to $T^{q_{s,t}}(u)$ (which has the same length as the subpath of $\mathrm p'$ from $\gamma(s+q_{s,t}k)$ to $u$). Since, any path from $\gamma(s+q_{s,t}k)$ to $\gamma(t)$ has length at least $r_{s,t}$ by the preceding paragraph (with equality if and only if the path follows the boundary of the upper face), the path $\mathrm p$ has length at least $t-s$ with equality if and only if its follows the boundary of the upper face, which concludes the proof.

The case of the universal cover of a trumpet can be handled in a similar way, and is in fact slightly simpler, since it suffices to show that it is geodesic rather than strictly geodesic.
\end{proof}

\section{Cylinders and disks via trumpets/cornets}
\label{sec:cyldisk}

In the previous section, we introduced trumpets and cornets as
cylinders with monochromatic boundaries, one of which is tight, and we
enumerated them through a bijection with slices. Building on these results, we now turn to the enumeration of general cylinders with monochromatic boundaries (Subsection~\ref{sub:cyl}). With additional combinatorial trick, we will also derive enumeration formulas for disks with either monochromatic (Subsection~\ref{sub:mono}) or Dobrushin (Subsection~\ref{sub:Dobru}) boundary conditions.

\subsection{Cylinders}\label{sub:cyl}

The key idea of this subsection is that hypermaps with two
monochromatic boundaries can be decomposed into a pair made of a
cornet and a trumpet, by cutting them along a minimal separating cycle.

More precisely, consider a hypermap $\rm$ with two boundaries, which
we draw in the plane with one of the boundaries chosen as the outer
face. This face will be called the \emph{outer boundary}, whereas the
other is called the \emph{central boundary} (we remind that the
adjective ``inner'' refers to faces which are not boundaries).

We define the \emph{ccw-girth} of $\rm$ as the minimal length of a
separating cycle oriented in the counterclockwise direction (recall
Definition~\ref{def:tight} of a separating cycle). If no such cycle
exists, the ccw-girth is conventionally set to infinity. This case may
only occur if the outer boundary is black and the central boundary is
white: indeed, for any other choice of colors for the boundaries, the
contour of at least one boundary forms a counterclockwise separating
cycle.

\begin{proposition}[Trumpet/cornet decomposition of cylinders]\label{prop:cylinders}
For any positive integers $p,q,h$, there exists a weight-preserving $h$-to-$1$ correspondence between:
\begin{itemize}
\item the set of hypermaps with two monochromatic rooted boundaries, the outer one of degree $p$ and the central one of degree $q$, and with ccw-girth $h$, 
\item and the set of ordered pairs $(\rm_1,\rm_2)$, where $\rm_1$ is a trumpet with perimeter $p$ and girth $h$ and $\rm_2$ is a cornet with perimeter $q$ and girth $h$. 
\end{itemize}
Under this correspondence, the color of the outer (respectively central) boundary of the hypermap corresponds to the color of the trumpet $\rm_1$ (respectively the cornet $\rm_2$).
\end{proposition}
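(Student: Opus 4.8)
The plan is to cut the cylinder $\rm$ along a counterclockwise separating cycle of minimal length $h$, i.e.\ along a tight cycle. Since such a cycle need not be unique (unlike in the trumpet/cornet situation), I expect exactly this non-uniqueness to be the source of the $h$-to-$1$ (rather than $1$-to-$1$) nature of the correspondence, and pinning this down will be the main obstacle. First I would establish existence: by the hypothesis the ccw-girth is the finite integer $h$, so there is at least one ccw separating cycle $\sigma$ of length $h$. Cutting $\rm$ along $\sigma$ produces two hypermaps with two boundaries each: the ``outer piece'' $\rm_1$ containing the outer boundary (still rooted, of degree $p$) and a new unrooted boundary of degree $h$ whose contour is (a copy of) $\sigma$; and the ``central piece'' $\rm_2$ containing the central boundary (rooted, of degree $q$) together with a new unrooted boundary of degree $h$. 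One checks that the new boundary of $\rm_1$ is black-on-the-correct-side so that $\rm_1$ is a candidate trumpet, and likewise $\rm_2$ is a candidate cornet; the colours of the rooted boundaries are preserved, and the weight is multiplicative because cutting along a cycle partitions inner faces and vertices (with the usual care about which side the cycle vertices are counted on, matching the $\rV'$ convention built into trumpets/cornets — this is why cornets are defined with no weight on the strictly tight boundary).

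The next step is to check the tightness conditions. The new boundary of $\rm_1$ must be \emph{tight} in $\rm_1$: any ccw separating cycle of $\rm_1$ is also a ccw separating cycle of $\rm$, hence has length $\geq h$, so $\sigma$ realises the minimum — giving a trumpet. For $\rm_2$ the new boundary must be \emph{strictly tight}: here I would use that $\sigma$ was chosen as a minimal ccw cycle that is moreover \emph{innermost}, i.e.\ closest to the central boundary among all minimal ccw separating cycles (such a choice exists, e.g.\ the one enclosing the smallest region on the central side). Then any other ccw separating cycle of $\rm_2$ of length $h$ would, back in $\rm$, be a minimal ccw separating cycle strictly inside $\sigma$, contradicting innermostness; so $\rm_2$ is a cornet. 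This shows the map ``cut along the innermost minimal ccw cycle'' is well defined from cylinders to trumpet/cornet pairs.

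Then I would describe the inverse direction and count preimages. Given a trumpet $\rm_1$ and a cornet $\rm_2$ of common girth $h$, glue the tight boundary of $\rm_1$ to the strictly tight boundary of $\rm_2$: since both have degree $h$, the gluing is determined by a cyclic offset, i.e.\ by a choice of one of the $h$ edges of the tight boundary of $\rm_1$ to identify with a fixed edge of the strictly tight boundary of $\rm_2$, yielding $h$ a priori distinct glued cylinders. Each is a hypermap with two monochromatic rooted boundaries of degrees $p$ and $q$; the glued cycle is a ccw separating cycle of length $h$, and using strict tightness of $\rm_2$'s boundary one sees no shorter ccw separating cycle can appear (any such would have to cross into $\rm_2$ and contradict strict tightness, or stay in $\rm_1$ and contradict tightness there), so the ccw-girth is exactly $h$. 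Conversely, starting from a cylinder with ccw-girth $h$ and applying the cutting map gives back $(\rm_1,\rm_2)$, and the $h$ cyclic gluings of this pair are precisely the $h$ cylinders whose associated innermost minimal ccw cycle reconstructs this pair — the offset records ``where the root corner sits relative to the cut cycle''. Hence the correspondence is $h$-to-$1$ and weight-preserving, with colours matching as stated. The delicate point throughout, which I would treat carefully, is the interplay between ``minimal'' and ``innermost'' and the verification that strict tightness on the cornet side is exactly what prevents spurious shorter separating cycles after gluing; this mirrors the role of strict tightness already seen in Proposition~\ref{prop:trumpets} and Lemma~\ref{lemma:coalescence}.
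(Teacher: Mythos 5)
Your overall route is the paper's: cut along a distinguished minimal ccw separating cycle to get a trumpet/cornet pair, and reglue with $h$ cyclic offsets. The weight bookkeeping (the $\rV'$-type convention on the cut cycle) and the tightness check for the trumpet piece are fine. But there is a genuine gap at exactly the point you flag as ``delicate'' and then do not actually treat: you never handle ccw separating cycles that \emph{cross} each other. The paper's proof consists almost entirely of an exchange lemma for two intersecting minimal ccw cycles (between consecutive common vertices the two arcs must have equal length, by minimality, so the boundary of the intersection of their interiors is again a minimal ccw cycle), from which the existence \emph{and uniqueness} of the innermost minimal ccw cycle follows. You only assert existence of an innermost choice (``the one enclosing the smallest region''), which suffices to make the cornet piece strictly tight, but uniqueness is what makes the cutting canonical and is what you need to show that each glued cylinder has exactly $h$ preimage-offsets: without the crossing lemma you cannot exclude that the glued cylinder contains another minimal ccw cycle crossing the glued one (and of no larger enclosed area), in which case ``cut along the innermost/smallest cycle'' would not return the pair you glued, and the $h$-to-$1$ count breaks.

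The same omission affects your verification that the glued map has ccw-girth exactly $h$. Your dichotomy --- a shorter ccw separating cycle ``would have to cross into $\rm_2$ and contradict strict tightness, or stay in $\rm_1$ and contradict tightness there'' --- is not exhaustive: such a cycle can use edges on both sides of the glued cycle $\gamma$, contradicting neither condition directly. The correct argument is again a splice: if $\sigma'$ is a ccw separating cycle of length $<h$ crossing $\gamma$, then the boundary of the intersection of the interiors of $\sigma'$ and $\gamma$ is a ccw separating cycle lying in $\rm_2$, the boundary of the union is one lying in $\rm_1$, and their total length is $|\sigma'|+|\gamma|<2h$, contradicting that each has length $\ge h$ by (strict) tightness. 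So the missing ingredient is precisely the intersecting-cycles lemma that the paper proves; once you add it, both the uniqueness of the innermost cycle and the post-gluing girth/recoverability statements follow, and the rest of your argument goes through.
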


As in Section~\ref{sec:genover}, we denote by $F^{\circ\circ}_{p,q}$
(resp.\ $F^{\bullet\bullet}_{p,q}$, $F^{\circ\bullet}_{p,q}$) the
generating function of hypermaps with two monochromatic rooted
boundaries, the outer one white (resp.\ black, white) of degree $p$
and the central one white (resp.\ black, black) of degree $q$. We also
denote by $\tilde{F}^{\bullet\circ}_{p,q}$ the generating function of
hypermaps with two monochromatic rooted boundaries, the outer one
black of degree $p$ and the central one white of degree $q$, \emph{such
  that the ccw-girth is finite}. We colloquially call such objects
``two-way cylinders''.  Combining Corollary~\ref{cor:trumpenum} and
Proposition~\ref{prop:cylinders}, we get the following:

\begin{corollary}\label{cor:cylindersenum}
  For any $p,q \geq 1$, we have
  \begin{equation} \label{eq:cylindersenum}
    \begin{split}
      F^{\circ\circ}_{p,q} &= \sum_{h \geq 1} h \left([z^{h}] x(z)^p \right) \left( [z^{-h}] x(z)^{q} \right),\\
      F^{\bullet\bullet}_{p,q} &= \sum_{h \geq 1} h \left([z^{h}] y(z)^p \right) \left( [z^{-h}] y(z)^{q} \right),\\
      F^{\circ\bullet}_{p,q} &= \sum_{h \geq 1} h \left([z^{h}] x(z)^p \right) \left( [z^{-h}] y(z)^{q} \right),\\
      \tilde{F}^{\bullet\circ}_{p,q} &= \sum_{h \geq 1} h \left([z^{h}] y(z)^p \right) \left( [z^{-h}] x(z)^{q} \right).\\
    \end{split}
  \end{equation}
  In these sums, the term of index $h$ corresponds to the contribution
  of hypermaps with ccw-girth equal to $h$.
\end{corollary}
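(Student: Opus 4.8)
The plan is to obtain \eqref{eq:cylindersenum} by assembling two results already at hand: the trumpet/cornet decomposition of Proposition~\ref{prop:cylinders} and the enumeration of trumpets and cornets in Corollary~\ref{cor:trumpenum}. The first step is to refine each of the four cylinder generating functions according to the value of the ccw-girth. Fix $p,q\geq 1$; for each $h\geq 1$, let $F$ be the generating function of hypermaps with two monochromatic rooted boundaries, the outer one of degree $p$ and the central one of degree $q$ with prescribed colors, and with ccw-girth equal to \emph{exactly} $h$. Proposition~\ref{prop:cylinders} gives a weight-preserving $h$-to-$1$ correspondence from this set onto the set of pairs $(\rm_1,\rm_2)$, where $\rm_1$ is a trumpet of perimeter $p$ and girth $h$ and $\rm_2$ a cornet of perimeter $q$ and girth $h$, with the color of $\rm_1$ (resp.\ $\rm_2$) equal to that of the outer (resp.\ central) boundary. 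As the correspondence is $h$-to-$1$ and weight-preserving, $F$ equals $h$ times the product of the generating function of such trumpets and that of such cornets; here the cornet is weighted with no contribution from the vertices of its strictly tight boundary, which is precisely what is needed so that the gluing along the two length-$h$ cycles neither loses nor double-counts a vertex weight, consistently with the definition~\eqref{eq:defWeightslice} of $\bols$ and the fact that Proposition~\ref{prop:cylinders} is weight-preserving.

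Second, I would substitute the values provided by Corollary~\ref{cor:trumpenum}: the generating function of white (resp.\ black) trumpets of perimeter $p$ and girth $h$ is $[z^h]x(z)^p$ (resp.\ $[z^h]y(z)^p$), and the generating function of white (resp.\ black) cornets of perimeter $q$ and girth $h$ is $[z^{-h}]x(z)^q$ (resp.\ $[z^{-h}]y(z)^q$). Running through the four choices of boundary colors --- $(\circ,\circ)$, $(\bullet,\bullet)$, $(\circ,\bullet)$ and $(\bullet,\circ)$ for (outer, central) --- the contribution of ccw-girth $h$ to the corresponding cylinder generating function is then exactly $h\,([z^h]\xi(z)^p)\,([z^{-h}]\eta(z)^q)$ with $(\xi,\eta)$ equal to $(x,x)$, $(y,y)$, $(x,y)$ and $(y,x)$, respectively, which is the $h$-indexed summand appearing in \eqref{eq:cylindersenum}.

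It remains to sum over $h$. For $F^{\circ\circ}_{p,q}$, $F^{\bullet\bullet}_{p,q}$ and $F^{\circ\bullet}_{p,q}$ every contributing hypermap has a finite (and positive) ccw-girth, since --- as recalled before Proposition~\ref{prop:cylinders} --- an infinite ccw-girth can occur only when the outer boundary is black and the central boundary white; and $\tilde F^{\bullet\circ}_{p,q}$ is by definition the generating function of two-way cylinders, i.e.\ of those with finite ccw-girth. Hence, in each of the four cases, summing the ccw-girth-$h$ contributions over all $h\geq 1$ recovers the full generating function, which yields \eqref{eq:cylindersenum}. I do not expect a real obstacle here: the substance is entirely contained in Proposition~\ref{prop:cylinders} and Corollary~\ref{cor:trumpenum}, and the only points deserving attention are the vertex-weight bookkeeping across the gluing (handled by the ``weight-preserving'' clause together with the cornet weight convention) and the verification that the ccw-girth ranges over a set of values whose contributions exhaust the generating function. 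One may also observe that each of these sums is in fact finite, since $x(z)$ contains no power of $z$ larger than $z^1$ and $y(z)$ none smaller than $z^{-1}$, so that $[z^h]x(z)^p=0$ for $h>p$ and $[z^{-h}]y(z)^q=0$ for $h>q$; thus no question of convergence arises.
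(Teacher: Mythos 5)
Your proof is correct and is exactly the argument the paper intends: it refines by ccw-girth, applies the weight-preserving $h$-to-$1$ trumpet/cornet decomposition of Proposition~\ref{prop:cylinders}, and substitutes the trumpet and cornet generating functions from Corollary~\ref{cor:trumpenum}, summing over $h$. The paper states the corollary as an immediate combination of these two results, and your write-up just makes explicit the same bookkeeping (cornet vertex-weight convention, finiteness of the ccw-girth except in the $\tilde F^{\bullet\circ}$ case, color matching) that the paper leaves implicit.
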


Note that the above expressions vanish whenever $p$ or $q$ is equal to
$0$, and that the series $F^{\circ\circ}_{p,q}$ and
$F^{\bullet\bullet}_{p,q}$ are symmetric in $p$ and $q$: this follows
from their combinatorial definition by exchanging the roles of the two
boundaries\footnote{Alternatively, the symmetry can be seen by the
  following walk-counting argument. Using the generic notations from
  Appendix~\ref{sec:dsf}, the symmetry in $p$ and $q$ amounts to the
  vanishing of $\sum_{h \in \Z} h P_{p,h} P_{q,-h}$. This quantity is
  the weighted sum, over all walks with $p+q$ steps starting and
  ending at $0$, of the position attained after $p$ steps, i.e.\ of
  the sum of the increments of the $p$ first steps. It must indeed
  vanish, since the sum of all $p+q$ increments of any such walk
  vanishes, and since the weight of a walk is invariant under cyclic
  shifts. Note that we do not use the DSF assumption here.}.

\begin{remark}
  \label{rem:hatFexpr}
  It is interesting and useful to note that the combination
  \begin{equation}
    \label{eq:hatFexpr}
    \hat{F}^{\circ\bullet}_{p,q} := F^{\circ\bullet}_{p,q}-\tilde{F}^{\bullet\circ}_{q,p} =
    \sum_{h \in \Z} h \left([z^h] x(z)^p \right) \left( [z^{-h}] y(z)^q \right)
  \end{equation}
  is the generating function of ``one-way cylinders'', namely
  hypermaps with an outer white boundary of degree $p$ and a central
  black boundary of degree $q$ \emph{such that there exists no
    separating cycle oriented in the clockwise direction}. Indeed,
  those hypermaps having at least one such cycle are counted by
  $\tilde{F}^{\bullet\circ}_{q,p}$, as seen by exchanging the roles of
  the outer and central boundaries.
\end{remark}

\begin{proof}[Proof of Proposition~\ref{prop:cylinders}]
Let us first prove that in a hypermap $\rm$ with two monochromatic boundaries of ccw-girth $h<\infty$, there exists a unique \emph{innermost} ccw cycle of length $h$. To see this, consider any ccw cycle in $\mathcal{M}$ and define its interior to be the submap lying to its left, which therefore contains the central boundary of $\mathcal{M}$. Now consider the intersection of all interiors of ccw cycles of length $h$ in $\mathcal{M}$. Since this intersection contains the central boundary, it is non-empty, and its boundary forms a ccw cycle of length $h$.

\begin{figure}[h!]
\begin{center}
\includegraphics[width=0.3\linewidth]{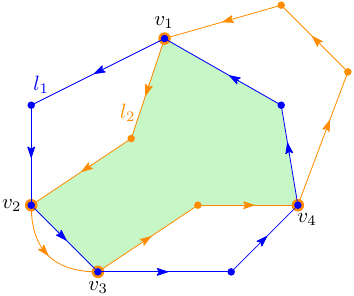}
\caption{Innermost ccw cycle of minimal length.\label{fig:innermost}}
\end{center}
\end{figure}
To prove the latter fact, consider two ccw cycles $l_1$ and $l_2$ of minimal length that intersect each other, and list their common vertices as $v_1,\ldots,v_d$, see Figure~\ref{fig:innermost}. By minimality, the restrictions of $l_1$ and $l_2$ between $v_i$ and $v_{i+1}$ must then have the same length. Hence, the boundary of the intersection of their interiors is also a ccw cycle of minimal length. The general case follows by induction.

The conclusion of the proposition then follows by cutting $\rm$ along the innermost ccw cycle of minimal length. 
\end{proof}

\subsection{Monochromatic disks}\label{sub:mono}

Our purpose in this section is to derive a general expression for the
disk generating functions $F^\circ_p$ and $F^\bullet_p$, defined
in~\eqref{eq:defFp}, in terms of the slice generating functions
$(a_k)_{k \geq -1}$ and $(b_k)_{k \geq 0}$, or equivalently in terms
of the Laurent series $x(z)$ and $y(z)$ defined
in~\eqref{eq:defxy}. We have already found an expression for their
derivatives with respect to the vertex weight~$t$
(Corollary~\ref{cor:pointed}), so in principle an expression for
$F^\circ_p$ and $F^\bullet_p$ follows by integration (the constant
term in $t$ being zero since a map contains at least one vertex). We
would however like to find an integration-free expression, which would
be manifestly algebraic in the case of bounded face degrees.  A first
approach via ``unpointing'' is described in
Appendix~\ref{sec:altmono}: it builds on the ideas
of~\cite[Section~3.3]{hankel} and extends them to the setting of
hypermaps. Here, we present a new combinatorial approach that, instead
of pointed disks, uses rather the various sorts of cylinders which we
have enumerated in the previous subsection. The idea is illustrated on
Figure~\ref{fig:DiskCylTrick} and yields the following:

\begin{proposition}
  \label{prop:DiskCylTrickNew}
 Recall that $\hat{F}_{p,q}$ was defined in Remark~\ref{rem:hatFexpr}. For any $p \geq 0$, we have
  \begin{equation}
    \label{eq:DiskCylTrickNew}
    \begin{split}
      F^\circ_p &= \frac{\hat{F}^{\circ\bullet}_{p+1,1}}{p+1} = \frac{F^{\circ\bullet}_{p+1,1}-\tilde{F}^{\bullet\circ}_{1,p+1}}{p+1} = \frac{F^{\circ\bullet}_{p+1,1} - \sum_{d \geq 2} t_d^\circ F^{\circ\circ}_{p+1,d-1}}{p+1}, \\
      F^\bullet_p &= \frac{\hat{F}^{\circ\bullet}_{1,p+1}}{p+1} = \frac{F^{\circ\bullet}_{1,p+1}-\tilde{F}^{\bullet\circ}_{p+1,1}}{p+1} = \frac{F^{\circ\bullet}_{1,p+1} - \sum_{d \geq 2} t_d^\bullet F^{\bullet\bullet}_{p+1,d-1}}{p+1}.
    \end{split}
  \end{equation}
\end{proposition}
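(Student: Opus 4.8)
The plan is to deduce all three expressions for $F^\circ_p$ (and, by the colour‑exchanging symmetry, those for $F^\bullet_p$) from a single combinatorial surgery performed at the degree‑$1$ boundary of a cylinder, as suggested by Figure~\ref{fig:DiskCylTrick}. The entry point is the differentiation identity $F^{\circ\bullet}_{p+1,1}=(p+1)\,\partial F^\bullet_1/\partial t^\circ_{p+1}$, a special case of~\eqref{eq:Fpq}: it rewrites the cylinder series $F^{\circ\bullet}_{p+1,1}$ in terms of hypermaps $\mathcal M$ consisting of a black monochromatic rooted boundary of degree $1$ together with a distinguished \emph{unrooted} white inner face $f$ of degree $p+1$, each weighted by $w(\mathcal M)/t^\circ_{p+1}$. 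Thus it suffices to establish
\[
  \frac{\partial F^\bullet_1}{\partial t^\circ_{p+1}}\;=\;F^\circ_p\;+\;\frac{1}{p+1}\sum_{d\geq 2}t^\circ_d\,F^{\circ\circ}_{p+1,d-1}.
\]

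A boundary of degree $1$ is bounded by a single loop edge $\beta$; let $h$ be the white face on its other side. I would split the configurations $\mathcal M$ above according to whether $f=h$ or $f\neq h$, and handle each by deleting $\beta$:
\begin{itemize}
\item If $f=h$: deleting $\beta$ makes the degree‑$1$ black face disappear, merges into one corner $\rho$ the corners of $f$ adjacent to $\beta$, and turns $f$ into a white boundary of degree $p$ rooted at $\rho$ — a white disk of degree $p$. The inverse adds, at the root corner of a white disk of degree $p$, a black loop enclosing the whole map, which promotes its boundary face to a white inner face of degree $p+1$. This is a weight‑preserving bijection, so this family contributes $F^\circ_p$.
\item If $f\neq h$: writing $d\geq 2$ for the degree of $h$, deleting $\beta$ turns $h$ into a white boundary of degree $d-1$ (rooted at the merged corner) while $f$ remains a distinguished white inner face of degree $p+1$; promoting $f$ to a \emph{rooted} boundary, in $p+1$ ways, produces a cylinder with two white rooted boundaries of degrees $d-1$ and $p+1$, the weight $t^\circ_d$ of $h$ being the only one lost. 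Since this construction is reversible, this family contributes $\tfrac1{p+1}\sum_{d\geq 2}t^\circ_d F^{\circ\circ}_{d-1,p+1}$.
\end{itemize}
Adding the two contributions, multiplying by $p+1$, and using $F^{\circ\circ}_{d-1,p+1}=F^{\circ\circ}_{p+1,d-1}$, gives $F^{\circ\bullet}_{p+1,1}=(p+1)F^\circ_p+\sum_{d\geq 2}t^\circ_d F^{\circ\circ}_{p+1,d-1}$, which is exactly the third displayed expression.

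The first two expressions then follow, via the definition $\hat F^{\circ\bullet}_{p+1,1}:=F^{\circ\bullet}_{p+1,1}-\tilde F^{\bullet\circ}_{1,p+1}$ of Remark~\ref{rem:hatFexpr}, from the identity $\tilde F^{\bullet\circ}_{1,p+1}=\sum_{d\geq 2}t^\circ_d F^{\circ\circ}_{p+1,d-1}$. I would prove this last identity algebraically: by Corollary~\ref{cor:cylindersenum} both sides are sums over $h\geq 1$ involving $[z^{\pm h}]x(z)^{p+1}$ and a coefficient of $\sum_{d\geq 1}t^\circ_d x(z)^{d-1}$, and the slice equation $[z^k]\bigl(y(z)-\sum_{d\geq 1}t^\circ_d x(z)^{d-1}\bigr)=0$ for $k\geq 0$ from Proposition~\ref{prop:decslices} (together with the symmetry of $F^{\circ\circ}$) identifies that coefficient with the appropriate $b_k$, matching the expression for $\tilde F^{\bullet\circ}_{1,p+1}$; alternatively one can repeat the $\beta$‑deletion surgery on $\tilde F^{\bullet\circ}_{1,p+1}$ directly, at the price of invoking a criterion — in the spirit of Lemma~\ref{lem:DobruCyl} — for a cylinder with monochromatic boundaries to have finite ccw‑girth. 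The statement for $F^\bullet_p$ is obtained by exchanging the two colours throughout, using instead $[z^{-k}]\bigl(x(z)-tz-\sum_{d\geq 1}t^\bullet_d y(z)^{d-1}\bigr)=0$ for $k\geq -1$. The only step demanding genuine care is the bookkeeping in the two cases above — that the case split is exhaustive (for $f\neq h$ one must note $h$ cannot have degree $1$), that both surgeries are genuine bijections, and that face weights and root corners are tracked correctly — but this is routine rather than an obstacle; the rest is formal manipulation or a citation.
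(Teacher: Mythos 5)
Your argument is correct, and the two surgeries you perform (adding or deleting the loop $\beta$ that bounds the degree-one black face) are exactly the two moves of Figure~\ref{fig:DiskCylTrick}; what differs is how you wire them together. The paper proves the first equality $(p+1)F^\circ_p=\hat F^{\circ\bullet}_{p+1,1}$ directly, by showing that the corner-inflation produces precisely the one-way cylinders — this needs the observation that a cylinder with a degree-one black central boundary has a clockwise separating cycle exactly when the two boundaries are not adjacent — and then proves $\tilde F^{\bullet\circ}_{1,p+1}=\sum_{d\ge2}t^\circ_d F^{\circ\circ}_{p+1,d-1}$ by the collapsing bijection, the middle equality being the definition from Remark~\ref{rem:hatFexpr}. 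You instead prove the combined (third) identity $F^{\circ\bullet}_{p+1,1}=(p+1)F^\circ_p+\sum_{d\ge2}t^\circ_d F^{\circ\circ}_{p+1,d-1}$ in one stroke, reading $F^{\circ\bullet}_{p+1,1}$ via \eqref{eq:Fpq} as loop-bounded maps with a marked white face and splitting on whether the marked face is adjacent to the loop, and then recover the first equality by verifying $\tilde F^{\bullet\circ}_{1,p+1}=\sum_{d\ge2}t^\circ_d F^{\circ\circ}_{p+1,d-1}$ algebraically from Corollary~\ref{cor:cylindersenum}, the symmetry of $F^{\circ\circ}$, and \eqref{eq:decBslices}; that computation is valid (the $d=1$ term drops since $[z^h]x(z)^0=0$ for $h\ge1$), and your colour-swapped version correctly invokes \eqref{eq:decAslices} at $k\ge1$, where the $t\,\delta_{k,-1}$ term plays no role. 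The trade-off is clear: your route never discusses separating cycles, but in exchange the identity $F^\circ_p=\hat F^{\circ\bullet}_{p+1,1}/(p+1)$ becomes a consequence of the earlier enumerative formulas rather than a direct bijection with one-way cylinders, which is the combinatorial content the paper isolates (and reuses, via Lemma~\ref{lem:DobruCyl}, for Dobrushin boundaries). The bookkeeping points you flag — exhaustiveness of the case split, $d\ge 2$ when $f\neq h$, the degenerate case $p=0$, and unrooting the marked face by dividing by $p+1$ (legitimate because the other boundary remains rooted, killing automorphisms) — all check out.
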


\begin{figure}
  \centering \includegraphics[width=\textwidth]{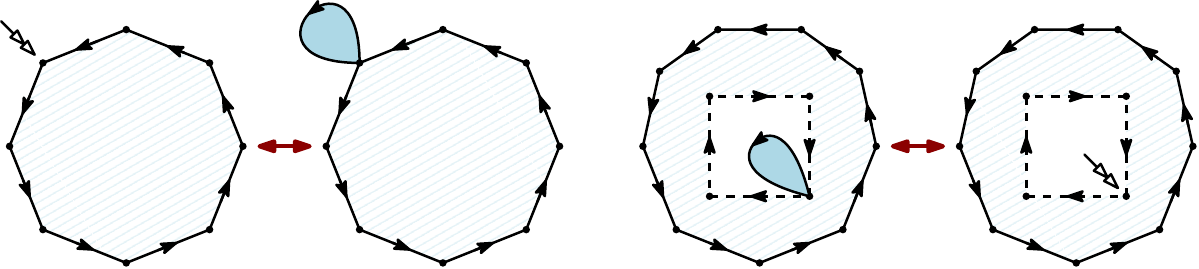}
  \caption{Idea of the proof of
    Proposition~\ref{prop:DiskCylTrickNew}.\\ Left: a disk with a white
    (say) rooted boundary of degree $p$ (here $p=8$, the hatched
    region may contain other vertices, edges and faces) can be
    transformed, by ``inflating'' the marked corner into a black face
    of degree $1$, into a cylinder with a white boundary of degree
    $p+1$ and a black boundary of degree $1$, such that the two
    boundaries are adjacent. Note that this entails that there is no
    clockwise cycle separating the two boundaries.\\ Right: a cylinder
    with a white boundary of degree $p+1$ and a black boundary of
    degree $1$, such that the black boundary is adjacent to an inner
    face of degree $d$ (here $d=5$), has at least one clockwise
    separating cycle (e.g. the one shown with dashed edges). By
    ``collapsing'' the black boundary into a marked corner, we get a
    cylinder with a white boundary of degree $p+1$ and a white rooted
    boundary of degree $d-1$.}
  \label{fig:DiskCylTrick}
\end{figure}

\begin{proof}
  The first equalities on both lines follow from the correspondence illustrated on
  the left of Figure~\ref{fig:DiskCylTrick}: given a hypermap with a
  white rooted boundary of degree $p$ (chosen as the outer face), we
  ``inflate'' the marked corner into a black boundary of
  degree $1$. This also increases the degree of the white boundary by
  $1$. Both boundaries are a priori unrooted: the black boundary may
  be rooted in a unique way, and the white boundary in $p+1$ different
  ways. Note that there is no clockwise separating cycle by
  construction, and we get precisely all ``one-way cylinders''
  contributing to $\hat{F}^{\circ\bullet}_{p+1,1}$. Indeed, as
  illustrated on the right of Figure~\ref{fig:DiskCylTrick}, in a
  cylinder with a white outer boundary and a black central boundary of
  degree $1$, there is a least one clockwise separating cycle as soon
  as the boundaries are not adjacent: take for instance the
  contour of the inner white face adjacent to the black boundary, with
  their common incident edge removed. This establishes the relation
  $(p+1)F^\circ_p = \hat{F}^{\circ\bullet}_{p+1,1}$, and the other
  relation $(p+1)F^\bullet_p = \hat{F}^{\circ\bullet}_{1,p+1}$ is
  obtained similarly by exchanging the colors and orientations.

  The second equalities on both lines of~\eqref{eq:DiskCylTrickNew}
  follow immediately from the first equality
  in~\eqref{eq:hatFexpr}. To conclude the proof of the proposition, it
  suffices to show that
  \begin{equation}
    \tilde{F}^{\bullet\circ}_{1,p+1}=\sum_{d \geq 2} t_d^\circ F^{\circ\circ}_{p+1,d-1}, \qquad
    \tilde{F}^{\bullet\circ}_{p+1,1}= \sum_{d \geq 2} t_d^\bullet F^{\bullet\bullet}_{p+1,d-1}.
  \end{equation}
  This may be seen by the following bijective argument, again
  illustrated on the right of Figure~\ref{fig:DiskCylTrick}. Consider
  a hypermap contributing to $\tilde{F}^{\bullet\circ}_{1,p+1}$, that
  is a hypermap with a black boundary of degree $1$ and a white
  boundary of degree $p+1$ such that the two boundaries are not
  adjacent. Denote by $d$ the degree of the white inner face adjacent
  to the black boundary. We turn this white face into a boundary of
  degree $d-1$ by ``collapsing'' the black boundary into a marked
  corner. Summing over $d$ the first equality follows (the factor
  $t_d^\circ$ is the weight of the white inner face that should be
  accounted in $\tilde{F}^{\bullet\circ}_{1,p+1}$). The second
  equality is obtained similarly.
\end{proof}

Combining~\eqref{eq:DiskCylTrickNew} with the formulas for cylinders
found previously, we may rewrite $F_p^\circ$ and $F_p^\bullet$ in
various forms. For instance, by applying the second equality
in~\eqref{eq:hatFexpr}, we get the compact expressions
\begin{equation}
  \label{eq:Fpcompact}
  F^\circ_p = \frac{1}{p+1}\sum_{h \in \Z} h b_{-h} [z^h]x(z)^{p+1}, \qquad
  F^\bullet_p = \frac{1}{p+1}\sum_{h \in \Z} h a_{-h} [z^{-h}]y(z)^{p+1}
\end{equation}
in terms of the slice generating functions $(a_k),(b_k)$ and the
associated Laurent series $x(z)$ and $y(z)$. Let us make a few remarks
on these expressions:
\begin{enumerate}
\item Only the term $h=1$ gives a positive contribution, and all the
  terms $h<0$ correspond to ``unwanted'' contributions which should be
  subtracted,
\item Taking $p=0$ in both expressions and recalling that
  $F^\circ_0=F^\bullet_0=t$, we recover the identity~\eqref{eq:kakbk}
  from Corollary~\ref{cor:ahbh},
\item In the case of bounded face degrees, it follows
  from~\eqref{eq:Fpcompact} and Lemma~\ref{lem:boundxypol} that
  $F^\circ_p$ and $F^\bullet_p$ are polynomials in the $a_k$'s and
  $b_k$'s.
\end{enumerate}
Alternate expressions are obtained by expanding the right-hand sides
of~\eqref{eq:DiskCylTrickNew} using Corollary~\ref{cor:cylindersenum},
to yield
\begin{equation} \label{eq:Fplesscompact}
  \begin{split}
    F_p^\circ &= \frac{1}{p+1} \left([z]x(z)^{p+1} - \sum_{d \geq 2} t_d^\circ \sum_{h \geq 1} h \left([z^h]x(z)^{p+1}\right) \left([z^{-h}] x(z)^{d-1}\right)\right),\\
    F_p^\bullet &= \frac{1}{p+1} \left(a_{-1} [z^{-1}]y(z)^{p+1} - \sum_{d \geq 2} t_d^\bullet \sum_{h \geq 1} h \left([z^{-h}]y(z)^{p+1}\right) \left([z^{h}] y(z)^{d-1}\right)\right).\\
  \end{split}
\end{equation}
We will use these less compact expressions in
Section~\ref{sec:apmono}.  Note that, by symmetry, there are two ways
to expand $F^{\circ\circ}_{p+1,d-1}$ and
$F^{\bullet\bullet}_{p+1,d-1}$, and we choose one specifically
here. Note that we may pass directly from~\eqref{eq:Fpcompact}
to~\eqref{eq:Fplesscompact} using Proposition~\ref{prop:decslices}.

To conclude, let us mention that this approach gives a new derivation of the disk generating function of non-bicolored maps. Indeed, recall from Remark~\ref{rem:nonbicolored} that, in that case, we have $x(z)=z^{-1}+S+Rz$. By plugging this expression in the first line of~\eqref{eq:Fplesscompact}, we get an expression which corresponds to~\cite[(3.16)]{hankel} in the limit $d\rightarrow \infty$.

\subsection{Disks with a Dobrushin boundary condition}\label{sub:Dobru}

We now proceed to establishing the following:

\begin{proposition}
  \label{prop:Wdobr}
  For $p,q \geq 0$, let $F^{\yy}_{p,q}$ be the generating function of
  hypermaps with a Dobrushin boundary of type $(p,q)$, as defined in
  Section~\ref{sub:DefIntro}. Then, we have
  \begin{equation}
    \label{eq:Wdobr}
    \begin{split}
      \sum_{p,q \geq 0}  \frac{F^{\yy}_{p,q}}{x^{p+1} y^{q+1}} 
      &=\exp \left(\sum_{h \in \Z} h \left([z^h] \ln \left(1-\frac{x(z)}x\right) \right) \left( [z^{-h}] \ln \left(1-\frac{y(z)}y\right) \right)\right)-1 \\
      &=  \exp\left(  \sum_{h \in \Z} \sum_{p,q \geq 1}
      h \left([z^h] \frac{x(z)^p}{p x^p} \right) \left( [z^{-h}] \frac{y(z)^q}{q y^q} \right) 
      \right) - 1.
    \end{split}
  \end{equation}
\end{proposition}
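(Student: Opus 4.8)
The plan is to decompose a hypermap with a Dobrushin boundary of type $(p,q)$ along a minimal separating structure, in the spirit of the trumpet/cornet decomposition of cylinders (Proposition~\ref{prop:cylinders}), but now accounting for the fact that the boundary is non-monochromatic. Recall that the Dobrushin boundary contour splits into a directed path of length $p$ with the boundary on its right (``white side'') and a directed path of length $q$ with the boundary on its left (``black side''), meeting at two vertices, one of which carries the root corner. The key geometric observation should be that there is a canonical way to produce, from such a hypermap, a sequence of ``cyclic'' pieces obtained by cutting along minimal directed cycles separating the white side of the boundary from the black side. I would first argue that such separating cycles exist (using strong connectivity of the inner faces and the structure of the boundary, cf.\ Remark~\ref{rem:access}), and that the innermost one of minimal length is unique, by the same lattice-of-interiors argument used in the proof of Proposition~\ref{prop:cylinders}.

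Next I would set up the iteration. Cutting along the innermost minimal ccw separating cycle of length $h_1$ peels off an annular region between the boundary and that cycle; this annular region, once cut open along a leftmost directed geodesic joining the two boundary arcs, should be analyzed as a pair consisting of a ``stack'' of slices of type $\cA$ on the white side and a ``stack'' of slices of type $\cB$ on the black side, both wrapping around to girth $h_1$ — this is where the trumpet/cornet bijection of Proposition~\ref{prop:trumpets} and Corollary~\ref{cor:trumpenum} enters, giving the factors $[z^{h_1}]x(z)^{p}$ and $[z^{-h_1}]y(z)^{q}$ (with the combinatorial factor $h_1$ counting the rooting of the cut cycle). Iterating inward on the remaining hypermap (which again has a Dobrushin-type boundary, now of type $(p',q')$ determined by the previous cut), one obtains a sequence of such annular layers, and the $\exp$ structure in~\eqref{eq:Wdobr} is exactly the generating-function incarnation of "arbitrary sequence of layers", each layer contributing $\sum_h h\,[z^h](x(z)^p/(p\,x^p))\cdot[z^{-h}](y(z)^q/(q\,y^q))$ summed over the perimeters $(p,q)$ of that layer, with the $1/p$, $1/q$ accounting for rooting each layer's white and black arcs and the logarithm arising from $\sum_{p\ge 1} u^p/p = -\ln(1-u)$. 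The innermost piece of the iteration, when the two boundary arcs have collapsed, should be the trivial hypermap reduced to a point, which is why we subtract $1$ and why the convention $F^{\yy}_{0,0}=t$ is consistent.

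The main obstacle I anticipate is making the layer decomposition genuinely bijective and weight-preserving: one must check that cutting along successive innermost minimal separating cycles, each cut followed by opening along an appropriately-chosen leftmost geodesic, is reversible, and that the pieces assembled are precisely (rooted) trumpet/cornet-type objects with matching girths glued back-to-back. In particular one has to track carefully which vertices and faces are counted where — recalling the convention from~\eqref{eq:defWeightslice} that vertices on the right boundary of a slice carry no weight — so that the product of the per-layer weights reproduces $w$ of the original hypermap with no over- or under-counting along the cut cycles. There is also a bookkeeping subtlety at the two ``pole'' vertices where the white and black arcs meet, and in the degenerate layers where $h_1$ equals $p$ or $q$ (a layer reduced to part of a directed cycle); these edge cases must be handled so that the formal identity $\sum_{p,q\ge 0}F^{\yy}_{p,q}/(x^{p+1}y^{q+1})$, with the stated convention on $F^{\yy}_{p,0}$, $F^{\yy}_{0,q}$ and $F^{\yy}_{0,0}$, matches the right-hand side after the $-1$. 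Once the bijection is in place, the passage from the first line of~\eqref{eq:Wdobr} to the second is just the expansion $\ln(1-u) = -\sum_{p\ge1}u^p/p$ applied to $u=x(z)/x$ and $u=y(z)/y$, which is purely formal.
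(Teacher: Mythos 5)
Your plan has a genuine gap at both ends. First, the object you propose to cut along is not defined: a hypermap with a Dobrushin boundary has a \emph{single} boundary face, so ``directed cycles separating the white side of the boundary from the black side'' do not exist in the sense of Definition~\ref{def:tight} --- any directed cycle avoiding the boundary bounds a disk that leaves both boundary arcs on the same side. The paper's first (and crucial) move, which your sketch omits, is to \emph{add} a directed edge joining the two pole vertices, turning the Dobrushin map into a cylinder with a white boundary of degree $p+1$ and a black boundary of degree $q+1$ that are adjacent; Lemma~\ref{lem:DobruCyl} then shows these are exactly the ``one-way cylinders'' of Remark~\ref{rem:hatFexpr}, and moreover every separating cycle must pass through the common edge. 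Consequently there is no family of disjoint nested annular layers: all would-be cuts are pinched at the shared edge, and the pieces between them are not cylinders with monochromatic boundaries, so Proposition~\ref{prop:trumpets} and Corollary~\ref{cor:trumpenum} cannot be invoked to produce the factors $[z^h]x(z)^p$ and $[z^{-h}]y(z)^q$ layer by layer.

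Second, your explanation of the exponential is incorrect as a combinatorial principle: an ``arbitrary sequence of layers'' yields a geometric series $\frac{1}{1-(\cdot)}$, not $\exp(\cdot)$. In the paper the exponential arises from the \emph{blob decomposition} into strongly connected components along the bridges incident to the Dobrushin boundary: with $B(x,y)$ the blob series one has $\sum_{p,q}F^{\yy}_{p,q}x^{-p-1}y^{-q-1}=\frac{1}{1-B(x,y)}-1$, while the one-way cylinders satisfy $\sum_{p,q}\hat F^{\circ\bullet}_{p,q}x^{-p}y^{-q}=xy\,\frac{\partial^2}{\partial x\partial y}\ln\frac{1}{1-B(x,y)}$, the two derivatives accounting for the two rooted boundaries of the cylinder as opposed to the single marked common edge of the Dobrushin map. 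Comparing these two identities is what produces $\exp\bigl(\sum_{p,q}\hat F^{\circ\bullet}_{p,q}/(p x^p q y^q)\bigr)-1$, and only then does the formula~\eqref{eq:hatFexpr} for $\hat F^{\circ\bullet}_{p,q}$ give~\eqref{eq:Wdobr}. Your $1/p$, $1/q$ unrooting factors point in the right direction, but without this derivative-of-logarithm bookkeeping (or an equivalent argument relating the rooted and unrooted/edge-marked countings) the exponential does not follow, so the central identity remains unproved in your proposal.
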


Note that, by extracting the coefficient of $\frac1{x^{p+1}y}$ or
$\frac1{x y^{p+1}}$ in these expressions, we recover the
formulas~\eqref{eq:Fpcompact} for $F^\circ_{p}=F^{\yy}_{p,0}$ and
$F^\bullet_{p}=F^{\yy}_{0,p}$.  Our starting point for establishing
the proposition is Remark~\ref{rem:hatFexpr} about ``one-way
cylinders'', which we already used in the previous subsection to
establish Proposition~\ref{prop:DiskCylTrickNew}.
One of the observations made in its proof may be generalized as
follows:

\begin{lemma}[Characterization of one-way cylinders]
  \label{lem:DobruCyl}
  A hypermap with an outer white boundary and a central black boundary
  contains no separating cycle oriented in the clockwise direction if
  and only if the two boundaries are adjacent, i.e.\ they have a
  common incident edge.
\end{lemma}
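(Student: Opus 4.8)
The plan is to prove both implications directly. For the ``if'' direction, suppose the outer white boundary $B^\circ$ and the central black boundary $B^\bullet$ share a common incident edge $e$. Because the map is a hypermap (with the canonical orientation), the contour of each boundary is almost a directed cycle, except that the boundaries themselves need not be directed; however, the edge $e$ is incident to a boundary on each side, so its orientation is not constrained by the face-coloring. Still, I claim no clockwise separating cycle can exist: such a cycle $c$ would bound a region containing $B^\bullet$ but not $B^\circ$, hence $c$ would have to ``separate'' $e$'s two endpoints in a way incompatible with $e$ being incident to both boundaries. Concretely, since $e$ touches both $B^\circ$ and $B^\bullet$, any separating cycle must either use $e$ or leave $e$ entirely on one side; one checks that using the directedness of the inner-face contours, a clockwise cycle would force the white face $B^\circ$ to lie in its interior, a contradiction. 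I would phrase this carefully: the interior of a clockwise separating cycle contains $B^\bullet$, and the edge $e$, being incident to $B^\bullet$, has at least one endpoint on the cycle or in its interior; tracing the contour of $B^\circ$ from that point, one is forced to cross $c$ against its orientation — impossible since $c$ is a directed cycle and the relevant contour edges are also directed consistently.

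For the ``only if'' direction — which I expect to be the genuinely substantive part — assume the two boundaries are \emph{not} adjacent, and produce a clockwise separating cycle. This is precisely the construction sketched in the proof of Proposition~\ref{prop:DiskCylTrickNew} and in the right panel of Figure~\ref{fig:DiskCylTrick}, but it must now be carried out in the general setting rather than the special case of a degree-$1$ black boundary. The idea is to look at the faces incident to the central black boundary $B^\bullet$: since $B^\bullet$ is not adjacent to $B^\circ$, every face sharing an edge with $B^\bullet$ is an \emph{inner} face. Walk around the contour of $B^\bullet$ and, at each edge, cross into the inner face on the other side; following the directed contours of these inner faces (which, recall, \emph{are} directed cycles because the faces are inner), one assembles a closed walk that encircles $B^\bullet$ while staying ``one layer out''. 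The orientation works out to be clockwise: the contour of $B^\bullet$ as seen from outside is counterclockwise around $B^\bullet$ hence clockwise around $B^\circ$, and the inner faces adjacent to it inherit a compatible orientation. More precisely, I would take the symmetric difference (as edge sets, or rather the boundary in the planar-duality sense) of the contours of all inner faces incident to $B^\bullet$, together with the contour of $B^\bullet$ itself; cancellation along shared edges leaves a directed cycle, and a parity/planarity argument shows it separates the two boundaries and is oriented clockwise.

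The main obstacle will be making the second construction rigorous when the faces incident to $B^\bullet$ form a complicated pattern — e.g.\ when a single inner face is incident to $B^\bullet$ along several disjoint arcs, or when the ``one layer out'' walk is not simple. I would handle non-simplicity by extracting an innermost simple clockwise cycle from the closed walk (the same ``innermost cycle'' trick used in the proof of Proposition~\ref{prop:cylinders}, via intersecting interiors of clockwise cycles), so that it suffices to produce \emph{some} clockwise closed directed walk separating the boundaries, not necessarily a simple cycle. The cleanest formulation is therefore: (i) show a clockwise separating closed directed walk exists iff the boundaries are non-adjacent, by the face-contour sum above; (ii) note that from any such walk one extracts a simple clockwise separating cycle. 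I would also double-check the degenerate case where the hypermap \emph{is} just an annulus-like configuration with $B^\bullet$ and $B^\circ$ separated only by a single ring of faces, to confirm the cycle produced is genuinely separating and does not collapse onto a boundary contour.
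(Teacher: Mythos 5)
Your ``only if'' direction is essentially the paper's own proof. The paper merges the central black boundary $f_2$ with all of its adjacent white faces (by non-adjacency these are all inner faces) and takes the outer contour of the resulting region, i.e.\ the boundary of the unbounded component of its complement; since every edge of that contour has a white face on the inside, the canonical orientation (white on the right) puts the region containing $f_2$ on the right, so this contour is a clockwise separating cycle (see the left of Figure~\ref{fig:DobruTrick}). Your boundary-of-the-union / symmetric-difference construction produces the same object, and your fallback of extracting a simple clockwise separating cycle from a closed separating walk is a legitimate alternative to the paper's device of passing to the unbounded complement component; both take care of the non-simply-connected and non-simple configurations you worry about.

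The ``if'' direction is where your write-up has a genuine flaw. You assert that the common edge $e$ ``is incident to a boundary on each side, so its orientation is not constrained by the face-coloring''. That is false in the setting of this lemma: one-way cylinders have \emph{monochromatic} boundaries, which are colored exactly like inner faces, so $e$ is canonically oriented with the white outer boundary on its right and the black central boundary on its left. This orientation is precisely what makes the direction immediate, and it is the paper's entire argument: any separating cycle must contain $e$ (otherwise a small disk around the midpoint of $e$ lies in one of the two regions, so the two faces incident to $e$ --- which are the two boundaries --- lie in the same region, contradicting separation), and a directed cycle through $e$ has the unbounded outer face on its right at $e$, hence is counterclockwise. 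Your substitute argument --- ``tracing the contour of $B^\circ$ \ldots\ one is forced to cross $c$ against its orientation'' --- does not work as stated: face contours do not cross a cycle of edges transversally, and the directedness of \emph{inner}-face contours, which you invoke, is not the relevant fact here. Once you restore the canonical orientation of $e$, your earlier sentence (``a clockwise cycle would force $B^\circ$ to lie in its interior'') becomes the correct one-line conclusion, and the proof matches the paper's.
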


\begin{figure}[t]
  \centering
  \includegraphics[scale=.9]{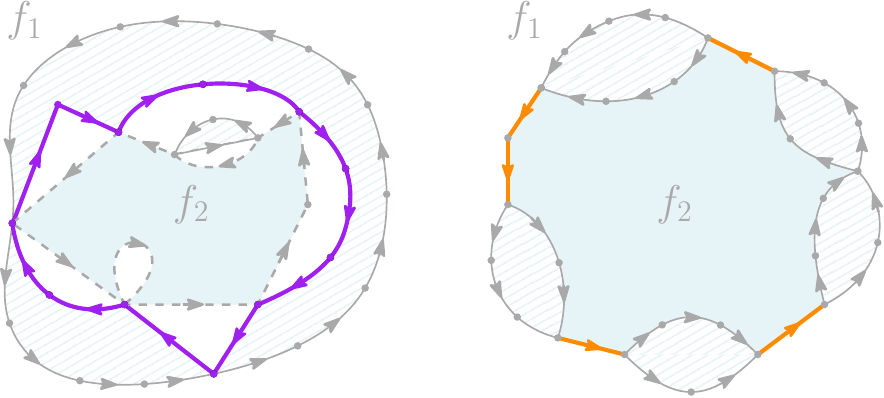}
  \caption{A criterion for the existence of a clockwise separating
    cycle in a hypermap with a white outer boundary $f_1$ and a black
    central boundary $f_2$ (the hatched regions may contain other
    vertices, edges and faces).\\ %
    Left: when $f_1$ and $f_2$ do not have a common incident edge, we
    may construct a clockwise separating cycle, here shown with purple
    thicker lines, by considering the outer contour of the region
    formed by merging $f_2$ with its adjacent white faces (thereby
    removing the dashed edges).\\  Right: when $f_1$ and $f_2$ have
    common incident edges, here shown as orange thicker lines, these
    prevent the existence of a clockwise separating cycle.  Between
    these edges, the hypermap consists of a sequence of ``blobs''
    (hatched regions).
    By removing one orange edge, thereby merging
    $f_1$ and $f_2$, we obtain a hypermap with a Dobrushin boundary
    condition.
  }
  \label{fig:DobruTrick}
\end{figure}

\begin{proof}
  Let $\rm$ be a hypermap, drawn in the plane, with an outer white
  boundary denoted by $f_1$, and a central black boundary denoted
  by $f_2$. If $f_1$ and $f_2$ have a common incident edge $e$, then
  every separating cycle is oriented in the counterclockwise
  direction, as it must necessarily pass through $e$, which is
  canonically oriented with $f_1$ on the right and $f_2$ on the
  left. See the right of Figure~\ref{fig:DobruTrick} for an
  illustration.

  Conversely, let us assume that $f_1$ and $f_2$ have no common
  incident edge. This situation is illustrated on the left of
  Figure~\ref{fig:DobruTrick}.  Let us consider the open region
  $\mathrm{R}$ of the plane obtained by merging $f_2$ with its
  adjacent white faces (thereby removing all the edges incident to
  $f_2$). Note that $\mathrm{R}$ may not be simply connected, if a white face is adjacent to $f_2$ along several edges. By our
  assumption $\mathrm{R}$ is bounded, and we let $\mathrm{R}'$ be the
  unbounded component of the complement of $\mathrm{R}$. The boundary
  of $\mathrm{R}'$, which we may call the ``outer contour'' of
  $\mathrm{R}$, is then a clockwise separating cycle of $\rm$, since
  by construction it consists of edges of $\rm$ incident to only white faces on the inside.
\end{proof}

\begin{figure}
  \centering
  \includegraphics[scale=.8]{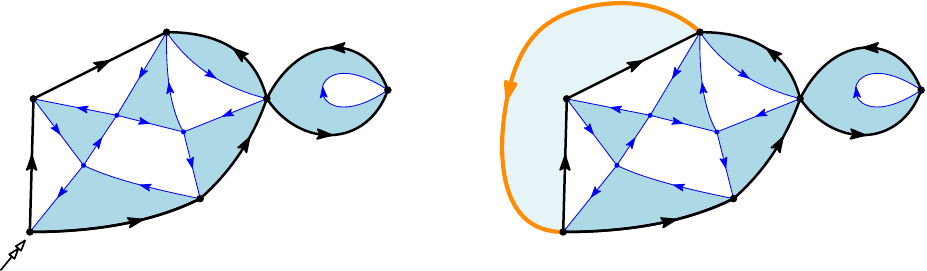}
  \caption{The hypermap with a Dobrushin boundary of
    Figure~\ref{subfig:hypermapDobrushin} (here reproduced on the
    left) can be transformed into a ``one-way cylinder'' (right) by
    adding an edge connecting the two corners at which the orientation
    of the boundary reverses, thereby splitting the non-monochromatic
    boundary into two monochromatic boundaries, one white (outer face)
    and one black (lighter blue).}
  \label{fig:DobruCyl}
\end{figure}

We may now make the connection with hypermaps with Dobrushin
boundaries.

\begin{proof}[Proof of Proposition~\ref{prop:Wdobr}]
Recall that the contour of a Dobrushin boundary consists of
two parts with opposite orientation. As shown on
Figure~\ref{fig:DobruCyl}, a hypermap with such a boundary can
be naturally transformed into a ``one-way
cylinder'', by adding a directed edge between the two vertices at which the orientation of the boundary changes. Lemma~\ref{lem:DobruCyl} shows that this transformation is
essentially a bijection, since starting conversely from a one-way
cylinder, by removing an edge incident to both boundaries we obtain an
hypermap with a Dobrushin boundary, see again the right of
Figure~\ref{fig:DobruTrick}.

To obtain a genuine bijection we have to pay attention to the
details. For any $p,q \geq 0$, the above transformation gives a
weight-preserving bijection between the set of hypermaps with a
Dobrushin boundary of type $(p,q)$, and that of hypermaps with two
monochromatic unrooted boundaries, one white of degree $p+1$ and one
black of degree $q+1$, \emph{with a marked edge incident to both
  boundaries}. Indeed, this marked edge contains all the information
we need to recover the hypermap with a Dobrushin boundary (recall
that a hypermap with a Dobrushin boundary is conventionally rooted at
the origin of the two boundary parts, thus the position of the
root corner does not contain any extra information, except in the
cases $p=0$ or $q=0$ where it allows to locate the boundary part
of zero length).

\begin{figure}
  \centering
  \includegraphics{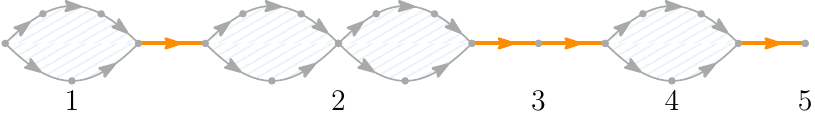}
  \caption{Schematic representation of the blob decomposition of a
    hypermap with a Dobrushin boundary (the hatched regions may
    contain extra vertices, edges and faces): by removing the four
    bridges (shown in orange), we obtain five ``blobs'', possibly with
    non simple boundaries or reduced to single vertices.}
  \label{fig:DobruBlob}
\end{figure}

Hence, the generating function $F^{\yy}_{p,q}$ of hypermaps with a
Dobrushin boundary of type $(p,q)$ is not quite equal to
$\hat{F}^{\circ\bullet}_{p+1,q+1}$, since they differ by the markings
(in the latter, both boundaries are rooted). To obtain an exact
relation, we have to go through an extra step which we call the \emph{blob
decomposition}. Observe that, unlike hypermaps with monochromatic
boundaries, hypermaps with Dobrushin boundaries may contain bridges,
i.e. edges whose removal disconnects the map. The bridges are
necessarily incident to the non-monochromatic boundary on both
sides. Let us then define a \emph{blob} as a hypermap with a Dobrushin
boundary containing no bridge. As illustrated on
Figure~\ref{fig:DobruBlob}, in general a hypermap with a Dobrushin
boundary can be decomposed into a non empty sequence of blobs,
connected by bridges. Note that this amounts to a decomposition into
strongly connected components, in the graph-theoretical
sense. Denoting by $B_{p,q}$ the generating function of blobs of type
$(p,q)$, we form the grand generating function
\begin{equation}
  \label{eq:Bxydef}
  B(x,y) := \sum_{p,q \geq 0} \frac{B_{p,q}}{x^{p+1} y^{q+1}}.
\end{equation}
Note that we have $B_{0,0}=t$, which corresponds to the blob reduced to a single vertex. 

The blob decomposition then yields the relation
\begin{equation}
  \label{eq:WBxyrel}
  \sum_{p,q \geq 0}  \frac{F^{\yy}_{p,q}}{x^{p+1} y^{q+1}} =
  \sum_{k \geq 1} B(x,y)^k = \frac{1}{1-B(x,y)} - 1.
\end{equation}
Here, $B(x,y)^k$ is the generating function of the hypermaps
consisting of $k$ blobs, and we sum over $k \geq 1$ to obtain all
possible configurations. The exponents $p+1$ and $q+1$ appearing in
the above series ensure that the contribution of the bridges to the
lengths of the two directed paths forming the Dobrushin boundary are
properly taken into account.

A similar blob decomposition can be performed for the ``one-way
cylinders'' contributing to $\hat{F}^{\circ\bullet}_{p+1,q+1}$. These
hypermaps have two marked corners, one incident to each boundary. The
blob decomposition consists in removing \emph{all} the edges incident
to both boundaries: as visible on the right of
Figure~\ref{fig:DobruTrick} this also yields a sequence of blobs,
which we conventionally start at the blob carrying the marked corner
incident to the white boundary.

This yields the relation
\begin{equation}
  \label{eq:hatFBrel}
  \sum_{p,q \geq 1} \frac{\hat{F}^{\circ\bullet}_{p,q}}{x^{p} y^{q}} =
  y \frac{\partial}{\partial y} \left(
    \sum_{k \geq 1} x \frac{\partial B(x,y)}{\partial x} B(x,y)^{k-1} \right) =
  x y \frac{\partial^2}{\partial x \partial y} \ln \left( \frac{1}{1 - B(x,y)} \right)
\end{equation}
where $- x \frac{\partial B(x,y)}{\partial x}$ is the generating
function of blobs with a corner marked on the ``white'' side, and
where acting with $-y \frac{\partial}{\partial y}$ corresponds to
marking another corner on the ``black'' side, anywhere in the sequence
of blobs. By comparing~\eqref{eq:WBxyrel} and~\eqref{eq:hatFBrel} we
find that
\begin{equation}
  \sum_{p,q \geq 0}  \frac{F^{\yy}_{p,q}}{x^{p+1} y^{q+1}} = \exp\left( \sum_{p,q \geq 1}
    \frac{\hat{F}^{\circ\bullet}_{p,q}}{p x^{p} q y^{q}} \right) - 1
\end{equation}
and using the expression~\eqref{eq:hatFexpr} for
$\hat{F}^{\circ\bullet}_{p,q}$ we obtain the expression given on the
second line of~\eqref{eq:Wdobr}.
\end{proof}

\section{All-perimeter generating functions}
\label{sec:allperim}

In this section, we consider the grand generating functions
$\Wb(x):= \sum_{p\geq 0}\frac{F_p^\circ}{x^{p+1}}$ and
$\Wn(y) := \sum_{p\geq 0}\frac{F_p^\bullet}{y^{p+1}}$ and its
derivatives introduced in Section~\ref{sec:genover}. Our purpose is to
establish the formulas~\eqref{eq:Wt}, \eqref{eq:Wcyl} and
\eqref{eq:Wsubs}. We will also discuss the relation between the
expression given in Proposition~\ref{prop:Wdobr} for the generating
function of hypermaps with a Dobrushin boundary, and those given
in~\cite{EynardBook}.

\subsection{Preliminaries, pointed disks and cylinders}
\label{sec:apprelim}

In Sections~\ref{sec:wrapping} and~\ref{sec:cyldisk}, we have found
expressions for the series $F_p^\circ,F_p^\bullet$ and its derivatives
$\frac{\partial}{\partial t}F_p^\circ,\frac{\partial}{\partial
  t}F_p^\bullet,\allowbreak F_{p,q}^{\circ\circ}=q\frac{\partial}{\partial
  t_q^\circ}F_p^\circ,\ldots$ in terms of the Laurent series
$x(z)=\sum_{k \geq -1} a_kz^{-k}$ and
$y(z)=z^{-1} + \sum_{k\geq 0} b_kz^{k}$ (whose coefficients are the
generating functions of elementary slices of type $\cA_k$ and $\cB_k$
respectively). Precisely, our expressions involved quantities of the
form $[z^h] x(z)^p$ and $[z^h] y(z)^p$, for some $h \in \Z$, and we
would therefore like to compute series of the form
$[z^h] \sum_{p \geq 0} \frac{x(z)^p}{x^{p+1}}$ and
$[z^h] \sum_{p \geq 0} \frac{y(z)^p}{y^{p+1}}$.  In view of
Remark~\ref{rem:DSFxy}, this corresponds to computing series of
downward skip-free walks weighted by the $a_k$ and $b_k$
respectively---recall that these series belong to the ring
$\Ring:=\mathbb{Q}[[t,
t^\circ_1,t^\circ_2,\ldots,t^\bullet_1,t_2^\bullet,\ldots]]$.  We
discuss the enumeration of DSF walks in detail in
Appendix~\ref{sec:dsf}, let us only quote the useful results here.

\label{page:excursionz}
Let us define an~\emph{excursion} as a DSF walk of any length (see
again Definition~\ref{def:DSF}) that starts at $0$, ends at $-1$, and
always remains nonnegative in between (these objects are also called
\L{}ukasiewicz walks). We then define $z^\bullet(y)$ as the generating
functions of excursions where, to each step with increment $i$, we
attach a weight $b_i y^{-1}$, for all $i \geq -1$ (recall that
$b_{-1}=1$). The series $z^\bullet(y)$ belongs to $\Ring[[y^{-1}]]$,
and by a standard decomposition of excursions (see
Proposition~\ref{prop:Ueq}) we have
\begin{equation}\label{eq:defGSExcursionsBullet}
  z^\bullet(y) = \sum_{k\geq -1} \frac{b_k}{y} z^\bullet(y)^{k+1} \qquad \text{i.e.} \qquad
  y = y(z^\bullet(y)).
\end{equation}

\begin{remark} \label{rem:compinv}
  One may think of $z^\bullet(y)$ as a ``compositional inverse'' of
  $y(z)$. To relate this idea to the standard notion of compositional
  inverse of formal power series---see
  e.g.~\cite[Section~5.4]{Stanley1999}---we observe that, while
  $y(z)=z^{-1} + \sum_{k\geq 0} b_kz^{k}$ is a formal Laurent series
  in $z$, its multiplicative inverse $y(z)^{-1}=z+\cdots$ is a formal
  power (not Laurent) series without constant coefficient and with a
  nonzero $z$ coefficient. As such, it indeed admits a compositional
  inverse in the usual sense, and this is why $z^\bullet(y)$ is
  naturally a series in $y^{-1}$.
\end{remark}

We similarly define $\tilde{z}^\circ(x)$ as the generating function
of excursions where, to each step with increment $i$, we attach a
weight $a_i x^{-1}$, for all $i \geq -1$. The series
$\tilde{z}^\circ(x)$ belongs to $\Ring[[x^{-1}]]$ and satisfies
\begin{equation}
  \label{eq:tzcircdef}
  \tilde{z}^\circ(x)  = \sum_{k\geq -1} \frac{a_k}{x} \tilde{z}^\circ(x)^{k+1} \qquad \text{i.e.} \qquad
  x = x\left(\tilde{z}^\circ(x)^{-1}\right).
\end{equation}
We then set $z^\circ(x):=\tilde{z}^\circ(x)^{-1}$, which may be thought
as the compositional inverse of $x(z)$. It is a formal Laurent series
in $x^{-1}$, note however that its coefficients do not belong to
$\Ring$ but to its field of fractions, as
$\tilde{z}^\circ(x) = a_{-1} x^{-1} + \cdots$ and,
by~\eqref{eq:decAslices}, we see that $a_{-1}$ has no constant
coefficient so is not invertible in $\Ring$. Even though we will write
expressions in terms of $z^\circ(x)$ for symmetry reasons, in fact the
reader should notice that they can be recast in terms of
$\tilde{z}^\circ(x)$ and that, at the end, we only obtain series
having their coefficients in $\Ring$.

The ``master'' series $z^\bullet(y)$ and $z^\circ(x)$ allow to express
several generating functions of weighted DSF walks. This is discussed
in detail in Appendix~\ref{sec:dsf}, where we establish the following
proposition, stating the results that we need for our purposes.

\begin{proposition}
  \label{prop:DSFenumxy}
  For DSF walks starting and ending at $0$ (``bridges''), we have
  \begin{equation} \label{eq:xybridges}
    \begin{split}
      \sum_{p \geq 1} \frac{P^\bullet_{p,0}}{p y^p} &= [z^0] \sum_{p \geq 1} \frac{y(z)^p}{p y^p} = \ln \left( y \, z^\bullet(y) \right), \\
      \sum_{p \geq 1} \frac{P^\circ_{p,0}}{p x^p} &= [z^0] \sum_{p \geq 1} \frac{x(z)^p}{p x^p}
                                                    = \ln \left( \frac{x}{a_{-1} z^\circ(x)} \right).
    \end{split}
  \end{equation}
  For DSF walks starting at $0$ and ending at a fixed negative
  position $-h<0$, we have
  \begin{equation} \label{eq:xycycle}
    \begin{split}
      \sum_{p \geq 1} \frac{P^\bullet_{p,-h}}{p y^p} &= [z^{-h}] \sum_{p \geq 1} \frac{y(z)^p}{p y^p} = \frac{z^\bullet(y)^h}h, \\
      \sum_{p \geq 1} \frac{P^\circ_{p,-h}}{p x^p} &= [z^h] \sum_{p \geq 1} \frac{x(z)^p}{p x^p} = \frac{z^\circ(x)^{-h}}{h}.
    \end{split}
  \end{equation}
  For DSF walks starting at $0$ and ending at a positive position, we
  have the bivariate series
  \begin{equation} \label{eq:xybivar}
    \begin{split}
      \sum_{p \geq 1} \sum_{h \geq 1} \frac{P^\bullet_{p,h} u^h}{p y^p} &= \sum_{h \geq 1} u^h [z^{h}] \sum_{p \geq 1} \frac{y(z)^p}{p y^p} 
      = \ln \left( \frac{z^\bullet(y)^{-1}-u^{-1}}{y - y(u)}\right), \\
      \sum_{p \geq 1} \sum_{h \geq 1} \frac{P^\circ_{p,h} u^h}{p x^p} &= \sum_{h \geq 1} u^h [z^{-h}] \sum_{p \geq 1} \frac{x(z)^p}{p x^p} = \ln \left( a_{-1} \frac{z^\circ(x)-u^{-1}}{x-x(u^{-1})}\right).
    \end{split}
  \end{equation}
\end{proposition}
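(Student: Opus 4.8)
The plan is to derive all six identities from the Lagrange inversion formula (see e.g.\ \cite[Section~5.4]{Stanley1999}), using the functional equations satisfied by the master series. I will carry out the ``black'' case in detail — the identities involving $y$, $y(z)$, $z^\bullet(y)$ and the $b_k$ — and deduce the ``white'' case verbatim through the substitution $z \leftrightarrow z^{-1}$, $x \leftrightarrow y$, $a_k \leftrightarrow b_k$, $P^\circ_{p,h} \leftrightarrow P^\bullet_{p,h}$, $\tilde{z}^\circ(x)=z^\circ(x)^{-1} \leftrightarrow z^\bullet(y)$. The only asymmetry is that $b_{-1}=1$ whereas $a_{-1}$ is not normalized (recall \eqref{eq:decAslices}), and this is precisely what produces the extra factors $a_{-1}$, resp.\ the series $z^\circ(x)$ rather than $\tilde z^\circ(x)$, in the right-hand sides of \eqref{eq:xybridges}, \eqref{eq:xycycle} and \eqref{eq:xybivar}.

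First, set $g(z) := z\,y(z) = \sum_{k \geq -1} b_k z^{k+1}$, an ordinary power series with $g(0) = b_{-1} = 1$, and treat $s := 1/y$ as the expansion variable. The relation \eqref{eq:defGSExcursionsBullet}, $y = y(z^\bullet(y))$, becomes $Z = s\,g(Z)$ with $Z := z^\bullet(y)$, the classical Lagrange setup ($Z$ being the unique formal power series in $s$ with $Z = s + O(s^2)$ solving it). Moreover, by \eqref{eq:pathIncBis}, for $p \ge 1$ and $h \in \Z$ we have $P^\bullet_{p,h} = [z^h] y(z)^p = [z^h]\bigl(z^{-1}g(z)\bigr)^p = [z^{p+h}] g(z)^p$, which vanishes for $h < -p$; in particular $P^\bullet_{p,-j} = [z^{p-j}]g(z)^p = 0$ whenever $p < j$. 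Now for \eqref{eq:xycycle} with $h \ge 1$, writing $P^\bullet_{p,-h}/p = \tfrac1p[z^{p-1}]\bigl(z^{h-1}g(z)^p\bigr)$ and applying Lagrange inversion with $H(z)=z^h/h$ gives $P^\bullet_{p,-h}/p = [s^p]\,(Z^h/h)$; summing over $p\ge1$ and using $Z|_{s=0}=0$ yields $\sum_{p\ge1} P^\bullet_{p,-h}/(p\,y^p) = z^\bullet(y)^h/h$. For the bridge case ($h=0$) one uses the logarithmic form: from $\ln(Z/s)=\ln g(Z)$ and $\tfrac{g'}{g}g^{p} = \tfrac1p(g^{p})'$ one obtains $[s^p]\ln(Z/s) = \tfrac1p[z^p]g(z)^p = P^\bullet_{p,0}/p$ for $p\ge1$, while $\ln(Z/s)|_{s=0}=\ln g(0)=0$; summing gives $\sum_{p\ge1}P^\bullet_{p,0}/(p\,y^p) = \ln(Z/s) = \ln(y\,z^\bullet(y))$. (In the white case $\ln\hat g(0) = \ln a_{-1}\ne 0$, whence the $a_{-1}$ in the denominator of \eqref{eq:xybridges}.)

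For the bivariate identity \eqref{eq:xybivar} I work in a ring of formal series in which $u$ and $s=1/y$ are treated as independent, so that each logarithm below is a genuine formal series (a unit being of the form $1+(\text{positive }s\text{-valuation})$); it is essential not to substitute $s=1/y$ prematurely and collapse the $u^{-1}$-terms. Using $P^\bullet_{p,h} = [z^{p+h}]g(z)^p$ for $h\ge1$ and the power-series identity $g(u)^p = \sum_{m\ge0}u^m[z^m]g(z)^p$, one gets
\[
  \sum_{p\ge1}\sum_{h\ge1}\frac{P^\bullet_{p,h}\,u^h}{p\,y^p}
  = \sum_{p\ge1}\frac{s^p}{p}\,u^{-p}\Bigl(g(u)^p - \sum_{m=0}^{p} u^m[z^m]g(z)^p\Bigr).
\]
The ``$g(u)^p$'' contribution sums to $-\ln\!\bigl(1 - s\,g(u)/u\bigr)$. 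In the remaining ``correction'' sum, reindexing by $j:=p-m\in\{0,\dots,p\}$ and using $[z^{p-j}]g(z)^p = P^\bullet_{p,-j}$ (zero for $p<j$) turns it into $\sum_{j\ge0}u^{-j}\sum_{p\ge1}P^\bullet_{p,-j}s^p/p$, which by the two formulas just proved equals $\ln(Z/s) + \sum_{j\ge1}(Z/u)^j/j = \ln(Z/s) - \ln(1-Z/u)$. Subtracting, and simplifying $-\ln(1-sg(u)/u) - \ln(Z/s) + \ln(1-Z/u)$ with the help of $u - s\,g(u) = u\,(y-y(u))/y$ (since $g(u)=u\,y(u)$) and $Z/s = y\,Z$, yields $\ln\!\bigl((z^\bullet(y)^{-1} - u^{-1})/(y - y(u))\bigr)$; the overall constant is checked by observing that both sides lie in $u\,\Ring[[u]][[s]]$, i.e.\ have vanishing $u^0$-term.

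The step I expect to be the genuine obstacle is this last reassembly in the bivariate case: keeping every intermediate logarithm inside a ring where it is actually well-defined — rather than a branch-ambiguous symbol, which is what the naive rewriting $-\ln(1-sg(u)/u) = -\ln(1-y(u)/y)$ would produce — and verifying that the finite correction sums $\sum_{m=0}^{p}$ recombine exactly into the logarithmic terms $\ln(Z/s)-\ln(1-Z/u)$. Everything else is a mechanical application of Lagrange inversion, requiring no combinatorial input beyond the identities $P^\bullet_{p,h}=[z^h]y(z)^p$ and $y = y(z^\bullet(y))$ already recorded above (and their white analogues).
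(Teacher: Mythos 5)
Your proof is correct, but it follows a genuinely different route from the paper. The paper (Appendix~\ref{sec:dsf}) proceeds combinatorially: it characterizes the excursion series via a last-passage decomposition (Proposition~\ref{prop:Ueq}), invokes the cycle lemma for the walks ending at nonpositive height, introduces arches and strict arches, and packages everything into the formal Wiener--Hopf factorization of $1-sP(u)$ (Proposition~\ref{prop:WHfact}); the three families of identities are then read off by taking the logarithm of that factorization and extracting nonpositive versus positive powers of $u$ (\eqref{eq:logseries}, \eqref{eq:archgen}). You instead obtain the $h\le 0$ identities by ordinary and logarithmic Lagrange inversion applied to $Z=s\,g(Z)$, and you recover the $h\ge 1$ identity by complementation: expanding $-\ln(1-s\,y(u))=\sum_p s^pP(u)^p/p$ in $\Ring((u))[[s]]$ and subtracting the already-computed nonpositive-$u$ part, which after the algebraic simplification $u-sg(u)=u(y-y(u))/y$ gives exactly $\ln\bigl((z^\bullet(y)^{-1}-u^{-1})/(y-y(u))\bigr)$; this is algebraically a rearrangement of the Wiener--Hopf identity, but you never need to prove the factorization itself, nor to introduce arches. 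What your route buys is brevity and a purely formal-series argument with Lagrange inversion as the only external input; what the paper's route buys is the combinatorial meaning of the factors (the arch series $A_h^{\ge}$), which is reused for the bounded-degree complement Proposition~\ref{prop:DSFenumxybd}, where the positive-$u$ factor is identified with the product over the large roots---your approach would need an extra argument there. One point to state a bit more carefully in the white case: $a_{-1}$ has no constant term, hence is not invertible in $\Ring$, so the intermediate series $\ln\bigl(\hat g(w)/a_{-1}\bigr)$ and the logarithmic Lagrange step are not literally available over $\Ring$; the standard fix (which the paper implicitly uses by working with indeterminates $p_{-1},p_0,\dots$ in $\mathcal{P}$ before substituting) is to note that every excursion weight is divisible by $a_{-1}$, so that $x\,\tilde z^\circ(x)/a_{-1}\in 1+x^{-1}\Ring[[x^{-1}]]$ and the identities, proved with $a_{-1}$ an invertible indeterminate, specialize correctly. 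With that remark added, your ``verbatim'' transfer to the white case is sound.
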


Note that
the right-hand sides of~\eqref{eq:xybridges} belong to
$\Ring[[y^{-1}]]$ and $\Ring[[x^{-1}]]$ respectively, since
$y z^\bullet(y)$ and $\frac{x}{a_{-1} z^\circ(x)}$ are formal power
series in $y^{-1}$ and $x^{-1}$ respectively with constant coefficient
equal to $1$ (they count excursions with their last step removed, so
$1$ is the contribution from the walk with $0$ steps).  We now apply
the above formulas to treat the case of pointed disks and cylinders:

\begin{proposition} \label{prop:ggfpointcyl}
  The grand generating functions of pointed disks with a monochromatic
  boundary read
  \begin{equation}
    \label{eq:Wtbis}
    \frac{\partial W^\circ(x)}{\partial t} = \frac{d}{dx} \ln z^\circ(x), \qquad
    \frac{\partial W^\bullet(y)}{\partial t}  = - \frac{d}{dy} \ln z^\bullet(y)
  \end{equation}
  while those for cylinders with monochromatic boundaries read
  \begin{equation}
  \label{eq:Wcylbis}
  \begin{split}
    W^{\circ\circ}(x_1,x_2) &= \sum_{p,q \geq 1} \frac{F^{\circ\circ}_{p,q}}{x_1^{p+1} x_2^{q+1}}
    = \frac{\partial^2}{\partial x_1 \partial x_2} \ln \left(\frac{z^\circ(x_1)-z^\circ(x_2)}{x_1-x_2}\right),\\
    W^{\bullet\bullet}(y_1,y_2) &= \sum_{p,q \geq 1} \frac{F^{\bullet\bullet}_{p,q}}{y_1^{p+1} y_2^{q+1}} = \frac{\partial^2}{\partial y_1 \partial y_2} \ln \left(\frac{z^\bullet(y_1)-z^\bullet(y_2)}{y_1-y_2}\right),\\
    W^{\circ\bullet}(x,y) &= \sum_{p,q \geq 1} \frac{F^{\circ\bullet}_{p,q}}{x^{p+1} y^{q+1}} = -
    \frac{\partial^2}{\partial x \partial y} \ln \left( 1 - \frac{z^\bullet(y)}{z^\circ(x)}\right).
\end{split}
\end{equation}
\end{proposition}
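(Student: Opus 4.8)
The plan is to derive everything from the combinatorial formulas already established: Corollary~\ref{cor:pointed} for pointed disks and Corollary~\ref{cor:cylindersenum} for cylinders. In each case one divides by the appropriate powers of the boundary variables, sums over the perimeters, and recognises the resulting series of weighted downward skip-free walks via Proposition~\ref{prop:DSFenumxy}; what then remains is elementary calculus with the master series $z^\circ(x)$ and $z^\bullet(y)$, used repeatedly through the fact that they are compositional inverses of $x(\cdot)$ and $y(\cdot)$, so that $x\bigl(z^\circ(x)\bigr)=x$, $x'\bigl(z^\circ(x)\bigr)(z^\circ)'(x)=1$, and likewise for $y$.

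I would first treat~\eqref{eq:Wtbis}. By Corollary~\ref{cor:pointed} and the convention $F^\circ_0=t$ one has $\partial_t W^\circ(x)=\frac1x+\sum_{p\geq1}\frac{[z^0]x(z)^p}{x^{p+1}}$. Since $\sum_{p\geq1}\frac{x(z)^p}{x^{p+1}}=-\frac{d}{dx}\sum_{p\geq1}\frac{x(z)^p}{p\,x^p}$, the first line of~\eqref{eq:xybridges} gives $\sum_{p\geq1}\frac{[z^0]x(z)^p}{x^{p+1}}=-\frac{d}{dx}\ln\frac{x}{a_{-1}z^\circ(x)}=-\frac1x+\frac{d}{dx}\ln z^\circ(x)$, whence $\partial_t W^\circ(x)=\frac{d}{dx}\ln z^\circ(x)$. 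The black case is identical via the second line of~\eqref{eq:xybridges}, the minus sign in the statement coming from $\frac{d}{dy}\ln\bigl(y\,z^\bullet(y)\bigr)=\frac1y+\frac{d}{dy}\ln z^\bullet(y)$.

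For~\eqref{eq:Wcylbis}, Corollary~\ref{cor:cylindersenum} expresses each grand generating function as $\sum_{h\geq1}h\,A_h B_h$, where $A_h$ and $B_h$ are ``per-boundary'' series of the form $\sum_{p\geq1}\frac{[z^{\pm h}]x(z)^p}{x^{p+1}}$ (or the same with $y$). Differentiating the cycle formula~\eqref{eq:xycycle} yields the factors attached to walks ending at a negative position, namely $\sum_{p\geq1}\frac{[z^h]x(z)^p}{x^{p+1}}=z^\circ(x)^{-h-1}(z^\circ)'(x)$ and $\sum_{q\geq1}\frac{[z^{-h}]y(z)^q}{y^{q+1}}=-z^\bullet(y)^{h-1}(z^\bullet)'(y)$; differentiating the bivariate formula~\eqref{eq:xybivar} yields the factor attached to walks ending at a positive position, of the shape $[u^h]R(u)$ with $R$ an explicit series in $u$ (and the relevant boundary variable). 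Because the exponent of $z$ records \emph{minus} the increment for type $\cA$ slices but \emph{plus} the increment for type $\cB$ slices, the two factors of $W^{\circ\bullet}$ are \emph{both} of the first, ``simple'' kind; summing via $\sum_{h\geq1}h\,r^h=r/(1-r)^2$ with $r=z^\bullet(y)/z^\circ(x)$ collapses the double sum to $-\frac{(z^\circ)'(x)(z^\bullet)'(y)}{(z^\circ(x)-z^\bullet(y))^2}$, which is exactly $-\partial_x\partial_y\ln\bigl(1-\frac{z^\bullet(y)}{z^\circ(x)}\bigr)$. In $W^{\circ\circ}$ and $W^{\bullet\bullet}$ exactly one factor is of the bivariate kind, and there one uses the identity $\sum_{h\geq1}h\,t^h[u^h]R(u)=t\,R'(t)$ (legitimate since $R$ has only positive powers of $u$), evaluates at $t=z^\circ(x_1)$ (resp.\ $z^\bullet(y_1)$), simplifies with $x\bigl(z^\circ(x_1)\bigr)=x_1$ and $x'\bigl(z^\circ(x_1)\bigr)(z^\circ)'(x_1)=1$, and checks that the outcome equals $\frac{(z^\circ)'(x_1)(z^\circ)'(x_2)}{(z^\circ(x_1)-z^\circ(x_2))^2}-\frac1{(x_1-x_2)^2}=\partial_{x_1}\partial_{x_2}\ln\frac{z^\circ(x_1)-z^\circ(x_2)}{x_1-x_2}$ (and symmetrically with $y$ and $z^\bullet$).

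The hard part is not conceptual but a matter of bookkeeping: one must keep the two opposite sign conventions straight so as to know, in each of the three cylinder cases, which per-boundary factor is governed by the cycle formula~\eqref{eq:xycycle} and which by the bivariate formula~\eqref{eq:xybivar}; and one must justify the termwise manipulations of formal (Laurent) series, in particular that the right-hand sides of~\eqref{eq:xybivar} are honest formal power series in $u$ with vanishing constant term, so that $\sum_{h\geq1}t^h[u^h]R(u)=R(t)$ and $\sum_{h\geq1}h\,t^{h-1}[u^h]R(u)=R'(t)$ hold as identities of formal series.
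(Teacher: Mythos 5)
Your proposal is correct and follows essentially the same route as the paper: both derive \eqref{eq:Wtbis} and \eqref{eq:Wcylbis} from Corollaries~\ref{cor:pointed} and~\ref{cor:cylindersenum} combined with Proposition~\ref{prop:DSFenumxy}, using $x(z^\circ(x))=x$ and $y(z^\bullet(y))=y$ to simplify. The only difference is order of operations --- the paper keeps $\partial^2/\partial x_1\partial x_2$ in front and applies \eqref{eq:xycycle}--\eqref{eq:xybivar} directly to the $\sum_p \frac{(\cdot)^p}{p x^p}$ series, whereas you differentiate those identities first and then resum over $h$ (via $\sum_{h\geq 1} h r^h$ and $\sum_{h\geq 1} h t^h [u^h]R(u)=tR'(t)$), which is an equivalent bookkeeping choice that indeed reproduces the stated right-hand sides.
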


\begin{proof}
  We start with the case of pointed disks: we have
  $\frac{\partial W^\circ(x)}{\partial t}=\sum_{p \geq 0}
  \frac{\partial F_p^\circ}{\partial t} x^{-p-1} = \sum_{p \geq 0}
  \frac{P^\circ_{p,0}}{x^{p+1}}$ by Corollary~\ref{cor:pointed}. The
  latter series is obtained by acting on the second line
  of~\eqref{eq:xybridges} with $-\frac{\partial}{\partial x}$, and
  by adding the contribution of the $p=0$ term
  $\frac{P^\circ_{0,0}}x=\frac1x$. The case of
  $\frac{\partial W^\bullet(x)}{\partial t}$ is similar.
  
  We now turn to the case of cylinders, for which we shall sum the
  expressions given in Corollary~\ref{cor:cylindersenum}. For
  $W^{\circ\circ}(x_1,x_2)$ we may write
  \begin{multline}
    W^{\circ\circ}(x_1,x_2) = \frac{\partial^2}{\partial x_1 \partial x_2} \sum_{p,q,h \geq 1} h \cdot \frac{[z^h]x(z)^p}{p x_1^p} \cdot \frac{[z^{-h}]x(z)^q}{q x_2^q} \\
    \overset{\eqref{eq:xycycle}}{=} \frac{\partial^2}{\partial x_1 \partial x_2} \sum_{q,h\geq 1} z^\circ(x_1)^{-h} \cdot \frac{[z^{-h}]x(z)^q}{q x_2^q} \overset{\eqref{eq:xybivar}}{=}
    \frac{\partial^2}{\partial x_1 \partial x_2} \ln \left( a_{-1} \frac{z^\circ(x_2)-z^\circ(x_1)}{x_2-x(z^\circ(x_1))}\right)
  \end{multline}
  which yields the first line of~\eqref{eq:Wcylbis} using
  $x(z^\circ(x_1))=x_1$ (the $a_{-1}$ factor can be removed due to the
  differentiation).  The case of $W^{\bullet\bullet}(y_1,y_2)$ is
  similar. Finally, for $W^{\circ\bullet}(x,y)$ we may write
  \begin{equation}
    W^{\circ\bullet}(x,y) = \frac{\partial^2}{\partial x \partial y} \sum_{p,q,h \geq 1} h \cdot \frac{[z^h]x(z)^p}{p x^p} \cdot \frac{[z^{-h}]y(z)^q}{q y^q} \overset{\eqref{eq:xycycle}}{=} \frac{\partial^2}{\partial x \partial y} \sum_{h \geq 1} \frac{1}h \left(\frac{z^\bullet(y)}{z^\circ(x)}\right)^h
  \end{equation}
  which yields the last line of~\eqref{eq:Wcylbis}.
\end{proof}

Note that it does not seem possible to compute by the above method the
grand generating function associated with
$\tilde{F}^{\bullet\circ}_{p,q}$. We will consider another method,
under the assumption of bounded face degrees, in
Section~\ref{sec:apdobru}.

\subsection{Monochromatic disks and the spectral curve}
\label{sec:apmono}

We now treat the case of monochromatic disks in the following:

\begin{proposition}
  \label{prop:Wsubsbis}
  The grand generating functions
  $\Wb(x)$ and
  $\Wn(y)$ of disks with
  monochromatic boundaries are given by
  \begin{equation}
    \label{eq:Wsubsbis}
    W^\circ(x) = y(z^\circ(x)) - \sum_{d \geq 1} t^\circ_d x^{d-1}, \qquad
    W^\bullet(y) = x(z^\bullet(y)) - \sum_{d \geq 1} t^\bullet_d y^{d-1}.
  \end{equation}
  Under the assumption of bounded face degrees
  ($t^\circ_d=t^\bullet_d=0$ for $d$ large enough), these identities
  hold within $\Ring((x^{-1}))$ and $\Ring((y^{-1}))$
  respectively. Without this assumption, they hold within the
  respective larger rings
  $\mathbb{Q}((x^{-1}))[[t,
  t^\circ_1,t^\circ_2,\ldots,t^\bullet_1,t_2^\bullet,\ldots]]$ and
  $\mathbb{Q}((y^{-1}))[[t,\allowbreak
  t^\circ_1,t^\circ_2,\ldots,t^\bullet_1,t_2^\bullet,\ldots]]$, in
  which the substitutions $y(z^\circ(x))$ and $x(z^\bullet(y))$ are
  respectively well-defined.
\end{proposition}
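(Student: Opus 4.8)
The plan is to derive the identity for $W^\circ(x)$ from the compact expression~\eqref{eq:Fplesscompact} for $F_p^\circ$, resumming over $p\ge 0$ and using the DSF-walk identities of Proposition~\ref{prop:DSFenumxy}, and then conclude for $W^\bullet(y)$ by the colour-swap symmetry. First I would write
\[
W^\circ(x) = \sum_{p\ge 0} \frac{F_p^\circ}{x^{p+1}}
= \sum_{p\ge 0} \frac{1}{(p+1)x^{p+1}}\Bigl([z^1]x(z)^{p+1} - \sum_{d\ge 2} t_d^\circ \sum_{h\ge 1} h\,([z^h]x(z)^{p+1})([z^{-h}]x(z)^{d-1})\Bigr),
\]
shifting the summation index to $p'=p+1\ge 1$. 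The first sum becomes $[z^1]\sum_{p'\ge 1}\frac{x(z)^{p'}}{p'x^{p'}}$, which by~\eqref{eq:xybivar} (extracting the coefficient of $u^1$, i.e.\ differentiating in $u$ and setting $u=0$, or directly reading the $h=1$ term) equals a quantity I can identify with $b_0 + (\text{a term in }z^\circ(x))$; more cleanly, I would recognise $\sum_{p'\ge1}\frac{x(z)^{p'}}{p'x^{p'}} = -\ln(1-x(z)/x)$ and then evaluate its $z^{-h}$-coefficients via the residue/Lagrange-inversion computations underlying Proposition~\ref{prop:DSFenumxy}.

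The cleaner route, which I expect to be the backbone of the argument, is to combine the two pieces at the level of grand generating functions: the first term gives $[z^1]\bigl(-\ln(1-x(z)/x)\bigr)$-type data, and the double sum over $d,h$ is exactly $\sum_{d\ge2}t_d^\circ$ times the quantity appearing in $W^{\circ\circ}(x,x_2)$ from Proposition~\ref{prop:ggfpointcyl}, specialised so that $x_2$-side becomes $[z^{-h}]x(z)^{d-1}$. Using~\eqref{eq:xycycle} to resum $\sum_{p'}\frac{[z^h]x(z)^{p'}}{p'x^{p'}} = \frac{z^\circ(x)^{-h}}{h}$ (here I am using that the ``positive-coefficient'' generating data of $x(z)^{p'}$ resums through $z^\circ(x)$), the factor $h$ cancels, and I am left with
\[
W^\circ(x) = \sum_{h\ge 1} z^\circ(x)^{-h}\Bigl([z^{-h}]\,\tfrac{d}{d?}\ldots\Bigr) - \sum_{d\ge2} t_d^\circ \sum_{h\ge1} z^\circ(x)^{-h}\,[z^{-h}]x(z)^{d-1}.
\]
The second double sum telescopes: $\sum_{h\ge1}z^\circ(x)^{-h}[z^{-h}]x(z)^{d-1}$ is the ``strictly negative part'' of $x(z)^{d-1}$ evaluated at $z=z^\circ(x)$, i.e.\ $x(z)^{d-1}\big|_{z=z^\circ(x)}$ minus the non-negative part. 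Since $x(z^\circ(x))=x$ by~\eqref{eq:tzcircdef}, this is $x^{d-1}$ minus $\sum_{k\le0}(\ldots)$; the non-negative part, resummed over $d$ against $t_d^\circ$, reconstructs $\sum_d t_d^\circ x^{d-1}$ up to a piece that I expect to cancel against the first term and rebuild $y(z^\circ(x))$ via~\eqref{eq:decBslices}, namely $b_k = [z^k]\sum_d t_d^\circ x(z)^{d-1}$ for $k\ge0$ and $y(z)=z^{-1}+\sum_{k\ge0}b_kz^k$.

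The main obstacle is bookkeeping of the splitting of each Laurent series $x(z)^{d-1}$ into its parts of nonnegative versus negative degree in $z$, and checking that the substitution $z\mapsto z^\circ(x)$ (which, as the statement warns, lives in $\Ring((x^{-1}))$ only after clearing the denominator $a_{-1}$, or in the larger ring in general) is legitimate term by term — i.e.\ that every infinite sum involved converges in the appropriate $(x^{-1})$-adic topology. I would handle this by first proving the identity under the bounded-face-degree assumption, where $x(z)$ is a genuine Laurent polynomial and all sums over $d$ are finite, so that $y(z^\circ(x))$ is an honest substitution of a Laurent polynomial at a Laurent series; there the computation above is a finite manipulation and Proposition~\ref{prop:DSFenumxy} applies directly. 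Then I would remove the assumption by a limiting/continuity argument: both sides are continuous in the face weights $t_d^\circ$ for the $(t,t_d^\circ,t_d^\bullet)$-adic topology on the coefficient ring, and the substitution $y(z^\circ(x))$ is well-defined in the larger ring $\mathbb{Q}((x^{-1}))[[t,t_1^\circ,\ldots]]$ precisely because $z^\circ(x)=a_{-1}^{-1}x + O(1)$ with $a_{-1}$ a unit there. Finally, $W^\bullet(y)$ follows by exchanging $\circ\leftrightarrow\bullet$, $x\leftrightarrow y$, $\cA\leftrightarrow\cB$, and $z^\circ\leftrightarrow z^\bullet$ throughout, using the second lines of~\eqref{eq:Fplesscompact}, \eqref{eq:xycycle} and the relation $a_k=t\delta_{k,-1}+[z^{-k}]\sum_d t_d^\bullet y(z)^{d-1}$ from~\eqref{eq:decAslices}.
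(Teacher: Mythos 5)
Your derivation of the identity itself follows the paper's own route: start from \eqref{eq:Fplesscompact}, resum over $p$ using \eqref{eq:xycycle} (note the first term is just the $h=1$ instance of \eqref{eq:xycycle}, not \eqref{eq:xybivar}, and your first display still contains a placeholder), recognize $\sum_{h\geq 1} z^\circ(x)^{-h}[z^{-h}]x(z)^{d-1}$ as the negative-degree part of $x(z)^{d-1}$ evaluated at $z=z^\circ(x)$, and rebuild $y(z^\circ(x))$ from the nonnegative part via \eqref{eq:decBslices} together with $x(z^\circ(x))=x$ from \eqref{eq:tzcircdef}. Under the bounded-degree assumption this is a finite manipulation and your argument is complete and coincides with the paper's; the colour-swapped case is analogous.

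The genuine gap is in the unbounded case, which is part of the statement: one must show that $y(z^\circ(x))$ \emph{is} a well-defined element of $\mathbb{Q}((x^{-1}))[[t,t^\circ_1,t^\circ_2,\ldots,t^\bullet_1,t^\bullet_2,\ldots]]$. Your justification --- ``$z^\circ(x)=a_{-1}^{-1}x+O(1)$ with $a_{-1}$ a unit there'' --- is false: by \eqref{eq:decAslices}, $a_{-1}=t+\sum_{d}t^\bullet_d P^\bullet_{d-1,1}$ has no constant term as a series in the $t$-variables, so it is no more invertible in $\mathbb{Q}((x^{-1}))[[t,\ldots]]$ than in $\Ring$; consequently $z^\circ(x)$, and each individual term $b_k\,z^\circ(x)^k$, only has coefficients in the fraction field. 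The paper repairs exactly this point: since $x(z)=a_{-1}z+\cdots$, one has $[z^k]x(z)^{d-1}=a_{-1}^k\,C_{d,k}$ with $C_{d,k}\in\Ring$ (and $=0$ for $k\geq d$), so that $b_k\,z^\circ(x)^k=\sum_d t^\circ_d\,C_{d,k}\,\bigl(a_{-1}z^\circ(x)\bigr)^k$ where $a_{-1}z^\circ(x)=x\,(1+O(x^{-1}))\in\Ring((x^{-1}))$, and for any fixed monomial in the $t$-variables only finitely many pairs $(d,k)$ contribute (those with $t^\circ_d$ dividing the monomial and $k\leq d-1$). Without this divisibility-plus-finiteness argument, your proposed limiting/continuity step has no target: you must first know that the right-hand side exists in the stated ring before you can compare its coefficients with the bounded-degree truncations. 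Once well-definedness is secured, your coefficientwise passage from bounded to general degrees does work, since each monomial involves only finitely many of the $t^\circ_d,t^\bullet_d$; a similar per-monomial finiteness check is also needed on the black side to make sense of $\sum_{k\geq 1}a_k\,z^\bullet(y)^{-k}$, using that $a_k$ carries a factor $t^\bullet_d$ with $d\geq k+1$.
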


\begin{proof}
  Let us prove the first identity, the argument for the second being
  entirely similar. For simplicity, let us first present the idea of
  the computation, without paying attention to the ring in which we
  are working.  On the one hand, we may write
  \begin{equation}
    \label{eq:yzcircexppr}
    y(z^\circ(x)) = z^\circ(x)^{-1} + \sum_{k \geq 0} b_k z^\circ(x)^{k}
    \overset{\eqref{eq:decBslices}}{=} z^\circ(x)^{-1} + 
    \sum_{d \geq 1} t_d^\circ \sum_{k \geq 0}  z^\circ(x)^{k} [z^{k}] x(z)^{d-1}.
  \end{equation}
  On the other hand, by~\eqref{eq:Fplesscompact}, we have
  \begin{equation}
    W^\circ(x) = \sum_{p\geq 0}\frac{F_p^\circ}{x^{p+1}} =
    \sum_{p\geq 0} \frac{[z]x(z)^{p+1}}{(p+1)x^{p+1}} -
    \sum_{p\geq 0} \sum_{d \geq 2} t_d^\circ \sum_{h \geq 1} h \frac{[z^h]x(z)^{p+1}}{(p+1)x^{p+1}} [z^{-h}] x(z)^{d-1}.
  \end{equation}
  By~\eqref{eq:xycycle}, we deduce
  \begin{equation}
    \label{eq:wcircexppr}
    W^\circ(x) = z^\circ(x)^{-1} - \sum_{d \geq 2} t_d^\circ \sum_{h \geq 1} z^\circ(x)^{-h} [z^{-h}] x(z)^{d-1}.
  \end{equation}
  By doing a change of variable $k=-h$ in the right-hand side, the sum
  can be combined with that in the right-hand side
  of~\eqref{eq:yzcircexppr} to yield
  \begin{equation}
    y(z^\circ(x)) - W^\circ(x) =  \sum_{d \geq 1} t_d^\circ \sum_{k \in \Z} z^\circ(x)^{k} [z^{k}] x(z)^{d-1} = \sum_{d \geq 1} t_d^\circ \,x \left(z^\circ(x)\right)^{d-1} =
    \sum_{d \geq 1} t_d^\circ x^{d-1},
  \end{equation}
  which gives the wanted identity.

  Let us now discuss in which ring the above computations make sense.
  Introducing the shorthand notations
  \begin{equation}
    \begin{split}
      \Ring_1 &:= \Ring((x^{-1})) = \mathbb{Q}[[t,
                t^\circ_1,t^\circ_2,\ldots,t^\bullet_1,t_2^\bullet,\ldots]]((x^{-1})) \\
      \Ring_2 &:= \mathbb{Q}((x^{-1}))[[t,
    t^\circ_1,t^\circ_2,\ldots,t^\bullet_1,t_2^\bullet,\ldots]]
    \end{split}
  \end{equation}
  observe that we have the strict inclusion
  \begin{equation}
    \Ring_1 \subsetneq \Ring_2.
  \end{equation}
  The series $\Wb(x)$ belongs to $\Ring_1$ hence to $\Ring_2$, however
  we have
  $\sum_{d \geq 1} t^\circ_d x^{d-1} \in \Ring_2 \setminus \Ring_1$
  unless we make the assumption of bounded degrees.  Let us check
  that, in full generality, the substitution $y(z^\circ(x))$ makes
  sense within $\Ring_2$: for this we return to~\eqref{eq:yzcircexppr}
  and note that, in the right-hand side, the quantity
  $[z^{k}] x(z)^{d-1}$ vanishes if $k \geq d$, and is of the form
  $(a_{-1})^k C_{d,k}$ for some $C_{d,k} \in \Ring$ if
  $0 \leq k \leq d-1$ (as $x(z)=a_{-1} z + \cdots$).  Thus, we have
  \begin{equation}
    \label{eq:yzbis}
    y(z^\circ(x)) = z^\circ(x)^{-1} +
    \sum_{d \geq 1} t_d^\circ \sum_{k=0}^{d-1} C_{d,k} \left(a_{-1} z^\circ(x)\right)^k .
  \end{equation}
  Now, we observe that $a_{-1} z^\circ(x)$ belongs to $\Ring_1$, since
  it is the multiplicative inverse of
  $\tilde{z}^\circ(x)/a_{-1}=x^{-1}+ \cdots \in
  \Ring[[x^{-1}]]$, so that $a_{-1}z^\circ(x)=x(1+S(x))$, where $S(x)\in \Ring[[x^{-1}]]$. Hence,
  $\sum_{k=0}^{d-1} C_{d,k} \left(a_{-1} z^\circ(x)\right)^k $ also
  belongs to $\Ring_1$ for any $d \geq 1$. Under the assumption of
  bounded face degrees, this shows that $y(z^\circ(x))$ is a
  well-defined element of $\Ring_1$ as wanted. Without this
  assumption, consider an arbitrary (finite) monomial $T$ in
  $t, t^\circ_1,t^\circ_2,\ldots,t^\bullet_1,t_2^\bullet,\ldots$, then
  for $d$ large enough $t_d^\circ$ does not appear in $T$, and hence
  only finitely many terms in the right-hand side of~\eqref{eq:yzbis}
  contribute to the coefficient of $T$. This means that
  $y(z^\circ(x))$ is a well-defined element of $\Ring_2$, as wanted.
\end{proof}

\begin{remark}
  For bookkeeping purposes, let us write down the black analogue of~\eqref{eq:wcircexppr}:
  \begin{equation}
    \label{eq:wbulletexppr}
    W^\bullet(x) = a_{-1} z^\bullet(x) - \sum_{d \geq 2} t_d^\bullet \sum_{h \geq 1} z^\bullet(x)^{h} [z^{h}] y(z)^{d-1}.
  \end{equation}
\end{remark}

\begin{remark}
  Our discussion of the rings in which the
  identities~\eqref{eq:Wsubsbis} hold is somewhat inspired
  from~\cite[Section~2]{Gessel1980}. This reference considers the ring
  $\C((T))[[Y]]$ of formal power series in a variable $Y$ whose
  coefficients are formal Laurent series in another variable $T$ (we
  capitalize the letters to avoid confusions with the notations of the
  present paper). Here, we are replacing $T$ by $x^{-1}$ or $y^{-1}$,
  and $Y$ by a collection of variables $t,t_1^\circ,...$, which does
  not change fundamentally the discussion.
\end{remark}

For simplicity, let us now make the assumption of bounded face
degrees, in which case Lemma~\ref{lem:boundxypol} asserts that $x(z)$
and $y(z)$ are Laurent polynomials. They define the so-called
\emph{spectral curve}, see~\cite[Definition~8.3.1]{EynardBook}. Then,
interestingly, Proposition~\ref{prop:Wsubsbis} may be reformulated in
the following form, which corresponds essentially
to~\cite[Theorem~8.3.1]{EynardBook}\footnote{In this reference, the
  result is given under the inessential assumption that there are no
  faces of degrees $1$ and $2$. Also, slightly different notations are
  used, see again Remark~\ref{rem:Eynard_connection}.}.

\begin{proposition}[Rational parametrization of disk generating
  functions]
  \label{prop:YXsubs}
  Consider the series
  \begin{equation}
    Y(x) := W^\circ(x) + \sum_{d=1}^{\dmaxb} t^\circ_d x^{d-1}, \qquad
    X(y) := W^\bullet(y) + \sum_{d=1}^{\dmaxn} t^\bullet_d y^{d-1}
  \end{equation}
  where $\dmaxb,\dmaxn$ are the maximal degrees for white and black faces, respectively.
  Then, we have the rational parametrization
  \begin{equation}
    \label{eq:YXsubs}
    Y(x(z))=y(z), \qquad X(y(z))=x(z).
  \end{equation}
\end{proposition}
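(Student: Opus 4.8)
The plan is to deduce Proposition~\ref{prop:YXsubs} directly from Proposition~\ref{prop:Wsubsbis} by the bounded-degree specialization, composing with the parametrizing maps $x(z)$ and $y(z)$. First I would recall that under the bounded face degree assumption, Lemma~\ref{lem:boundxypol} guarantees that $x(z)$ and $y(z)$ are Laurent polynomials in $z$, so all the substitutions below take place inside honest rings of Laurent series with coefficients in $\Ring$, and no convergence subtlety arises (this is exactly the ``easy'' regime distinguished in the statement of Proposition~\ref{prop:Wsubsbis}). In particular $W^\circ(x) \in \Ring((x^{-1}))$, and since $\sum_{d=1}^{\dmaxb} t^\circ_d x^{d-1}$ is now an actual polynomial, the series $Y(x) := W^\circ(x) + \sum_{d=1}^{\dmaxb} t^\circ_d x^{d-1}$ is a well-defined element of $\Ring((x^{-1}))$.

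The core step is the substitution $x \mapsto x(z)$. By the first identity of Proposition~\ref{prop:Wsubsbis}, namely $W^\circ(x) = y(z^\circ(x)) - \sum_{d \geq 1} t^\circ_d x^{d-1}$, we get $Y(x) = y(z^\circ(x))$ as an identity in $\Ring((x^{-1}))$ (here the two finite sums $\sum_{d=1}^{\dmaxb} t^\circ_d x^{d-1}$ cancel exactly). It then remains to substitute $x = x(z)$ and use that $z^\circ$ is the compositional inverse of $x$: recall from~\eqref{eq:tzcircdef} that $x = x(z^\circ(x)^{-1})$, equivalently $z^\circ(x(z)) = z$, so that $Y(x(z)) = y(z^\circ(x(z))) = y(z)$. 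The symmetric computation, starting from the second identity of Proposition~\ref{prop:Wsubsbis} and using~\eqref{eq:defGSExcursionsBullet} in the form $y(z^\bullet(y)) = y$, i.e. $z^\bullet(y(z)) = z$, yields $X(y(z)) = x(z^\bullet(y(z))) = x(z)$. This establishes~\eqref{eq:YXsubs}.

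The only genuine point requiring care — and the step I would expect a referee to scrutinize — is the legitimacy of the substitution $x \mapsto x(z)$ into a formal Laurent series in $x^{-1}$. Since $x(z) = a_{-1} z + \cdots$ is a Laurent polynomial whose lowest-order term $a_{-1} z$ has $a_{-1}$ a non-unit in $\Ring$, one should check that $x(z)^{-1} = a_{-1}^{-1} z^{-1}(1 + \cdots)$ makes sense: in fact $x(z)^{-1}$ is a well-defined element of $\mathrm{Frac}(\Ring)((z^{-1}))$, and more to the point, $z^\circ(x(z))$ is computed formally as the unique solution $Z \in z \cdot \Ring[[z]]\cdot(\text{appropriate localization})$ of $x(Z^{-1})^{-1}\cdots$; the clean way around this is to work, as in the proof of Proposition~\ref{prop:Wsubsbis}, with $\tilde z^\circ(x) = z^\circ(x)^{-1}$ and the normalization $a_{-1} z^\circ(x) = x(1 + S(x))$ with $S(x) \in \Ring[[x^{-1}]]$, so that substituting $x = x(z)$ is a substitution of a Laurent polynomial into a power series in $x^{-1}$ with the leading behaviour controlled. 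Since $z^\circ$ and $x$ are mutually inverse as formal Laurent series (over $\mathrm{Frac}(\Ring)$), the composition identity $z^\circ(x(z)) = z$ holds, and hence so does $Y(x(z)) = y(z)$; but because both sides of this last equation are Laurent polynomials (resp.\ Laurent series) with coefficients in $\Ring$ — not merely in $\mathrm{Frac}(\Ring)$ — the identity is in fact an identity over $\Ring$, as claimed. The same remark applies verbatim to the black side.
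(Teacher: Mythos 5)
Your proposal is correct and follows essentially the same route as the paper's own proof: apply Proposition~\ref{prop:Wsubsbis} so that the finite sums cancel, giving $Y(x)=y(z^\circ(x))$ and $X(y)=x(z^\bullet(y))$, then substitute $x=x(z)$ and $y=y(z)$ using that $z^\circ$ and $z^\bullet$ are compositional inverses of $x(z)$ and $y(z)$. Your closing discussion of why the substitution is legitimate (leading behaviour of $x(z)^{-1}$, working over $\mathrm{Frac}(\Ring)$ and descending to $\Ring$) is a slightly more detailed version of the paper's remark that one substitutes $x^{-1}$ by the constant-term-free series $x(z)^{-1}$, so no gap remains.
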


\begin{proof}
  Proposition~\ref{prop:Wsubsbis} asserts that $Y(x)=y(z^\circ(x))$
  and $X(y)=x(z^\bullet(y))$. As discussed in
  Section~\ref{sec:apprelim}, $z^\circ(x)$ and $z^\bullet(y)$ can be
  seen as compositional inverses of $x(z)$ and $y(z)$
  respectively. Therefore, we get the wanted
  relations~\eqref{eq:YXsubs} by substituting $x=x(z)$ into $Y(x)$ and
  $y=y(z)$ into $X(y)$. Note that these operations are well-defined
  since $Y(x)$ is here a formal Laurent series in $x^{-1}$, which we
  substitute by $x(z)^{-1}$ which is a formal power series in $z^{-1}$
  without constant coefficient, and similarly for the other relation.
\end{proof}

\subsection{Disks with a Dobrushin boundary condition and the resultant}
\label{sec:apdobru}

In this section we make the assumption of bounded face degrees
($t^\circ_d=t^\bullet_d=0$ for $d$ large enough) so that, by
Lemma~\ref{lem:boundxypol}, $x(z)$ and $y(z)$ are Laurent polynomials.
Our purpose is to show that the expression obtained in
Proposition~\ref{prop:Wdobr} for the generating function of hypermaps
with a Dobrushin boundary is consistent with the expressions given
in~\cite{EynardBook}. We first state the following complement to
Proposition~\ref{prop:DSFenumxy} on the enumeration of weighted DSF walks.
It is also established in Appendix~\ref{sec:dsf}.

\begin{proposition}
  \label{prop:DSFenumxybd}
  Let $\dmaxb$ be a fixed positive integer and set $t_k^\circ=0$ for
  $k>\dmaxb$. Then, over the field of Puiseux series in $y^{-1}$, the
  algebraic equation $y(z)=y$ admits $\dmaxb$ solutions. One of them
  is the generating function of excursions $z^\bullet(y)$ defined above. Denoting the other ones by
  $z_1^\bullet(y),\ldots,z_{\dmaxb-1}^\bullet(y)$, we have for any
  $h>0$
  \begin{equation}
    \sum_{p \geq 1} \frac{P^\bullet_{p,h}}{p y^p} = [z^{h}] \sum_{p \geq 1} \frac{y(z)^p}{p y^p} = \sum_{i=1}^{\dmaxb-1} \frac{z_i^\bullet(y)^{-h}}h.
  \end{equation}
  Similarly, let $\dmaxn$ be a fixed positive integer and set
  $t_k^\bullet=0$ for $k>\dmaxn$. Then, over the field of Puiseux series
  in $x^{-1}$, the algebraic equation $x(z)=x$ admits $\dmaxn$
  solutions. One of them is the inverse $z^\circ(x)$ of the generating
  function of excursions defined above. Denoting the other ones by
  $z_1^\circ(y),\ldots,z_{\dmaxn-1}^\circ(y)$, we have for any $h>0$
  \begin{equation}
    \sum_{p \geq 1} \frac{P^\circ_{p,h}}{p x^p} = [z^{-h}] \sum_{p \geq 1} \frac{x(z)^p}{p x^p} = \sum_{i=1}^{\dmaxn-1} \frac{z_i^\circ(y)^{h}}h.
  \end{equation}
\end{proposition}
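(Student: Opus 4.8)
The plan is to reduce the statement to a residue/partial-fraction computation, using the Lagrange-type results already obtained in Proposition~\ref{prop:DSFenumxy} to identify one of the roots as $z^\bullet(y)$ (resp.\ $z^\circ(x)$). I will treat the ``black'' case; the ``white'' case is symmetric under the exchange $x \leftrightarrow y$, $z \leftrightarrow z^{-1}$, $a_k \leftrightarrow b_k$. Since $t_k^\circ=0$ for $k>\dmaxb$, Lemma~\ref{lem:boundxypol} gives $b_k=0$ for $k\geq\dmaxb$, so $y(z) = z^{-1}+b_0+b_1 z+\cdots+b_{\dmaxb-1}z^{\dmaxb-1}$, and $z\mapsto y(z)$ is a rational map. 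The equation $y(z)=y$ is equivalent to the polynomial equation $z\,y(z)\cdot  - yz= z^{-1}z\cdots$; more precisely, multiplying by $z$, we get $1+b_0 z+\cdots+b_{\dmaxb-1}z^{\dmaxb} = yz$, i.e.\ a polynomial equation of degree $\dmaxb$ in $z$ with leading coefficient $b_{\dmaxb-1}$ and with the coefficient of $z$ shifted by $-y$. Over the field of Puiseux series in $y^{-1}$ this has exactly $\dmaxb$ roots (counted with multiplicity, but generically distinct): one of them tends to $0$ as $y\to\infty$ (this is $z^\bullet(y)$, since by~\eqref{eq:defGSExcursionsBullet} it satisfies $y=y(z^\bullet(y))$ and $z^\bullet(y)=y^{-1}+\cdots$), while the remaining $\dmaxb-1$ roots $z_1^\bullet(y),\dots,z_{\dmaxb-1}^\bullet(y)$ stay bounded away from $0$ (they behave like the roots of $1+b_0z+\cdots+b_{\dmaxb-1}z^{\dmaxb}=0$ plus corrections, hence are $O(1)$ Puiseux series with nonzero constant term).

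The key identity to establish is, for $h>0$,
\begin{equation}
  \label{eq:dobrplan1}
  [z^{h}]\sum_{p\geq 1}\frac{y(z)^p}{p\,y^p} \;=\; \sum_{i=1}^{\dmaxb-1}\frac{z_i^\bullet(y)^{-h}}{h}.
\end{equation}
The left-hand side equals $[z^h]\bigl(-\ln(1-y(z)/y)\bigr)=-[z^h]\ln\!\bigl(1-y(z)/y\bigr)$, understood as a formal power series in $y^{-1}$ whose coefficients are (eventually) Laurent polynomials in $z$. To extract $[z^h]$ I will use the contour-integral representation $[z^h]G(z)=\frac{1}{2i\pi}\oint G(z)\,z^{-h-1}\,dz$ around a small circle $|z|=\epsilon$, and argue that
\begin{equation}
  \label{eq:dobrplan2}
  -\ln\!\Bigl(1-\frac{y(z)}{y}\Bigr) \;=\; -\ln\Bigl(-\frac{b_{\dmaxb-1}}{y\,z}\Bigr)
  \;-\;\ln\bigl(z-z^\bullet(y)\bigr)\;-\;\sum_{i=1}^{\dmaxb-1}\ln\bigl(z-z_i^\bullet(y)\bigr),
\end{equation}
which is just the factorization of the degree-$\dmaxb$ polynomial $1+b_0z+\cdots+b_{\dmaxb-1}z^{\dmaxb}-yz = b_{\dmaxb-1}\prod(z-z_j^\bullet(y))$ divided by $z$ (the $z^{-1}$ accounts for $y(z)$ being $z^{-1}+\cdots$; note $1-y(z)/y = -(yz)^{-1}(1+b_0z+\cdots+b_{\dmaxb-1}z^{\dmaxb}-yz)$). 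Taking $[z^h]$ with $h>0$ of the right-hand side of~\eqref{eq:dobrplan2}: the term $-\ln(-b_{\dmaxb-1}/(yz))$ contributes nothing for $h>0$; the term $-\ln(z-z^\bullet(y))=-\ln(-z^\bullet(y))-\ln(1-z/z^\bullet(y))$ contributes $+\frac{1}{h}z^\bullet(y)^{-h}$ — but wait, this is a series in $z/z^\bullet(y)$, valid for $|z|<|z^\bullet(y)|$, and since $z^\bullet(y)=y^{-1}+\cdots\to 0$ one must be careful about the annulus of expansion. The correct bookkeeping, which I will carry out carefully, is to expand each $-\ln(z-z_j^\bullet(y))$ as a series in $z/z_j^\bullet(y)$ whenever $z_j^\bullet(y)$ is ``large'' ($j=1,\dots,\dmaxb-1$), giving $[z^h]\bigl(-\ln(z-z_j^\bullet(y))\bigr)=\frac{1}{h}z_j^\bullet(y)^{-h}$, and to expand $-\ln(z-z^\bullet(y))$ the other way (as $-\ln z - \ln(1-z^\bullet(y)/z)$) since $z^\bullet(y)$ is ``small'', giving no positive powers of $z$. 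Summing the contributions yields exactly~\eqref{eq:dobrplan1}.

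The main obstacle — and the point requiring genuine care rather than a routine manipulation — is the justification of the factorization~\eqref{eq:dobrplan2} and the resulting annulus-of-expansion argument \emph{at the level of formal power series in $y^{-1}$} with Puiseux-series-in-$y^{-1}$ coefficients for the roots, rather than as an identity of analytic functions. Concretely: (i) one must verify that $1+b_0z+\cdots+b_{\dmaxb-1}z^{\dmaxb}-yz$ genuinely splits into $\dmaxb$ linear factors over the field of Puiseux series in $y^{-1}$ (this follows from that field being algebraically closed, a standard fact), and that the splitting sorts into one root of valuation $+1$ in $y^{-1}$ and $\dmaxb-1$ roots of valuation $\leq 0$; (ii) one must make sense of $[z^h]$ applied to the logarithm of a Laurent polynomial in $z$ whose coefficients are series in $y^{-1}$ — here I will note that $-\ln(1-y(z)/y)=\sum_{p\geq 1}\frac{y(z)^p}{p\,y^p}$ is a well-defined element of $\Ring[[y^{-1}]][z,z^{-1}]$-type completion because each power of $y^{-1}$ picks up only finitely many $p$, so $[z^h]$ is unambiguous; (iii) one must check that the formal identity obtained by equating coefficients of each power of $y^{-1}$ on both sides of~\eqref{eq:dobrplan1} holds — this is where the consistency with Proposition~\ref{prop:DSFenumxy} (which already handles the $z^\bullet(y)$ root and the single-variable case) can be invoked to cross-check the normalization and to confirm that no spurious contribution from the $-\ln z$ and constant terms survives. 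Once these formal-algebra points are pinned down, the computation itself is the short residue calculation sketched above, and the white case is obtained verbatim by the substitution $x\leftrightarrow y$, $y(z)\leftrightarrow x(z)$, $z\leftrightarrow z^{-1}$.
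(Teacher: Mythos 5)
Your overall strategy is sound and, in its computational core, coincides with what the paper does: factor the kernel $1-y(z)/y$ into linear factors over Puiseux series in $y^{-1}$, identify the unique root of positive valuation with the excursion series $z^\bullet(y)$, take logarithms, and sort the roots according to which way the logarithm must be expanded before extracting $[z^h]$. The difference is in how that sorting is justified. The paper first proves the formal Wiener--Hopf factorization $1-sP(u)=\bigl(1-\tfrac{U(s)}{u}\bigr)\bigl(1-\sum_{h\geq 0}A_h^{>}(s)u^h\bigr)$ of Proposition~\ref{prop:WHfact} using arch generating functions; this makes the splitting of $\ln\frac1{1-sP(u)}$ into nonpositive and positive powers of $u$ automatic (the arch factor is manifestly a power series in $u$ with coefficients in $\mathcal{P}[[s]]$), and the bounded-degree case then only requires comparing this with the root factorization~\eqref{eq:Pfactnew} to identify the arch factor with $\prod_i(1-u/U_i(s))$, whence~\eqref{eq:logseriesbis}. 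You bypass the arches entirely and instead must justify the ``annulus of expansion'' bookkeeping directly at the level of formal series. You correctly identify this as the main obstacle, but you do not actually carry it out; in the paper this is exactly the content supplied by the combinatorial factorization, so in your write-up the substantive step is deferred rather than proved. It can be completed (either by a valuation argument showing that the re-expansion of each $\ln(1-z/z_i^\bullet(y))$ is coefficient-wise summable in $y^{-1}$, or by specializing the weights to make everything analytic on a genuine annulus and transferring back), but as it stands the proof is a plan at its crucial point.

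Two concrete inaccuracies should also be fixed. First, the large roots are \emph{not} $O(1)$ Puiseux series with nonzero constant term: since the coefficient of $z$ in $1+(b_0-y)z+b_1z^2+\cdots+b_{\dmaxb-1}z^{\dmaxb}$ carries the $-y$, the Newton polygon gives one root of valuation $+1$ in $y^{-1}$ (this is $z^\bullet(y)\sim y^{-1}$) and $\dmaxb-1$ roots of leading behavior $\omega_i^{-1}(y/b_{\dmaxb-1})^{1/(\dmaxb-1)}$, i.e.\ they diverge, with fractional negative valuation --- exactly as in the paper's explicit construction $U_i(s)\sim\omega_i^{-1}(p_ds)^{-1/d}$. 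Your argument does not need your stated claim, only that exactly one root has positive valuation and that the inverses $z_i^\bullet(y)^{-1}$ of the others have positive valuation (which is what makes both the re-expansion and the right-hand side $\sum_i z_i^\bullet(y)^{-h}/h$ well defined as a series in $y^{-1}$ with no constant term), but the claim as written is false. Second, invoking algebraic closedness of ``the field of Puiseux series in $y^{-1}$'' requires specifying the coefficient field; the $b_k$ live in $\Ring$, which is not a field, and the symmetric sums must be shown to have integer exponents and coefficients back in (the fraction field of) $\Ring$. The paper sidesteps both issues by constructing the $\dmaxb-1$ large roots explicitly via a compositional inverse composed with the $(\dmaxb-1)$-th roots of unity; adopting that construction, or a Newton-polygon argument over a specified base, would close this point. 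Finally, in a careful version you should combine the $\ln z$ arising from the prefactor $-\ln\bigl(-b_{\dmaxb-1}/(yz)\bigr)$ with the $-\ln z$ from the small-root factor, so that the identity you extract coefficients from is one between honest Laurent series rather than containing dangling logarithms.
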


Note that these expressions correspond to the coefficients of $u^h$ in
the two series~\eqref{eq:xybivar}. From them we get an interesting
expression for the grand generating function of disks with a Dobrushin
boundary condition.

\begin{proposition}
  \label{prop:Wdobrbd}
  Under the assumption of bounded face degrees, we have
  \begin{equation}
    1 + \sum_{p,q \geq 0}  \frac{F^{\yy}_{p,q}}{x^{p+1} y^{q+1}} = \left( 1 - \frac{z^\bullet(y)}{z^\circ(x)}\right)^{-1} \prod_{i=1}^{\dmaxn-1} \prod_{j=1}^{\dmaxb-1} \left( 1 - \frac{z_i^\circ(x)}{z_j^\bullet(y)}\right).
  \end{equation}
\end{proposition}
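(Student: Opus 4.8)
The plan is to start from the expression for the Dobrushin generating function obtained in Proposition~\ref{prop:Wdobr}, namely
\begin{equation*}
1 + \sum_{p,q \geq 0}  \frac{F^{\yy}_{p,q}}{x^{p+1} y^{q+1}}
= \exp\left( \sum_{h \in \Z} h \left([z^h] \ln\!\left(1-\tfrac{x(z)}{x}\right)\right)\left([z^{-h}] \ln\!\left(1-\tfrac{y(z)}{y}\right)\right)\right),
\end{equation*}
and to evaluate the right-hand side using the enumerative identities for weighted DSF walks provided by Proposition~\ref{prop:DSFenumxybd} (together with Proposition~\ref{prop:DSFenumxy}). The key observation is that $\ln(1-x(z)/x) = -\sum_{p\geq 1} x(z)^p/(p x^p)$ and $\ln(1-y(z)/y) = -\sum_{q\geq 1} y(z)^q/(q y^q)$, so the coefficient extractions $[z^h]$ and $[z^{-h}]$ of these logarithms are exactly the quantities $\sum_{p\geq 1} P^\circ_{p,\pm h}/(p x^p)$ and $\sum_{q\geq 1} P^\bullet_{q,\pm h}/(q y^q)$ appearing in those propositions. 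Thus I can rewrite the exponent as a sum over $h$ of products of explicit algebraic expressions in $z^\circ(x)$, $z^\bullet(y)$, and the ``secondary'' roots $z_i^\circ(x)$, $z_j^\bullet(y)$.

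Concretely, first I would treat the $h>0$ and $h<0$ parts of the sum separately. For $h>0$: by Proposition~\ref{prop:DSFenumxybd}, $[z^{-h}]\big(\!-\!\ln(1-x(z)/x)\big) = \sum_{i=1}^{\dmaxn-1} z_i^\circ(x)^{h}/h$, while by~\eqref{eq:xycycle} of Proposition~\ref{prop:DSFenumxy}, $[z^{-h}]\big(\!-\!\ln(1-y(z)/y)\big)=z^\bullet(y)^h/h$; hence the $h>0$ contribution to the exponent equals $-\sum_{h\geq 1} h \cdot \frac{1}{h}\sum_i z_i^\circ(x)^h \cdot \frac{1}{h} z^\bullet(y)^h = -\sum_{i=1}^{\dmaxn-1}\sum_{h\geq 1} \frac{1}{h}\big(z_i^\circ(x) z^\bullet(y)\big)^h = \sum_{i=1}^{\dmaxn-1}\ln\big(1 - z_i^\circ(x) z^\bullet(y)\big)$. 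Wait — I need to be careful with signs and with which of $z^\circ$ versus $z^\bullet$ pairs with the secondary roots. The symmetric computation for $h<0$ (writing $h=-h'$, $h'>0$): $[z^{-h}]\big(\!-\!\ln(1-x(z)/x)\big) = [z^{h'}](\cdots) = z^\circ(x)^{-h'}/h'$ by~\eqref{eq:xycycle}, and $[z^{h}]\big(\!-\!\ln(1-y(z)/y)\big) = [z^{-h'}](\cdots) = \sum_{j=1}^{\dmaxb-1} z_j^\bullet(y)^{-h'}/h'$; the factor $h = -h'$ then contributes a crucial extra sign, giving $+\sum_{j}\sum_{h'\geq 1}\frac{1}{h'}\big(z^\circ(x)^{-1} z_j^\bullet(y)^{-1}\big)^{h'}\cdot(-1)\cdot(-1)$ — here I will need to carefully track the factor $h$ versus $-h$ in the Proposition~\ref{prop:Wdobr} formula, which flips one sign relative to the $h>0$ case and accounts for why the final answer has $\big(1-z^\bullet(y)/z^\circ(x)\big)^{-1}$ (a pole, from the $h<0$ piece involving the ``primary'' roots) rather than a zero. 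Exponentiating the sum of all three pieces (the $h>0$ secondary-root sum, the $h<0$ secondary-root sum, and being careful that the ``primary $\times$ primary'' term shows up in the $h<0$ part) then yields the claimed product formula.

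The main obstacle I anticipate is bookkeeping the signs and the roles of the four families of roots: which of $z^\circ(x), z_i^\circ(x)$ pairs with which of $z^\bullet(y), z_j^\bullet(y)$, and in which sign, so that exactly the cross-terms $z_i^\circ(x)/z_j^\bullet(y)$ (secondary $\times$ secondary, giving the product of $(1-z_i^\circ/z_j^\bullet)$ factors) and $z^\bullet(y)/z^\circ(x)$ (primary $\times$ primary, giving the $(1-z^\bullet/z^\circ)^{-1}$ factor) survive, while the mixed primary-secondary terms cancel or, more likely, simply do not appear because of which coefficient extraction ($[z^h]$ vs $[z^{-h}]$, hence ``multivalued root side'') each logarithm contributes for a given sign of $h$. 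In particular one must check that $z^\circ(x)$ only enters through $z^\circ(x)^{-h}$ for $h>0$ and $z^\bullet(y)$ only through $z^\bullet(y)^h$ for $h>0$, as dictated by which of $x(z)=x$ or $y(z)=y$ has the ``small'' root expansion in $z^{-1}$ versus $z$, while the secondary roots enter with the opposite powers — this asymmetry is precisely what produces the asymmetric structure of the final formula. Once the indexing is pinned down, each geometric series sums to a logarithm via $\sum_{h\geq 1} w^h/h = -\ln(1-w)$ (valid in the appropriate Puiseux-series completion, since all relevant ratios of roots are ``small'' in $x^{-1}$ or $y^{-1}$), and exponentiation gives the product. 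I would also remark that extracting the coefficient of $x^{-p-1}y^{-1}$ or $x^{-1}y^{-q-1}$ recovers the monochromatic disk formulas, as a consistency check, and that the $\dmaxn,\dmaxb$ factors match the resultant of the spectral curve, connecting with~\cite[Equation~(8.4.6)]{EynardBook}.
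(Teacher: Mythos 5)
Your overall strategy is exactly the paper's proof: start from Proposition~\ref{prop:Wdobr}, evaluate the coefficient extractions via~\eqref{eq:xycycle} and Proposition~\ref{prop:DSFenumxybd}, resum the geometric series into logarithms, and exponentiate. So the route is right; the problem is that the one place where you actually carry out the computation, the pairing of coefficient extractions is wrong, and you then defer the fix (``I need to be careful with signs and with which of $z^\circ$ versus $z^\bullet$ pairs with the secondary roots'') instead of resolving it. The exponent in~\eqref{eq:Wdobr} couples $[z^{h}]$ of the $x$-series with $[z^{-h}]$ of the $y$-series, for the \emph{same} $h$. In your $h>0$ computation you instead multiply $[z^{-h}]$ of the $x$-log (which is the secondary-root sum $\sum_i z_i^\circ(x)^h/h$) by $[z^{-h}]$ of the $y$-log (which is the primary $z^\bullet(y)^h/h$); both identities you quote are individually correct, but this is not the combination that occurs, and it produces spurious mixed terms $\ln\bigl(1-z_i^\circ(x)\,z^\bullet(y)\bigr)$ (note also: a product of roots, not a ratio) that have no counterpart in the claimed formula. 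Similarly, in your $h<0$ computation you assert $[z^{-h'}]$ of the $y$-log equals the secondary sum $\sum_j z_j^\bullet(y)^{-h'}/h'$, whereas by~\eqref{eq:xycycle} it is the primary $z^\bullet(y)^{h'}/h'$; the secondary $y$-roots enter through $[z^{+h'}]$, by Proposition~\ref{prop:DSFenumxybd}.

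The correct bookkeeping, which you guess as the expected final structure but never verify, is the following and it is all that is missing. For $h>0$: $[z^{h}]\sum_{p}\frac{x(z)^p}{p x^p}=\frac{z^\circ(x)^{-h}}{h}$ and $[z^{-h}]\sum_{q}\frac{y(z)^q}{q y^q}=\frac{z^\bullet(y)^{h}}{h}$ (both primary, by~\eqref{eq:xycycle}), so this part of the exponent is $\sum_{h\geq 1}\frac{1}{h}\bigl(z^\bullet(y)/z^\circ(x)\bigr)^h=-\ln\bigl(1-z^\bullet(y)/z^\circ(x)\bigr)$. For $h=-h'<0$: $[z^{-h'}]\sum_{p}\frac{x(z)^p}{p x^p}=\sum_{i}\frac{z_i^\circ(x)^{h'}}{h'}$ and $[z^{h'}]\sum_{q}\frac{y(z)^q}{q y^q}=\sum_{j}\frac{z_j^\bullet(y)^{-h'}}{h'}$ (both secondary, by Proposition~\ref{prop:DSFenumxybd}), and the prefactor $h=-h'$ turns this into $-\sum_{h'\geq 1}\frac1{h'}\sum_{i,j}\bigl(z_i^\circ(x)/z_j^\bullet(y)\bigr)^{h'}=\sum_{i,j}\ln\bigl(1-z_i^\circ(x)/z_j^\bullet(y)\bigr)$. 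In particular, primary--secondary cross terms never arise at all — there is no cancellation to arrange — and exponentiating the two contributions gives precisely the stated formula. With this pairing fixed, your argument coincides with the paper's proof; the convergence remark (all relevant ratios are small in $x^{-1}$, $y^{-1}$) and the consistency checks you mention are fine.
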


\begin{proof}
  Rewrite the left-hand side using the second line
  of~\eqref{eq:Wdobr}.  Evaluate the sums over $p,q$ using
  \eqref{eq:xycycle} for $h>0$, and Proposition~\ref{prop:DSFenumxybd}
  for $h<0$. The sum over $h>0$ is then equal to
  $-\ln\left( 1 - \frac{z^\bullet(y)}{z^\circ(x)}\right)$, and the sum
  over $h<0$ to
  $\sum_{i=1}^{\dmaxn-1} \sum_{j=1}^{\dmaxb-1} \ln \left( 1 -
    \frac{z_i^\circ(x)}{z_j^\bullet(y)}\right)$. Taking the
  exponential, the wanted expression follows.
\end{proof}

In the remainder of this section, we will argue that this expression
is equivalent to that given in~\cite{EynardBook}. Let us consider the
resultant $r(x,y)$ of the two following polynomials in $z$:
\begin{equation}
  z^{\dmaxn-1}(x(z)-x) = \sum_{k=-1}^{\dmaxn-1} (a_k - x \delta_{k,0}) z^{\dmaxn-1-k}, \qquad
  z (y(z)-y) = \sum_{k=-1}^{\dmaxb-1} (b_k - y \delta_{k,0}) z^{k+1}.
\end{equation}

On the one hand, the resultant is by definition the determinant
\begin{equation}
  r(x,y) := \det
  \begin{pmatrix}
    a_{-1} & a_0 -x & a_1 & \cdots & a_{\dmaxn-1} & & &\\
    & a_{-1} & a_0 -x & a_1 & \cdots & a_{\dmaxn-1} & &\\
    & & \ddots & \ddots & \ddots & \ddots & \ddots  &\\
    & & & a_{-1} & a_0 -x & a_1 & \cdots & a_{\dmaxn-1} \\
    b_{\dmaxb -1 } & \cdots & b_1 & b_0 - y & 1 & & &\\
    & b_{\dmaxb -1 } & \cdots & b_1 & b_0 - y & 1 & & \\
    & & \ddots & \ddots & \ddots & \ddots & \ddots  &\\
    & & & b_{\dmaxb -1 } & \cdots & b_1 & b_0 - y & 1 \\
  \end{pmatrix}
\end{equation}
where the undisplayed entries are equal to $0$.  This quantity is
proportional to $E(x,y)$ as given in~\cite[Theorem~8.3.2,
p.~378]{EynardBook}, up to the change of notation given
in~\eqref{eq:Eynard_connection}.

On the other hand, the resultant can be expressed in terms of the
roots of the polynomials. Namely, using the factorizations
\begin{equation}
  \label{eq:xyfactorpol}
  z^{\dmaxn-1}(x(z)-x)= a_{-1} \prod_{i=0}^{\dmaxn-1} (z-z_i^\circ(x)), \qquad
  z (y(z)-z) = b_{\dmaxb-1} \prod_{j=0}^{\dmaxb-1} (z-z_j^\bullet(y)),
\end{equation}
where we denote by $z_0^\circ(x)=z^\circ(x)$ and
$z_0^\bullet(y)=z^\bullet(y)$ the ``zeroth'' roots, we find
\begin{equation}
  \begin{split}
    r(x,y) &= (a_{-1})^{\dmaxb} (b_{\dmaxb-1})^{\dmaxn} \prod_{i=0}^{\dmaxn-1} \prod_{j=0}^{\dmaxb-1} \left( z_i^\circ(x) - z_j^\bullet(y) \right) \\
    &= (a_{-1})^{\dmaxb} \prod_{i=0}^{\dmaxn-1} \prod_{j=0}^{\dmaxb-1} \left( 1 - \frac{z_i^\circ(x)}{z_j^\bullet(y)}\right)
  \end{split}
\end{equation}
where we use the fact that
$b_{\dmaxb-1} \prod_{j=0}^{\dmaxb-1} (-z_j^\bullet(y))=b_{-1}=1$ since
it corresponds to the constant coefficient of the polynomial
$z(y(z)-z)$. The above double product is very close to that in
Proposition~\ref{prop:Wdobrbd}, but it involves the extra zeroth
roots. To make the two expressions match, we shall divide the above
display by the two relations~\eqref{eq:xyfactorpol} taken at
$z=z_0^\bullet(y)$ and $z=z_0^\circ(x)$ respectively. This yields
\begin{equation}
  1 + \sum_{p,q \geq 0}  \frac{F^{\yy}_{p,q}}{x^{p+1} y^{q+1}} = \frac{r(x,y)}{(a_{-1})^{\dmaxb-1}(x-X(y))(y-Y(x))}
\end{equation}
where $X(y)=x(z_0^\bullet(y))$ and $Y(x)=y(z_0^\circ(y))$ are as in
Proposition~\ref{prop:YXsubs}. By substituting $x=x(z)$ and $y=y(z')$
we recover, up to notations, the expression given at~\cite[Equation
(8.4.6), p.~397]{EynardBook} for $c=-1$.

\section{Conclusion}
\label{sec:conc}
In this paper, we have shown how to enumerate bijectively a number of families of planar hypermaps using the slice decomposition. We thus recovered several of the intriguing formulas given in~\cite[Chapter 8]{EynardBook}. 
In particular, we have seen that the fact that generating functions for hypermaps admit rational parametrizations is closely related with the enumeration of downward skip-free walks: the parameter in which the series are rational can be interpreted as the generating function of excursions. 

Another novelty of our approach is the introduction of trumpets and cornets, that is hypermaps with two boundaries, one of them tight. Such families of hypermaps have not been considered previously, but turn out to be important building blocks to which other families of hypermaps can be reduced. Trumpets and cornets also appear as natural objects in the blossoming tree approach of the forthcoming paper~\cite{AMT2025}.

In contrast with the case of ordinary, non-bicolored maps, the slice decomposition for hypermaps relies heavily on the canonical orientation of the edges, and leads to considerations specific to the world of directed graphs. In particular, our approach to enumerate planar hypermaps with Dobrushin boundaries hinges on a decomposition into strongly connected components. This idea can be generalized to handle arbitrary boundary conditions; this is the subject of two papers in preparation~\cite{BEL,Lejeune}.

\appendix

\section{Generating functions of downward skip-free walks}\label{appendix:DSF}
\label{sec:dsf}

The purpose of this appendix is to review some enumeration results
about downward skip-free walks, which we need in this
paper. Precisely, we shall establish Propositions~\ref{prop:DSFenumxy}
and~\ref{prop:DSFenumxybd}. Since the discussion is more about walks
than hypermaps, we state the results using independent notation, and
it is only at the end that we make the connection with the conventions
and notation of the main text. Let us mention that the enumeration of
one-dimensional walks, sometimes also called directed lattice paths,
is a very classical topic, see for instance \cite{BaFl2002,BaLaWa2020}
and references therein\footnote{An important reference is the
  classical textbook by Feller \cite{Feller1971} which contains many
  deep results about one-dimensional walks, notably their relation
  with Wiener-Hopf factorization, of relevance here. It however takes
  a bit of work to perform the translation from the language of
  probability theory to that of enumerative combinatorics.}. Note that
our terminology differs slightly from these references. An originality
of our discussion is that it specifically focuses on downward
skip-free walks.

Recall from Definition~\ref{def:DSF} that a \emph{downward skip-free
  walk}, or \emph{DSF walk} for short, is a finite sequence of
integers $(\pi_0,\pi_1,\ldots,\pi_\ell)$ of arbitrary length, such
that at each \emph{step} $i=1,\ldots,\ell$, the \emph{increment}
$\pi_i-\pi_{i-1}$ is greater than or equal to $-1$. The integers
$\pi_0,\pi_1,\ldots,\pi_\ell$ are called the \emph{positions} of the
walk, with $\pi_0$ the starting position and $\pi_\ell$ the final
position. Given formal variables $p_{-1},p_0,p_1,p_2,\ldots$, we
attach a weight $p_j$ to each step with increment $j$, for each
$j=-1,0,1,2,\ldots$. We then define the weight of a DSF walk as the
product of its step weights.

A first simple observation is that, introducing the formal Laurent
series
\begin{equation}
  \label{eq:Pdef}
  P(u) := p_{-1} u^{-1} + p_0 + p_1 u + p_2 u^2 + \cdots,
\end{equation}
the quantity $P(u)^\ell$ may be expanded as a sum over all DSF walks
with $\ell$ steps starting at position $\pi_0=0$, the exponent
of $u$ recording the final position $\pi_\ell$. Thus, the coefficient
\begin{equation}
  P_{\ell,h} := [u^h] P(u)^\ell
\end{equation}
is the sum of weights of all DSF walks with $\ell$ steps, starting at
$0$ and ending at $h$. Clearly, $P_{\ell,h}$ is an element of the ring
$\mathcal{P}:=\mathbb{Q}[p_{-1},p_0,p_1,p_2,\ldots]$ of polynomials in
the $p_j$ with rational coefficients.

Next, we consider \emph{excursions}, namely DSF walks which start at
$0$, end at $-1$, and always remain nonnegative in between (these
objects are also called \L{}ukasiewicz walks). We denote by $U(s)$
the generating function of excursions with an arbitrary number of steps, where we incorporate an extra auxiliary
weight $s$ per step. It is an element of the ring $\mathcal{P}[[s]]$
of formal power series in $s$ with coefficients in $\mathcal{P}$. We
then have:
\begin{proposition}
  \label{prop:Ueq}
  The generating function $U(s)$ of excursions is the unique series
  in $\mathcal{P}[[s]]$ satisfying
  \begin{equation}
    \label{eq:Ueq}
    1 = s P(U(s))
  \end{equation}
  or equivalently
  \begin{equation}
    \label{eq:Ueq2}
    U(s) = s \left( p_{-1} + p_0 U(s) + p_1 U(s)^2 + p_2 U(s)^3 + \cdots \right).
  \end{equation}
\end{proposition}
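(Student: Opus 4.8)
The plan is to prove Proposition~\ref{prop:Ueq} by the standard last-return (or first-step) decomposition of excursions, together with a formal-power-series argument guaranteeing existence and uniqueness of the fixed point.

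First I would set up the combinatorial decomposition. An excursion with at least one step can be analyzed according to its first step, which has some increment $j \geq -1$ and weight $p_j$. If $j = -1$, the excursion is reduced to this single step, contributing $s\,p_{-1}$. If $j \geq 0$, the walk moves up to height $j+1$ and must eventually return to $-1$ while staying nonnegative until the very last step; by the DSF property, downward moves have size exactly $1$, so the walk passes through each of the heights $j, j-1, \ldots, 0$ at a well-defined first time after the first step. Cutting at these first-passage times decomposes the remainder of the walk into $j+1$ independent excursions (each shifted so as to start at $0$). Translating into generating functions, the contribution of first step of increment $j$ is $s\,p_j\,U(s)^{j+1}$, and summing over $j \geq -1$ gives exactly~\eqref{eq:Ueq2}. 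Recognizing that $p_{-1} + p_0 U + p_1 U^2 + \cdots = U^{-1}(p_{-1} U^{-1} + \cdots)\cdot U$ — more precisely that the right-hand side of~\eqref{eq:Ueq2} equals $s\,U(s)\,P(U(s))$ after factoring $U(s)$ out of each term $U(s)^{j+1}$ — one obtains $U(s) = s\,U(s)\,P(U(s))$, and since $U(s)$ is not a zero divisor in $\mathcal{P}[[s]]$ (it has nonzero lowest-order term $s\,p_{-1}$), this is equivalent to $1 = s\,P(U(s))$, which is~\eqref{eq:Ueq}.

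Next I would address existence and uniqueness as a formal power series. Equation~\eqref{eq:Ueq2} has the form $U = s\,\Phi(U)$ with $\Phi(u) = p_{-1} + p_0 u + p_1 u^2 + \cdots \in \mathcal{P}[[u]]$, and such an equation has a unique solution $U \in \mathcal{P}[[s]]$ with zero constant term: writing $U(s) = \sum_{n \geq 1} u_n s^n$, one checks that $u_1 = p_{-1}$ and that each $u_{n+1}$ is determined as a polynomial in $p_{-1},p_0,\ldots$ and $u_1,\ldots,u_n$ by extracting the coefficient of $s^{n+1}$ on both sides, so the recursion has a unique solution. The same standard argument shows that the generating function of excursions, which is a priori only defined as a combinatorial sum, does lie in $\mathcal{P}[[s]]$: the number of excursions of a given length is finite (each step has increment at most equal to that length, up to the nonnegativity constraint), so the coefficient of $s^n$ is a genuine polynomial in the $p_j$. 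Hence the combinatorial series satisfies the equation and, by uniqueness, coincides with the formal solution.

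The main obstacle — really the only delicate point — is making the last-return / first-passage decomposition rigorous, i.e. checking that cutting at the first-passage times through heights $j, j-1, \ldots, 0$ genuinely produces a bijection between excursions starting with a step of increment $j \geq 0$ and $(j+1)$-tuples of excursions, and that this bijection is weight-preserving (with the extra $s$ bookkeeping one step per factor). This uses crucially the downward skip-free property: because the walk can only descend in unit steps, the first time it reaches height $i$ (for $i = j, \ldots, 0$) after the initial step is well-defined and the successive segments are themselves nonnegative excursions when re-based. This is a classical fact about \L{}ukasiewicz paths, so I would state it carefully and either cite a reference such as \cite{Feller1971} or give the short inductive verification; everything else is routine formal-series manipulation.
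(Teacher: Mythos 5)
Your proof is correct and follows essentially the same route as the paper: the decomposition of an excursion at its first step (of increment $j$) into that step followed by $j+1$ suitably shifted excursions, yielding \eqref{eq:Ueq2}, together with the standard coefficient-extraction argument for existence and uniqueness in $\mathcal{P}[[s]]$ that the paper leaves implicit (the paper phrases the cutting via ``last passages'', you via first-passage times, but it is the same classical decomposition). The only blemish is the slip ``the walk moves up to height $j+1$'' --- after a first step of increment $j\geq 0$ from $0$ the walk is at height $j$ --- which does not affect your count of $j+1$ excursion pieces or the resulting formula.
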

This follows from a classical ``last-passage'' decomposition: given an
excursion, denote by $j$ the increment of its first step. Then,
consider the last passages of the walk by the positions
$j-1,j-2,\ldots,1,0,-1$ (such passages necessarily exist since the
walk is downward skip-free): this yields a decomposition of the
excursion into its first step followed by the concatenation of $j+1$
(suitably shifted) excursions. Thus, the generating function of
excursions starting with a step of increment $j$ is equal to
$s p_j U(s)^{j+1}$, and we obtain~\eqref{eq:Ueq2} by summing over $j$.

A similar last-passage decomposition implies that, for any $k \geq 1$,
the series $U(s)^k$ is equal to the generating function of DSF walks
which start at $0$, end at $-k$, and remain strictly above $-k$
before. The classical \emph{cycle lemma}---see
e.g.~\cite[Chapter~5]{Stanley1999} or \cite[$\triangleright$
I.47]{Flajolet2009} and references therein---asserts that
\begin{equation}
  [s^\ell] U(s)^k = \frac{k}{\ell} P_{\ell,-k}, \qquad k,\ell \geq 1.
\end{equation}
In other words, out of all DSF walks with $\ell$ steps starting at $0$
and ending at $-k$, a fraction $k/\ell$ remain strictly above $-k$
until the last step. The cycle lemma may be rewritten as
\begin{equation}
  \label{eq:Pexc}
  \sum_{\ell \geq 0} P_{\ell,-k} s^\ell = s U(s)^{k-1} U'(s), \qquad k \geq 1.
\end{equation}
This expression remains in fact valid for $k=0$. Indeed, observe that a
general DSF walk from $0$ to $-1$ can be uniquely decomposed as
the concatenation of an excursion and of a general walk from $-1$ to
$-1$, which by shifting the latter gives the relation
$s U'(s) = U(s) \sum_{\ell \geq 0}
P_{\ell,0} s^\ell$.

We would like to find a counterpart of~\eqref{eq:Pexc} for DSF walks
starting at $0$ and ending at a positive position. To this end, let us
define, for all $h \geq 0$, an \emph{arch} of \emph{tilt} $h$ as a DSF
walk which starts at $0$, ends at $h$, and remains at positions
$\geq h$ in between. An arch of tilt $h$ is said \emph{strict} if it
has at least one step and remains at positions $>h$ in between its
starting and ending positions. Let us denote by $A_h^\geq(s)$ and
$A_h^>(s)$ the generating functions of arches and strict arches of
tilt $h$, respectively. Let us make the observation that
\begin{equation}
  \label{eq:arch0}
  A_0^\geq(s) = \frac{1}{1-A_0^>(s)}
\end{equation}
since an arch of tilt $0$ is the concatenation of an arbitrary number
of strict arches of tilt $0$. Furthermore, we have
\begin{equation}
  \label{eq:archstrict}
  A_h^\geq(s)=A_h^>(s) A_0^\geq(s) =\frac{A_h^>(s)}{1-A_0^>(s)} \qquad \text{for } h>0
\end{equation}
since an arch of tilt $h$ is the concatenation of a strict arch of the
same tilt, and of a (suitably shifted) arch of tilt $0$. Finally, we
have
\begin{equation}
  \label{eq:archstrdec}
  A_h^>(s) = s \sum_{j \geq h} p_j U(s)^{j-h} \qquad \text{for } h\geq 0
\end{equation}
as seen by decomposing a strict arch according to the increment $j$ of
its first step. All these relations allow to express the $A_h^\geq(s)$
and $A_h^>(s)$ in terms of the ``master'' series $U(s)$. But notice
that we also have
\begin{equation}
  \label{eq:archexcur}
  U(s) = s p_{-1} A_0^\geq(s) = \frac{s p_{-1}}{1-A_0^>(s)}
\end{equation}
since an excursion is obtained by appending a final downward step to
an arch of tilt $0$. Combining with~\eqref{eq:archstrdec} at $h=0$, we
recover the relation~\eqref{eq:Ueq2} determining $U(s)$. All the above
combinatorial relations may be conveniently encoded into the
following:

\begin{proposition}[formal Wiener-Hopf factorization]
  \label{prop:WHfact}
  Within the ring $\mathcal{P}((u))[[s]]$ of formal power series in
  $s$ whose coefficients are themselves formal Laurent series in $u$
  with coefficients in $\mathcal{P}$, we have
  \begin{equation}
    \label{eq:WHfact}
    \begin{split}
      1 - s P(u) &= \left( 1 - \frac{U(s)}u \right)
                   \left( 1 - \sum_{h \geq 0} A_h^>(s) u^h \right) \\
                 &= \frac{sp_{-1}}{U(s)} \left( 1 - \frac{U(s)}u \right) \left( 1 - \sum_{h > 0} A_h^\geq(s) u^h \right) .
    \end{split}
  \end{equation}
  or equivalently
  \begin{equation}
    \label{eq:WHfactbis}
    \begin{split}
      \frac1{1 - s P(u)} &= \frac1{1 - \frac{U(s)}u} \cdot
                           \frac1{1 - \sum_{h \geq 0} A_h^>(s) u^h} \\
      &= \frac{U(s)}{sp_{-1}} \cdot \frac1{1 - \frac{U(s)}u} \cdot
      \frac1{1 - \sum_{h > 0} A_h^\geq(s) u^h}.
    \end{split}
  \end{equation}
\end{proposition}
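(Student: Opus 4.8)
The plan is to deduce \eqref{eq:WHfact} directly from the combinatorial identities \eqref{eq:arch0}--\eqref{eq:archexcur} already established, by a short algebraic computation, and then to obtain the two displays \eqref{eq:WHfactbis} by taking reciprocals. The first thing I would do is compute the series $\sum_{h\ge 0}A_h^{>}(s)\,u^h$ in closed form. Inserting the explicit expression \eqref{eq:archstrdec}, namely $A_h^{>}(s)=s\sum_{j\ge h}p_j\,U(s)^{\,j-h}$, and interchanging the two summations (legitimate, since each $A_h^{>}(s)$ is a multiple of $s$ and $U(s)$ has vanishing constant term in $s$, so that for each fixed power of $s$ and of $u$ only finitely many terms contribute), one gets
\[
  \sum_{h\ge 0}A_h^{>}(s)\,u^h \;=\; s\sum_{n\ge 0}p_n\sum_{i=0}^{n}u^{i}U(s)^{\,n-i}
  \;=\; \frac{s}{u-U(s)}\sum_{n\ge 0}p_n\bigl(u^{\,n+1}-U(s)^{\,n+1}\bigr),
\]
using the geometric summation $\sum_{i=0}^{n}u^{i}U^{\,n-i}=(u^{n+1}-U^{n+1})/(u-U)$, which is valid in $\mathcal{P}((u))[[s]]$ because $u-U(s)$ is a unit there. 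Recognising $\sum_{n\ge 0}p_n u^{\,n+1}=u\,P(u)-p_{-1}$ and likewise with $U(s)$ in place of $u$, this simplifies to $s\bigl(u\,P(u)-U(s)\,P(U(s))\bigr)/(u-U(s))$; now invoking \eqref{eq:Ueq} (i.e. $sP(U(s))=1$) to replace $sU(s)P(U(s))$ by $U(s)$, the bracket collapses and one finds $1-\sum_{h\ge 0}A_h^{>}(s)u^h = u\bigl(1-sP(u)\bigr)/(u-U(s))$. Multiplying by $1-U(s)/u=(u-U(s))/u$ gives the first equality in \eqref{eq:WHfact}.

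For the second equality in \eqref{eq:WHfact} I would isolate the $h=0$ term, writing $1-\sum_{h\ge 0}A_h^{>}(s)u^h=\bigl(1-A_0^{>}(s)\bigr)-\sum_{h>0}A_h^{>}(s)u^h$ and factoring out $1-A_0^{>}(s)$: relation \eqref{eq:archstrict} turns $A_h^{>}(s)/(1-A_0^{>}(s))$ into $A_h^{\ge}(s)$ for $h>0$, while \eqref{eq:archexcur} identifies $1-A_0^{>}(s)$ with $sp_{-1}/U(s)$. Substituting into the first factorization yields the second. Finally, \eqref{eq:WHfactbis} follows simply by inverting \eqref{eq:WHfact}, since each of the factors $1-sP(u)$, $1-U(s)/u$, $1-\sum_{h\ge 0}A_h^{>}(s)u^h$ and $1-\sum_{h>0}A_h^{\ge}(s)u^h$ is a unit of $\mathcal{P}((u))[[s]]$: its value at $s=0$ equals $1$ in each case (for the last two because every strict arch, and every arch of positive tilt, has at least one step).

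\textbf{Main difficulty.} There is in fact no serious obstacle: the combinatorial content has been packaged into \eqref{eq:arch0}--\eqref{eq:archexcur} and Proposition~\ref{prop:Ueq}, so the statement is essentially a repackaging. The only points needing a little care are (i) checking that all manipulations -- the interchange of summations and the division by $u-U(s)$ -- take place inside $\mathcal{P}((u))[[s]]$, which follows from $U(s)$ and the $A_h^{\ge/>}(s)$ being divisible by $s$; and (ii) applying the defining equation \eqref{eq:Ueq} for $U(s)$ at precisely the right moment, since that is exactly what makes the numerator simplify to $u\bigl(1-sP(u)\bigr)$. One could alternatively give a purely bijective proof, reading \eqref{eq:WHfactbis} as the unique decomposition of an arbitrary DSF walk into a ``descending'' piece counted by $1/(1-U(s)/u)$ and an ``ascending'' piece counted by $1/(1-\sum_h A_h^{>}(s)u^h)$ -- a ladder/last-passage decomposition in the spirit of the argument preceding Proposition~\ref{prop:Ueq} -- but the algebraic route above is shorter given the relations already in hand.
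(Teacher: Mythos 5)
Your proof is correct and follows essentially the same route as the paper: both derive the first factorization algebraically from \eqref{eq:archstrdec}, \eqref{eq:archexcur} and the kernel relation \eqref{eq:Ueq}, pass to the second line via \eqref{eq:archstrict} and \eqref{eq:archexcur}, and obtain \eqref{eq:WHfactbis} by inverting units of $\mathcal{P}((u))[[s]]$. The only cosmetic difference is that you sum $\sum_{h\ge 0}A_h^{>}(s)u^h$ in closed form by a geometric summation, whereas the paper verifies the same identity by matching the coefficient of $u^h$ on both sides for $h=-1$ and $h\ge 0$.
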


\begin{proof}
  We establish the first equality in~\eqref{eq:WHfact} by checking
  that the coefficients of $u^h$ on both sides are equal for any
  $h \in \Z$:
  \begin{itemize}
  \item for $h<-1$ they simply vanish,
  \item for $h=-1$ we find the identity
    $-s p_{-1} = - U(s) (1-A^>_0(s))$ equivalent
    to~\eqref{eq:archexcur},
  \item for $h\geq 0$ we find the identity
    $-s p_h=-A_h^>(s)+U(s) A_{h+1}^>(s)$ resulting
    from~\eqref{eq:archstrdec}.
  \end{itemize}
  We then pass to the second line of~\eqref{eq:WHfact}
  using~\eqref{eq:archstrict} and~\eqref{eq:archexcur} again. We then
  obtain~\eqref{eq:WHfactbis} by inverting within
  $\mathcal{P}((u))[[s]]$.
\end{proof}

\begin{remark}
  There is also a \emph{bijective} proof of \eqref{eq:WHfactbis},
  which consists in decomposing a general DSF walk into a sequence of
  excursions followed by a sequence of arches (which may be taken
  strict or not, hence the two versions of the identity). Let us not
  enter into details here, but refer instead to the closely related
  discussion in~\cite{BaLaWa2020}.
\end{remark}

For our purposes it is convenient to take the \emph{logarithm}
of~\eqref{eq:WHfactbis}, which gives the relation
\begin{equation}
  \sum_{\ell \geq 1} \frac{s^\ell P(u)^\ell}\ell = \ln \frac{U(s)}{s p_{-1}} + \sum_{k \geq 1} \frac{U(s)^k}{k u^k}
  + \ln \frac1{1 - \sum_{h > 0} A_h^\geq(s) u^h}.
\end{equation}
Notice that, in the right-hand side, the first term does not contain
the variable $u$, while the second and third terms contain only
negative and positive powers of $u$ respectively. Extracting the
nonpositive powers of $u$ in the relation, we get the identities
\begin{equation}
  \label{eq:logseries}
  \sum_{\ell \geq 1} \frac{P_{\ell,0} s^\ell}\ell = \ln \frac{U(s)}{s p_{-1}}, \qquad \qquad
  \sum_{\ell \geq 1} \frac{P_{\ell,-h} s^\ell}\ell = \frac{U(s)^h}h \qquad \text{for } h>0
\end{equation}
which amount again to the cycle lemma. For positive powers of $u$,
there is a priori no such simple identities, but we may write
\begin{equation}
  \label{eq:archgen}
  \sum_{h \geq 1} \sum_{\ell \geq 1}  \frac{P_{\ell,h} s^\ell u^h}{\ell} =
  \ln \frac1{1 - \sum_{h > 0} A_h^\geq(s) u^h} = \ln \left( p_{-1} \frac{U(s)^{-1}-u^{-1}}{1-sP(u)} \right).
\end{equation}
It is possibly to get a bit further by imposing a global bound on
increments, that is by fixing a positive integer $d$ and setting
$p_j=0$ for $j>d$. In that case, $P(u)=p_{-1} u^{-1}+\cdots+p_d u^d$
is a Laurent \emph{polynomial} in $u$, and the equation $1=sP(u)$
admits $d+1$ roots in a suitable completion of $\mathcal{P}[[s]]$. One
of these roots is the generating function of excursions $U(s)$. The
other roots may be constructed as follows: consider the formal power
series in $V$
\begin{equation}
  \sigma(V) := V \left( 1+ \frac{p_{d-1}}{p_d} V + \cdots + \frac{p_{-1}}{p_d} V^{d+1} \right)^{-1/d}
\end{equation}
where we expand $(1+\cdots)^{-1/d}$ in the standard way.  The constant
coefficient vanishes and the coefficient of $V$ is $1$, hence
$\sigma(V)$ admits a compositional inverse which we denote by
$V(\sigma)$. Then, denoting by $\omega_1,\ldots,\omega_d$ the
$d$-th roots of unity, for each $i=1,\ldots,d$, we set 
$U_i(s):=V(\omega_i (p_d s)^{1/d})^{-1}$ to be the Puiseux series in $s$
obtained by substituting $\sigma=\omega_i (p_d s)^{1/d}$ in
$V(\sigma)$, and taking the multiplicative inverse. It is
straightforward to check that it satisfies $1=sP(U_i(s))$. Note that
the leading term in $U_i(s)$ is $\omega_i^{-1} (p_d s)^{-1/d}$,
involving a negative exponent of $s$. For this reason,
$U_1(s),\ldots,U_d(s)$ are called the \emph{large roots} of the
equation $1=sP(u)$, while $U_0(s):=U(s)$ is called the \emph{small
  root} as it only contains positive powers of $s$. With all these
roots at hand, we may split the Laurent polynomial $1-sP(u)$ as
\begin{equation}
  \label{eq:Pfactnew}
  1-sP(u) = - \frac{p_d s}{u} \prod_{i=0}^d (u-U_i(s)) =
  \frac{p_{-1} s}{U(s)} \left( 1 - \frac{U(s)}{u} \right)
  \prod_{i=1}^d \left( 1 - \frac{u}{U_i(s)} \right).
\end{equation}
Here, the second equality is obtained by noting that the product of
all roots (large and small) is equal to $(-1)^{d+1}p_{-1}/p_d$.
Comparing with~\eqref{eq:WHfact}, we deduce that
\begin{equation}
  1 - \sum_{h > 0} A_h^\geq(s) u^h = \prod_{i=1}^d \left( 1 - \frac{u}{U_i(s)} \right)
\end{equation}
and, by~\eqref{eq:archgen}, we get
\begin{equation}
  \label{eq:logseriesbis}
  \sum_{\ell \geq 1} \frac{P_{\ell,h} s^\ell}\ell = \sum_{i=1}^d \frac{U_i(s)^{-h}}h \qquad \text{for }h>0.
\end{equation}
This is the last identity that we need for our purposes:
\begin{proof}[Proof of Propositions~\ref{prop:DSFenumxy} and~\ref{prop:DSFenumxybd}]
  The identities involving $P^\bullet_{p,\ldots}$ are obtained by
  substituting $p_j=b_j$, $u=y^{-1}$, $U(s)=z^\bullet(y)$,
  $d=\dmaxb-1$ and $U_i(s)=z_i^\bullet(y)$ in~\eqref{eq:logseries},
  \eqref{eq:archgen} and \eqref{eq:logseriesbis} (recall that
  $b_{-1}=1$). Similarly, for the identities involving
  $P^\circ_{p,\ldots}$ we substitute $p_j=a_j$, $u=x^{-1}$,
  $U(s)=\tilde{z}^\circ(y)=z^\circ(x)^{-1}$, $d=\dmaxn-1$,
  $U_i(s)=z_i^\circ(x)^{-1}$.
\end{proof}

Let us conclude this appendix by mentioning that,
in~\cite{BaFl2002,BaLaWa2020}, the equation $1-sP(u)$ is called the
\emph{kernel equation}, and that its roots play a fundamental
role. These references consider one-dimensional walks which are not
necessarily downward-skip free, but still have a global bound on their
increments, in the sense that in addition to the upper bound $d$ on
positive increments there exists another positive integer $c$ such
that $p_j=0$ for $j<-c$. Downward skip-free walks correspond to the
particular case $c=1$. In general, the kernel equation admits $c$
small roots and $d$ large roots. The take home message 
from~\cite{BaFl2002,BaLaWa2020} is that the generating functions of
several families of constrained walks (the so-called bridges,
excursions, meanders...) can be expressed as symmetric polynomials of
the small roots, or of the inverses of the large roots, as we may
observe here on a specific instance in~\eqref{eq:logseriesbis}.

\section{An alternative approach to the enumeration of monochromatic
  disks}
\label{sec:altmono}

In this appendix we derive an expression for the disk generating
functions $F_p^\circ$ and $F_p^\bullet$ by building on some ideas
introduced in~\cite[Section~3.3]{hankel} and extending them to the
setting of hypermaps. Let us mention that the intermediate case of
so-called constellations was treated in~\cite{NousFPSAC}. This
provides an alternative proof to the approach of Section~\ref{sub:mono}.

Following the above references, we make a detour via a subfamily of
pointed disks. Namely, we consider pointed disks with a monochromatic
boundary condition, and the extra constraint that, among all vertices
incident to the boundary face, the directed distance to the pointed
vertex attains a minimum at the vertex incident to the root
corner. By Definition~\ref{def:boundaryLabelCondition}, this
says that the label boundary condition has a minimum at the root
vertex and, following the terminology of Appendix~\ref{sec:dsf}, that
it forms an arch of tilt $0$.

Let us denote by $F_p^{\blar}$ and
$F_p^{\nlar}$ the generating function of such hypermaps with
respectively a white and a black boundary of degree
$p$. For any $h \geq 0$, let us furthermore denote by $P_{p,-h}^{\blar}$ (resp.\
  $P_{p,-h}^{\nlar}$) the generating function of DSF walks with
  $p$ steps starting at $0$ and ending at $-h$, that not visit positions
  smaller than $-h$, and carry a weight $a_k$ (resp.\ $b_k$) per
  step of increment $k$. We first take $h=0$ in the following lemma, which is
  an immediate consequence of Proposition~\ref{prop:decPointedRooted}.

\begin{lemma}\label{lem:bijExcursion}
 For any $p \geq 0$, we have
  \begin{equation}
    F_p^{\blar} = t P_{p,0}^{\blar} \qquad \text{and} \qquad F_p^{\blar} = t P_{p,0}^{\blar},
  \end{equation}
  where the factor $t$ accounts for the pointed vertex.
\end{lemma}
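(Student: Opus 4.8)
The plan is to apply Proposition~\ref{prop:decPointedRooted} with the trivial DSF walk ${\bm \pi}$ taken to be an \emph{arch of tilt} $0$, and then sum over all such arches. First I would recall the precise content of Proposition~\ref{prop:decPointedRooted}: for a fixed DSF walk ${\bm \pi}=(\pi_0,\ldots,\pi_p)$ of length $p$ starting and ending at $0$, it gives a weight-preserving bijection between pointed hypermaps with a monochromatic white (resp.\ black) boundary whose \emph{label boundary condition} (Definition~\ref{def:boundaryLabelCondition}) is ${\bm \pi}$, with no weight on the pointed vertex, and slices of type $\cA$ (resp.\ $\cB$) with base condition given by ${\bm \pi}$. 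The objects counted by $F_p^{\blar}$ are, by their very definition, pointed hypermaps with a white boundary of degree $p$ whose label boundary condition attains its minimum at the root vertex, i.e.\ stays $\geq 0$ throughout; in the terminology of Appendix~\ref{sec:dsf} this means the label boundary condition is an arch of tilt $0$. The only discrepancy is the weight $t$ of the pointed vertex, which is present in the definition of $F_p^{\blar}$ but excluded in Proposition~\ref{prop:decPointedRooted}; this accounts for the factor $t$ in the statement.

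The key steps, in order, are: (i) partition the pointed hypermaps contributing to $F_p^{\blar}$ according to their label boundary condition ${\bm \pi}$, noting that by the constraint defining $F_p^{\blar}$ the admissible ${\bm \pi}$ are exactly the DSF walks of length $p$ from $0$ to $0$ staying nonnegative, i.e.\ arches of tilt $0$; (ii) for each such ${\bm \pi}$, invoke Proposition~\ref{prop:decPointedRooted} to get a weight-preserving bijection with slices of type $\cA$ and base condition ${\bm \pi}$, carefully tracking that the weight of the hypermap equals $t$ times the weight of the slice; (iii) recall from Proposition~\ref{prop:decCompositePaths} (or directly from Corollary~\ref{cor:genSerComp} together with Remark~\ref{rem:DSFxy}) that the generating function of slices of type $\cA$ with base condition ${\bm \pi}$ equals the product $\prod_{i=1}^p a_{\pi_i-\pi_{i-1}}$ over the increments of ${\bm \pi}$; (iv) sum over all arches ${\bm \pi}$ of tilt $0$ and length $p$, recognizing the result as $P_{p,0}^{\blar}$ by the very definition of the latter as the generating function of DSF walks with $p$ steps from $0$ to $0$ not visiting negative positions, weighted by $a_k$ per step of increment $k$. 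The black case is identical with $\cA$ replaced by $\cB$ and $a_k$ by $b_k$. (Note there is an evident typo in the lemma statement — the second identity should read $F_p^{\nlar} = t P_{p,0}^{\nlar}$ — which I would silently correct.)

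I do not expect a genuine obstacle here: the lemma is labelled ``an immediate consequence'' and indeed the whole content is a bookkeeping of the bijection of Proposition~\ref{prop:decPointedRooted} summed over the relevant family of boundary conditions. The one point requiring a line of care is matching the combinatorial constraint ``the directed distance to the pointed vertex attains its minimum, among boundary vertices, at the root vertex'' with the walk-theoretic constraint ``${\bm \pi}$ is an arch of tilt $0$'': since $\pi_i = \pd(\rho_i,v^\star)-\pd(\rho,v^\star)$ and $\pi_0=\pi_p=0$, the minimum being attained at $\rho$ is precisely the statement $\pi_i\geq 0$ for all $i$, which is the definition of an arch of tilt $0$ in Appendix~\ref{sec:dsf}. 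The other mild subtlety is the weight $t$: in $F_p^{\blar}$ the pointed vertex does carry its weight $t$, whereas $P_{p,0}^{\blar}$ is purely a walk generating function with weights $a_k$, and the bijection of Proposition~\ref{prop:decPointedRooted} is stated with no weight on the pointed vertex, so the single extra factor $t$ is exactly the correction. Assembling these observations yields the claimed equalities $F_p^{\blar}=t\,P_{p,0}^{\blar}$ and $F_p^{\nlar}=t\,P_{p,0}^{\nlar}$.
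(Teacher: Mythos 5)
Your proposal is correct and follows exactly the route the paper takes: the paper gives no separate proof, declaring the lemma ``an immediate consequence of Proposition~\ref{prop:decPointedRooted}'', and your argument is precisely the unpacking of that — partitioning by the label boundary condition (an arch of tilt $0$), applying the bijection for each fixed $\bm\pi$, identifying the slice generating function via Proposition~\ref{prop:decCompositePaths}, and accounting for the single factor $t$ of the pointed vertex. Your silent correction of the typo ($F_p^{\nlar}=t\,P_{p,0}^{\nlar}$ in the second identity) is also right.
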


Now, let us observe that unpointed disks can be been as pointed disks in
which the pointed vertex $v^\star$ coincides with the root vertex
$\v(\rho)$. It is clear from
Definition~\ref{def:boundaryLabelCondition} that, for such pointed
disks, the label boundary condition is a DSF walk never visiting
negative positions. Thus, all hypermaps
contributing to $F_p^\circ$ (resp.\ $F_p^\bullet$) also contribute to
$F_p^{\blar}$ (resp.\ $F_p^{\nlar}$). The remaining contribution comes
from the pointed disks such that the pointed vertex $v^\star$
\emph{differs} from the root vertex $\v(\rho)$. We denote by
$F_p^{\bstr}$ and $F_p^{\nstr}$ the generating function of such
hypermaps with respectively a white or a black boundary of degree
$p$. By the above discussion we have
\begin{equation}
  \label{eq:Fplarstr}
  F_p^\circ = F_p^{\blar} - F_p^{\bstr} \qquad \text{and} \qquad
  F_p^\bullet = F_p^{\nlar} - F_p^{\nstr}.
\end{equation}
The interest of these seemingly tautological equalities lies in the following:

\begin{lemma} \label{lem:Fpstr}
  For any $p \geq 0$ we have
  \begin{equation} \label{eq:Fpstr}
    F_p^{\bstr}= a_{-1}\sum_{d \geq 2} t_d^\circ \sum_{h\geq 1}P^\circ_{d-1,h+1}P^{\blar}_{p,-h}
    \quad \text{and} \quad
    F_p^{\nstr}= \sum_{d\geq 2} t_d^\bullet \sum_{h\geq 1}P^\bullet_{d-1,h+1}P^{\nlar}_{p,-h}
  \end{equation}
  where $P^\circ_{\cdot,\cdot}$ and $P^\bullet_{\cdot,\cdot}$ are as in~\eqref{eq:pathIncBis}.
\end{lemma}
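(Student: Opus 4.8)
The plan is to prove Lemma~\ref{lem:Fpstr} by an explicit weight-preserving bijection, treating the white case $F_p^{\bstr}$ (the black case being symmetric). Start from a hypermap $\rm$ contributing to $F_p^{\bstr}$: a pointed hypermap with a white boundary of degree $p$, in which the pointed vertex $v^\star$ differs from the root vertex $\v(\rho)$, and whose label boundary condition (in the sense of Definition~\ref{def:boundaryLabelCondition}) is a nonnegative DSF walk, so that $m:=\pd(\v(\rho),v^\star)=\min_i\pd(\rho_i,v^\star)\geq 1$. The first move is to cut along the leftmost geodesic from $\rho$ to $v^\star$, exactly as in Proposition~\ref{prop:decPointedRooted}. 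This yields a slice $\rs$ of type $\cA$ with increment $0$, base length $p$, whose base condition is the nonnegative walk recording the boundary labels, and whose left and right boundaries share the common length $m\geq 1$ (the condition $v^\star\neq\v(\rho)$ being exactly equivalent to $m\geq 1$, i.e.\ to the left boundary being nontrivial). Reinstating the weight $t$ of the apex $v^\star$ (which, being the common endpoint of the two boundaries, carries no weight in $\bols$, and is the only vertex of $\rs$ not accounted for there), this reduces the claim to showing that the generating function $G_p$ of slices of type $\cA$ with increment $0$, base length $p$, nonnegative base condition, and left boundary of positive length satisfies $tG_p=a_{-1}\sum_{d\geq 2}t^\circ_d\sum_{h\geq 1}P^\circ_{d-1,h+1}P^{\blar}_{p,-h}$.

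The second move is to peel off the first edge $e_1$ of the left boundary of $\rs$, as in the proof of Proposition~\ref{prop:alternativeDec}: since this boundary has positive length, $e_1$ is incident to an inner white face $f^\circ$ on its right, of degree $d\geq 2$, whose contour with $e_1$ removed is a directed path of length $d-1$ along which the labels rise from $m-1$ back to $m$ (a DSF walk of total increment $+1$, hence contributing $P^\circ_{d-1,h+1}=[z^{-(h+1)}]x(z)^{d-1}$ once the overall ``height'' $h$ is identified). The subtlety compared with Proposition~\ref{prop:alternativeDec} is that removing $e_1$ may genuinely increase directed distances to the apex, so one cannot simply delete it. Instead, the plan is to cut $\rs$ along a suitable leftmost geodesic so as to detach four pieces: an elementary slice of type $\cA_{-1}$ carrying $e_1$ together with whatever portion of $\rs$ becomes inaccessible upon its removal (this piece degenerates to the empty slice precisely when no detour shortens any distance, and accounts for the factor $a_{-1}$); the white face $f^\circ$ itself (factor $t^\circ_d$); a slice of type $\cA$ with base length $d-1$ and increment $h+1$ obtained from the contour of $f^\circ\setminus\{e_1\}$ (factor $P^\circ_{d-1,h+1}$); and an ``outer'' slice of type $\cA$ with base length $p$, increment $-h$, and base condition staying weakly above its final value, which is exactly the object counted by $P^{\blar}_{p,-h}$ (and which, re-wrapped, is a white trumpet of perimeter $p$ and girth $h$ — the integer $h\geq 1$ being the girth of the tight cycle along which the neighbourhood of $v^\star$ is separated from the part of the disk carrying the boundary). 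Summing over all $d\geq 2$ and $h\geq 1$, and describing the inverse gluing, will give the identity.

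The main obstacle will be the second move: pinning down the precise geodesic along which to cut, and proving that the four resulting pieces really are slices of the asserted types — in particular that the outer piece inherits exactly the base condition measured by $P^{\blar}_{p,-h}$, and that the $\cA_{-1}$-piece correctly absorbs the (possibly nontrivial) change of directed distances caused by removing $e_1$. This is where the specifically directed features of the setting intervene: uniqueness of the right boundary of a slice, the asymmetry of $\pd$, and the bookkeeping of accessibility via directed Busemann functions as in Section~\ref{sec:dirBuse}. Once this geometric picture is in place, the verification that the cutting and gluing maps are mutually inverse and that they preserve $\bols$ is routine, following the template of the proof of Proposition~\ref{prop:decCompositePaths} and of the ``unpointing'' argument of \cite[Section~3.3]{hankel}; the black case is obtained throughout by exchanging the two colours and reversing orientations.
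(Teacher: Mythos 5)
Your first step is sound: opening the pointed disk along the leftmost geodesic from $\rho$ to $v^\star$ (Proposition~\ref{prop:decPointedRooted}) does identify $F_p^{\bstr}$ with $t$ times the generating function of type-$\cA$ slices with base length $p$, increment $0$, nonnegative base condition and left boundary of positive length. But the heart of the lemma is your second step, and there you have not given a proof: you assert that one can ``cut $\rs$ along a suitable leftmost geodesic'' so that four pieces of the stated types fall out, and you yourself defer the construction (``the main obstacle will be the second move''). That construction is precisely the nontrivial content. The description of the $\cA_{-1}$ piece as ``$e_1$ together with whatever portion of $\rs$ becomes inaccessible upon its removal'' is not a well-defined operation (removing an edge does not make anything inaccessible; it changes directed distances), and nothing in your sketch shows that the outer piece has base condition counted by $P^{\blar}_{p,-h}$, nor that the face-contour piece has increment exactly $h+1$: once the pieces are separated, the relevant labels are directed distances in the dissected object, which in general differ from the canonical labels of $\rs$ --- exactly the difficulty you flag and leave unresolved. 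There is also an unverified weight issue: a dissection along leftmost geodesics preserves $\bols$, which would yield $G_p=\sum_{d\geq 2} a_{-1}t^\circ_d\sum_{h\geq 1} P^\circ_{d-1,h+1}P^{\blar}_{p,-h}$ and hence an answer off by a factor $t$ from the lemma; you must show that exactly one extra weighted vertex (the apex weight you ``reinstated'') is absorbed by the pieces, and the sketch does not do this. Finally, the parenthetical identification of the outer piece with a trumpet is off: trumpets correspond to \emph{all} slices of increment $-h$ (generating function $P^\circ_{p,-h}$), not to the subfamily with base condition staying weakly above $-h$ that $P^{\blar}_{p,-h}$ counts.

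For comparison, the paper avoids these difficulties by working at the level of the map rather than of the slice: it takes the first edge $e_1$ of the \emph{rightmost} geodesic from $\rho$ to $v^\star$ (so that every geodesic first-edge stays attached to the same copy of the root vertex), splits the root vertex in two and merges the white face $f^\circ$ to the right of $e_1$ into the boundary. The image of this surgery is characterized by explicit constraints on the label boundary condition of the new map (first step down, positive position after $d$ steps, nonnegative afterwards), the inverse gluing is shown to land back in the family counted by $F_p^{\bstr}$, and then Propositions~\ref{prop:decPointedRooted} and~\ref{prop:decCompositePaths} convert the constrained boundary walk directly into the product $a_{-1}P^\circ_{d-1,h+1}P^{\blar}_{p,-h}$, the vertex duplication accounting for the factor $t$. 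If you wish to keep a purely slice-level dissection, you would need to supply exactly this kind of data: a precise rule for the cut (your choice of the leftmost rather than rightmost geodesic is itself a point to justify, since it governs how distances change under the separation), an intrinsic characterization of the resulting quadruples that makes the inverse gluing well defined, and the label and weight bookkeeping. None of this is present in the proposal, so as it stands the proof is incomplete.
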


\begin{proof}
  \begin{figure}[t!]
    \centering
    \includegraphics[width=0.85\linewidth]{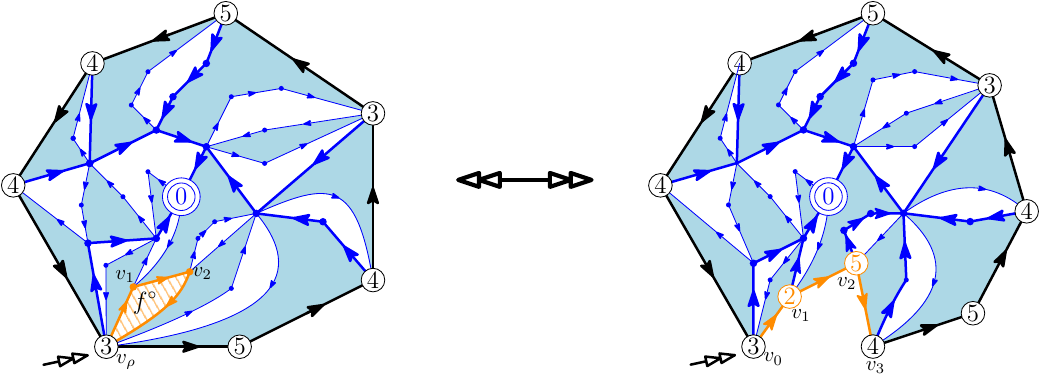}
    \caption{\label{fig:Rooted}Illustration of the proof of Lemma~\ref{lem:Fpstr}:
      starting from a pointed hypermap with a white monochromatic boundary of degree $p=7$ contributing to $F_p^{\bstr}$ (left), we cut at the root vertex $v_\rho$ in the way explained in the text, thereby merging the white boundary face with the inner white face $f^\circ$ of degree $d=3$, giving another pointed hypermap with a white monochromatic boundary of degree $p+d=10$ (right). 
    }
  \end{figure}
  Note first that, for $p=0$, both relations hold since everything vanishes.
  Let us prove the first relation for $p \geq 1$, the other is just obtained by
  exchanging the colors (recall that $b_{-1}=1$).  Let $(\rm,v^*)$ be a pointed disk
  contributing to $F_p^{\bstr}$, and write $v_\rho:=\v(\rho)$ for its root vertex. Define $e_1:=(v_\rho,v_1)$ as the first edge of the \emph{rightmost} geodesic from $\rho$ to $v^*$, and let $f^\circ$ be the (white) face on the right of $(v_\rho,v_1)$, see Figure~\ref{fig:Rooted}. Since $\pd(v_1,v^*)=\pd(v_\rho,v^*)-1$, so that $v_1$ cannot be incident to the boundary face of $\rm$, given its boundary condition. Hence, $f^\circ$ is an inner face and must have a degree $d$ at least $2$. We label its vertices clockwise as $v_\rho=v_0,v_1,\ldots,v_{d-1}$. 

We define a new hypermap $\tilde \rm$ from $\rm$ by cutting $v_\rho$ into two vertices $v_0$ and $v_d$. Edges incident to $v_\rho$ in $\rm$ become incident to either $v_0$ or $v_d$ according to the following rule: 
\begin{itemize}
 \item edges of $\rm$ situated between its root corner and $e_1$ (included) when turning clockwise around $v_\rho$ are incident to $v_0$ in $\tilde \rm$, 
 \item other edges are incident to $v_d$ in $\tilde \rm$. 
 \end{itemize}
 The corner of $\tilde \rm$ is defined as the image of $\rho$ that is incident to $v_0$.
 
 The above operation has the effect of merging $f^\circ$ with the boundary face of $\rm$, so that $\tilde \rm$ has a white boundary of degree $d+p$. We list its incident vertices as $v_0,v_1,\ldots,v_d,v_{d+1},\ldots,\allowbreak v_{d+p-1}$, starting from $v_0$ and following the orientation of the edges. 
 
We now characterize the label boundary condition of $\tilde \rm$. First observe that, by construction, the first edge of any geodesic between $v_\rho$ and $v^*$ in $\rm$ is incident to $v_0$ in $\tilde \rm$. Hence, we have: 
\begin{equation}
\pd_{\rm}(v_\rho,v^*)=\pd_{\tilde \rm}(v_0,v^\star)<\pd_{\tilde \rm}(v_d,v^\star).
\end{equation}
Endow $\tilde \rm$ with its canonical labeling $\ell$. The latter observation implies that $\ell(v_d)>0$ and that $\ell(v_i)\geq 0$, for any $i\in\{d+1,\ldots,d+p-1\}$. Moreover, since $(v_0,v_1)$ is the first edge of a geodesic from $v_0$ to $v^\star$, we necessarily have $\ell(v_1)=-1$.

To summarize, $\tilde \rm$ is a pointed hypermap with a monochromatic white boundary of degree $d+p$, whose weight satisfies $t^\circ_d \cdot  w(\tilde \rm)= t \cdot w(\rm)$, where the $t^\circ_d$ factor accounts for the weight of $f^\circ$ and the $t$ factor for the duplication of $v_\rho$. Moreover, its label boundary condition forms a DSF walk of $d+p$ steps which:
\begin{enumerate}[(a)]
\item starts with a down-step, \label{item:down}
\item arrives at a positive position after $d$ steps, \label{item:pos}
\item then stays at nonnegative positions after. \label{item:nn}
\end{enumerate}
\medskip

Reciprocally,
let $\bar \rm$ be a pointed rooted hypermap with a white boundary of length $p+d$, for some $p \geq 1$ and $d \geq 2$, whose label boundary condition satisfies the above constraints. 
Denote by $\bar u^*$ the pointed vertex and list the vertices incident to the boundary of $\bar \rm$ (again starting from the root corner and following the canonical orientation of its edges) as $\bar u_0,\ldots, \bar u_d,\bar u_{d+1},\ldots, \bar u_{p+d}$. Identify $u_0$ and $u_d$, and write $u_\rho$ for the resulting vertex and $\phi(\bar \rm)$ for the hypermap obtained\footnote{It might happen that the boundary of $\bar \rm$ is incident to several corners incident to $u_0$ or to $u_d$, i.e. there can exist $i\neq 0$ or $j\neq d$ such that $u_i=u_0$ or $u_j=u_d$ (or both). In that case, we identify $u_0$ and $u_d$ in such a way that the first corner and the $(d+1)$-th corner around the boundary of $\bar \rm$ are identified to produce $\phi(\bar \rm)$.}. It is easy to see that the resulting map is a planar hypermap with a white boundary. Indeed, the identification produces one new face, whose boundary is clearly directed. To prove that its label boundary condition is an arch of tilt $0$, it only remains to prove that:
\begin{equation}\label{eq:ToProve}
\pd_{\phi(\bar \rm)}(\bar u_i,\bar u^*)\geq \pd_{\phi(\bar \rm)}(\bar u_\rho,\bar u^*)\quad \text{ for any }i\in \{d+1,\ldots,d+p\}.
\end{equation}
Since $\bar \rm$ satisfies Property~\ref{item:pos}, we know that $\pd_{\phi(\bar \rm)}(\bar u_\rho,\bar u^*)=\pd_{\bar \rm}(\bar u_0,\bar u^*)$. Moreover, for $i \in \{d+1,\ldots,d+p\}$, consider a geodesic path $\gamma$ from $\bar u_i$ to $\bar u^*$ in $\phi(\bar \rm)$. If it passes through $u_0$, then~\eqref{eq:ToProve} holds since $\gamma$ is geodesic. Otherwise, $\gamma$ is actually a path in $\bar \rm$ which, by Property~\ref{item:nn}, has length at least $\pd_{\bar \rm}(\bar u_0,\bar u^*)=\pd_{\phi(\bar \rm)}(\bar u_\rho,\bar u^*)$, so that~\eqref{eq:ToProve} also holds.

This shows that we have a bijection between the pointed hypermaps
contributing to $F_p^{\bstr}$ and those whose labeled boundary
condition are DSF walks satisfying the constraints~\ref{item:down},
\ref{item:pos} and \ref{item:nn}, for some $d\geq 2$. The wanted expression~\eqref{eq:Fpstr} follows by
Propositions~\ref{prop:decPointedRooted} and~\ref{prop:decCompositePaths}, noting that a constrained DSF
walk can be decomposed into the concatenation of a down-step (contributing a weight
by $a_{-1}$), a walk with $d-1$ steps starting from position $-1$ and ending at
some position $h\geq 1$ (such walks are counted by $P^\circ_{d-1,h+1}$ upon shifting), and a walk with
$p$ steps starting from $h$, ending at $0$ and remaining nonnegative
(as counted by $P^{\blar}_{p,-h}$ upon shifting), and finally summing
over $d \geq 2$ and $h \geq 1$.
\end{proof}

Combining~\eqref{eq:Fplarstr} with Lemmas~\ref{lem:bijExcursion}
and~\ref{lem:Fpstr}, we find the expressions
\begin{equation} \label{eq:Fplar}
  \begin{split}
    F_p^\circ &= t P_{p,0}^{\blar} - a_{-1}\sum_{d \geq 2} t_d^\circ \sum_{h\geq 1}P^\circ_{d-1,h+1}P^{\blar}_{p,-h}, \\
  F_p^\bullet &= t P_{p,0}^{\nlar} - \sum_{d\geq 2} t_d^\bullet \sum_{h\geq 1}P^\bullet_{d-1,h+1}P^{\nlar}_{p,-h}.
  \end{split}
\end{equation}
Let us now explain why this expressions are equivalent to those
obtained in Section~\ref{sub:mono}. By the cycle lemma, already
mentioned in Appendix~\ref{sec:dsf}, we have for all $h \geq 0$
\begin{equation}
  a_{-1} P_{p,-h}^{\blar} = \frac{h+1}{p+1} P^\circ_{p+1,-(h+1)}, \qquad
  P_{p,-h}^{\nlar} = \frac{h+1}{p+1} P^\bullet_{p+1,-(h+1)},
\end{equation}
since appending a down-step to a DSF walk counted by
$P_{p,-h}^{\blar}$ or $P_{p,-h}^{\nlar}$ amounts to the concatenation
of $h+1$ excursions.  Thus, the above relations may be rewritten as
\begin{equation} \label{eq:Fpappendix}
  \begin{split}
    (p+1) F_p^\circ &= \frac{t}{a_{-1}} P^{\circ}_{p+1,-1}- \sum_{d \geq 2} t_d^\circ \sum_{h\geq 2} h P^\circ_{d-1,h}P^\circ_{p+1,-h}, \\
  (p+1) F_p^\bullet &= t P_{p+1,-1}^\bullet - \sum_{d\geq 2} t_d^\bullet \sum_{h\geq 2} h P^\bullet_{d-1,h}P^\bullet_{p+1,-h}.
  \end{split}
\end{equation}
where we have made a change of variable $h+1 \to h$ in the rightmost
sums. Now, using respectively Propositions~\ref{prop:alternativeDec}
and~\ref{prop:decslices}, we have
\begin{equation}
  \frac{t}{a_{-1}} = 1 - \sum_{d \geq 2} t_d^\circ P^\circ_{d-1,1}, \qquad
  t = a_{-1} - \sum_{d \geq 2} t_d^\bullet P^\bullet_{d-1,1}
\end{equation}
and substituting these relations in~\eqref{eq:Fpappendix} we get
\begin{equation} \label{eq:Fpappendixbis}
  \begin{split}
    (p+1) F_p^\circ &= P^{\circ}_{p+1,-1}- \sum_{d \geq 2} t_d^\circ \sum_{h\geq 1} h P^\circ_{d-1,h}P^\circ_{p+1,-h}, \\
  (p+1) F_p^\bullet &= a_{-1} P_{p+1,-1}^\bullet - \sum_{d\geq 2} t_d^\bullet \sum_{h\geq 1} h P^\bullet_{d-1,h}P^\bullet_{p+1,-h},
  \end{split}
\end{equation}
where the reader should notice that the sums over $h$ now start at
$1$.  Via~\eqref{eq:pathIncBis}, we get the same expressions as
in~\eqref{eq:Fplesscompact}. Let us mention that the relation
between~\eqref{eq:Fpappendix} and~\eqref{eq:Fpappendixbis} is very
similar to that between~\cite[Equations~(3.15) and (3.16)]{hankel}.

Let us conclude this appendix by remarking
that the expression given in~\cite[Theorem~3]{BoSc02} is actually a particular case of the
expression~\eqref{eq:Fplar} above. Indeed, taking $p=2$, and writing
$P_{2,0}^{\nlar}=b_0^2+b_1$, $P_{2,-1}^{\nlar}=2b_0$,
$P_{2,-2}^{\nlar}=1$, $P_{2,-h}^{\nlar}=0$ for $h \geq 3$, we get
\begin{equation}
  F_2^\bullet = t (b_0^2+b_1) - \sum_{d \geq 2} t_d^\bullet \left( 2 b_0 P^\bullet_{d-1,2} + P^\bullet_{d-1,3} \right).
\end{equation}
At $t=1$, we recover precisely the result of Bousquet-Mélou and
Schaeffer, using the correspondence of notation given at
Remark~\ref{rem:BoSc_connection}, and noting that the $B_i$'s and
$M(\boldsymbol{x},\boldsymbol{y})$ in their paper are respectively
equal to $\sum_{d \geq 2} t_d^\bullet P^\bullet_{d-1,i}$ and
$t_2^\bullet F_2^\bullet$ here.

\printbibliography

\end{document}